\colorlet{lgray}{white!85!black}
\colorlet{dgray}{white!45!black}
\colorlet{lred}{white!85!red}
\colorlet{dred}{white!35!red}
\colorlet{lgreen}{white!60!green}
\colorlet{dgreen}{black!30!green}
\colorlet{lpurple}{white!60!purple}
\colorlet{lblue}{white!60!blue}
\definecolor{green}{rgb}{0.1,0.8,0.1}
\definecolor{yellow}{rgb}{1.0,0.85,0.25}
\definecolor{purple}{rgb}{1.0, 0, 1.0}
\definecolor{blue}{rgb}{0, 0, 1.0}
 \renewcommand{\tikz}[2]{
\begin{tikzpicture}[scale=#1,baseline=(current bounding box.center),>=stealth]
#2
\end{tikzpicture}}
\newcommand{\tikzbase}[3]{
\begin{tikzpicture}[scale=#1,baseline={([yshift=#2]current bounding box.center)},>=stealth]
#3
\end{tikzpicture}}
\newcommand\restr[2]{{
  \left.\kern-\nulldelimiterspace 
  #1 
  \vphantom{\big|} 
  \right|_{#2} 
 }}
\tikzstyle{fused}=[lgray, line width=3pt, arrows={-Stealth[scale=1.1,length=10, width=10,dgray]}]
\tikzstyle{fused*}=[lgray, line width=3pt]
\newtheorem{prop}{Proposition}[section]
\newtheorem*{prop*}{Proposition}
\newtheorem{theo}[prop]{Theorem}
\newtheorem*{theo*}{Theorem}
\newtheorem{theoAlph}{Theorem}
\newtheorem{lem}[prop]{Lemma}
\newtheorem{cor}[prop]{Corollary}
\theoremstyle{definition}
\newtheorem{rem}[prop]{Remark}
\numberwithin{equation}{section}
\renewcommand{\(}{\left (}
\renewcommand{\)}{\right )}
\newcommand{\bA}{\bm A}
\newcommand{\bB}{\bm B}
\newcommand{\bC}{\bm C}
\newcommand{\bI}{\bm I}
\newcommand{\bJ}{\bm J}
\newcommand{\bK}{\bm K}
\newcommand{\bL}{\bm L}
\newcommand{\bR}{\bm R}
\newcommand{\bP}{\bm P}
\newcommand{\bQ}{\bm Q}
\newcommand{\bX}{\bm X}
\newcommand{\bY}{\bm Y}
\newcommand{\tmho}{\widetilde{\mho}}
\newcommand \calA{\mathcal A}
\newcommand \calB{\mathcal B}
\newcommand \calC{\mathcal C}
\newcommand \F{\mathbb F}
\newcommand \C{\mathbb C}
\newcommand \B{\mathbb B}
\newcommand \Z{\mathbb Z}
\newcommand \Y{\mathbb Y}
\newcommand{\be}{\bm e}
\newcommand{\x}{\mathbf x}
\renewcommand{\sl}{\mathfrak{sl}}
\newcommand{\ve}{\varepsilon}
\DeclareMathOperator{\Sym}{Sym}
\title{Representation theoretic interpretation and interpolation properties of inhomogeneous spin $q$-Whittaker polynomials}
\author{Sergei Korotkikh}
\begin{document}
\maketitle

\begin{abstract} We establish new properties of inhomogeneous spin $q$-Whittaker polynomials, which are symmetric polynomials generalizing $t=0$ Macdonald polynomials. We show that these polynomials are defined in terms of a vertex model, whose weights come not from an $R$-matrix, as is often the case, but from other intertwining operators of $U'_q(\widehat{\sl}_2)$-modules. Using this construction, we are able to prove a Cauchy-type identity for inhomogeneous spin $q$-Whittaker polynomials in full generality. Moreover, we are able to characterize spin $q$-Whittaker polynomials in terms of vanishing at certain points, and we find interpolation analogues of $q$-Whittaker and elementary symmetric polynomials.
\end{abstract}

\tableofcontents

\section{Introduction}

\subsection{Overview}

\emph{Inhomogeneous spin $q$-Whittaker polynomials} are symmetric polynomials generalizing the classical $q$-Whittaker functions (specialization of Macdonald symmetric functions at $t=0$) by adding two sequences of  parameters $\calA=(a_0, a_1, \dots)$ and $\calB=(b_0, b_1, \dots)$. In full generality they were recently constructed in \cite{BK21} using integrable lattice models, and some particular degenerations were described earlier in \cite{BW17}, \cite{MP20}. 

One of the main features of the spin $q$-Whittaker polynomials can be summarized as follows: while the new parameters $\calA, \calB$ are added in a non-trivial way, the resulting functions share several defining algebraic-combinatorial properties with the classical $q$-Whittaker functions. Namely, there exist spin $q$-Whittaker analogues of the branching rule and the (skew) Cauchy identity. Moreover, there also exists an analogue of the dual Cauchy identity, which involves spin $q$-Whittaker polynomials and \emph{spin Hall-Littlewood functions}: a generalization of Hall-Littlewood functions constructed from integrable lattice models in \cite{Bor14}, \cite{BP16}. Other known properties of spin $q$-Whittaker polynomials include formulae for explicit action of certain first order difference operators \cite{MP20}, and applications to various stochastic models from \emph{integrable probability} \cite{MP20}, \cite{K21}. 

The above properties were proved using an explicit construction of inhomogeneous spin $q$-Whittaker polynomials in terms of an integrable vertex model, called the \emph{$q$-Hahn vertex model}. In particular, the Cauchy and dual Cauchy identities follow directly from appropriate modifications of the Yang-Baxter equation using \emph{zipper} or \emph{train} argument. This idea is not new: vertex model constructions and Yang-Baxter equations were used to study numerous other special functions, examples can be found in  \cite{Tsi05}, \cite{ZJ09}, \cite{BBS09}, \cite{Kor11}, \cite{Bor14}, \cite{BP16}, \cite{BW18}, \cite{BSW19}, \cite{ABW21}. However, there is an important difference distinguishing the $q$-Hahn vertex model among the other integrable vertex models. Usually, integrable vertex models originate from matrix coefficients of $R$-matrices acting on specific representations of quantum groups, and the Yang-Baxter equation for vertex models is a reformulation of the corresponding equation for quantum groups. But the general $q$-Hahn vertex model cannot be presented in such a fashion, and in \cite{BK21} somewhat different combinatorial methods were used to show integrability of this model and deduce properties of inhomogeneous spin $q$-Whittaker polynomials.

In this work we establish several new properties of the spin $q$-Whittaker polynomials that can be grouped in two clusters. The first group is focused around the integrability of the $q$-Hahn model. It turns out that, while the $q$-Hahn vertex model does not come from an $R$-matrix of a quantum group, it still has a quantum group interpretation, where instead of the $R$-matrix we consider different intertwining operators between tensor products of certain representations of $U'_q(\widehat{\frak{sl}}_2)$. Unlike the $R$-matrix, these intertwining operators change the tensor factors instead of just permuting them. The integrability and the (modified) Yang-Baxter equations for the vertex model follow immediately from this observation, and using this new point of view we can finish the proof of the Cauchy identity for the spin $q$-Whittaker polynomials, which was partially done in \cite{BK21} but was not completed in full generality.

The second group of our new results reveals an unexpected connection of spin $q$-Whittaker polynomials with another area of the theory of symmetric functions: \emph{interpolation symmetric functions}. These are inhomogeneous deformations of classical symmetric functions which can be characterized by vanishing at specific points. The main known classes of interpolation symmetric functions are \emph{factorial Schur functions} and \emph{interpolation Macdonald functions}\footnote{It is worth noting that for the supersymmetric functions there is also the class of \emph{interpolation $Q$-Schur functions} \cite{Iv03}.}. The former ones are well-studied and their interpolation properties have appeared in the contexts of Capelli identities, multiplicity free spaces and asymptotic representation theory, see \cite{HU91}, \cite{S94},  \cite{OO96a}. The interpolation Macdonald functions are more complicated and still somewhat mysterious, however a number of nice combinatorial properties is known about them and their degenerations, see \cite{S96},  \cite{Ok96a}, \cite{Ok96b}, \cite{KS96}, \cite{Olsh17}, \cite{Cue17}, \cite{Pet20}. Moreover, there exist more general type $BC_n$ interpolation Macdonald polynomials, which are Laurent symmetric polynomials with connections to Koornwinder polynomials and applications to multivariate analogues of $q$-hypergeometric transformations, see \cite{Ok96c}, \cite{Rai01}, \cite{Rai04}, \cite{Krn14}.

In \cite{Ok97} it was shown that, under a certain constraint, all  interpolation symmetric polynomials of interest generally fall in one of three classes: factorial monomial functions (which were considered trivial), factorial Schur functions and (type $BC_n$) interpolation Macdonald functions. However, we show that inhomogeneous spin $q$-Whittaker polynomials also can be characterized by vanishing at specific points. Moreover, by specializing inhomogeneous spin $q$-Whittaker polynomials, one can obtain new symmetric functions which should fill the place of \emph{interpolation $q$-Whittaker} and \emph{interpolation elementary} functions. These new interpolation symmetric functions depend on a countable family of parameters (similarly to the factorial Schur functions and unlike the interpolation Macdonald functions) and both do not naturally arise via the interpolation Macdonald functions. Finally, we are able to find and prove a parallel version of the classification from \cite{Ok97}, which identifies our new interpolation functions as unique functions satisfying a constraint similar to the one from \cite{Ok97}.

Overall, our new results give two descriptions of inhomogeneous spin $q$-Whittaker polynomials: an improved constructions in terms of the vertex model from \cite{BK21} and a characterization in terms of vanishing at specific points. Combination of these two properties is rare: so far and to the best of our knowledge the only other symmetric functions simultaneously having both vertex model description and a vanishing property were factorial Schur functions\footnote{See \cite{BMN14} for a vertex model description of factorial Schur functions, leading to the vanishing property.}\footnote{There is also a degeneration of interpolation Macdonald polynomials with a vertex model construction, namely, interpolation Hall-Littlewood functions from \cite{Cue17}, \cite{Pet20}. However, interpolation Hall-Littlewood functions do not in fact have interpolation properties, so we do not include them.}. We still don't know if this situation is an artifact specific, for some reason, to the Schur and $q$-Whittaker levels, or it might indicate existence of  yet uncovered even more general symmetric functions with similar properties, but both possibilities look exciting for us. 

Below we briefly state the main results of this work.

\subsection{$q$-Hahn vertex model and intertwining operators} Inhomogeneous spin $q$-Whittaker polynomials $\F_{\lambda}(x_1, \dots, x_n\mid \calA, \calB)$ are defined as partition functions of a vertex model consisting of a square grid of vertices with weights
$$
\tikz{1.3}{
	\draw[fused] (-1,0) -- (1,0);
	\draw[fused] (0,-1) -- (0,1);
	\node[below] at (-1,-0.1) {\tiny \shortstack[c]{$I$}};\node[below] at (1,-0.1) {\tiny \shortstack[c]{$L$}};
	\node[below] at (0,-1) {\tiny $J$};\node[above] at (0,1) {\tiny $K$};
}=\delta_{I+J=K+L}\delta_{J\geq L} (x_i/b_j)^{L}\frac{(a_{i+j}/x_i;q)_{L}(x_i/b_j;q)_{J-L}}{(a_{i+j}/b_j;q)_{J}}\frac{(q;q)_{J}}{(q;q)_{L}(q;q)_{J-L}}.
$$
Here $i,j$ are coordinates of the vertex, $(x;q)_k$ denote the $q$-Pochhammer symbol recalled in Section \ref{notation} below, $a_i, b_i$ are parameters from $\calA,\calB$, and $I,J,K,L\in\mathbb Z_{\geq 0}$ are integers, representing a configuration of edges around the vertex. Note that these weights are orthogonality weights for $q$-Hahn polynomials, so they are traditionally called the $q$-Hahn weights. The exact definition of the spin $q$-Whittaker polynomials $\F_{\lambda}(x_1, \dots, x_n\mid \calA, \calB)$ via such weights is given in Section \ref{sqW-sect} of the text. 

Our first main result states that $q$-Hahn vertex weights are in fact matrix coefficients of an isomorphism between two generically irreducible representations of the affine quantum algebra $U'_q(\widehat{\frak {sl}}_2)$. The representations in question are tensor products of the evaluation modules $V(s)_z$, which are induced from the irreducible $U_q(\frak{sl}_2)$-module $V(s)$ with highest weight $s$ along the evaluation map $\mathrm{ev}_z:U'_q(\widehat{\frak{sl}_2})\to U_q(\frak{sl}_2)$. As in the classical $\frak{sl}_2$ case, representations $V(s)_z$ have a natural countable basis $|v_I\rangle$ consisting of weight vectors.

\begin{theoAlph}[{Theorem \ref{basicintertwiner} in the text}] For generic parameters $a_1,a_2,b_1,b_2$ the representations $V(a_1/b_1)_{a_1b_1}\otimes V(a_2/b_2)_{a_2b_2}$ and $V(a_1/b_2)_{a_1b_2}\otimes V(a_2/b_1)_{a_2b_1}$ are irreducible and isomorphic, with the isomorphism $W^b$ given explicitly by
$$
W^b:V(a_1/b_1)_{a_1b_1}\otimes V(a_2/b_2)_{a_2b_2}\to V(a_1/b_2)_{a_1b_2}\otimes V(a_2/b_1)_{a_2b_1},
$$
$$
\langle v_{K}\otimes v_{L}| W^b|v_{I}\otimes v_{J}\rangle = \delta_{I+ J=K+L}\delta_{J\geq L}\frac{(b_2/a_1)^L}{(b_1/a_1)^J}
(b_1/b_2)^{2L}\frac{(a_2^2/b_1^2;q)_{L} (b_1^2/b_2^2;q)_{J-L}}{(a_2^2/b_2^2;q)_{J}}\frac{(q;q)_{J}}{(q;q)_{L}(q;q)_{J-L}}.
$$
\end{theoAlph}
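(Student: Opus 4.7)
The plan is to split the proof into three parts: show that both tensor products are irreducible for generic parameters, identify them as isomorphic $U'_q(\widehat{\sl}_2)$-modules (so that an intertwiner exists and is unique up to scalar by Schur's lemma), and then verify by direct calculation that the explicit formula for $W^b$ defines an intertwiner.

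For irreducibility I would invoke the standard Chari--Pressley-type criterion for tensor products of evaluation modules of $U'_q(\widehat{\sl}_2)$: reducibility of $V(s_1)_{z_1}\otimes V(s_2)_{z_2}$ is confined to a discrete subvariety in the space of parameters $(z_1,z_2,s_1,s_2)$, given by resonance conditions between the spectral parameters and the highest weights. Substituting $s_i = a_i/b_i$, $z_i = a_i b_i$ on the left-hand side (and the swap $b_1\leftrightarrow b_2$ on the right), one sees that for generic $a_i, b_i$ the relevant ratios avoid these resonances. For the isomorphism step, it suffices to match the highest $\ell$-weights of the two irreducible modules, since this datum classifies irreducible representations of $U'_q(\widehat{\sl}_2)$. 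The $U_q(\sl_2)$-weight of the highest-weight vector on both sides equals $a_1 a_2/(b_1 b_2)$, and the action of the Drinfeld Cartan currents $\psi^\pm(u)$ on this vector can be computed explicitly: the result is a rational function in $u$ whose poles and zeros are encoded symmetrically by the multiset of pairs $(a_i, b_j)$, and is therefore invariant under $b_1 \leftrightarrow b_2$.

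Having established existence and uniqueness (up to scalar) of an intertwiner, it remains to check that the formula in the statement is nonzero and actually intertwines the module actions. Writing $\Delta$ and $\Delta'$ for the two coproduct actions on the different tensor products, it suffices to verify $W^b \Delta(x) = \Delta'(x) W^b$ on a set of generators of $U'_q(\widehat{\sl}_2)$. The factor $\delta_{I+J=K+L}$ in the formula immediately gives intertwining with the Cartan generators $k_0, k_1$. The intertwining with $e_1, f_1$, whose action does not involve the evaluation parameters, reduces to contiguous-term identities for $q$-Pochhammer symbols and is essentially combinatorial. The main technical obstacle is the intertwining with the affine generator $e_0$ (or equivalently $f_0$): this produces a sum over an intermediate index whose terms combine matrix coefficients of $W^b$ with the two different sets of evaluation parameters $a_i b_j$. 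After simplification, the resulting identity reduces to a single classical $q$-hypergeometric summation of $q$-Chu--Vandermonde or $q$-Saalschütz type; verifying this identity is the heart of the calculation, and once it is in hand the theorem follows.
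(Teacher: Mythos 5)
Your strategy coincides with the paper's: prove generic irreducibility of both tensor products, note that they share the same highest $l$-weight (manifestly symmetric under $b_1\leftrightarrow b_2$), so an intertwiner exists and is unique up to scalar, and then check directly that the stated formula intertwines the Chevalley generators. Two of your steps, however, are not quite as you describe them.

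First, the irreducibility step is not an off-the-shelf citation. The modules $V(s)_z$ are generically \emph{infinite-dimensional} analytic continuations of symmetric powers, so the standard Chari--Pressley resonance criteria for finite-dimensional evaluation modules do not apply directly. The paper devotes a full proposition to this point: it obtains the highest-$l$-weight property from Chari's tensor-product theorem in the finite-dimensional case $s_i\in q^{-\frac12\Z_{\geq0}}$, upgrades it to irreducibility via a duality argument using the involution $\widehat\omega$, and then passes to generic $s_i$ by encoding irreducibility as non-vanishing of countably many polynomials in the parameters and using the finite-dimensional points as witnesses. Simply invoking "the standard criterion" leaves this continuation — the one genuinely nontrivial input — unjustified.

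Second, your account of the final verification overstates its difficulty in a way that suggests a structural misreading. The coproduct of each Chevalley generator, $x_0^{\pm}$ included, has only two summands, and each generator moves a basis vector to a single neighbouring basis vector; hence for every generator the identity $\langle v_K\otimes v_L|W^b\Delta(x)|v_I\otimes v_J\rangle=\langle v_K\otimes v_L|\Delta'(x)W^b|v_I\otimes v_J\rangle$ is a two-term-versus-two-term identity among ratios of $q$-Pochhammer symbols. No sum over an intermediate index appears, and no $q$-Chu--Vandermonde or $q$-Saalsch\"utz summation is needed; such summation identities enter only when one \emph{composes} intertwiners (e.g.\ in expressing the $R$-matrix as $W^a\circ W^b$, or in the $q$-Gauss identity underlying the exchange relations). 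The paper carries out the check explicitly for $x_1^+$ and reduces it to an elementary rational identity; the remaining generators are entirely analogous. With these two corrections your argument matches the paper's proof.
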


The fact that the representations $V(a_1/b_1)_{a_1b_1}\otimes V(a_2/b_2)_{a_2b_2}$ and $V(a_1/b_2)_{a_1b_2}\otimes V(a_2/b_1)_{a_2b_1}$ are isomorphic is not new: one can show that for generic parameters both $V(a_1/b_1)_{a_1b_1}\otimes V(a_2/b_2)_{a_2b_2}$ and $V(a_1/b_2)_{a_1b_2}\otimes V(a_2/b_1)_{a_2b_1}$ are irreducible and have the same highest $l$-weight $\frac{(a_1^{-1}-a_1u)(a_2^{-1}-a_2u)}{(b_1^{-1}-b_1u)(b_2^{-1}-b_2u)}$ ($l$-weights are rational functions in $u$, which extend the usual weights to $U'_q(\frak{sl}_2)$-modules, \emph{cf.} \cite{MY12}). The nontriviality of our result is the explicit form for matrix coefficients of this isomorphism, which have a surprisingly simple form identical to the $q$-Hahn weights. Our result can also be extended to the higher rank case, see Theorem \ref{basicintertwiner} in the main text for the $U'_q(\widehat{\frak{sl}}_n)$ version.

We can use the expression for the isomorphism $W^b$ above to deduce two other results. First, we can obtain an explicit expression for the fully fused $R$-matrix $R:V(a_1/b_1)_{a_1b_1}\otimes V(a_2/b_2)_{a_2b_2}\to V(a_2/b_2)_{a_2b_2}\otimes V(a_1/b_1)_{a_1b_1}$, reproducing in a new way the results from \cite{Man14}, \cite{BM16}, see Proposition \ref{Rexpression} in the text. Another consequence is the following Cauchy identity for spin $q$-Whittaker polynomials $\F_{\lambda}(x_1, \dots, x_n\mid\calA,\calB)$:
\begin{theoAlph}[{Theorem \ref{cauchyTheo} in the text}] The following equality of formal power series in $x,y,q$ holds:
\begin{equation*}
\sum_{\lambda}\F_{\lambda}(x_1, \dots, x_n\mid\calA,\calB)\F^*_{\lambda}(y_1, \dots, y_m\mid \overline{\calB}, \overline{\calA})=\prod_{i=1}^n\prod_{j=1}^m\frac{(a_iy_j;q)_\infty(x_i/b_j;q)_\infty}{(x_iy_j;q)_\infty(a_i/b_j;q)_\infty},
\end{equation*}
where the sum is over all partitions $\lambda$.
\end{theoAlph}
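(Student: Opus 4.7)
The plan is to establish the identity via a zipper (train) argument whose elementary move is supplied by the new intertwiner $W^b$ of Theorem~A together with the fully fused $R$-matrix of Proposition~\ref{Rexpression}. Both sides of the claimed identity will be realized as partition functions of a single combined lattice, and the train argument will reduce that combined partition function to an explicit product.

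First, I would write the LHS as one partition function on an $(n+m)$-row lattice. The top $n$ rows carry spectral parameters $x_1,\dots,x_n$ and $q$-Hahn weights with column parameters from $(\calA,\calB)$; summing over configurations with a fixed profile $\lambda$ along the bottom of this block yields $\F_\lambda(x_1,\dots,x_n\mid\calA,\calB)$. The bottom $m$ rows carry $y_1,\dots,y_m$ and $q$-Hahn weights with parameters from $(\overline{\calB},\overline{\calA})$, and contribute $\F^*_\lambda(y_1,\dots,y_m\mid\overline{\calB},\overline{\calA})$ when the profile along their top is $\lambda$. Summing over $\lambda$ makes this interior profile free and turns the LHS into the partition function of the full lattice.

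Second, I would apply repeated $W^b$-swaps to commute every $x$-row past every $y$-row. Each swap of a single $x$-row with a single $y$-row is governed by a modified Yang--Baxter equation: the intertwining property of $W^b$ from Theorem~A shows that, after inserting an auxiliary crossing at one end of the lattice and dragging it through the bulk, the two rows can be exchanged at the cost of permuting their $b$-parameters across the columns. Interactions between two $x$-rows or two $y$-rows are handled by the standard fused $R$-matrix of Proposition~\ref{Rexpression}. This is precisely the ingredient that was missing in the treatment of \cite{BK21}: Theorem~A provides the required intertwining isomorphism for all generic values of the parameters, so the full train can be carried out without the restrictions present there.

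Third, once the train has fully traversed the lattice, the $x$- and $y$-rows are decoupled, and the combined partition function factorizes into a product over pairs $(i,j)$ of single-column contributions. Each such contribution is a one-strand $q$-Hahn sum that evaluates, by the $q$-binomial theorem (or equivalently by the explicit single-strand summation of $q$-Hahn weights), to $\frac{(a_iy_j;q)_\infty(x_i/b_j;q)_\infty}{(x_iy_j;q)_\infty(a_i/b_j;q)_\infty}$, which is the factor appearing on the RHS. The main obstacle I anticipate is not the overall strategy but the bookkeeping in this second step: each $W^b$-move reshuffles column $b$-parameters between the two crossing rows, and one has to track these permutations carefully to verify that after all swaps the resulting assignment of column parameters matches exactly the conjugated pair $(\overline{\calB},\overline{\calA})$ appearing in the definition of $\F^*_\lambda$. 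A subsidiary check is formal power series convergence of the sum over $\lambda$, which should follow from lower-triangularity of the $q$-Hahn weights, guaranteeing that only finitely many $\lambda$ contribute to each order in $x,y,q$.
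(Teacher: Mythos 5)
Your proposal follows essentially the same route as the paper: the sum over $\lambda$ is realized as a matrix element of a product of row operators, each exchange of an $x$-row past a $y$-row is carried out by a train argument based on the mixed Yang--Baxter relation supplied by the intertwiners of Theorem~A (Proposition \ref{inhomYB2}, with the $R$-vertex as the dragged crossing), and the scalar factors produced at the boundary are evaluated by a $q$-Gauss summation, yielding the double product of $q$-Pochhammer ratios. The only cosmetic difference is organizational: the paper packages the swaps as the operator exchange relation of Proposition \ref{BCexchange} and extracts one infinite-product factor per swap, rather than deferring all evaluations to the end.
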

Here $\F^*_{\lambda}(y_1, \dots, x_n\mid\calB,\calA)$ is an explicit renormalization of spin $q$-Whittaker polynomials, with parameters $a_i,b_i$ swapped and inverted. The proof of the Cauchy identity is based on an algebraic identity, obtained by choosing specific parameters $s_i, s_i', z_i, z_i'$ and constructing an isomorphism $V(s_1)_{z_1}\otimes V(s_2)_{z_2}\otimes V(s_3)_{z_3}\cong V(s'_1)_{z'_1}\otimes V(s'_2)_{z'_2}\otimes V(s'_3)_{z'_3}$ of irreducible representations in two different ways, using our explicit expressions for isomorphisms $W^b$ and $R$.

\subsection{Vanishing properties and interpolation characterization} For this group of results the starting point is the following vanishing property of inhomogeneous spin $q$-Whittaker polynomials $\F_\lambda(x_1, \dots, x_n\mid\calA,\calB)$: 
$$
\F_{\lambda}(a_1q^{\mu_1-\mu_2}, a_2q^{\mu_2-\mu_3},\dots, a_nq^{\mu_n}\mid \calA,\calB)=0,\qquad \text{unless}\ \lambda\subseteq\mu.
$$
This vanishing property is new, and it leads to an alternative characterization of the inhomogeneous spin $q$-Whittaker polynomials. Let $\x^n_{\calA}(\mu)=(a_1q^{\mu_1-\mu_2}, a_2q^{\mu_2-\mu_3},\dots, a_nq^{\mu_n})$, and let $\mathcal G^n_{0}\subset \mathcal G^n_{1}\subset \mathcal G^n_{2}\subset \dots$ denote a certain deformation of the natural filtration of the algebra of symmetric functions in $n$ variables, which is defined before Theorem \ref{charTheo} in the text.
\begin{theoAlph}[{Theorem \ref{charTheo} in the text}] For each partition $\lambda\in\Y^n$ the spin $q$-Whittaker polynomial $\F_\lambda(x_1, \dots, x_n\mid \calA,\calB)$ is uniquely characterized, up to a scalar, by the following properties:
\begin{enumerate}
\item $\F_\lambda(x_1, \dots, x_n\mid\calA,\calB)\in\mathcal G^{n}_{|\lambda|}$.
\item For any partition $\mu$ such that $|\mu|\leq |\lambda|$ and $\mu\neq\lambda$ we have $\F_\lambda(\x^n_{\calA}(\mu)\mid\calA,\calB)=0$.
\item $\F_\lambda(\x^n_{\calA}(\lambda)\mid\calA,\calB)\neq 0$.
\end{enumerate}
\end{theoAlph}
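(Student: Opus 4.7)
The plan is to first establish that $\F_\lambda$ satisfies the three listed properties, and then deduce uniqueness by a triangularity/dimension argument. Property (1) should be nearly structural: the filtration $\mathcal G^n_\bullet$ is presumably designed so that any partition function arising from the $q$-Hahn vertex model with top-boundary profile $\lambda$ lies automatically in $\mathcal G^n_{|\lambda|}$, since each unit of path flowing through a vertical edge contributes one factor in $x_i$, and the total vertical flux equals $|\lambda|$.

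The crux is the vanishing in property (2). My approach is to exploit the factor $(a_{i+j}/x_i;q)_L$ in the vertex weight: after the substitution $x_i = a_i q^{\mu_i - \mu_{i+1}}$ (with $\mu_{n+1}=0$), genericity of $\calA$ forces this $q$-Pochhammer to vanish exactly in the row $j=0$ whenever $L > \mu_i - \mu_{i+1}$. Combinatorially this means the horizontal edge leaving column $i$ in the bottom row can carry no more than $\mu_i - \mu_{i+1}$ paths. Propagating this constraint upward using the conservation $I+J=K+L$ at each vertex, one deduces that the partial sums of the top boundary profile are componentwise dominated by those of $\mu$; equivalently, $\lambda \subseteq \mu$ is necessary for a nonzero contribution. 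Since $|\mu| \leq |\lambda|$ together with $\mu \neq \lambda$ precludes $\lambda \subseteq \mu$, property (2) follows. For property (3), I would exhibit a single configuration that saturates the bound $L = \mu_i - \mu_{i+1}$ everywhere with $\mu = \lambda$ and verify, using generic non-vanishing of the $q$-Pochhammer denominators, that its weight is nonzero.

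For uniqueness, the dimension of $\mathcal G^n_{|\lambda|}$ equals the number of partitions $\nu \in \Y^n$ with $|\nu| \leq |\lambda|$. The vanishing already proved, applied to each $\F_\nu$, together with non-vanishing on the diagonal $\F_\nu(\x^n_\calA(\nu)\mid\calA,\calB)\neq 0$, shows that the matrix $[\F_\nu(\x^n_\calA(\mu)\mid\calA,\calB)]$ is triangular with nonzero diagonal once rows and columns are listed by any linear extension of inclusion refined by size. Hence the evaluation map $\mathcal G^n_{|\lambda|}\to \C^{\{\mu\in\Y^n : |\mu|\leq|\lambda|\}}$ is injective, and by dimension count the $\F_\nu$ form a basis of $\mathcal G^n_{|\lambda|}$. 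Writing any candidate $f$ in this basis, properties (2) and (3) force $f$ to be a scalar multiple of $\F_\lambda$. The main obstacle will be the combinatorial propagation in property (2): a single weight being zero does not by itself kill the partition function, so one has to trace carefully how the row-$0$ bound $L\leq \mu_i - \mu_{i+1}$ cascades up the lattice through the conservation law, and verify that no other Pochhammer specialization accidentally resurrects a forbidden configuration at higher rows.
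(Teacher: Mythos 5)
Your treatment of properties (2) and (3) and of the uniqueness step is essentially sound: the vertex-model argument you sketch for the vanishing (the factor $(a_i/x_i;q)_{r_i}$ forcing $r_i\le\mu_i-\mu_{i+1}$ on the left boundary, combined with the conservation law and the constraint $J\ge L$ in the $W^b$-weights propagating to $\lambda_i\le r_i+\dots+r_n\le\mu_i$) is exactly the alternative proof the paper records in the remark following Proposition \ref{vanishing} (the paper's primary proof instead uses induction on $n$ via the branching rule), and your triangularity argument for uniqueness is the same as the paper's.

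The genuine gap is property (1). You treat it as ``nearly structural,'' to be deduced from a degree count: total vertical flux $|\lambda|$ gives degree $\le|\lambda|$, hence membership in $\mathcal G^n_{|\lambda|}$. This fails on two counts. First, $\mathcal G^n_m$ is \emph{not} the degree filtration of $\Bbbk[x_1,\dots,x_n]^{S_n}$: it is the span of the functions $G^n_\mu=\prod_{i=1}^n\prod_{r\ge1}(x_i/b_r;q)_{\mu_r-\mu_{r+1}}/(a_i/b_r;q)_{\mu_r-\mu_{r+1}}$ with $|\mu|\le m$, and $G^n_\mu$ has total degree $n\mu_1$, so already for $n=2$, $m=1$ the space $\mathcal G^2_1$ contains the degree-two polynomial $(1-x_1/b_1)(1-x_2/b_1)$ and does not coincide with the degree-$\le1$ polynomials. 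Second, $\F_\lambda(x_1,\dots,x_n\mid\calA,\calB)$ itself does not have degree $\le|\lambda|$: by Proposition \ref{1variable} the one-variable function $\F_{\lambda/\mu}(x\mid\calA,\calB)$ has degree $\lambda_1$ (the factors $(x/b_r;q)_{\mu_r-\lambda_{r+1}}$ contribute beyond $|\lambda|-|\mu|$), so no degree bound can place $\F_\lambda$ in $\mathcal G^n_{|\lambda|}$. In the paper, property (1) is the hard half of the theorem: it is proved by specializing the Cauchy identity (Theorem \ref{cauchyTheo}) at $y_j=q^{\mu_j-\mu_{j+1}}/b_j$, which expresses $G^n_\mu$ as $\sum_\lambda \F^*_\lambda(\x^n_{\overline\calB}(\mu)\mid\overline\calB,\overline\calA)\,\F_\lambda$, and then invoking the \emph{dual} vanishing property (Proposition \ref{vanishing} applied with $\calA,\calB$ replaced by $\overline\calB,\overline\calA$) to see that this sum is triangular with nonzero diagonal, whence the transition matrix between $\{G^n_\mu\}$ and $\{\F_\lambda\}$ is invertible. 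Without this input your argument never connects $\F_\lambda$ to the spaces $\mathcal G^n_m$, and your dimension count for uniqueness (which needs each $\F_\nu$ with $|\nu|\le|\lambda|$ to lie in $\mathcal G^n_{|\lambda|}$ and to span it) does not get off the ground.
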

For the normalization we use the values $\F_\lambda(\x^n_{\calA}(\lambda)\mid\calA,\calB)$ from (3) can be found explicitly using the vertex model description and they are products of $q$-Pochhammer symbols, see Proposition \ref{vanishing} in the text. Note that in the characterization above the parameters $a_i$ from $\calA$ determine the interpolation points. The dependence on the second family $\calB$ is in fact hidden in the subspaces $\mathcal G^n_{k}$. 

These vanishing and characterization properties closely resemble the defining properties of interpolation symmetric polynomials, which are families of symmetric polynomials  $F_{\lambda}(x_1, \dots, x_n)$ satisfying
\begin{enumerate}[label=(\roman*)]
\item $\deg F_\lambda(x_1, \dots, x_n)\leq |\lambda|$;
\item $F_{\lambda}(\mho(\mu))=0$ unless $\lambda\subset\mu$;
\item $F_{\lambda}(\mho(\lambda))\neq 0$
\end{enumerate}
for some family $\mho(\mu)$ of $n$-dimensional points enumerated by partitions $\mu$. Property (ii), called the \emph{vanishing property}, is the non-trivial one: it overdefines the functions $f_{\lambda}$ and  makes the existence of these functions for a given family $\mho$ exceptional. The factorial Schur functions and the interpolation Macdonald functions are examples of such functions $F_\lambda$ for certain choices of $\mho$, in both these examples $\mho$  is of the form
$$
\mho(\mu)=(f_1(\mu_1), \dots, f_n(\mu_n))
$$
for some functions $f_1, \dots, f_n$.

The spin $q$-Whittaker functions $\F_\lambda(x_1, \dots, x_n\mid \calA,\calB)$ almost satisfy the properties of interpolation symmetric functions $F_\lambda$, but in the first property we have the function spaces $\mathcal G^n_k$ instead of the natural degree filtration of the algebra of symmetric polynomials. However, when the parameters $b_i$ from $\calB$ tend to $\infty$, the subspaces $\mathcal G^n_{k}$ degenerate to this natural filtration, and in this way we can find two degenerations of the spin $q$-Whittaker functions, which satisfy properties (i)-(iii) for certain choices of $\mho$. Namely, when $b_i\to\infty$ we get functions $\widetilde{\F}_\lambda(x_1, \dots, x_n\mid\calA,\infty)$, which satisfy the interpolation properties (i)-(iii) with 
$$
\mho(\mu)=(a_1q^{\mu_1-\mu_2}, a_2q^{\mu_2-\mu_3},\dots, a_nq^{\mu_n}).
$$
We call these functions \emph{interpolation $q$-Whittaker polynomials}, as their top homogeneous components coincide with the usual $q$-Whittaker functions.

The other degeneration is obtained by setting $b_i=\infty$ and considering the following limit regime:
$$
x_i=e^{\ve r_i}, \quad a_i=e^{\ve c_i}, \quad b_i\to\infty,\quad q=e^{\ve},\quad \ve\to 0.
$$
As a result we obtain symmetric polynomials $\F^{el}(r_1, \dots, r_n\mid \calC, \infty)$ in $r_1, \dots, r_n$, depending on a family of parameters $\calC=(c_1, c_2, \dots)$ and satisfying an interpolation property with 
$$
\mho(\mu)=(c_1+\mu_1-\mu_2, c_2+\mu_2-\mu_3,\dots, c_n+\mu_n).
$$
Moreover, the top homogeneous component of $\F^{el}(r_1, \dots, r_n\mid \calC, \infty)$ coincides with the elementary symmetric polynomial $e_{\lambda'}(r_1, \dots, r_n)$ for the conjugated partition $\lambda'$, so we call this second degeneration \emph{elementary interpolation polynomials}.

As a final result of our work we show that these two degenerations, interpolation $q$-Whittaker polynomials and elementary interpolation polynomials, exhaust all interpolation symmetric polynomials for $\mho(\mu)$ of the form
$$
\mho(\mu)=(f_1(\mu_1-\mu_2), \dots, f_n(\mu_n)).
$$
A precise result is stated in Theorem \ref{perfectTheorem} of the text, below we give its brief reformulation:
\begin{theoAlph}[{Theorem \ref{perfectTheorem} in the text}]
Assume that $n\geq 3$ and $\mho(\mu)$ is a collection of interpolation points of the form
$$
\mho(\mu)=(f_1(\mu_1-\mu_2), \dots, f_n(\mu_n)).
$$
Then the interpolation polynomials $F_\lambda$ satisfying the properties (i)-(iii) exist only if either
$$
\mho(\mu)=(c+a_1q^{\mu_1-\mu_2}, c+a_2q^{\mu_2-\mu_3},\dots, c+a_nq^{\mu_n})
$$
for some $c,q, a_1, a_2, \dots, a_n$, or if
$$
\mho(\mu)=(c_1+d(\mu_1-\mu_2), c_2+d(\mu_2-\mu_3),\dots, c_n+d\mu_n)
$$
for some $d, c_1, c_2, \dots, c _n$. In these cases they coincide with the interpolation $q$-Whittaker polynomials and the elementary interpolation polynomials, respectively.
\end{theoAlph}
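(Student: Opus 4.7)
The plan is to follow the general strategy used by Okounkov in \cite{Ok97} to classify interpolation symmetric polynomials: derive functional equations on the functions $f_1,\dots,f_n$ from the existence of $F_\lambda$ for low-degree partitions $\lambda$, and then classify the solutions.

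First, I would normalize via the affine symmetry $x_i \mapsto \alpha x_i+\beta$, which reduces the two target families to canonical forms $f_i(t)=a_iq^t$ and $f_i(t)=c_i+t$ respectively. The problem is invariant under this symmetry, so it suffices to detect the dichotomy up to such transformations.

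Next, I would derive the functional equations. For each $\lambda$ with $|\lambda|=k$, the dimension of the space of symmetric polynomials of degree at most $k$ in $n$ variables equals one plus the number of vanishing conditions, so the existence of a candidate $F_\lambda$ is generically automatic. The essential constraint is that the vanishing conditions at the ordered tuples $\mho(\mu)$ be compatible with the symmetry of $F_\lambda$, i.e., compatible also at all permutations of these tuples. I would focus on $F_{(1)}$, $F_{(2)}$, $F_{(1,1)}$, and $F_{(2,1)}$ for $n=3$, using the fact that shifting $\mu_i\mapsto\mu_i+1$ moves only the two coordinates indexed by $i-1,i$ of $\mho(\mu)$ (or just the $n$-th coordinate when $i=n$), so the effect on $\mho$ is local. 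Combining this local structure across several $\lambda$ should produce a functional equation of the form
$$
f_i(t+1) = \alpha f_i(t) + \beta, \qquad 1\le i\le n,
$$
with \emph{shared} coefficients $\alpha,\beta$ independent of $i$. Solving this first-order recurrence gives the dichotomy: if $\alpha\ne 1$, then $f_i(t)=c+a_iq^t$ with $q=\alpha$ and $c=\beta/(1-\alpha)$, the interpolation $q$-Whittaker case; if $\alpha=1$, then $f_i(t)=c_i+dt$ with $d=\beta$, the elementary interpolation case. In each branch, the uniqueness clause of the interpolation characterization (analogous to Theorem C) identifies the resulting $F_\lambda$ with the previously constructed $\widetilde\F_\lambda$ or $\F^{el}_\lambda$.

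The main obstacle is the derivation of the shared-coefficient recurrence above. Individual recurrences $f_i(t+1)=\alpha_i f_i(t)+\beta_i$ with index-dependent coefficients are much easier to rule out partially, but forcing $\alpha$ and $\beta$ to be genuinely shared across all $i$ requires cross-comparisons between $f_i$ and $f_{i+1}$ through the shared partition-variable $\mu_i$. This is precisely where the hypothesis $n\ge 3$ becomes essential: for $n=2$ the vanishing constraints couple only $f_1,f_2$ and admit many more families $\mho$, while for $n\ge 3$ the overlapping local actions on neighboring coordinates force the coefficients to propagate uniformly along the chain $f_1,\dots,f_n$. Carrying out this propagation rigorously, and confirming that no exotic solutions slip through, is the technical heart of the argument.
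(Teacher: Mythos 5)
Your high-level plan points in the right direction: the two target families are exactly the solutions of a shared first-order recurrence $f_i(t+1)=\alpha f_i(t)+\beta$ (with $\alpha=q$, $\beta=c(1-q)$ in the $q$-Whittaker case and $\alpha=1$, $\beta=d$ in the elementary case), and the hypothesis $n\geq 3$ is indeed what allows the coefficients to propagate along the chain $f_1,\dots,f_n$. But the proposal stops exactly where the proof begins: you write that combining the local structure ``should produce'' the shared recurrence and then acknowledge that carrying this out is the technical heart. That derivation is the entire content of the theorem, and it is not a routine computation. In the paper the constraints come from the \emph{perfectness} condition --- the extra vanishings $F_\lambda(\mho(\mu))=0$ for $|\mu|>|\lambda|$ with $\lambda\not\subseteq\mu$ (e.g.\ $F_{(3)}$ at $\mho((2,2))$ and $F_{(2)}$ at $\mho((1,1,1))$) --- not from ``compatibility of the vanishing conditions with the symmetry of $F_\lambda$'' as you suggest; for $|\mu|\leq|\lambda|$ the conditions exactly match the dimension of $\Bbbk[x_1,\dots,x_n]^{S_n}_{\leq|\lambda|}$ and impose nothing on $\mho$. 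Moreover, the relations one actually extracts are not linear recurrences but quadratic cross-ratio identities such as
\begin{equation*}
\bigl(f_i(1)-f_{i+1}(j-1)\bigr)\bigl(f_i(1)-f_{i+1}(j+1)\bigr)=\bigl(f_i(0)-f_{i+1}(j)\bigr)\bigl(f_i(2)-f_{i+1}(j)\bigr),
\end{equation*}
and obtaining them requires closed formulas for $F_{(1)},F_{(2)},F_{(3)}$ on $2$-grids, which the paper gets from a Pieri-type recursion. Only after combining several such quadratic relations does the ratio $q=(f_2(1)-f_1(1))/(f_2(0)-f_1(0))$ emerge and split the argument into the cases $q\neq 1$ and $q=1$.

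A second gap is that even after the $n=3$ analysis one only controls $f_1(0),f_1(1),f_1(2)$ (and all values of $f_2,\dots,f_n$); determining $f_1(k)$ for all $k$ requires a separate induction that simultaneously computes $F_{(k)}(x_1,x_2)$ in closed form and uses a further vanishing (at $\mho((k,2))$) to solve for $f_1(k)$ --- your proposal has no mechanism for this step. Finally, the passage from $n=3$ to general $n$ is not automatic: one needs to know that the restriction, left-shift and index-shift operations on grids preserve perfectness, which itself takes work (it is where the non-degeneracy statement $\mho(i;j)\neq\mho(i';j')$ and the leading-coefficient identity $F_\lambda=x_1^{\lambda_1}F_{\tilde\lambda}+\cdots$ are proved). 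As written, the proposal is a correct statement of the goal together with an accurate diagnosis of where the difficulty lies, but it does not constitute a proof.
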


\subsection{Layout of the paper.} 
In Section \ref{quantum-sect} we recall definitions and necessary properties of the affine quantum algebra $U'_q(\widehat{\sl}_{n+1})$ and its representations $V(s)_z$. In Section \ref{sect-intertw} we introduce explicit expressions for intertwining operators between tensor products $V(s_1)_{z_1}\otimes V(s_2)_{z_2}$, and use them to deduce equations needed for the later parts. Section \ref{exchange-sect} is devoted to row operators and exchange relations, which are a convenient technical tool for working with the vertex model description of the spin $q$-Whittaker polynomials. In Section \ref{sqW-sect} we collect all results about spin $q$-Whittaker polynomials, proving the Cauchy identity and the vanishing property. In the same section we also describe degenerations of spin $q$-Whittaker functions: interpolation $q$-Whittaker and elementary functions. In Section \ref{interpolationSect} we discuss the general classification for interpolation symmetric polynomials, and prove the version covering interpolation $q$-Whittaker and elementary functions.

\subsection{Notation}\label{notation}Throughout the work we treat $q^{\frac12}$ either as a fixed transcendental complex number such that $|q^{\frac12}|<1$, or as a formal variable. In both cases we set $q=(q^{\frac12})^2$ and
$$
[n]_q:=\frac{q^{\frac{n}{2}}-q^{-\frac n2}}{q^{\frac12}-q^{-\frac12}},\qquad [n]_q!:=\prod_{i=1}^n[i]_q,\qquad {\ n+m\ \brack m}_q:=\frac{[n+m]_q!}{[n]_q![m]_q!},
$$ 
where $m,n\in\Z_{\geq 0}$. We define the $q$-Pochhammer symbol by
$$
(x;q)_n:=\begin{cases}
\prod_{i=1}^{n}(1-xq^{i-1}), \quad &n\geq 0,\\
\prod_{i=1}^{-n}(1-xq^{-i})^{-1},\quad &n<0.
\end{cases}
$$
Let $(x;q)_{\infty}:=\lim_{n\to \infty}(x;q)_n$, which is well-defined both when $q$ is a number such that $|q|<1$ and when $q$ is a formal variable. Note that we can express the $q$-binomial coefficients in the following ways
$$
{\ n+m\ \brack m}_q=q^{-\frac{nm}{2}}\frac{(q;q)_{n+m}}{(q;q)_n(q;q)_m}=q^{-\frac{nm}{2}}\frac{(q^{n+1};q)_m}{(q;q)_m}=q^{-\frac{nm}{2}}\frac{(q^{m+1};q)_n}{(q;q)_n},
$$
allowing us to extend the definition to the case when either $n$ or $m$ are negative, but not both simultaneously.

A \emph{partition} $\lambda$ is an infinite sequence $(\lambda_1,\lambda_2,\lambda_3,\dots)$ of nonnegative integers such that 
$$
\lambda_1\geq\lambda_2\geq\lambda_3\geq\dots\geq 0
$$
and all but finitely many of $\lambda_i$ are equal to $0$. Sometimes we omit a tail consisting of zeroes, writing $(\lambda_1,\dots,\lambda_n)$ instead of $(\lambda_1,\dots,\lambda_n,0,0,\dots)$. The coordinates $\lambda_i$ are called the parts of the partition $\lambda$, the number of nonzero parts $\lambda_i$ is the \emph{length} $l(\lambda)$ of $\lambda$ and the \emph{weight} is defined by $|\lambda|:=\lambda_1+\lambda_2+\dots$. We use $\Y^n$ to denote the set of partitions of length at most $n$. For a pair of partitions $\lambda,\mu$, we write $\mu\subset\lambda$ if and only if for any $i\geq 1$ we have $\mu_i\leq\lambda_i$. Furthermore, we say the partition $\lambda$ \emph{interlaces} the partition $\mu$, and write $\lambda\succ\mu$, if $\lambda_i\geq\mu_i\geq\lambda_{i+1}$ for all $i\geq 1$. Note that in this case we also have $0\leq l(\lambda)-l(\mu)\leq 1$.

Whenever we claim that a statement holds for generic complex parameters $a_1,\dots, a_n$, we mean that there exists a countable collection of non-constant polynomials $F_i\in\mathbb C[x_1, \dots, x_n]$ such that the statement holds for all $(a_1, \dots, a_n)\in \C^n$ satisfying $F_i(a_1, \dots, a_n)\neq 0$ for all $i$. In particular, the set of such $(a_1, \dots, a_n)$ is dense in $\mathbb C^n$.

\subsection{Acknowledgements} The author is grateful to Alexei Borodin for his constant support and fruitful suggestions during this project, and to Matteo Mucciconi for his significant help with initiating this project. The author also would like to thank Pavel Etingof and Evgeny Mukhin for their clarifications regarding affine quantum groups and Grigori Olshanski for his explanation of the context around interpolation symmetric functions.


\section{Quantum affine algebra $U_q'(\widehat{\sl}_{n+1})$ and representations $V(s)_z$} \label{quantum-sect}

In this section we describe the necessary background on the quantum affine algebra $U_q'(\widehat\sl_{n+1})$ and define the representations $V(s)_z$, which play the main role in Section \ref{sect-intertw}. Throughout this section $q^{\frac12}$ is assumed to be a transcendental complex number, though all results are still valid when $q^{\frac12}$ is a formal variable. References for the material in this section are  \cite[Chapter 12]{CP94a}, \cite{CP94b}, and \cite{MY12}.

\subsection{Quantum affine algebra $U'_q(\widehat{\sl}_{n+1} )$} Let $\mathfrak{h}$ denote the Cartan subalgebra of $\mathfrak{sl}_{n+1}$, which we identify with its dual $\mathfrak{h}^\vee$ via the Killing form $(\cdot, \cdot )$. Let $\alpha_i$ denote the simple roots of $\mathfrak{sl}_{n+1}$, and $C_{i,j}$ denote the corresponding Cartan matrix. We enumerate the nodes $I=\{1, \dots, n\}$ of the Dynkin diagram in a way such that $C_{i,i}=2$ and $C_{i,i-1}=C_{i-1,i}=-1$, while $C_{i,j}=0$ for $|i-j|>1$. We also assume that $(\alpha_i, \alpha_i)=2$, so the simple coroots $\alpha_i^{\vee}$ coincide with the roots $\alpha_i$. We use $P$ ($P^+$ and $P^-$) to denote the $\Z$-span ($\Z_{\geq 0}$-span and $\Z_{\leq 0}$-span correspondingly) of the simple roots $\alpha_i$, while $Q:=\{\mu\in\mathfrak h \mid (\mu, \alpha_i)\in\Z\}$ denotes the lattice of integral weights.

Let $\theta=\alpha_1+\alpha_2+\dots+\alpha_n$ be the maximal positive root, $\widehat{I}:=\{0, 1, \dots, n\}$ and $\widehat{C}_{i,j}$ be the extended Cartan matrix
$$
\widehat{C}_{0,0}=(-\theta,-\theta)=2, \qquad \widehat{C}_{0,i}=\widehat{C}_{i,0}=-(\alpha_i, \theta), \qquad \widehat C_{i,j}=C_{i,j}=(\alpha_i,\alpha_j), \qquad i,j\in I.
$$
The \emph{quantum affine algebra} $U'_q(\widehat{\sl}_{n+1})$\footnote{The prime in $U_q'(\widehat\sl_{n+1})$ reflects that our definition does not include the additional generator $d$, which is sometimes used in the context of affine algebras.} is the unital associative algebra over $\mathbb C$ with generators $\{k_i^{\pm 1}\}_{i\in\widehat I}$, $\{x_i^{\pm}\}_{i\in\widehat I}$ and relations
$$
k_ik_i^{-1}=k_i^{-1}k_i=1, \qquad k_ik_j=k_jk_i,
$$ 
\begin{equation}
\label{slrel}
k_ix_j^{\pm}=q^{\pm\frac12  \widehat{C}_{i,j}} x^{\pm}_jk_i,\qquad [x_i^+,x_j^-]=\delta_{i,j}\frac{k_i-k_i^{-1}}{q^{\frac12}-q^{-\frac12}},
\end{equation}
$$
\sum_{r=0}^{1-\widehat{C}_{ij}}(-1)^r{1-\widehat{C}_{ij}\brack r}_q(x^\pm_i)^rx^{\pm}_j(x^{\pm}_{i})^{1-\widehat{C}_{i,j}-r}=0 \quad \text{for\ } i\neq j.
$$
The last relation is called the (quantum) \emph{Serre relation}. The algebra $U'_q(\widehat{\sl}_{n+1})$ has a Hopf algebra structure which is given by
$$
\Delta k_i=k_i\otimes k_i, \qquad \Delta x_i^+=x_i^+\otimes k_i+1\otimes x_i^+,\qquad \Delta x_i^-=x_i^-\otimes 1 + k_i^{-1}\otimes x_i^{-},
$$
$$
S(x^+_i)=-x_i^+k_i^{-1}, \qquad S(x^-_i)=-k_ix_i^-,\qquad S(k_i^{\pm 1})=k_i^{\mp 1},
$$
$$
\epsilon(k_i)=1,\qquad \epsilon(x_i^+)=\epsilon(x_i^-)=0.
$$

There is another presentation of $U'_q(\widehat{\sl}_{n+1})$, introduced by Drinfeld in \cite{Dri88}. In this presentation the algebra is generated by $\{k_i^{\pm 1}\}_{i\in I}$, $\{h_{i,m}\}_{i\in I, m\in \mathbb Z\backslash\{0\}}$, $\{x^{\pm}_{i,m}\}_{i\in I, m\in\mathbb Z}$, $c^{\pm 1}$, with the relations
$$
[c^{\pm1}, U_q'(\widehat{\sl}_{n+1})]=0,\qquad k_ik_i^{-1}=k_i^{-1}k_i=1, \qquad k_ih_{j,m}=h_{j,m}k_i, \qquad k_ix^{\pm}_{j,m}=q^{\pm\frac 12 C_{i,j}}x^{\pm}_{j,m}k_i, 
$$ 
$$
 [h_{i,m}, h_{j,l}]=\delta_{m,-l}\frac{1}{m}[m C_{i,j}]_q\frac{c^m-c^{-m}}{q^{\frac12}-q^{-\frac12}}, \qquad [h_{i,m},x^{\pm}_{j,l}]=\pm\frac{1}{m}[mC_{i,j}]_q\,c^{-(m\mp |m|)/2}\,x^{\pm}_{j,m+l}, 
$$
\begin{equation}\label{newrealisation}
x^{\pm}_{i,m+1}x^{\pm}_{j,l}-q^{\pm\frac12 C_{i,j}}x^{\pm}_{i,l}x^{\pm}_{j,m+1}=q^{\pm\frac12 C_{i,j}}x^{\pm}_{i,m}x^{\pm}_{j,l+1}-x^{\pm}_{i,l+1}x^{\pm}_{j,m},
\end{equation}
$$
\qquad [x_{i,m}^+,x_{j,l}^-]=\delta_{i,j}\frac{c^{m}\phi^+_{i,m+l}-c^{l}\phi^-_{i,m+l}}{q^{\frac12}-q^{-\frac12}},
$$
$$
\sum_{\pi\in S_{1-C_{i,j}}}\sum_{r=0}^{1-{C}_{ij}}(-1)^r{1-C_{ij}\brack r}_qx^\pm_{i,m_{\pi(1)}}\dots x^\pm_{i,m_{\pi(r)}}x^{\pm}_{j,l}x^\pm_{i,m_{\pi(r+1)}}\dots x^\pm_{i,m_{\pi(1-C_{i,j})}}=0.
$$
Here $\phi^{\pm}_{i,m}$ are defined as coefficients in the formal power series
$$
\phi_i^{\pm}(u)=\sum_{m\geq 0}\phi^{\pm}_{i,\pm m}u^{\pm m}=k_i^{\pm 1}\exp\left(\pm(q^{\frac12}-q^{-\frac12})\sum_{m>0} h_{i,\pm m} u^{\pm m}\right)
$$
and we set $\phi^{\pm}_{i,\pm m}=0$ if $m<0$. The last relation in \eqref{newrealisation} is taken for every pair of distinct integers $(i,j)\in I^2$ and for every integer sequence $m_1, \dots, m_{1-C_{i,j}}$, while the first summation is over all permutations $\pi$ of $1-C_{i,j}$ elements.

The two presentations of $U_q'(\widehat{\sl}_{n+1})$ above are related as follows: $\{k_i^{\pm 1}\}_{i\in I}$ denote the same elements in both presentations, $x^{\pm}_{i,0}=x^{\pm}_i$ for $i\in I$ and $c=k_0k_1k_2\dots k_n$. The generators $x^{\pm}_0$ are given in terms of the generators from Drinfeld's presentation using the following expressions, \emph{cf.} \cite{Dri88}, \cite{Jin96}:
\begin{equation}
\label{e0expression}
x^+_0=[x_n^-, [x^-_{n-1},\dots [x_2^-,x_{1,1}^-]_{q^{-\frac12}}\dots]_{q^{-\frac12}}]_{q^{-\frac12}}\ c k^{-1}_1k^{-1}_2\dots k^{-1}_n,
\end{equation}
\begin{equation}
\label{f0expression}
x^-_0=c^{-1} k_1\dots k_n\ [\dots [[x_{1,-1}^+,x_{2}^+]_{q^{\frac12}}, x_3^+]_{q^{\frac12}}, \dots x_n^+]_{q^{\frac12}},
\end{equation}
where we use the notation $[A,B]_u=AB-uBA$.

The subalgebra of $U_q'(\widehat{\sl}_{n+1})$ generated by $\{k^{\pm 1}_i\}_{i\in I}$, $\{x^{\pm}_i\}_{i\in I}$ is a Hopf subalgebra isomorphic to the quantized enveloping algebra $U_q(\sl_{n+1})$, thus any $U'_q(\widehat{\sl}_{n+1})$-module has also a structure of a $U_q(\sl_{n+1})$-module. We can define a $P$-grading on $U_q'(\widehat{\sl}_{n+1})$ and $U_q({\sl}_{n+1})$ by setting
$$
\deg( x_{i,m}^{\pm})=\pm\alpha_i, \qquad \deg( x^{\pm}_0)=\mp\theta, \qquad \deg(k_{i}^{\pm 1})=\deg(h_{i,n})=\deg(c)=0.
$$

Let $U^{+}$ (resp. $U^-$ and $U^0$) be the subalgebra of $U_q(\sl_{n+1})$ generated by $\{x_i^{+}\}_{i\in I}$ (resp. $\{x_i^{-}\}_{i\in I}$ for $U^-$ and $\{k^{\pm1}_i\}_{i\in I}$ for $U^0$). Similarly, let $\widehat{U}^{+}$ (resp. $\widehat{U}^{-}$,$\widehat{U}^0$) be the subalgebra of $U'_q(\widehat{\sl}_{n+1})$ generated by $\{x_{i,m}^{+}\}_{i\in I, m\in \mathbb Z}$ (resp. $\{x_{i,m}^{-}\}_{i\in I, m\in \mathbb Z}$ and $\{k^{\pm1}_i, h_{i,m}\}_{i\in I, m\in \mathbb Z\backslash\{0\}}$). The following factorizations are known \cite{CP94a}:
$$
U_q(\sl_{n+1})=U^-.U^0.U^+, \qquad U_q'(\widehat{\sl}_{n+1})=\widehat{U}^-.\widehat{U}^0.\widehat{U}^+,
$$
where we use $U_1.U_2:=\{u_1u_2\mid u_1\in U_1, u_2\in U_2\}$.

There exists an involution of $U_q'(\widehat{\sl}_{n+1})$, which is denoted by $\widehat{\omega}$ and is defined by
$$
\widehat{\omega}(x_{i,m}^{\pm})=-x_{i,-m}^{\mp}, \qquad \widehat{\omega}(h_{i,m})=-h_{i,-m},\qquad \widehat{\omega}(k_{i}^{\pm 1})=k_i^{\mp 1}, \qquad \widehat{\omega}(c^{\pm 1})=c^{\mp 1}.
$$
One can check, using \eqref{e0expression}, \eqref{f0expression} that $\widehat\omega(x_0^\pm)=-q^{\mp\frac{n+1}{2}}x_0^{\mp}$. Moreover, $\widehat{\omega}$ is a coalgebra anti-automorphism:
$$
\Delta\circ \widehat\omega=(\widehat\omega\otimes\widehat\omega)\circ\Delta'.
$$
For a $U'_q(\widehat{\sl}_{n+1})$-module $V$ we use $V^{\omega}$ to denote the pullback of $V$ through $\omega$.

Note that we have not listed the expressions for the coproduct of the Drinfeld generators. In fact, explicit formulae for $\Delta x^\pm_{i,m}$ and $\Delta h_{i,m}$ are not known, but partial expressions are available in the case of the \emph{quantum loop algebra} $U_q(L{\sl}_{n+1})=U_q'(\widehat{\sl}_{n+1})/(c-1)$, see \cite[Proposition 1.2]{Cha01} and references therein.

\begin{rem} In the definitions of the quantum algebras we choose to use $q^{\frac 12}$ instead of $q$. Our reasoning behind this choice will be clear in Section \ref{sqW-sect}, where our $q$ will match the parameter $q$ of the $q$-Whittaker and Macdonald symmetric functions. 
\end{rem}

\subsection{Highest $l$-weight modules} To study representations of $U_q(\sl_{n+1})$ and $U_q'(\widehat{\sl}_{n+1})$ it is convenient to reintroduce the notions of weights, weight spaces and highest weight representations in the quantum affine setting. For a $U_q(\sl_{n+1})$-module $V$ we say that $v\in V$ is a weight vector if
$$
k_i.v=\rho_i v, \qquad i\in I
$$
for an $n$-tuple $\rho=(\rho_1, \rho_2, \dots, \rho_n)\in(\mathbb C^\times)^n$ called the \emph{weight} of $v$. Treating $\mathbb C^\times=\mathbb C\backslash\{0\}$ as an abelian group, we get an abelian group structure on the set of such weights: $\rho\tau=(\rho_1\tau_1, \dots, \rho_n\tau_n)$. We define an abelian group map $q^{\frac{\bullet}{2}}:Q\to(\mathbb C^\times)^n$ by setting $q^{\frac{\mu}{2}}:=(q^{\frac12 (\mu,\alpha_1)}, \dots, q^{\frac12 (\mu,\alpha_n)})$. Since we have assumed that $q^{\frac12}$ is transcendental this map is injective.

We call a $U_q(\sl_{n+1})$-module $V$ a \emph{weight module} if 
$$
V=\bigoplus_{\rho\in(\mathbb C^\times)^n}V_{\rho},\qquad V_\rho:=\{v\in V \mid k_i.v=\rho_iv\}.
$$
The spaces $V_\rho$ are called the \emph{weight spaces} of $V$, and $\rho$ is called a weight of $V$ if $V_\rho\neq 0$. We say that an $U_q(\sl_{n+1})$-module $V$ is in the category $\mathcal O$ if 
\begin{itemize}
\item $V$ is a weight module and all its weight spaces are finite-dimensional;
\item All weights of $V$ are contained in $\bigcup_{i=1}^{m}\{\rho_i q^{-\frac{\mu}{2}}\mid\mu\in P^+\}$ for some weights $\rho_1, \dots, \rho_m\in(\mathbb C^{\times})^n$.
\end{itemize}
Similarly to the classical situation, for each weight $\rho$ there exists a unique irreducible $U_q(\sl_{n+1})$-module $V(\rho)\in \mathcal O$ with the highest weight $\rho$.

To extend the formalism above to $U_q'(\widehat{\sl}_{n+1})$ we assume for convenience that $c$ always acts by $1$; one can check that this assumption is not restrictive as long as we work with simple $U_q'(\widehat{\sl}_{n+1})$-modules in $\mathcal O$, \emph{cf.} \cite[Proposition 3.2]{MY12}. For a $U'_q(\widehat{\sl}_{n+1})$-module $V$ we call $v\in V$ an \emph{$l$-weight vector} if \begin{samepage}
$$
\phi^{\pm}_{i,m}.v=\gamma^{\pm}_{i,m}v, \qquad i\in I, m\in\Z,$$
where $\gamma=(\gamma_{i,m}^\pm)_{i\in I, m\in \pm\mathbb{Z}_{\geq 0}}$ is a collection of complex numbers satisfying $\gamma_{i,0}^+\gamma_{i,0}^-=1$ for every $i\in I$.\end{samepage} Such collections of numbers $\gamma$ are called \emph{$l$-weights}\footnote{As a warning, $l$-weights are not as nicely behaved as the usual weights. For example, if $v$ is an $l$-weight vector $x^{\pm}_i.v$ might fail to be an $l$-weight vector.}. We say that an $l$-weight $\gamma$ is \emph{rational} if for some rational functions $(f_1(u), \dots, f_n(u))$ the expansions of $f_i(u)$ around $0$ and $\infty$ are
$$
f_i(u)=\sum_{m\geq 0} \gamma_{i,m}^+ u^m, \qquad f_i(u)=\sum_{m\geq 0} \gamma_{i,-m}^- u^{-m}. 
$$
In this situation we denote this $l$-weight by ${\bm f}=(f_1(u), \dots, f_n(u))$. Note that rational functions $f_i(u)$ define a rational $l$-weight as long as  $f_i(u)$ are regular at $0,\infty$ and $f_i(0)f_i(\infty)=1$. For rational $l$-weights $\bm{f}, \bm{g}$ set $\bm{fg}:=(f_1(u)g_1(u), \dots, f_n(u)g_n(u))$. 

We call an $l$-weight vector $v$ \emph{singular} if $x^+_{i,m}.v=0$ for any $i,m$.  An $U'_q(\widehat{\sl}_{n+1})$-module $V$ is called \emph{a highest $l$-weight module} when $V=U'_q(\widehat{\sl}_{n+1}).v=\widehat{U}^-.v$ for a singular $l$-weight vector $v$; the $l$-weight of $v$ is called the \emph{highest $l$-weight} of $V$. The following statement summarizes the information about irreducible highest $l$-weight modules with rational highest $l$-weights:

\begin{prop}[\cite{MY12}]\label{Lmod} Let $\bm f, \bm g$ be rational $l$-weights.
\begin{enumerate}
\item There exists a unique up to isomorphism irreducible representation of $U_q'(\widehat{\sl}_{n+1})/(c-1)$ with the highest $l$-weight ${\bm f}$, which we denote by $L(\bm f)$.
\item $L(\bm f)\in\mathcal O$ as a $U_q(\sl_{n+1})$-module.
\item If $L(\bm f)\otimes L(\bm g)$ is an irreducible $U_q'(\widehat{\sl}_{n+1})$-module then $L(\bm f)\otimes L(\bm g)\cong L(\bm{fg})\cong L(\bm g)\otimes L(\bm f)$.
\end{enumerate}
\end{prop}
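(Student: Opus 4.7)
The plan is to address the three parts in order, all reducing to a Verma-module construction followed by triangularity arguments for the comultiplication in the Drinfeld presentation.

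For \textbf{(1)}, I would build a Verma-type module $M(\bm f)$ as follows. Let $\mathbb{C}_{\bm f}$ be the one-dimensional module over $\widehat{U}^0.\widehat{U}^+$ on which all $x_{i,m}^+$ act by zero and each $\phi_{i,m}^\pm$ acts by the Taylor coefficient of $f_i(u)$ prescribed by the rational $l$-weight $\bm f$. Induce along the triangular decomposition $U'_q(\widehat{\sl}_{n+1})/(c-1) = \widehat{U}^-.\widehat{U}^0.\widehat{U}^+$ to get a highest $l$-weight module $M(\bm f)$ generated by the image $v_{\bm f}$ of $1$. By the standard weight-space argument, no proper submodule contains $v_{\bm f}$, so the sum of all proper submodules is itself proper; the quotient by this unique maximal submodule is the irreducible $L(\bm f)$. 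Uniqueness is automatic, since any irreducible module generated by a singular $l$-weight vector of $l$-weight $\bm f$ is a quotient of $M(\bm f)$, hence of $L(\bm f)$.

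For \textbf{(2)}, the weight-cone constraint is immediate because $L(\bm f) = \widehat{U}^-.v_{\bm f}$ and every $x^-_{j,m}$ carries $U_q(\sl_{n+1})$-weight $-\alpha_j$, so all weights lie in the appropriate union of $\rho\,q^{-\mu/2}$ with $\mu \in P^+$. The real content is finite-dimensionality of each weight space. I would argue that rationality of $\bm f$ provides, through the Drinfeld relation $[h_{i,m}, x_{j,l}^{-}]$ and its iterates, enough linear relations among the operators $x_{j,m}^-$ modulo the maximal submodule to bound, in each fixed $U_q(\sl_{n+1})$-weight, the number of independent ordered PBW-monomials applied to $v_{\bm f}$. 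Making this reduction fully quantitative is the main obstacle: rationality of $f_i(u)$ must be translated into an explicit linear recursion on the Taylor coefficients, which then turns into a recursion on the $x_{j,m}^{-}$'s acting on $v_{\bm f}$ modulo the maximal submodule.

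For \textbf{(3)}, the key input is the (partially known) coproduct on Drinfeld generators, which is triangular modulo terms annihilating tensor products of highest $l$-weight vectors. Concretely, when $v \in L(\bm f)$ and $w \in L(\bm g)$ are the singular vectors, one verifies directly that
$$
\phi_i^\pm(u).(v\otimes w) = f_i(u)\, g_i(u)\,(v\otimes w), \qquad x_{i,m}^+.(v\otimes w) = 0,
$$
so $v\otimes w$ is singular in $L(\bm f)\otimes L(\bm g)$ with $l$-weight $\bm{fg}$. Its cyclic $U'_q(\widehat{\sl}_{n+1})$-span is therefore a quotient of $L(\bm{fg})$; if $L(\bm f)\otimes L(\bm g)$ is irreducible, this cyclic span must be the whole module, yielding $L(\bm f)\otimes L(\bm g)\cong L(\bm{fg})$. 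The same argument applied to the opposite tensor order gives $L(\bm g)\otimes L(\bm f)\cong L(\bm{fg})$, hence the two tensor products are isomorphic. The main hurdle here is the rigorous justification of the triangular form of $\Delta$ on the Drinfeld generators, which is only known in partial form; one invokes the formulae of \cite{Cha01} together with the assumption $c=1$ to extract exactly the piece needed to act on the tensor-top vector $v\otimes w$.
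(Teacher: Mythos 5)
The paper does not prove this proposition at all: it is imported wholesale from \cite{MY12}, so there is no internal argument to compare against. Judged on its own terms, your outline for (1) is the standard Verma-module argument and is fine, and your outline for (3) is also essentially the standard one (one small correction: the cyclic span of $v\otimes w$ is a quotient of the Verma module $M(\bm{fg})$ with irreducible head $L(\bm{fg})$, not a quotient of $L(\bm{fg})$; the conclusion that an irreducible highest $l$-weight module of $l$-weight $\bm{fg}$ must be $L(\bm{fg})$ is what you actually use, and it is correct).

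The genuine gap is in (2), and you have correctly located it but not closed it. Rationality of $\bm f$ does give a linear recursion on the coefficients $\gamma^{\pm}_{i,m}$, but this does not translate, via the relations $[h_{i,m},x^-_{j,l}]=-\frac{1}{m}[mC_{i,j}]_q x^-_{j,m+l}$, into linear relations among the vectors $x^-_{j,m}.v_{\bm f}$ modulo the maximal submodule: those relations only tell you how $h_{i,m}$ moves the index of $x^-_{j,l}$, not that the family $\{x^-_{j,m}.v_{\bm f}\}_{m\in\Z}$ spans a finite-dimensional space. Already for $\widehat{\sl}_2$ the weight space at $\rho q^{-\alpha_1/2}$ is a priori spanned by the infinitely many vectors $x^-_{1,m}.v_{\bm f}$, and showing it is finite-dimensional exactly when $f_1(u)$ is rational is the substantive content of \cite{MY12}. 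The proof there does not proceed by a PBW count; instead one first treats ``elementary'' rational $l$-weights (quotients of two linear factors), for which the module can be written down explicitly --- this is precisely what Proposition \ref{explicitAction} of the present paper does for $V(s)_z$ --- and then realizes a general $L(\bm f)$ as a subquotient of a tensor product of such modules, using that the category-$\mathcal O$ conditions are preserved under tensor products and subquotients. If you want a self-contained proof of (2), you would need either to carry out that tensor-product reduction or to develop enough of the theory of $l$-weights/$q$-characters to control the series $x^-_i(u).v_{\bm f}$; the recursion you sketch will not suffice as stated.
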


\subsection{The representations $V(s)_z$} For a pair of complex numbers $s,z\in\C^\times$ let $V(s)_z$ denote the $U_q'(\widehat{\sl}_{n+1})$-module $L(\bm{f})$ with $f_1(u)=s^{-1}\frac{1-szu}{1-s^{-1}zu}$ and $f_r(u)=1$ for $r>1$. For $a,b\in\C^\times$ we also set $V^a_b:=V(a/b)_{ab}$. These representations can be explicitly described, and to do it we use the following notation. By a \emph{composition} $\bI$ we mean an $n$-tuple of nonnegative integers $(I_1, \dots, I_n)\in\mathbb Z_{\geq 0}^n$ and for any composition $\bI$ we set $|\bI|=I_1+I_2+\dots+I_n$. Define $\be^i=(e_1^i, e_2^i,\dots, e_n^i)$ as the composition with $e^i_j=\delta_{i,j}$, and set $\bm 0:=(0, \dots, 0)$.

\begin{prop}\label{explicitAction} The representation $V(s)_z$ is infinite-dimensional and has basis $\{v_{\bI}\}_{\bI\in\Z^n_{\geq 0}}$ if $s\neq\pm q^{-\frac{m}{2}}$ for any $m\in\mathbb Z_{\geq 0}$, and is finite-dimensional with basis $\{v_{\bI}\}_{\bI\in\Z^n_{\geq 0}:|\bI|\leq m}$ if $s=\pm q^{-\frac{m}{2}}$ for $m\in\Z_{\geq0}$. The action of $U'_q(\widehat{\sl}_{n+1})$ is given explicitly by
\begin{align}
k_1. v_{\bI}&=s^{-1}q^{\frac12(-|\bI|-I_1)}\ v_{\bI}, \quad &k_r.v_{\bI}&=q^{\frac12(I_{r-1}-I_r)}\ v_{\bI},\quad &k_0.v_{\bI}&= sq^{\frac12(|\bI|+I_n)}\ v_{\bI},\label{slaction} \\
x_1^+.v_{\bI}&=s^{-1}q^{\frac12(-|\bI|+1)}[I_1]_q\ v_{\bI-\be^1},\quad     &x_r^+. v_{\bI}&=[I_{r}]_q\ v_{\bI-\be^r+\be^{r-1}}, \quad &x_0^+. v_{\bI}&=z\frac{1-s^2q^{|\bI|}}{q^{\frac12}-q^{-\frac12}}\ v_{\bI+\be^{n}},\nonumber\\
x_1^-.v_{\bI}&=\frac{1-s^2q^{|\bI|}}{q^{\frac12}-q^{-\frac12}}\ v_{\bI+\be^1},\quad  &x_r^-. v_{\bI}&=[I_{r-1}]_q\ v_{\bI+\be^r-\be^{r-1}}, \quad &x^-_0.v_{\bI}&=(zs)^{-1}q^{\frac12(-|\bI|+1)}[I_n]_q\ v_{\bI-\be^n}\nonumber,
\end{align}
where $r=2, \dots, n$. Moreover, viewed as a $U_q(\sl_{n+1})$-module, $V(s)_z$ is irreducible with the highest weight $(s^{-1}, 1, \dots, 1)$.
\end{prop}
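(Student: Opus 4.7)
The plan is to verify that the displayed formulas define a $U'_q(\widehat{\sl}_{n+1})$-module structure on $V := \bigoplus_{\bI}\C v_{\bI}$, and then identify $V$ with $L(\bm f) = V(s)_z$ via Proposition \ref{Lmod}. The argument naturally splits into the $U_q(\sl_{n+1})$-part (Chevalley generators $k_i^{\pm 1}, x_i^{\pm}$ for $i \in I$), the affine extension via $x_0^{\pm}$, and the computation of the $l$-weight of $v_{\bm 0}$.

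For the $U_q(\sl_{n+1})$-relations, the Cartan relations are immediate from the displayed weight data. The commutator $[x_i^+, x_j^-] = \delta_{ij}(k_i - k_i^{-1})/(q^{1/2}-q^{-1/2})$ reduces to elementary $q$-integer identities on basis vectors, with the only nontrivial case $i=j=1$ requiring a telescoping identity involving the factor $(1-s^2 q^{|\bI|})$. The Serre relations follow because for $r>1$ the action of $x_r^\pm$ is of standard quantum-group ladder type in the coordinates $(I_{r-1}, I_r)$, while $x_1^\pm$ commutes with $x_r^\pm$ for $r \geq 3$. These checks simultaneously establish the $U_q(\sl_{n+1})$-module structure and the claim that $V$ is irreducible with highest weight $(s^{-1},1,\dots,1)$: the vector $v_{\bm 0}$ is clearly a highest weight vector, and the lowering operators connect it to every nonzero $v_{\bI}$. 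The dimensional dichotomy is dictated by the factor $(1-s^2q^{|\bI|})$ in $x_1^-.v_{\bI}$, which vanishes precisely when $|\bI|=m$ and $s=\pm q^{-m/2}$, forcing $v_{\bJ}=0$ for $|\bJ|>m$.

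To extend the action to the full affine algebra, I would use \eqref{e0expression}, \eqref{f0expression} together with the Drinfeld recursion \eqref{newrealisation}. Applied to $v_{\bm 0}$, the recursion identifies $x_{1,\pm 1}^{\mp}.v_{\bm 0}$ as $z^{\pm 1}$ times a scalar multiple of $x_1^{\mp}.v_{\bm 0}$, which accounts for the spectral parameter appearing in $x_0^{\pm}$. Expanding the iterated nested commutators in \eqref{e0expression}, \eqref{f0expression} on a general basis vector then produces exactly the displayed formulas: the factor $(1-s^2q^{|\bI|})$ in $x_0^+.v_{\bI}$ comes from the single $x_1^-$ in \eqref{e0expression}, while the $[I_n]_q$ in $x_0^-.v_{\bI}$ comes from the outermost $x_n^+$ in \eqref{f0expression}. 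The remaining affine relations (the Serre relations for the pairs $(0,1)$ and $(0,n)$, and $[x_0^+, x_0^-]\sim(k_0-k_0^{-1})$) are then checked by direct $q$-combinatorial computations on basis vectors.

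Finally, to apply Proposition \ref{Lmod} I would verify that $v_{\bm 0}$ is a singular $l$-weight vector of $l$-weight $\bm f = (s^{-1}(1-szu)/(1-s^{-1}zu), 1, \dots, 1)$. Singularity under $x_{i,m}^+$ follows inductively from the Drinfeld recursion together with $x_i^+.v_{\bm 0}=0$. The series $\phi_1^{\pm}(u).v_{\bm 0}$ is computed order by order via the commutators $[x_{1,m}^+, x_{1,-m}^-]$ and the already-identified action of $x_{1,\pm 1}^{\mp}$, producing exactly the expansions of $s^{-1}(1-szu)/(1-s^{-1}zu)$ at $0$ and $\infty$; for $r>1$ one has $\phi_r^{\pm}(u).v_{\bm 0} = v_{\bm 0}$ trivially. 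Since $V$ is irreducible and generated by $v_{\bm 0}$ under $\widehat U^-$, Proposition \ref{Lmod} identifies $V$ with $L(\bm f) = V(s)_z$. The main obstacle is verifying the affine Serre relations involving $x_0^{\pm}$, which requires lengthy direct calculation; this can be bypassed by constructing $V$ through the Jimbo evaluation homomorphism $\mathrm{ev}_z: U'_q(\widehat{\sl}_{n+1}) \to U_q(\sl_{n+1})$ (see \cite[Chapter 12]{CP94a}), under which all affine relations become automatic consequences of the already-verified $U_q(\sl_{n+1})$-structure.
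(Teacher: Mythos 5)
Your outline agrees with the paper's proof in its first half but diverges in two places worth flagging. For the $U_q(\sl_{n+1})$-part you and the paper do the same thing: check the Chevalley--Serre relations on basis vectors, observe that $v_{\bm 0}$ generates and is the unique singular vector, and read off the dimensional dichotomy from the factor $(1-s^2q^{|\bI|})$. However, for the affine extension the paper simply treats the displayed formula for $x_0^{\pm}$ as part of the definition and verifies the remaining Chevalley--Serre relations (including $\widehat{C}_{0,1}=-2$ when $n=1$) by the same direct computation; it does not pass through the Drinfeld presentation. Your primary route --- deriving $x_0^{\pm}$ from $x_{1,\pm1}^{\mp}$ via \eqref{e0expression}, \eqref{f0expression} and the recursion \eqref{newrealisation} --- is circular as stated, because the action of the Drinfeld loop generators on $V$ is only defined once the affine module structure exists; the identity the paper actually uses goes in the opposite direction (it computes $x_{1,1}^-.v_{\bm 0}$ from the already-defined $x_0^+$). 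Your fallback via the Jimbo evaluation homomorphism is legitimate (the paper mentions it in a remark) and repairs this, provided you actually construct $V(s)$ as the analytically continued symmetric power over $U_q(\mathfrak{gl}_{n+1})$ and match the pullback with the displayed formulas.

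The second divergence is in identifying the module with $V(s)_z$. You propose computing the full series $\phi_1^{\pm}(u).v_{\bm 0}$ order by order, which requires controlling all $x_{1,m}^{\pm}.v_{\bm 0}$ and is by far the heaviest and least substantiated step of your sketch. The paper avoids this entirely by citing \cite[Proposition 5.5]{MY12}: any $U'_q(\widehat{\sl}_{n+1})$-module that is irreducible over $U_q(\sl_{n+1})$ with highest weight $(s^{-1},1,\dots,1)$ is automatically $L(\bm g)$ with $g_1(u)=s^{-1}\frac{1-sz'u}{1-s^{-1}z'u}$ for some $z'$, so only the single coefficient $\phi^+_{1,1}.v_{\bm 0}$ needs to be computed to pin down $z'=z$. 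If you do not want to invoke that classification, you must supply the full induction on the loop generators; as written, that part of your argument is an assertion rather than a proof.
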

For the finite-dimensional case the claim can be deduced from \cite[Proposition 5.1]{MTZ03}, and the infinite-dimensional situation can be reached using analytic continuation. Below we provide another way to verify Proposition \ref{explicitAction}.
\begin{proof} We first check that \eqref{slaction} gives a well-defined $U'_q(\widehat{\sl}_{n+1})$-module, and then verify that the resulting module is indeed $V(s)_z$.

Let $V'$ be a vector space with a basis $\{v_{\bI}\}_{\bI\in\Z^n_{\geq 0}}$, and consider $V'$ as a $U_q'(\widehat{\sl}_{n+1})$-module with the action defined by \eqref{slaction}. To check that this action is well-defined we need to verify the defining relations \eqref{slrel}, which can be done by a direct computation. For instance,
\begin{multline*}
[x_1^+, x_1^-].v_{\bI}= \left(\frac{1-s^2q^{|\bI|}}{q^{\frac12}-q^{-\frac12}} s^{-1}q^{-\frac12|\bI|}\frac{q^{\frac{I_1+1}{2}}-q^{-\frac{I_1+1}{2}}}{q^{\frac12}-q^{-\frac12}}- s^{-1}q^{\frac12(-|\bI|+1)}\frac{q^{\frac{I_1}{2}}-q^{-\frac{I_1}{2}}}{q^{\frac12}-q^{-\frac12}}\frac{1-s^2q^{|\bI|-1}}{q^{\frac12}-q^{-\frac12}}\right)v_{\bI}\\
=\frac{s^{-1}q^{-\frac{|\bI|}{2}-\frac{I_1}{2}}-sq^{\frac{|\bI|}{2}+\frac{I_1}{2}}}{q^{\frac12}-q^{-\frac12}}v_{\bI}=\frac{k_1-k_1^{-1}}{q^{\frac12}-q^{-\frac12}}.v_{\bI},
\end{multline*}
\begin{equation*}
((x_1^+)^2x_2^+-[2]_qx_1^+x_2^+x_1^++x_2^+(x_1^+)^2).v_{\bI}=s^{-2}q^{-|\bI|+\frac32}[I_2]_q[I_1]_q \left([I_1+1]_q-[2]_q[I_1]_q+[I_1-1]_q\right)v_{\bI}=0.
\end{equation*}
Other relations  (including the Serre relations for $U'_q(\widehat{\sl}_2)$, when $\widehat{C}_{0,1}=-2$) are either trivial or similar to the two above, so we omit their verification.

Note that when $s=\pm q^{-\frac{m}{2}}$ for $m\in\mathbb Z_{\geq 0}$  the module $V'$ defined above has a submodule spanned by $\{v_{\bI}\}_{\bI\in\Z^n_{\geq 0}:|\bI|\leq m}$. Indeed, the only generators $x_i^{\pm}$ which send $v_{\bI}$ to $v_{\bJ}$ with $|\bJ|>|\bI|$ are $x_0^+$ and $x_1^-$, and we have $x_0^+v_{\bI}=0, x_1^-v_{\bI}=0$ when $s=\pm q^{-\frac{m}{2}}$ and $|\bI|=m$. In this situation, let $V''\subset V'$ denote this submodule spanned by $\{v_{\bI}\}_{\bI\in\Z^n_{\geq 0}:|\bI|\leq m}$, otherwise, when $s\neq \pm q^{-\frac{m}{2}}$ for any $m\in\mathbb Z_{\geq 0}$, set $V''=V'$. Looking at the action of $x_i^{\pm}$ for $i=1, \dots, n$, one can see that $V''=U_q(\sl_{n+1}).v_{\bm 0}$ and $v_{\bm0}$ is the only $U_q(\sl_{n+1})$-singular vector of $V''$, hence $V''$ is an irreducible $U_q(\sl_{n+1})$-module with the highest weight $(s^{-1},1,\dots,1)$.

To finish the proof we only need to show that $V''\cong V(s)_z$. To do it we refer to \cite[Proposition 5.5]{MY12}, which claims that a $U_q'(\widehat{\sl}_{n+1})$-module which is irreducible as a $U_q(\sl_{n+1})$-module and has the highest $U_q(\sl_{n+1})$-weight $(s^{-1}, 1, \dots, 1)$ is isomorphic to $L(\bm g)$ with $g_1(u)=s^{-1}\frac{1-sz'u}{1-s^{-1}z'u}$, $g_2(u)=g_3(u)=\dots=1$ for some $z'\in\mathbb C\backslash\{0\}$\footnote{Such a $U_q'(\widehat{\sl}_{n+1})$-module is called a Kirillov-Reshetikhin module corresponding to the first fundamental weight.} . So $V'' \cong L(\bm g)$, and we just need to show that $z=z'$. To do it we can compute the action of $x^-_{1,1}$ on $v_{\bm 0}$: Note that for any $A\in U_q'(\widehat{\sl}_{n+1})$ such that $[A,x_i^+]=0$, $k_iAk_i^{-1}=q^{\frac12}A$ we have $[x_i^+, [x_i^-, A]_{q^{-\frac12}}k_i^{-1}k_{i-1}^{-1}]_{q^{\frac12}}=Ak_{i-1}^{-1}$. Using this relation as an inductive step, we see from \eqref{e0expression} that
$$
[x_{i}^+,[x_{i+1}^+,\dots[x_n^+,x_0^+]_{q^{\frac12}}\dots]_{q^{\frac12}}]_{q^{\frac12}}=[x_{i-1}^-,\dots[x_2^-,x_{1,1}^-]_{q^{-\frac12}}\dots]_{q^{-\frac12}} ck_1^{-1}\dots k_{i-1}^{-1}.
$$
In particular, 
$$
x_{1,1}^-.v_{\bm 0}=[x_{2}^+,[x_{3}^+,\dots[x_n^+,x_0^+]_{q^{\frac12}}\dots]_{q^{\frac12}}]_{q^{\frac12}}k_1.v_{\bm 0}=s^{-1}x_2^+\dots x_n^+x_0^+.v_{\bm 0}=z\frac{s^{-1}-s}{q^{\frac12}-q^{-\frac12}}v_{\be^1},
$$
so
$\phi^+_{1,1}.v_{\bm0}=(q^{\frac12}-q^{-\frac12})[x_1^+, x_{1,1}^-].v_{\bm 0}=z(s^{-2}-1).v_{\bm 0}$. On the other hand, since $g_1(u)=s^{-1}+z'(s^{-2}-1)u+\overline o(u)$, we have $\phi^+_{1,1}.v_{\bm0}=z'(s^{-2}-1).v_{\bm 0}$. Hence $z=z'$ and $V''\cong V(s)_z$.
\end{proof}

\begin{rem} The representation $V(s)_z$ can also be viewed as an evaluation module induced from an analytically-continued symmetric tensor power of the standard representation of $U_q(\sl_{n+1})$. More rigorously, to obtain $V(s)_z$ one can consider the extension $U_q(\sl_{n+1})\subset U_q(\mathfrak{gl}_{n+1})$, define a $U_q(\mathfrak{gl}_{n+1})$-module $V(s)$ and then pull it along the evaluation map $\mathrm {ev_z}:U'_q(\widehat{\sl}_{n+1})\to U_q(\mathfrak{gl}_{n+1})$ introduced in \cite{Jim86}.
\end{rem}

The following fact will be crucial in the following section.

\begin{prop}
\label{irred}
The $U_q'(\widehat{\sl}_{n+1})$-module $V(s_1)_{z_1}\otimes V(s_2)_{z_2}\otimes\dots\otimes V(s_L)_{z_L}$ is irreducible for a fixed $L\in\Z_{\geq 1}$ and generic complex parameters $z_1, \dots, z_L, s_1, \dots, s_L$.
\end{prop}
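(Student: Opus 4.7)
My plan is to prove irreducibility by a Zariski-density argument. First, Proposition \ref{explicitAction} implies that in $V := V(s_1)_{z_1} \otimes \cdots \otimes V(s_L)_{z_L}$ the $U_q(\sl_{n+1})$-weight $(\prod_i s_i^{-1}, 1, \ldots, 1)$ is the unique maximal weight, and its weight space is one-dimensional, spanned by $v := v_{\bm 0} \otimes \cdots \otimes v_{\bm 0}$ (inspecting the first coordinate of the weight of $v_{\bI_1} \otimes \cdots \otimes v_{\bI_L}$ forces $|\bI_i| = 0$ in every factor). Consequently $x^+_{i,m}.v = 0$ for all $i \in I, m \in \Z$, since the image would lie in a strictly higher weight space. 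Since the $\phi^\pm_{i,m}$ preserve $U_q(\sl_{n+1})$-weights, $v$ is automatically an $l$-weight vector, and by the general multiplicativity of $l$-weights on a tensor product of highest $l$-weight vectors its $l$-weight equals $\bm f := \prod_i \bm f_i$, where $\bm f_i$ is the highest $l$-weight of $V(s_i)_{z_i}$.

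As $V$ lies in category $\mathcal O$ (each factor does), any nonzero $U_q'(\widehat{\sl}_{n+1})$-submodule of $V$ contains a singular $l$-weight vector. Hence it suffices to show that for generic parameters $v$ is the only singular $l$-weight vector of $V$ up to scalar, and that $v$ generates $V$ as a $U_q'(\widehat{\sl}_{n+1})$-module. Both properties are cut out by the nonvanishing of explicit determinants built from the action of the Drinfeld generators on the finite-dimensional weight spaces of $V$: rank conditions on the joint action of the $x^+_{i,m}$ for uniqueness, and a Gram-matrix determinant for the $\widehat U^-$-orbit of $v$ for cyclicity. Thus the locus of failure is Zariski closed, and to show it is proper I would proceed by induction on $L$, using the R-matrix isomorphisms $V(s_i)_{z_i} \otimes V(s_j)_{z_j} \cong V(s_j)_{z_j} \otimes V(s_i)_{z_i}$, which exist and are invertible for generic $z_i/z_j$, to reduce to the base case $L = 2$.

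The main obstacle is the base case: I must verify that at the first weight space below the top of $V(s_1)_{z_1} \otimes V(s_2)_{z_2}$ the space of singular $l$-weight vectors is generically zero-dimensional, which amounts to a single polynomial condition in $(s_1, z_1, s_2, z_2)$ that must be shown not identically zero. This can be done directly from \eqref{slaction}, since the matrix of the $x^+_{i,m}$-action on the relevant two-dimensional weight space has polynomial entries whose determinant has nonvanishing leading term in a convenient limit such as $z_1/z_2 \to 0$; alternatively one may invoke the classification of irreducible representations of $U_q'(\widehat{\sl}_{n+1})$ via their Drinfeld polynomials. Combined with the inductive scheme above this yields irreducibility on a Zariski-dense open subset of the parameter space, proving the proposition.
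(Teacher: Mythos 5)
Your general framework is sound and in fact mirrors the paper's: irreducibility is equivalent to the conjunction, over all $\mu\in P^+$, of two conditions on the finite-dimensional weight space $V_{\rho q^{-\mu/2}}$ (it lies in $U_q'(\widehat{\sl}_{n+1}).v$, and every nonzero vector in it generates $v$), each cut out by the nonvanishing of polynomials in the $s_i,z_i$ (this is Lemma \ref{linear} in the text), so that the failure locus is a countable union of hypersurfaces. The entire difficulty is producing, for each $\mu$, a point where the relevant polynomial does not vanish, and this is where your proposal has two genuine gaps. First, the inductive reduction from $L$ factors to $L=2$ ``using the R-matrix isomorphisms'' is not a valid step as stated: the existence and invertibility of the pairwise isomorphisms $V(s_i)_{z_i}\otimes V(s_j)_{z_j}\cong V(s_j)_{z_j}\otimes V(s_i)_{z_i}$ does not formally imply that the $L$-fold product is cyclic or irreducible. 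The implication ``pairwise conditions $\Rightarrow$ the full tensor product is highest $l$-weight'' is precisely the content of the nontrivial theorem the paper imports (Theorem \ref{irredtheoref}, i.e.\ \cite[Theorem 4]{Cha01}, whose proof uses braid group actions); you cannot obtain it by a formal induction. Second, your base case is insufficient even for $L=2$: checking that the weight space immediately below the top contains no singular $l$-weight vector rules out only one of infinitely many weight spaces. For special parameter values, tensor products of this kind acquire singular vectors arbitrarily deep (the resonance conditions form a countable family), so you must exhibit a nonvanishing witness for \emph{every} $\mu$, not just for $\mu=\alpha_1$.

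By contrast, the paper obtains the witnesses by specializing $s_i=q^{-m_i/2}$ with $m_i$ large, where each factor is finite-dimensional, invoking \cite[Theorem 4]{Cha01} to get that $V$ is highest $l$-weight there, and upgrading this to irreducibility via the duality $(V^*)^{\widehat\omega}\cong V(q^{-m_1/2})_{cz_1^{-1}}\otimes\dots\otimes V(q^{-m_L/2})_{cz_L^{-1}}$; genericity in $(s_i,z_i)$ then follows from Lemma \ref{linear}. If you want to avoid citing Chari's theorem, you would need to carry out an actual cyclicity computation with the Drinfeld generators in a limit such as $z_1\ll z_2\ll\dots\ll z_L$ for all weight spaces simultaneously --- a substantial argument that your proposal does not supply.
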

\begin{proof}
We refer to a much stronger result \cite[Theorem 4]{Cha01} for the case when all $V(s_i)_{z_i}$ are finite-dimensional, that is, when $s_i\in q^{-\frac{1}{2}\mathbb Z_{\geq 0}}$ for each $i$. 
\begin{theo}[{\cite[Theorem 4]{Cha01}}] \label{irredtheoref}Let $m_1, \dots, m_L\in\mathbb Z_{\geq 0}, z_1,\dots, z_L\in\C\backslash\{0\}$. If for any pair $i,j$ such that $i\neq j$ we have $z_i/z_j\not\in q^{\frac12 \mathbb Z}$, then $V(q^{-\frac{m_1}{2}})_{z_1}\otimes V(q^{-\frac{m_2}{2}})_{z_2}\otimes\dots\otimes V(q^{-\frac{m_L}{2}})_{z_L}$ is a highest $l$-weight module. 
\end{theo}
Below we explain how \cite[Theorem 4]{Cha01} implies Proposition \ref{irred} by first explaining the notation from \cite{Cha01}, then showing irreducibility of  $V(q^{-\frac{m_1}{2}})_{z_1}\otimes\dots\otimes V(q^{-\frac{m_L}{2}})_{z_L}$ for generic $z_1, \dots, z_L$ and then lifting the restriction $s_i\in q^{-\frac12{\mathbb Z_{\geq 0}}}$.

We start with matching the notation used in \cite{Cha01} with our notation: 
\begin{itemize}
\item Our $q^{\frac12}$ corresponds to $q$ in \cite{Cha01}, to avoid confusion we keep using our $q^{\frac12}$ even when describing notation from \cite{Cha01}. 
\item In \cite{Cha01} the irreducible finite-dimensional highest $l$-weight $U'_q(\widehat{\sl}_{n+1})$-modules are parametrized by $n$-tuples $\bm \pi=(\pi_1(u), \dots, \pi_n(u))$ of polynomials $\pi_i(u)$, instead of rational $l$-weights $\bm f$ like in our work. More precisely, the functions $\pi_i(u)$ corresponding to the representation $V({\bm f})$ are given by
$$
\pi_i(u)=\begin{cases}
\prod_{j\geq 1} \frac{f_i(q^{j}u)}{f_i(0)}, \quad |q|<1,\\
\prod_{j\geq 0} \frac{f_i(0)}{f_i(q^{-j}u)},\quad |q|>1,
\end{cases}
$$
and it turns out that for the finite-dimensional representation $V(q^{-\frac{m}{2}})_z$ these functions $\pi_i$ are polynomials (regardless of conditions $|q|>1$ or $|q|<1$):
$$
\pi_1(u)=\prod_{i=1}^m(1-q^{\frac{m}{2}-i+1}zu), \qquad \pi_r(u)=1, \quad r\geq 2.
$$
The $n$-tuple of polynomials $(\pi_1(u), \dots, \pi_n(u))$ above is denoted by $\bm \pi^1_{m,zq^{1/2}}$ in \cite{Cha01}, and our representations $V(q^{-\frac{m}{2}})_z$ correspond to $V(\bm \pi^1_{m,zq^{1/2}})$ in \cite{Cha01}.

\item In \cite{Cha01} the results are proved over $\mathbb C(q^{\frac12})$, while here we fix $q^{\frac 12}$ as a transcendental complex number. Since we work generically and the action of $U_q'(\widehat{\sl}_{n+1})$ can be expressed in terms of matrices over $\mathbb Q(q^{\frac12}, z_1, \dots, z_m)$ this difference is irrelevant in view of Lemma \ref{linear} below, we will return to this later when discussing the transition from $s_i=q^{-\frac{m_i}{2}}$ to arbitrary $s_i$.
\end{itemize}

So, from \cite[Theorem 4]{Cha01} we know that for any fixed $m_1, \dots, m_L\in\mathbb Z_{\geq 1}$ and generic $z_1, \dots, z_L$ the $U'_q(\widehat{\sl}_{n+1})$-module $V:=V(q^{-\frac{m_1}{2}})_{z_1}\otimes\dots\otimes V(q^{-\frac{m_L}{2}})_{z_L}$ is highest $l$-weight. Since the vector $v:=v_{\bm 0}\otimes\dots v_{\bm 0}$  is the unique up to a scalar vector with the maximal $U_q(\sl_{n+1})$-weight $\rho=(q^{\frac 12\sum_{i}m_i},1\dots, 1)$, we have $x^+_{i,m}.v=0$ and $v$ is the highest $l$-weight vector. In other words,  \cite[Theorem 4]{Cha01} can be rephrased as $U'_q(\widehat{\sl}_{n+1}). v=V$. 

To show irreducibility of $V$ we consider $(V^*)^{\widehat\omega}$, that is, we consider the dual representation pulled back along the involution $\widehat{\omega}$. For convenience, we identify the underlying vector space of $(V^*)^{\widehat\omega}$ with $V^*$. We use the following fact from \cite{CP94b}:
\begin{lem}[{\cite[Proposition 5.1]{CP94b}}]
There exists a fixed constant $c$ such that for any $m,z$ we have $((V(q^{-\frac{m}{2}})_z)^*)^{\widehat\omega}\cong V(q^{-\frac{m}{2}})_{cz^{-1}}$.
\end{lem}
Hence we have 
$$
(V^*)^{\widehat\omega}=V(q^{-\frac{m_1}{2}})_{cz_1^{-1}}\otimes\dots\otimes V(q^{-\frac{m_L}{2}})_{cz_L^{-1}}
$$
and so $(V^*)^{\widehat\omega}$ is also a highest $l$-weight representation for generic $z_1, \dots, z_L$. By comparing $U_q(\sl_{n+1})$-weights note that the highest $l$-vector $v^*$ of $(V^*)^{\widehat\omega}$ is dual to $v$ in the sense that $v^*$ vanishes on all weight spaces of $V$ other than $\mathbb C v$. Now assume that $W\subset V$ is a $U_q'(\widehat{\sl}_{n+1})$-submodule. Then $W$ should also have a decomposition into $U_q(\sl_{n+1})$-weight spaces, and hence either $v\in W$ or $v^*\in W^{\perp}$, where $W^{\perp}$ is the annihilator of $W$ in $V^*$. In the first case we have $U'_q(\widehat{\sl}_{n+1}).v=V\subset W$, while in the second case we get
$$
U_q'(\widehat{\sl}_{n+1}).v^*=\widehat\omega\(U_q'(\widehat{\sl}_{n+1})\).v^*=V^*\subset W^{\perp},
$$
implying $W=0$. So $V(q^{-\frac{m_1}{2}})_{z_1}\otimes\dots\otimes V(q^{-\frac{m_L}{2}})_{z_L}$ is irreducible for generic $z_1, \dots, z_L$.

To establish Proposition \ref{irred} we now need to go back from $s_i=q^{-\frac{m_i}{2}}$ to generic $s_i$, which can be done using the following elementary linear algebra fact:

 \begin{lem}\label{linear} Fix $d_1,d_2\in\mathbb Z_{\geq 1}$ and let $X_{i,j}^{(r)}$ be formal variables enumerated by $i,j,r\in\mathbb Z_{\geq 1}$ such that $i\leq d_1, j\leq d_2$. There exist  countable families of polynomials $F_p$ and $G_t$ over $\mathbb Z$ in variables $X_{i,j}^{(r)}$ with the following property:
 
Assume we are given finite-dimensional vector spaces $V,W$ over $\mathcal F$ with dimensions $\dim(W)=d_1, \dim(V)=d_2$, and a countable family of linear operators $A^{(r)}:W\to V$. Fix bases of $V,W$ and let $A^{(r)}_{i,j}$ be the matrix coefficients of $A^{(r)}$ with respect to those bases. Then $\bigcup_{r} \mathrm{Im}\, A^{(r)}=V$ if and only if $F_p(A^{(r)}_{i,j})\neq 0$ for some $p$, and $\bigcap_{r} \mathrm{Ker}\, A^{(r)}=0$ if and only if $G_t(A^{(r)}_{i,j})\neq 0$ for some $t$. Here $F_p(A^{(r)}_{i,j}), G_t(A^{(r)}_{i,j})$ denote the results of substitution $X^{(r)}_{i,j}=A^{(r)}_{i,j}$ into $F_p$ and $G_t$.\qed
\end{lem} 

We apply Lemma \ref{linear} in the following way. Let, as before, $V=V(s_1)_{z_1}\otimes\dots\otimes V(s_L)_{z_L}$, $v=v_{\bm 0}\otimes\dots\otimes v_{\bm 0}$ and $\rho=(\prod_{i}s_i^{-1},1\dots, 1)$. Note that, when viewed as $U_q(\sl_{n+1})$-module, all weights of $V$ are of the form $\rho q^{-\frac12\mu}$ for $\mu\in P^+$ and the weight spaces can be explicitly described: if $\mu=K_1\alpha_1+\dots+K_n\alpha_n$ for a composition $\bK\in \Z_{\geq 0}^n$ then $V_{\rho q^{-\frac12\mu}}$ is spanned by $v_{\bI_1}\otimes\dots \otimes v_{\bI_L}$ such that $\bI_1+\dots+\bI_L=\bK$ and $v_{\bI_i}\in V(s_i)_{z_i}$. The last condition is only relevant when $s_i=\pm q^{-\frac{m}{2}},m\in\Z$, when we require $|\bI_i|\leq m$. In particular, as long as $s_i\notin\{\pm1, \pm q^{-\frac{1}{2}}, \dots, \pm q^{-\frac{|\bK|-1}{2}}\}$ for each $i$, we can identify the vector space $V_{\rho q^{-\frac12\mu}}$ for arbitrary $s_1, \dots, s_L, z_1, \dots, z_L$ with the vector space over $\C$ generated by basis vectors $\{|\bI_1, \dots, \bI_L\rangle\}_{\bI_1+\dots+\bI_L=\bK}$, we denote the latter by $V^{gen}_{\rho q^{-\frac12\mu}}$.

Note that $V$ is irreducible if and only if for any $\mu\in P^+$ we have $V_{\rho q^{-\frac12\mu}}\subset U'_q(\widehat{\sl}_{n+1}).v$ and $v\in U'_q(\widehat{\sl}_{n+1}).v'$ for any $v'\in V_{\rho q^{-\frac12\mu}}\backslash\{0\}$. It is enough to show that for a fixed $\mu\in P^+$ both these conditions hold for generic $s_1,\dots, s_L, z_1, \dots, z_L$. We start with $V_{\rho q^{-\frac12\mu}}\subset U'_q(\widehat{\sl}_{n+1}).v$. Recall that we have a $P$-grading on $U_q'(\widehat{\sl}_{n+1})$, let $\mathcal G_{\mu}$ be the set of words in $k^{\pm 1}_i, x_i^{\pm}, i\in \widehat{I}$ whose total $P$-degree is $\mu$. Using $U'_q(\widehat{\sl}_{n+1})$-action from Proposition \ref{explicitAction}, consider each word $w\in \mathcal G_{-\mu}$ as an operator $A^{(w)}:\mathbb C v\to V_{\rho q^{-\frac12\mu}}$. Recall that for a fixed $K\in\mathbb Z_{\geq0}$ depending only on $\mu$ the vector spaces $V_{\rho q^{-\frac12\mu}}$ for arbitrary $s_1, \dots, s_L, z_1, \dots, z_L$ can be identified with $V_{\rho q^{-\frac12\mu}}^{gen}$, as long as $s_i\notin\{\pm1, \pm q^{-\frac{1}{2}}, \dots, \pm q^{-\frac{K-1}{2}}\}$. Moreover, all matrix coefficients of $A^{(w)}$ with respect to $v_{\bI_1}\otimes\dots \otimes v_{\bI_l}$ are polynomials in $s_1, \dots, s_L, z_1,\dots, z_L$, with coefficients in $\mathbb Q(q^{\frac12})$. Since $V_{\rho q^{-\frac12\mu}}\subset U'_q(\widehat{\sl}_{n+1}).v$ is equivalent to $\bigcup_{w}\mathrm{Im}\, A^{(w)}=V_{\rho q^{-\frac12\mu}}$, Lemma \ref{linear} gives a family of polynomials $F_p(s_1, \dots, s_L, z_1, \dots, z_L)$ with coefficients from $\mathbb Q(q^{\frac12})$, such that $V_{\rho q^{-\frac12\mu}}\subset U'_q(\widehat{\sl}_{n+1}).v$ if and only if $F_p(s_1, \dots, s_L, z_1, \dots, z_L)\neq 0$ for some $p$, still assuming $s_i\notin\{\pm1, \pm q^{-\frac{1}{2}}, \dots, \pm q^{-\frac{K-1}{2}}\}$. Since we work generically over $s_1, \dots, s_L, z_1, \dots, z_L$, it is enough to show that $F_p\neq 0$ as a polynomial in $s_i, z_i$ for at least one $p$.  But from the finite-dimensional case we know that $V_{\rho q^{-\frac12\mu}}\subset U'_q(\widehat{\sl}_{n+1}).v$ for generic $z_1, \dots, z_L$ and $s_i\in q^{-\frac{1}{2}\Z_{\geq 0}}$.   Hence, for any $m_i\geq K$ and generic $z_i$, we have $F_p(q^{-\frac{m_1}{2}}, \dots, q^{-\frac{m_L}{2}}, z_1, \dots, z_L)\neq 0$ for some $p$, implying that $F_p\neq 0$ as a polynomial in $s_i, z_i$ for some $p$. Hence $V_{\rho q^{-\frac12\mu}}\subset U'_q(\widehat{\sl}_{n+1}).v$ for generic $s_i, z_i$. Note in particular, that we have used Lemma \ref{linear} to rewrite the condition $V_{\rho q^{-\frac12\mu}}\subset U'_q(\widehat{\sl}_{n+1}).v$ in terms of vanishing of certain polynomials with coefficients being rational functions in $q^{\frac12}$, this readily implies that we can equivalently consider $q^{\frac12}$ as a transcendental complex number or as a formal variable, making this difference with \cite{Cha01} irrelevant. 

To show that generically $v\in U'_q(\widehat{\sl}_{n+1}).v'$ for fixed $\mu$ and any $v'\in V_{\rho q^{-\frac12\mu}}\backslash\{0\}$ we use the other half of Lemma \ref{linear}. Namely, considering now the words $w\in \mathcal G_{\mu}$ as operators $A^{(w)}:V_{\rho q^{-\frac12\mu}}\to\mathbb C v$, we have $v\in U'_q(\widehat{\sl}_{n+1}).v'$ for any $v'\in V_{\rho q^{-\frac12\mu}}\backslash\{0\}$ if and only if $\bigcap_w \mathrm{Ker} A^{(w)}=0$. By Lemma \ref{linear}, there exist polynomials $G_t(s_1, \dots, s_L, z_1, \dots, z_L)$ such that the last condition is equivalent to existence of $t$ such that $G_t(s_1, \dots, s_L, z_1, \dots, z_L)\neq 0$. Irreducibility of $V$ in finite-dimensional situation implies that at least one polynomial $G_t$ is nonzero, hence we have $v\in U'_q(\widehat{\sl}_{n+1}).v'$ for any $v'\in V_{\rho q^{-\frac12\mu}}\backslash\{0\}$ generically.
\end{proof}


\section{Explicit expressions for isomorphisms between tensor products} 
\label{sect-intertw}

In this section we present explicit expressions for isomorphisms between representations of the form $V(s_1)_{z_1}\otimes V(s_2)_{z_2}$. The importance of these expressions is two-fold: on one hand we get explicit expressions for the $R$-matrix $V(s_1)_{z_1}\otimes V(s_2)_{z_2}\to V(s_2)_{z_2}\otimes V(s_1)_{z_1}$, reproducing a result of \cite{BM16}. On the other hand, using the generic irreducibility of arbitrary tensor products of the form $V(s_1)_{z_1}\otimes\dots\otimes V(z_L)_{z_L}$, we can use our explicit expressions to explain and generalize the inhomogeneous Yang-Baxter equations found in \cite{BK21} and used to study the $q$-Hahn vertex model.

To simplify expressions in this section we use the following notation. Recall that for a pair of parameters $a,b\in\C\backslash\{0\}$ we set $V^a_b:=V(a/b)_{ab}$. For compositions $\bX, \bY$ and complex parameters $a,b$ define
$$
\Phi(\bX,\bY; a,b):=b^{|\bX|}q^{\sum_{i<j} X_iY_j}\frac{(a;q)_{|\bX|} (b;q)_{|\bY|}}{(ab;q)_{|\bX+\bY|}}\prod_{i=1}^n\frac{(q;q)_{X_i+Y_i}}{(q;q)_{X_i}(q;q)_{Y_i}}.
$$
Note that, while we usually assume that the compositions are positive, the expression $\Phi(\bX,\bY; a,b)$ makes sense as long as for each $i$ at least one of $X_i,Y_i$ is positive, while the other might be negative, see Section \ref{notation}. Alternative expressions there imply that, if $X_i+Y_i\geq0$ for every $i$, then $\Phi(\bX,\bY; a,b)=0$ unless $X_i\geq 0$ and $Y_i\geq 0$ for every $i$. For parameters $a_1, a_2, a_3, b_1,b_2, b_3$ and compositions $\bI,\bJ$ we set
$$
C_{a_1,b_1;a_2,b_2}(\bI,\bJ):=(b_1/a_1)^{|\bJ|} q^{-\frac12\sum_{i<j}J_iI_j}.
$$
In view of Proposition \ref{explicitAction} we have a basis of $V(s_1)_{z_1}\otimes\dots\otimes V(s_L)_{z_L}$ given by $|v_{\bI_1}\otimes v_{\bI_2}\otimes\dots\otimes v_{\bI_L}\rangle$, let $\langle v_{\bI_1}\otimes v_{\bI_2}\otimes\dots\otimes v_{\bI_L}|$ denote the dual vectors in $(V(s_1)_{z_1}\otimes\dots\otimes V(s_L)_{z_L})^*$. 

\subsection{Explicit expressions} We start by describing isomorphisms between $V^{a_1}_{b_1}\otimes V^{a_2}_{b_2}$, $V^{a_2}_{b_1}\otimes V^{a_1}_{b_2}$, $V^{a_1}_{b_2}\otimes V^{a_2}_{b_1}$ and $V^{a_2}_{b_2}\otimes V^{a_1}_{b_1}$ for generic $a_1,a_2,b_1,b_2$. Note that all four representations are irreducible by Proposition \ref{irred}, and by Proposition \ref{Lmod} all of them are isomorphic to $L(\bm f)$ where $f_1(u)=\frac{(a_1^{-1}-a_1u)(a_2^{-1}-a_2u)}{(b_1^{-1}-b_1u)(b_2^{-1}-b_2u)}$ and $f_r(u)=1$ for $r\geq 2$.

\begin{theo}\label{basicintertwiner} For generic parameters $a_1,b_1, a_2, b_2$ the representations $V^{a_1}_{b_1}\otimes V^{a_2}_{b_2}$ and $V^{a_2}_{b_1}\otimes V^{a_1}_{b_2}$ are irreducible and isomorphic, with an isomorphism $W^a$ given explicitly by
$$
W^a:V^{a_1}_{b_1}\otimes V^{a_2}_{b_2}\to V^{a_2}_{b_1}\otimes V^{a_1}_{b_2},
$$
$$
\langle v_{\bK}\otimes v_{\bL}| W^a|v_{\bI}\otimes v_{\bJ}\rangle = \delta_{\bI+\bJ=\bK+\bL}\delta_{\bI\geq \bK}\frac{C_{a_2,b_1;a_1,b_2}(\bK,\bL)}{C_{a_1,b_1;a_2,b_2}(\bI,\bJ)} \Phi(\bI-\bK, \bK; a_1^2/a_2^2, a_2^2/b_1^2).
$$
Similarly, for generic parameters $a_1,b_1, a_2, b_2$ the representations $V^{a_1}_{b_1}\otimes V^{a_2}_{b_2}$ and $V^{a_1}_{b_2}\otimes V^{a_2}_{b_1}$ are irreducible and isomorphic, with an isomorphism $W^b$ given explicitly by
$$
W^b:V^{a_1}_{b_1}\otimes V^{a_2}_{b_2}\to V^{a_1}_{b_2}\otimes V^{a_2}_{b_1}
$$
$$
\langle v_{\bK}\otimes v_{\bL}| W^b|v_{\bI}\otimes v_{\bJ}\rangle = \delta_{\bI+\bJ=\bK+\bL}\delta_{\bJ\geq\bL}\frac{C_{a_1,b_2;a_2,b_1}(\bK,\bL)}{C_{a_1,b_1;a_2,b_2}(\bI,\bJ)} \Phi(\bL, \bJ-\bL; a_2^2/b_1^2, b_1^2/b_2^2).
$$
\end{theo}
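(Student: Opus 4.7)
The plan is to first deploy the general theory developed in Section \ref{quantum-sect} to reduce the claim to a concrete verification, and then check the explicit formula on the Chevalley generators. By Proposition \ref{irred}, for generic $a_1, b_1, a_2, b_2$ all four representations $V^{a_1}_{b_1}\otimes V^{a_2}_{b_2}$, $V^{a_2}_{b_1}\otimes V^{a_1}_{b_2}$, $V^{a_1}_{b_2}\otimes V^{a_2}_{b_1}$ and $V^{a_2}_{b_2}\otimes V^{a_1}_{b_1}$ are irreducible $U'_q(\widehat{\sl}_{n+1})$-modules. Reading off the $\phi^+_{i,m}$-eigenvalues on $v_{\bm 0}\otimes v_{\bm 0}$ using Proposition \ref{explicitAction} and the coproduct, all four modules have the same highest $l$-weight $\bm f=\left(\frac{(a_1^{-1}-a_1u)(a_2^{-1}-a_2u)}{(b_1^{-1}-b_1u)(b_2^{-1}-b_2u)},1,\dots,1\right)$, so by Proposition \ref{Lmod} each is isomorphic to $L(\bm f)$. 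Schur's lemma then forces the space of intertwiners between any two of them to be one-dimensional, and it remains to exhibit a nonzero intertwiner of the claimed form.

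For normalization, note that at $\bI=\bJ=\bK=\bL=\bm 0$ the factor $\Phi$ and the ratio of $C$-prefactors both reduce to $1$, so the proposed $W^a$ sends $v_{\bm 0}\otimes v_{\bm 0}$ to $v_{\bm 0}\otimes v_{\bm 0}$ (and likewise for $W^b$). Thus it suffices to check that the explicit formula commutes with the Chevalley generators $k_i^{\pm 1}$ and $x_i^{\pm}$ for $i\in\widehat I$. The relations with $k_i^{\pm 1}$ are immediate from the delta-function constraint $\bI+\bJ=\bK+\bL$, which forces preservation of $U_q(\sl_{n+1})$-weights. For the finite Chevalley generators $x_i^{\pm}$, $i\in I$, the coproduct in \eqref{slrel} produces two-term relations on each side; combined with the explicit action from Proposition \ref{explicitAction}, these reduce to a finite family of shift identities for $\Phi(\bI-\bK,\bK;a_1^2/a_2^2,a_2^2/b_1^2)$ under shifts of the compositions by $\pm\be^i\mp\be^{i-1}$. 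These amount to elementary $q$-Pochhammer manipulations, with the $C$-prefactors designed to absorb the powers of $q^{1/2}$ produced by weight shifts.

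The main obstacle is the verification for the affine generators $x_0^{\pm}$. Here the action in Proposition \ref{explicitAction} explicitly involves the spectral parameters $z_i=a_ib_i$, which are redistributed by $W^a$: the source carries $(a_1b_1,a_2b_2)$ while the target carries $(a_2b_1,a_1b_2)$. The prefactor ratio $C_{a_2,b_1;a_1,b_2}(\bK,\bL)/C_{a_1,b_1;a_2,b_2}(\bI,\bJ)$ is tailored precisely to track this redistribution; after clearing it, the $x_0^+$-intertwining condition collapses to a nontrivial summation identity among the values $\Phi(\bI-\bK,\bK;a_1^2/a_2^2,a_2^2/b_1^2)$ with $\bI$ or $\bK$ shifted by $\be^n$. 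I expect this to reduce to a terminating $q$-Chu--Vandermonde summation (equivalently, a ${}_2\phi_1$ evaluation), which is exactly why the parameters of $\Phi$ appear in the specific combination $a_1^2/a_2^2$, $a_2^2/b_1^2$. The corresponding $x_0^-$ relation can be verified either by a parallel computation or by appealing to the coalgebra anti-automorphism $\widehat\omega$ that exchanges $x^+$ and $x^-$.

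The companion statement for $W^b$ is proved by an entirely analogous argument. It may alternatively be derived from the $W^a$ statement by exploiting a symmetry exchanging the two summation regimes $\bI\geq\bK$ and $\bJ\geq\bL$, which amounts to swapping the roles of $\calA$ and $\calB$ parameters in $\Phi$ and reflecting the underlying vertex-model picture; in either approach, the decisive and most delicate step is again the affine-generator check.
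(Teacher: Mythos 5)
Your proposal follows essentially the same route as the paper: reduce via Proposition \ref{irred} and the common highest $l$-weight to checking that the explicit formulas intertwine the generators, normalize on $v_{\bm 0}\otimes v_{\bm 0}$, and verify the $x_i^{\pm}$ relations by $q$-Pochhammer manipulations. One small correction: the $x_0^{\pm}$ check is not structurally harder than the others and does not produce a summation identity --- since $x_0^{\pm}$ acts by a single shift $\bI\mapsto\bI\pm\be^n$ on each tensor factor, the coproduct yields the same kind of four-term rational identity in $\Phi$ as the $x_1^+$ case the paper writes out in full, not a $q$-Chu--Vandermonde sum.
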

\begin{proof} By the discussion above it is enough to check that the morphisms $W^a$ and $W^b$ presented above commute with the action of $U_q'(\widehat{\sl}_{n+1})$. We do it by verifying the statement on the generators $\{k_i^{\pm 1}\}$, $\{x_i^{\pm}\}$ of $U'_q(\widehat{\sl}_{n+1})$. For $k_i$ the check is trivial since $W^a$ and $W^b$ clearly preserve the weight spaces. The checks for $x_i^\pm$ are similar to each other and can be readily done by a straightforward but tedious computation. Here we only provide the verification that $x_1^+$ commutes with the action of $W^a$.

We need to check that $\langle v_{\bK}\otimes v_{\bL}| W^ax_1^+|v_{\bI}\otimes v_{\bJ}\rangle=\langle v_{\bK}\otimes v_{\bL}|x_1^+W^a|v_{\bI}\otimes v_{\bJ}\rangle$ for any compositions $\bI,\bJ,\bK,\bL$. Since $W^a$ preserves the weight and $x_1^+$ increases it by $q^{\frac{1}{2}\alpha_1}$, both sides are zero unless $\bI+\bJ-\be^1=\bK+\bL$. Assuming $\bI+\bJ-\be^1=\bK+\bL$ from now on, note that
$$
x_1^+|v_{\bI}\otimes v_{\bJ}\rangle=\frac{b_1b_2}{a_1a_2}q^{\frac12(-|\bI|-|\bJ|-J_1+1)}[I_1]_q|v_{\bI-\be^1}\otimes v_{\bJ}\rangle+\frac{b_2}{a_2}q^{\frac12(-|\bJ|+1)}[J_1]_q|v_{\bI}\otimes v_{\bJ-\be^1}\rangle,
$$
$$
\langle v_{\bK}\otimes v_{\bL}|x_1^+=\frac{b_1b_2}{a_1a_2}q^{\frac12(-|\bK|-|\bL|-L_1)}[K_1+1]_q \langle v_{\bK+\be^1}\otimes v_{\bL}| + \frac{b_2}{a_1}q^{-\frac12|\bL|}[L_1+1]_q \langle v_{\bK}\otimes v_{\bL+\be^1}|,
$$
where the first line describes the left action on $V^{a_1}_{b_1}\otimes V^{a_2}_{b_2}$ with the assumption that $|v_{\bI'}\otimes v_{\bJ'}\rangle=0$ when the configurations $\bI', \bJ'$ are non-positive, while the second line describes the right action on  $(V^{a_2}_{b_1}\otimes V^{a_1}_{b_2})^*$. So we need to verify that
\begin{multline*}
\frac{b_1b_2}{a_1a_2}q^{\frac12(-|\bI|-|\bJ|-J_1+1)}[I_1]_q\langle v_{\bK}\otimes v_{\bL}|W^a|v_{\bI-\be^1}\otimes v_{\bJ}\rangle+\frac{b_2}{a_2}q^{\frac12(-|\bJ|+1)}[J_1]_q\langle v_{\bK}\otimes v_{\bL}|W^a|v_{\bI}\otimes v_{\bJ-\be^1}\rangle\\
=\frac{b_1b_2}{a_1a_2}q^{\frac12(-|\bK|-|\bL|-L_1)}[K_1+1]_q \langle v_{\bK+\be^1}\otimes v_{\bL}|W^a|v_{\bI}\otimes v_{\bJ}\rangle + \frac{b_2}{a_1}q^{-\frac12|\bL|}[L_1+1]_q \langle v_{\bK}\otimes v_{\bL+\be^1}|W^a|v_{\bI}\otimes v_{\bJ}\rangle.
\end{multline*}
Plugging the expression for $W^a$, cancelling terms, using that $\bI+\bJ-\be^1=\bK+\bL$ and
$$
\frac{C_{a_1,b_1;a_2,b_2}(\bI-\be^1,\bJ)}{C_{a_1,b_1;a_2,b_2}(\bI,\bJ)}=1, \qquad \frac{C_{a_1,b_1;a_2,b_2}(\bI,\bJ-\be^1)}{C_{a_1,b_1;a_2,b_2}(\bI,\bJ)}=(a_1/b_1) q^{\frac12(|\bI|-I_1)},
$$
we can rewrite the needed identity as
\begin{multline*}
(1-q^{I_1})\Phi(\bI-\bK-\be^1, \bK; a_1^2/a_2^2, a_2^2/b_1^2)+q^{I_1}(1-q^{J_1})\Phi(\bI-\bK, \bK; a_1^2/a_2^2, a_2^2/b_1^2)\\
=(1-q^{K_1+1})\Phi(\bI-\bK-\be^1, \bK+\be^1; a_1^2/a_2^2, a_2^2/b_1^2) + q^{K_1}(1-q^{L_1+1}) \Phi(\bI-\bK, \bK; a_1^2/a_2^2, a_2^2/b_1^2).
\end{multline*}
Using 
$$
\frac{\Phi(\bX,\bY; a,b)}{\Phi(\bX-\be^1,\bY; a,b)}=bq^{|\bY|-Y_1}\frac{1-aq^{|\bX|-1}}{1-abq^{|\bX|+|\bY|-1}}\frac{1-q^{X_1+Y_1}}{1-q^{X_1}},
$$
$$
\frac{\Phi(\bX,\bY+\be^1; a,b)}{\Phi(\bX,\bY; a,b)}=\frac{1-bq^{|\bY|}}{1-abq^{|\bX|+|\bY|}}\frac{1-q^{X_1+Y_1+1}}{1-q^{Y_1+1}}
$$
we finally reduce the verification to
\begin{multline*}
\frac{b_1^2}{a_2^2}(1-q^{I_1-K_1})q^{-|\bK|+K_1}\frac{1-q^{|\bI|-1}a_1^2/b_1^2}{1-q^{|\bI|-|\bK|-1}a_1^2/a_2^2} +q^{I_1}(1-q^{J_1})\\
=\frac{b_1^2}{a_2^2}(1-q^{I_1-K_1})q^{-|\bK|+K_1}\frac{1-q^{|\bK|}a_2^2/b_1^2}{1-q^{|\bI|-|\bK|-1}a_1^2/a_2^2}  + q^{K_1}(1-q^{L_1+1}).
\end{multline*}
The latter can be verified by standard algebraic manipulations, using $\bI+\bJ-\be^1=\bK+\bL$. Finally we note that the degenerate cases when $I_1=0$ or $I_1=K_1$ are automatically handled by our convention on $q$-Pochhammer symbols from Section \ref{notation}. 
\end{proof}

\begin{prop} \label{Rexpression} For generic parameters $a_1,b_1, a_2, b_2$ there is an isomorphism 
$$R:V^{a_1}_{b_1}\otimes V^{a_2}_{b_2}\to V^{a_2}_{b_2}\otimes V^{a_1}_{b_1},$$
which is given in the following two equivalent ways:
\begin{equation}
\label{R1}
\langle v_{\bK}\otimes v_{\bL}| R|v_{\bI}\otimes v_{\bJ}\rangle=\delta_{\bI+\bJ=\bK+\bL}\frac{C_{a_2,b_2;a_1,b_1}(\bK,\bL)}{C_{a_1,b_1;a_2,b_2}(\bI,\bJ)}\sum_{\bP}\Phi(\bL-\bP, \bK; a_1^2/a_2^2, a_2^2/b_2^2)\Phi(\bP, \bJ-\bP; a_2^2/b_1^2, b_1^2/b_2^2),
\end{equation}
where the sum is over configurations $\bP$ such that $P_i\leq \min (J_i,L_i)$;
\begin{equation}
\label{R2}
\langle v_{\bK}\otimes v_{\bL}| R|v_{\bI}\otimes v_{\bJ}\rangle=\delta_{\bI+\bJ=\bK+\bL} \frac{C_{a_2,b_2;a_1,b_1}(\bK,\bL)}{C_{a_1,b_1;a_2,b_2}(\bI,\bJ)}\sum_{\bP}\Phi(\bL, \bK-\bP; a_1^2/b_1^2, b_1^2/b_2^2)    \Phi(\bI-\bP, \bP; a_1^2/a_2^2, a_2^2/b_1^2),
\end{equation}
where the sum is over configurations $\bP$ such that $P_i\leq \min(I_i, K_i)$.
\end{prop}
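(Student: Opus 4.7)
The plan is to construct $R$ as a two-step composition of the intertwiners $W^a$ and $W^b$ from Theorem \ref{basicintertwiner}, taken in the two possible orders. By Proposition \ref{irred} and Proposition \ref{Lmod}, both $V^{a_1}_{b_1}\otimes V^{a_2}_{b_2}$ and $V^{a_2}_{b_2}\otimes V^{a_1}_{b_1}$ are generically irreducible and share the highest $l$-weight $f_1(u)=\frac{(a_1^{-1}-a_1u)(a_2^{-1}-a_2u)}{(b_1^{-1}-b_1u)(b_2^{-1}-b_2u)}$, $f_r(u)=1$ for $r\geq 2$, so an isomorphism $R$ exists and is unique up to a scalar by Schur's lemma. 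In particular, any two nonzero intertwiners are equal after fixing the common normalization $R|v_{\bm 0}\otimes v_{\bm 0}\rangle\mapsto |v_{\bm 0}\otimes v_{\bm 0}\rangle$ in the source and target.

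For \eqref{R1}, I would compose $V^{a_1}_{b_1}\otimes V^{a_2}_{b_2}\xrightarrow{W^b}V^{a_1}_{b_2}\otimes V^{a_2}_{b_1}\xrightarrow{W^a}V^{a_2}_{b_2}\otimes V^{a_1}_{b_1}$, where in the second map we substitute $b_1\leftrightarrow b_2$ in the formula for $W^a$ from Theorem \ref{basicintertwiner}. Summing over the intermediate second-factor index (call it $\bP$, so the intermediate state is $|v_{\bI+\bJ-\bP}\otimes v_\bP\rangle$ with $\bJ\geq \bP$ forced by $W^b$), the intermediate $C$-prefactor $C_{a_1,b_2;a_2,b_1}(\bI+\bJ-\bP,\bP)$ cancels between the numerator of step one and the denominator of step two, yielding exactly
\begin{equation*}
\frac{C_{a_2,b_2;a_1,b_1}(\bK,\bL)}{C_{a_1,b_1;a_2,b_2}(\bI,\bJ)}\sum_\bP \Phi(\bL-\bP,\bK;a_1^2/a_2^2,a_2^2/b_2^2)\,\Phi(\bP,\bJ-\bP;a_2^2/b_1^2,b_1^2/b_2^2),
\end{equation*}
with $\bL-\bP\geq\bm 0$ from the constraint $\bI+\bJ-\bP\geq \bK$ of the second $W^a$, combined with $\bP\leq \bJ$, giving $P_i\leq \min(J_i,L_i)$.

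For \eqref{R2}, I would perform the opposite composition $V^{a_1}_{b_1}\otimes V^{a_2}_{b_2}\xrightarrow{W^a}V^{a_2}_{b_1}\otimes V^{a_1}_{b_2}\xrightarrow{W^b}V^{a_2}_{b_2}\otimes V^{a_1}_{b_1}$, substituting $a_1\leftrightarrow a_2$ in the second map. The same telescoping of $C$-prefactors produces the formula \eqref{R2}, with the summation constraint $P_i\leq\min(I_i,K_i)$ coming from $\bI\geq\bP$ (first $W^a$) and $\bK\geq\bP$ (second $W^b$). Finally, each composition is nonzero (both $W^a$ and $W^b$ send $v_{\bm 0}\otimes v_{\bm 0}$ to $v_{\bm 0}\otimes v_{\bm 0}$ with coefficient $1$), so Schur's lemma forces them to agree, proving the equality of the two expressions.

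The main obstacle I anticipate is not conceptual but bookkeeping: one must carefully track the parameter relabellings at each stage to see that the $\Phi$-factors produced by the composition match the stated formulas on the nose, and confirm that the intermediate $C$-ratios telescope cleanly. Everything else reduces to the (already established) content of Theorem \ref{basicintertwiner} and Proposition \ref{irred}.
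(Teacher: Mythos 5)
Your proposal is correct and follows essentially the same route as the paper: both expressions are obtained as the compositions $W^a\circ W^b$ and $W^b\circ W^a$ through the intermediate products $V^{a_1}_{b_2}\otimes V^{a_2}_{b_1}$ and $V^{a_2}_{b_1}\otimes V^{a_1}_{b_2}$ respectively, the intermediate $C$-prefactors telescope, and the two compositions agree by irreducibility since both fix $v_{\bm 0}\otimes v_{\bm 0}$. Your index bookkeeping (the constraints $P_i\leq\min(J_i,L_i)$ and $P_i\leq\min(I_i,K_i)$) matches the paper's after the substitution $\bQ=\bK+\bL-\bP$.
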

\begin{proof}
For generic $a_1,a_2,b_1,b_2$ the representations $V^{a_1}_{b_1}\otimes V^{a_2}_{b_2}$ and $V^{a_2}_{b_2}\otimes V^{a_1}_{b_1}$ are irreducible and have one-dimensional highest weight space, so there exists, up to a scalar, at most one isomorphism $R$ between them.

The claim now follows from Proposition \ref{basicintertwiner}, which allows to construct isomorphism $R$ as above in two ways corresponding to \eqref{R1} and \eqref{R2}:
$$
W^a\circ W^b: V^{a_1}_{b_1}\otimes V^{a_2}_{b_2}\to V^{a_1}_{b_2}\otimes V^{a_2}_{b_1}\to V^{a_2}_{b_2}\otimes V^{a_1}_{b_1}
$$
\begin{multline*}
\langle v_{\bK}\otimes v_{\bL}| W^a\circ W^b|v_{\bI}\otimes v_{\bJ}\rangle =   \delta_{\bI+\bJ=\bK+\bL} \frac{C_{a_2,b_2;a_1,b_1}(\bK,\bL)}{C_{a_1,b_1;a_2,b_2}(\bI,\bJ)}\\
\times\sum_{\substack{\bQ,\bP\\ \bQ+\bP=\bI+\bJ}}\delta_{\bQ\geq \bK}\delta_{\bJ\geq\bP}\Phi(\bQ-\bK, \bK; a_1^2/a_2^2, a_2^2/b_2^2)\Phi(\bP, \bJ-\bP; a_2^2/b_1^2, b_1^2/b_2^2);
\end{multline*}
$$
W^b\circ W^a: V^{a_1}_{b_1}\otimes V^{a_2}_{b_2}\to V^{a_2}_{b_1}\otimes V^{a_1}_{b_2}\to V^{a_2}_{b_2}\otimes V^{a_1}_{b_1}
$$
\begin{multline*}
\langle v_{\bK}\otimes v_{\bL}| W^b\circ W^a|v_{\bI}\otimes v_{\bJ}\rangle =   \delta_{\bI+\bJ=\bK+\bL} \frac{C_{a_2,b_2;a_1,b_1}(\bK,\bL)}{C_{a_1,b_1;a_2,b_2}(\bI,\bJ)}\\
\times\sum_{\substack{\bP,\bR\\ \bP+\bR=\bI+\bJ}}\delta_{\bR\geq\bL}\delta_{\bI\geq\bP}\Phi(\bL, \bR-\bL; a_1^2/b_1^2, b_1^2/b_2^2)    \Phi(\bI-\bP, \bP; a_1^2/a_2^2, a_2^2/b_1^2).
\end{multline*}
Note that both morphisms above send $v_{\bm 0}\otimes v_{\bm 0}$ to $v_{\bm 0}\otimes v_{\bm 0}$, so these two isomorphisms coincide.
\end{proof}
\begin{rem}
Proposition \ref{Rexpression} provides an expression for the action of the $R$-matrix of $U'_q(\widehat{\sl}_{n+1})$ on $V(s)_z\otimes V(s')_{z'}$. In the case of $U'_q(\widehat{\sl}_2)$ this expression was obtained from various approaches in the works \cite{KR88}, \cite{Man14}, see also \cite{DJKMO88}, \cite{Agg17} for analogous expressions in the more general elliptic case. For $U_q(\widehat{\sl}_{n+1})$ the expression of Proposition \ref{Rexpression} was first obtained in \cite{BM16} using the methods of three-dimensional integrability.
\end{rem}

\subsection{Triple tensor products and inhomogeneous Yang-Baxter equations}

By Propositions \ref{Lmod}, \ref{irred} generically we have $V^{a_1}_{b_1}\otimes \dots\otimes V^{a_m}_{b_m}\cong L(\bm f)$ with $f_1(u)=\prod_i\frac{a_i^{-1}-a_iu}{b_i^{-1}-b_iu}$ and $f_r(u)=1$ for $r>1$. Hence, for two collections of generic parameters $a_1\dots a_m, b_1,\dots, b_m$ and $\tilde a_1\dots \tilde a_m, \tilde b_1,\dots, \tilde b_m$ the representations $V^{a_1}_{b_1}\otimes \dots\otimes V^{a_m}_{b_m}$ and $V^{\tilde a_1}_{\tilde b_1}\otimes \dots\otimes V^{\tilde a_m}_{\tilde b_m}$ are isomorphic if and only if, up to sign changes, $\tilde a$ is a permutation of $a$, $\tilde b$ is a permutation of $b$, and the total number of sign changes is even. Proposition \ref{basicintertwiner} allows to explicitly construct all such isomorphisms, since $W^a$ ($W^b$) is the isomorphism corresponding to a simple transposition of the parameters $a_i$ (respectively $b_i$), while the sign changes are trivial since $V^a_b=V(a/b)_{ab}=V^{-a}_{-b}$ and we have
$$
V^{a_1}_{b_1}\otimes V^{a_2}_{b_2}=V^{a_1}_{b_1}\otimes V^{-a_2}_{-b_2}\cong V^{-a_2}_{b_1}\otimes V^{a_1}_{-b_2}= V^{a_2}_{-b_1}\otimes V^{a_1}_{-b_2}\cong V^{a_1}_{-b_1}\otimes V^{a_2}_{-b_2}=V^{-a_1}_{b_1}\otimes V^{-a_2}_{b_2},
$$
where both isomorphisms above are constructed using $W^a$.

More importantly, since the representations are irreducible, the isomorphism $V^{a_1}_{b_1}\otimes \dots\otimes V^{a_m}_{b_m}\cong V^{\tilde a_1}_{\tilde b_1}\otimes \dots\otimes V^{\tilde a_m}_{\tilde b_m}$ is unique up to a scalar. Hence, if we have multiple ways of expressing the same isomorphism using $W^a$, $W^b$ and $R$, we obtain nontrivial equations, including the remarkable Yang-Baxter equations. Below we demonstrate this idea for triple tensor products, deriving two equalities which will be used in Section \ref{exchange-sect}.

For later use we summarize the expressions form Propositions \ref{basicintertwiner}, \ref{Rexpression}, setting:
\begin{multline}
\label{defWa}
W^a_{a_1,a_2,b_1}(\bI,\bJ,\bK,\bL):= \delta_{\bI+\bJ=\bK+\bL}\Phi(\bI-\bK, \bK; a_1/a_2, a_2/b_1)\\
=\delta_{\bI+\bJ=\bK+\bL}\delta_{\bI\geq\bK}(a_2/b_1)^{|\bI|-|\bK|}q^{\sum_{i<j}(I_i-K_i)K_j}\frac{(a_1/a_2;q)_{|\bI|-|\bK|}(a_2/b_1;q)_{|\bK|}}{(a_1/b_1;q)_{|\bI|}}\prod_{r=1}^n\frac{(q;q)_{I_r}}{(q;q)_{I_r-K_r}(q;q)_{K_r}},
\end{multline}
\begin{multline}
\label{defWb}
W^b_{a_2,b_1,b_2}(\bI,\bJ,\bK,\bL):= \delta_{\bI+\bJ=\bK+\bL} \Phi(\bL, \bJ-\bL; a_2/b_1, b_1/b_2)\\
=\delta_{\bI+\bJ=\bK+\bL}\delta_{\bJ\geq \bL} (b_1/b_2)^{|\bL|}q^{\sum_{i<j}L_i(J_j-L_j)}\frac{(a_2/b_1;q)_{|\bL|}(b_1/b_2;q)_{|\bJ|-|\bL|}}{(a_2/b_2;q)_{|\bJ|}}\prod_{r=1}^n\frac{(q;q)_{J_r}}{(q;q)_{L_r}(q;q)_{J_r-L_r}},
\end{multline}
\begin{multline}
\label{defR}
R_{a_1,b_1,a_2,b_2}(\bI,\bJ,\bK,\bL):=\delta_{\bI+\bJ=\bK+\bL} \sum_{\bP}\Phi(\bL-\bP, \bK; a_1/a_2, a_2/b_2)\Phi(\bP, \bJ-\bP; a_2/b_1, b_1/b_2)\\
=\delta_{\bI+\bJ=\bK+\bL} \sum_{\bP}\Phi(\bL, \bK-\bP; a_1/b_1, b_1/b_2)    \Phi(\bI-\bP, \bP; a_1/a_2, a_2/b_1).
\end{multline}
We do not include $C_{a_1,b_1;a_2,b_2}(\bI,\bJ)$ from Propositions \ref{basicintertwiner}, \ref{Rexpression} in the expressions above, in this way right-hand sides of \eqref{defWa}-\eqref{defR} are rational functions in $q$, not just in $q^{\frac12}$. Note also that we have replaced $a_i^2, b_i^2$ from Propositions \ref{basicintertwiner}, \ref{Rexpression} by $a_i, b_i$.
\begin{rem}
\label{polynomiality}
For the later use in Sections \ref{exchange-sect} and \ref{sqW-sect} we note that $\Phi(\bX,\bY, x/y, y/z)$ is a polynomial in $y$, and $y^{-|\bX|-|\bY|}\Phi(\bX,\bY, x/y, y/z)$ is a polynomial in $y^{-1}$. Hence $W^a_{a_1,a_2,b_1}(\bI,\bJ,\bK,\bL)$ is a polynomial in $a_2$, $W^b_{a_2,b_1,b_2}(\bI,\bJ,\bK,\bL)$ is a polynomial in $b_1$ and $a_2^{-|\bI|}W^a_{a_1,a_2,b_1}(\bI,\bJ,\bK,\bL)$ is a polynomial in $a_2^{-1}$. Moreover, $a_2^{-|\bI|}R_{a_1,b_1,a_2,b_2}(\bI,\bJ,\bK,\bL)$ is a polynomial in $b_1$ and $a_2^{-1}$. 
\end{rem}

\begin{prop} \label{inhomYB1}The following identity of rational functions in $a_1, a_2, a_3, b_1, b_2, b_3$ holds:
\begin{multline*}
\sum_{\bC_1,\bC_2,\bC_3} W^b_{a_3,b_1,b_2}(\bC_1, \bC_2, \bB_2, \bB_1) W^b_{a_2,b_1,b_3}(\bA_1, \bC_3, \bB_3, \bC_1)  W^b_{a_3,b_2,b_3}(\bA_2,\bA_3,\bC_3, \bC_2)\\
=\sum_{\bC_1,\bC_2,\bC_3} W^b_{a_2,b_2,b_3}(\bC_2,\bC_3, \bB_3, \bB_2) W^b_{a_3,b_1,b_3}(\bC_1,\bA_3, \bC_3, \bB_1) W^b_{a_2,b_1,b_2}(\bA_1,\bA_2,\bC_2,\bC_1).
\end{multline*} 
\end{prop}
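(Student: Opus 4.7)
The plan is to interpret both sides as matrix elements of two compositions of intertwining isomorphisms between the same pair of irreducible $U'_q(\widehat{\sl}_{n+1})$-modules, and to deduce equality via Schur's lemma. First, Theorem \ref{basicintertwiner} lifts each weight $W^b_{a_j, b_i, b_j}(\bI, \bJ, \bK, \bL)$ to the matrix coefficient of a genuine intertwiner $\widetilde W^b \colon V^{a_i}_{b_i}\otimes V^{a_j}_{b_j} \to V^{a_i}_{b_j}\otimes V^{a_j}_{b_i}$, up to the scalar ratio $C_{a_i, b_j; a_j, b_i}(\bK, \bL)/C_{a_i, b_i; a_j, b_j}(\bI, \bJ)$ (together with the inessential square-root reparametrization $a_i \to a_i^{1/2}, b_i \to b_i^{1/2}$ needed to reconcile \eqref{defWb} with Theorem \ref{basicintertwiner}). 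The LHS then corresponds to three $\widetilde W^b$'s applied at positions $(2,3), (1,2), (2,3)$ in order, and the RHS to the same at $(1,2), (2,3), (1,2)$; these are the two reduced words for the longest element $w_0 \in S_3$ acting on the $b$-labels, and both compositions produce an isomorphism $V^{a_1}_{b_1}\otimes V^{a_2}_{b_2}\otimes V^{a_3}_{b_3} \to V^{a_1}_{b_3}\otimes V^{a_2}_{b_2}\otimes V^{a_3}_{b_1}$ (the two sides share the same highest $l$-weight and are thus isomorphic by Proposition \ref{Lmod}).

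To strip off the $C$-factors and recover precisely the naked $W^b$-weights of the proposition, I would introduce the gauged basis
\begin{equation*}
|\hat v_{\bI_1, \bI_2, \bI_3}\rangle := \prod_{k<l} C_{a_k, b_k; a_l, b_l}(\bI_k, \bI_l)\,|v_{\bI_1}\otimes v_{\bI_2}\otimes v_{\bI_3}\rangle
\end{equation*}
on each relevant triple tensor product. A short direct computation shows that in this basis, the matrix element of $\widetilde W^b$ acting at positions $(i, i+1)$ is exactly $W^b_{a_{i+1}, b_i, b_{i+1}}(\bI_i, \bI_{i+1}, \bK, \bL)$: the two $C$-factors coupling the active pair to the spectator third factor cancel after the swap, because the componentwise conservation $\bI_i + \bI_{i+1} = \bK + \bL$ enforced by the $\delta$ in $W^b$ simultaneously annihilates the $(b/a)^{|J|}$ monomial piece and the $q^{-\frac{1}{2}\sum_{i<j}J_iI_j}$ cross-term piece of the $C$-factor ratio.

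With this identification, both sides of the identity become matrix coefficients, in the gauged basis, of isomorphisms between the same generically irreducible modules (Proposition \ref{irred}); by Schur's lemma they must be proportional. To pin down the scalar I would evaluate at $\bA_1 = \bA_2 = \bA_3 = \bm 0$: the $\delta$-constraints cascade and force every intermediate and output configuration to be zero, while $\Phi(\bm 0, \bm 0; \cdot, \cdot) = 1$ makes each factor equal to $1$, so both sides evaluate to $1$. The identity therefore holds on a dense open set of parameters, and by rationality (\emph{cf.} Remark \ref{polynomiality}) as an equality of rational functions in $a_1, a_2, a_3, b_1, b_2, b_3$. The main obstacle is the gauge-cancellation step: one has to verify the simultaneous vanishing of the monomial and the $q$-exponential contributions from the spectator $C$-factors, but both collapse mechanically once one invokes the componentwise weight conservation of $W^b$.
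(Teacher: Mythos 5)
Your proposal is correct and follows essentially the same route as the paper: the paper likewise realizes both sides as the matrix coefficients of the two compositions $(1\otimes W^b)(W^b\otimes 1)(1\otimes W^b)$ and $(W^b\otimes 1)(1\otimes W^b)(W^b\otimes 1)$ mapping $V^{a_1}_{b_1}\otimes V^{a_2}_{b_2}\otimes V^{a_3}_{b_3}$ to $V^{a_1}_{b_3}\otimes V^{a_2}_{b_2}\otimes V^{a_3}_{b_1}$, invokes generic irreducibility plus the normalization on $v_{\bm 0}\otimes v_{\bm 0}\otimes v_{\bm 0}$ to identify them, and observes that the $C$-factors cancel. Your gauged-basis bookkeeping is just a more explicit version of that last cancellation step.
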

\begin{proof}
It is enough to prove that for generic $a_1, a_2, a_3, b_1, b_2, b_3$ we have
\begin{multline*}
\sum_{\bC_1,\bC_2,\bC_3} W^b_{a^2_3,b^2_1,b^2_2}(\bC_1, \bC_2, \bB_2, \bB_1) W^b_{a^2_2,b^2_1,b^2_3}(\bA_1, \bC_3, \bB_3, \bC_1)  W^b_{a^2_3,b^2_2,b^2_3}(\bA_2,\bA_3,\bC_3, \bC_2)\\
=\sum_{\bC_1,\bC_2,\bC_3} W^b_{a^2_2,b^2_2,b^2_3}(\bC_2,\bC_3, \bB_3, \bB_2) W^b_{a^2_3,b^2_1,b^2_3}(\bC_1,\bA_3, \bC_3, \bB_1) W^b_{a^2_2,b^2_1,b^2_2}(\bA_1,\bA_2,\bC_2,\bC_1).
\end{multline*} 
Consider two isomorphisms $V^{a_1}_{b_1}\otimes V^{a_2}_{b_2}\otimes V^{a_3}_{b_3}\cong V^{a_1}_{b_3}\otimes V^{a_2}_{b_2}\otimes V^{a_3}_{b_1}$:
$$
(1\otimes W^b)(W^b\otimes 1)(1\otimes W^b):V^{a_1}_{b_1}\otimes V^{a_2}_{b_2}\otimes V^{a_3}_{b_3}\to V^{a_1}_{b_1}\otimes V^{a_2}_{b_3}\otimes V^{a_3}_{b_2}
\to V^{a_1}_{b_3}\otimes V^{a_2}_{b_1}\otimes V^{a_3}_{b_2}\to V^{a_1}_{b_3}\otimes V^{a_2}_{b_2}\otimes V^{a_3}_{b_1},
$$
$$
(W^b\otimes 1)(1\otimes W^b)(W^b\otimes 1):V^{a_1}_{b_1}\otimes V^{a_2}_{b_2}\otimes V^{a_3}_{b_3}\to V^{a_1}_{b_2}\otimes V^{a_2}_{b_1}\otimes V^{a_3}_{b_3}
\to V^{a_1}_{b_2}\otimes V^{a_2}_{b_3}\otimes V^{a_3}_{b_1}\to V^{a_1}_{b_3}\otimes V^{a_2}_{b_2}\otimes V^{a_3}_{b_1}.
$$
Both isomorphisms send $v_{\bm 0}\otimes v_{\bm 0}\otimes v_{\bm 0}$ to $v_{\bm 0}\otimes v_{\bm 0}\otimes v_{\bm 0}$, so by irreducibility they are equal. The claim now follows by applying Proposition \ref{basicintertwiner} to
\begin{multline*}
\langle v_{\bB_3}\otimes v_{\bB_2}\otimes v_{\bB_1}| (1\otimes W^b)(W^b\otimes 1)(1\otimes W^b) |v_{\bA_1}\otimes v_{\bA_2}\otimes v_{\bA_3}\rangle\\
=\langle v_{\bB_3}\otimes v_{\bB_2}\otimes v_{\bB_1}| (W^b\otimes 1)(1\otimes W^b)(W^b\otimes 1) |v_{\bA_1}\otimes v_{\bA_2}\otimes v_{\bA_3}\rangle.
\end{multline*}
Note that all coefficients $C_{a,b;a',b'}(\bI,\bJ,\bK,\bL)$ cancel out.
\end{proof}

\begin{prop}\label{inhomYB2}The following identity of rational functions holds:
\begin{multline*}
\sum_{\bC_1,\bC_2,\bC_3} W^b_{a_2,b_1,b_3}(\bC_1, \bC_2, \bB_2, \bB_1) R_{a_1, b_1, a_3,b_2}(\bA_1, \bC_3, \bB_3, \bC_1)  W^a_{a_2,a_3,b_2}(\bA_2,\bA_3,\bC_3, \bC_2)\\
=\sum_{\bC_1,\bC_2,\bC_3} W^a_{a_1,a_3,b_2}(\bC_2,\bC_3, \bB_3, \bB_2) R_{a_2,b_1,a_3,b_3}(\bC_1,\bA_3, \bC_3, \bB_1) W^b_{a_2,b_1,b_2}(\bA_1,\bA_2,\bC_2,\bC_1)
\end{multline*} 
\end{prop}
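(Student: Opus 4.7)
The plan is to follow the same strategy as in the proof of Proposition \ref{inhomYB1}: realize both sides of the identity as matrix coefficients of two a priori distinct compositions of elementary intertwiners between the same triple tensor products, then force them to agree using irreducibility.

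First I would trace indices in \eqref{defWa}--\eqref{defR} to identify the source and target. After temporarily replacing $a_i,b_j$ by $a_i^2,b_j^2$ to match the conventions of Theorem \ref{basicintertwiner} and Proposition \ref{Rexpression}, the left-hand side equals the matrix coefficient between $v_{\bA_1}\otimes v_{\bA_2}\otimes v_{\bA_3}$ and $v_{\bB_3}\otimes v_{\bB_2}\otimes v_{\bB_1}$ of the composition
\[
V^{a_1}_{b_1}\otimes V^{a_2}_{b_2}\otimes V^{a_3}_{b_3}\ \xrightarrow{1\otimes W^a}\ V^{a_1}_{b_1}\otimes V^{a_3}_{b_2}\otimes V^{a_2}_{b_3}\ \xrightarrow{R\otimes 1}\ V^{a_3}_{b_2}\otimes V^{a_1}_{b_1}\otimes V^{a_2}_{b_3}\ \xrightarrow{1\otimes W^b}\ V^{a_3}_{b_2}\otimes V^{a_1}_{b_3}\otimes V^{a_2}_{b_1},
\]
while the right-hand side equals the matrix coefficient between the same pair of vectors of
\[
V^{a_1}_{b_1}\otimes V^{a_2}_{b_2}\otimes V^{a_3}_{b_3}\ \xrightarrow{W^b\otimes 1}\ V^{a_1}_{b_2}\otimes V^{a_2}_{b_1}\otimes V^{a_3}_{b_3}\ \xrightarrow{1\otimes R}\ V^{a_1}_{b_2}\otimes V^{a_3}_{b_3}\otimes V^{a_2}_{b_1}\ \xrightarrow{W^a\otimes 1}\ V^{a_3}_{b_2}\otimes V^{a_1}_{b_3}\otimes V^{a_2}_{b_1}.
\]
Each arrow in both chains is a $U'_q(\widehat{\sl}_{n+1})$-module isomorphism by Theorem \ref{basicintertwiner} and Proposition \ref{Rexpression}.

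Next I would apply Proposition \ref{irred} to conclude that for generic $a_i,b_j$ both the common source and the common target are irreducible, hence have one-dimensional highest-weight spaces; so any two nonzero intertwiners between them are proportional. Since each of $W^a$, $W^b$, $R$ maps $v_{\bm 0}\otimes v_{\bm 0}$ to itself, both compositions send $v_{\bm 0}\otimes v_{\bm 0}\otimes v_{\bm 0}$ to $v_{\bm 0}\otimes v_{\bm 0}\otimes v_{\bm 0}$, and therefore coincide identically.

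Equating matrix coefficients then yields the stated identity after dividing out the scalar prefactors $C_{\cdot,\cdot;\cdot,\cdot}(\bullet,\bullet)$ arising in Theorem \ref{basicintertwiner} and Proposition \ref{Rexpression}. The key observation is that the total $C$-prefactor attached to either composition depends only on the outer indices $\bA_i,\bB_i$, and not on the summation indices $\bC_i$, simply because both compositions realize the same operator on the same pair of basis vectors; the two total prefactors therefore agree and cancel from the identity. The resulting rational-function identity in $a_i,b_j$ then extends from the generic case by analytic continuation. I expect the main obstacle to be purely bookkeeping: correctly matching source and target at each stage of the two chains and verifying the cancellation of $C$-prefactors through the three summations; no new algebraic content beyond Theorem \ref{basicintertwiner}, Proposition \ref{Rexpression}, and Proposition \ref{irred} is required.
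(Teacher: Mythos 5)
Your proposal is correct and takes essentially the same route as the paper: the two compositions $(1\otimes W^b)(R\otimes 1)(1\otimes W^a)$ and $(W^a\otimes 1)(1\otimes R)(W^b\otimes 1)$ between $V^{a_1}_{b_1}\otimes V^{a_2}_{b_2}\otimes V^{a_3}_{b_3}$ and $V^{a_3}_{b_2}\otimes V^{a_1}_{b_3}\otimes V^{a_2}_{b_1}$ are exactly the morphisms the paper compares. The appeal to generic irreducibility, the normalization via $v_{\bm 0}\otimes v_{\bm 0}\otimes v_{\bm 0}$, and the telescoping cancellation of the $C$-prefactors are all as in the paper's proof of Proposition \ref{inhomYB1}, to which the paper simply refers.
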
 
\begin{proof}
The claim follows from the same argument as in Proposition \ref{inhomYB1}, applied to the morphisms
$$
(1\otimes W^b)(R\otimes 1)(1\otimes W^a):V^{a_1}_{b_1}\otimes V^{a_2}_{b_2}\otimes V^{a_3}_{b_3}\to V^{a_1}_{b_1}\otimes V^{a_3}_{b_2}\otimes V^{a_2}_{b_3}
\to V^{a_3}_{b_2}\otimes V^{a_1}_{b_1}\otimes V^{a_2}_{b_3}\to V^{a_3}_{b_2}\otimes V^{a_1}_{b_3}\otimes V^{a_2}_{b_1},
$$
$$
(W^a\otimes 1)(1\otimes R)(W^b\otimes 1):V^{a_1}_{b_1}\otimes V^{a_2}_{b_2}\otimes V^{a_3}_{b_3}\to V^{a_1}_{b_2}\otimes V^{a_2}_{b_1}\otimes V^{a_3}_{b_3}
\to V^{a_1}_{b_2}\otimes V^{a_3}_{b_3}\otimes V^{a_2}_{b_1}\to V^{a_3}_{b_2}\otimes V^{a_1}_{b_3}\otimes V^{a_2}_{b_1}.
$$
\end{proof}

\begin{rem}
The identity from Proposition \ref{inhomYB1} was first established in \cite{BK21}, where it was called an inhomogeneous Yang-Baxter equation and it was proved by algebraic manipulations starting with the Yang-Baxter equation 
$$
(1\otimes R)(R\otimes 1)(R\otimes 1)=(R\otimes 1)(1\otimes R)(R\otimes 1): V^{a_1}_{b_1}\otimes V^{a_2}_{b_2}\otimes V^{a_3}_{b_3}\to V^{a_3}_{b_3}\otimes V^{a_2}_{b_2}\otimes V^{a_1}_{b_1}.
$$
Back then it was not clear for us if there exists some quantum group reasoning behind the existence of such equations and if there is a systematic way to construct them. The discussion of this section answers both questions by considering instead of isomorphisms of the form $V_1\otimes V_2\otimes V_3\cong  V_3\otimes V_2\otimes V_1$ isomorphisms $V_1\otimes V_2\otimes V_3\cong  \tilde V_3\otimes \tilde V_2\otimes \tilde V_1$, where representations $\tilde V_i$ are different from $V_i$. This new understanding allows us to obtain Proposition \ref{inhomYB2} here, which is new.
\end{rem}


\section{Vertex models and transfer matrices}\label{exchange-sect}

From this point we move away from the quantum affine algebras and focus on applying the relations obtained in Section \ref{sect-intertw} to the algebraic-combinatorial objects of our interest. In this section we introduce row operators $\B(x \mid \calA, \calB)$, $\B^*(y\mid \calA, \calB)$ and prove exchange relations between them by iterating Propositions \ref{inhomYB1} and \ref{inhomYB2}. To make our expressions and manipulations clearer we also explain the language of vertex models here.

We use the following notation. Since in what follows we only need Propositions \ref{inhomYB1} and \ref{inhomYB2} when $n=1$, that is, when the quantum algebra in question is $U_q'(\widehat{\sl}_2)$, we replace all length $1$ compositions $\bI=(I)$  by nonnegative integers $I$. From now on we treat $q$ as a formal variable, and let $\calA=(a_0, a_1, \dots)$, $\calB=(b_0, b_1,\dots)$ denote two infinite sequences of parameters $a_i$ and $b_i$, which we treat as formal variables. Let $\Bbbk=\mathbb Q(q, \calA, \calB)$ denote the field of rational functions in $q,a_0, a_1, \dots$ and $b_0, b_1, \dots$. For a sequence $\mathcal X=(\chi_0, \chi_1, \dots)$ we set
$$
\tau^n\mathcal X=(\chi_n, \chi_{n+1}, \dots, ), \qquad \overline{\mathcal X}=(\chi_0^{-1},\chi_1^{-1},\dots).
$$

\subsection{Vertex models} By a \emph{vertex model} we mean the following data:
\begin{itemize}
 \item A collection of oriented lines in the plane, whose intersections are called \emph{vertices}, while the line segments between vertices are called \emph{edges}. The edges are oriented in the same way as the underlying lines, and each vertex has exactly two incoming edges and two outgoing edges. An edge is \emph{internal} if it connects a pair of distinct vertices, and is \emph{boundary} if it is connected to only one vertex. 
 
\item A collection of pairs of \emph{edge parameters} $(a,b)$, which are assigned to the edges and are constrained by the following rule: for each vertex if $(a, b)$ and $(a',b')$ are the parameters of the incoming edges of the vertex, then its outgoing edges have either parameters $(a,b)$ and $(a',b')$, or parameters $(a,b')$ and $(a',b)$. See Figure \ref{verticesFig} for the assignments satisfying this constrain.
\end{itemize}

A \emph{configuration} of a vertex model is an assignment of non-negative integer labels to the edges. In this text we usually denote these labels by capital letters $I,J,\dots$. Given a configuration around a vertex, that is, a collection of four edge labels $I,J,K,L$ attached to the adjacent edges, we define the corresponding \emph{vertex weight} by tracking the behavior of the edge parameters $a,a',b,b'$ and correspondingly using expressions \eqref{defWa},\eqref{defWb} and \eqref{defR} in the way demonstrated in Figure \ref{verticesFig}.

\begin{figure}
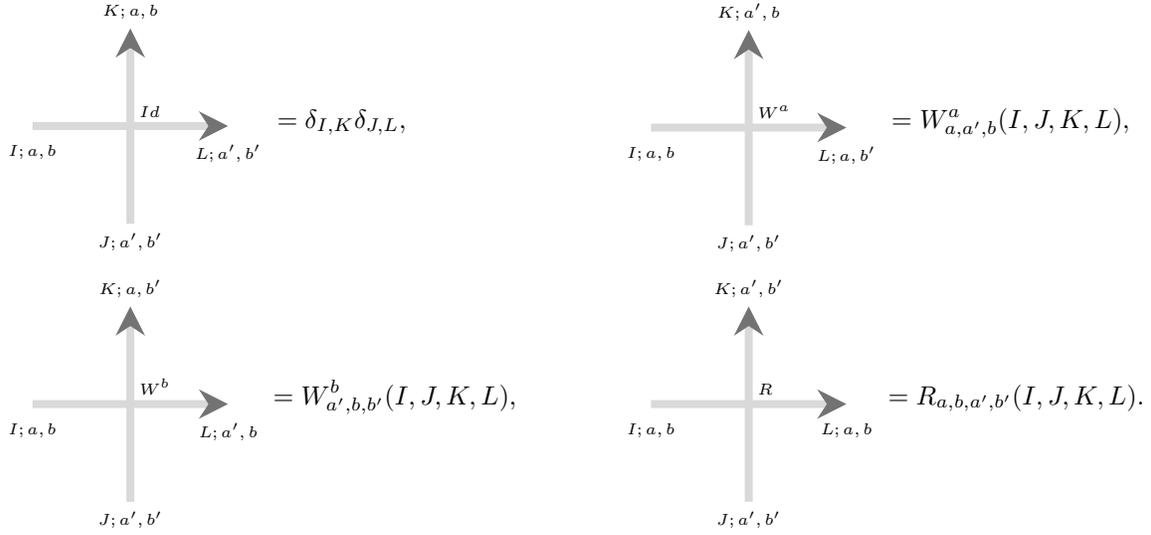

\begin{align*}
&\tikz{1.3}{
	\draw[fused] (-1,0) -- (1,0);
	\draw[fused] (0,-1) -- (0,1);
	\node[below] at (-1,-0.1) {\tiny \shortstack[c]{$I; a,b$}};\node[below] at (1,-0.1) {\tiny \shortstack[c]{$L; a',b'$}};
	\node[below] at (0,-1) {\tiny $J; a',b'$};\node[above] at (0,1) {\tiny $K; a,b$};
	\node[above right] at (0,0) {\tiny $Id$};
}=\delta_{I, K}\delta_{J,L},\qquad
&&\tikz{1.3}{
	\draw[fused] (-1,0) -- (1,0);
	\draw[fused] (0,-1) -- (0,1);
	\node[below] at (-1,-0.1) {\tiny \shortstack[c]{$I; a,b$}};\node[below] at (1,-0.1) {\tiny \shortstack[c]{$L; a,b'$}};
	\node[below] at (0,-1) {\tiny $J; a',b'$};\node[above] at (0,1) {\tiny $K; a',b$};
	\node[above right] at (0,0) {\tiny $W^a$};
}=W^a_{a,a',b}(I,J,K,L),
\\
&\tikz{1.3}{
	\draw[fused] (-1,0) -- (1,0);
	\draw[fused] (0,-1) -- (0,1);
	\node[below] at (-1,-0.1) {\tiny \shortstack[c]{$I; a,b$}};\node[below] at (1,-0.1) {\tiny \shortstack[c]{$L; a',b$}};
	\node[below] at (0,-1) {\tiny $J; a',b'$};\node[above] at (0,1) {\tiny $K; a,b'$};
	\node[above right] at (0,0) {\tiny $W^b$};
}=W^b_{a',b,b'}(I,J,K,L),\qquad
&&\tikz{1.3}{
	\draw[fused] (-1,0) -- (1,0);
	\draw[fused] (0,-1) -- (0,1);
	\node[below] at (-1,-0.1) {\tiny \shortstack[c]{$I; a,b$}};\node[below] at (1,-0.1) {\tiny \shortstack[c]{$L; a,b$}};
	\node[below] at (0,-1) {\tiny $J; a',b'$};\node[above] at (0,1) {\tiny $K; a',b'$};
	\node[above right] at (0,0) {\tiny $R$};
}=R_{a,b,a',b'}(I,J,K,L).
\end{align*}
\caption{\label{verticesFig} Possible vertices in vertex models and their weights.}
\end{figure}
In other words, we define four types of vertices $Id, W^a, W^b, R$, which are determined by the arrangement of the parameters $a,b,a',b'$. For example, the vertex of type $W^a$ preserves the parameters $a,a'$ but swaps the parameters $b,b'$ between the lines. Note that in each of these situations the vertex weight vanishes unless $I+J=K+L$, we call this fact the \emph{conservation law}.

The \emph{weight} of a configuration of a vertex model is the product of the weights of all vertices. A \emph{boundary condition} for a vertex model is an assignment of labels to the boundary edges. Given a vertex model and a boundary condition we define the corresponding \emph{partition function} as the sum of the weights of all configurations satisfying the boundary condition, that is, configurations whose labels of the boundary edges coincide with the boundary condition. For example, both sides of the identities from Propositions \ref{inhomYB1}, \ref{inhomYB2} can be rewritten as partition functions in the following way :
\begin{equation}
\label{WYB1}
\tikzbase{1.5}{-3}{
	\draw[fused] 
	(-1,1) node[left,scale=0.6] {\color{black} $A_1; a_1, b_1$} -- (1,1) -- (2,0) node[below,scale=0.6] {\color{black} $B_1; a_3,b_1$};
	\draw[fused] 
	(-1,0) node[left,scale=0.6] {\color{black} $A_2; a_2, b_2$} -- (1,0) -- (2,1) node[above,scale=0.6] {\color{black} $B_2; a_2, b_2$};
	\draw[fused] 
	(0,-1) node[below,scale=0.6] {\color{black} $A_3; a_3, b_3$} -- (0,0.5) -- (0,2) node[above,scale=0.6] {\color{black} $B_3; a_1, b_3$};
	\node[above right] at (0,0) {\tiny{ $W^b$}};
	\node[above right] at (0,1) {\tiny{ $W^b$}};
	\node[right] at (1.5,0.5) {\ \tiny{$W^b$}};
}
\qquad=
\qquad\tikzbase{1.5}{-3}{
	\draw[fused]
	(-2,0.5) node[above,scale=0.6] {\color{black} $A_1; a_1, b_1$} -- (-1,-0.5) -- (1,-0.5) node[right,scale=0.6] {\color{black} $B_1; a_3,b_1$};
	\draw[fused] 
	(-2,-0.5) node[below,scale=0.6] {\color{black} $A_2; a_2, b_2$} -- (-1,0.5)  -- (1,0.5) node[right,scale=0.6] {\color{black} $B_2;a_2,b_2$};
	\draw[fused] 
	(0,-1.5) node[below,scale=0.6] {\color{black} $A_3; a_3, b_3$} -- (0,0) -- (0,1.5) node[above,scale=0.6] {\color{black} $B_3;a_1, b_3$};
	\node[above right] at (0,-0.5) {\tiny{ $W^b$}};
	\node[above right] at (0,0.5) {\tiny{ $W^b$}};
	\node[right] at (-1.5,0) {\ \tiny{$W^b$}};
}
\end{equation}

\begin{equation}
\label{WYB2}
\tikzbase{1.5}{-3}{
	\draw[fused] 
	(-1,1) node[left,scale=0.6] {\color{black} $A_1; a_1, b_1$} -- (1,1) -- (2,0) node[below,scale=0.6] {\color{black} $B_1; a_2, b_1$};
	\draw[fused] 
	(-1,0) node[left,scale=0.6] {\color{black} $A_2; a_2, b_2$} -- (1,0) -- (2,1) node[above,scale=0.6] {\color{black} $B_2; a_1, b_3$};
	\draw[fused] 
	(0,-1) node[below,scale=0.6] {\color{black} $A_3; a_3, b_3$} -- (0,0.5) -- (0,2) node[above,scale=0.6] {\color{black} $B_3; a_3,b_2$};
	\node[above right] at (0,0) {\tiny{ $W^a$}};
	\node[above right] at (0,1) {\tiny{ $R$}};
	\node[right] at (1.5,0.5) {\ \tiny{$W^b$}};
}
\qquad
=\qquad
\tikzbase{1.5}{-3}{
	\draw[fused]
	(-2,0.5) node[above,scale=0.6] {\color{black} $A_1;a_1,b_1$} -- (-1,-0.5) -- (1,-0.5) node[right,scale=0.6] {\color{black} $B_1;a_2,b_1$};
	\draw[fused] 
	(-2,-0.5) node[below,scale=0.6] {\color{black} $A_2;a_2, b_2$} -- (-1,0.5)  -- (1,0.5) node[right,scale=0.6] {\color{black} $B_2;a_1,b_3$};
	\draw[fused] 
	(0,-1.5) node[below,scale=0.6] {\color{black} $A_3;a_3,b_3$} -- (0,0) -- (0,1.5) node[above,scale=0.6] {\color{black} $B_3;a_3,b_2$};
	\node[above right] at (0,-0.5) {\tiny{ $R$}};
	\node[above right] at (0,0.5) {\tiny{ $W^a$}};
	\node[right] at (-1.5,0) {\ \tiny{$W^b$}};
}
\end{equation}
Here to each boundary edge we assign a triple $(I;a,b)$, where $I$ is the label of the boundary condition, and $(a,b)$ are the corresponding edge parameters. To lighten the notation, instead of specifying the edge parameters of the internal edges we specify the types of the vertices, $W^a,W^b$ or $R$; the labels of the internal edges can be uniquely reconstructed from this information. 

\begin{rem}
The vertex models described above are directly related to the representations from Section \ref{sect-intertw}: we can think about the edge with parameters $(a,b)$ as of the representation $V^{\sqrt{a}}_{\sqrt{b}}$, and each vertex corresponds to one of the operators $V^{\sqrt{a_1}}_{\sqrt{b_1}}\otimes V^{\sqrt{a_2}}_{\sqrt{b_2}}\to V^{\sqrt{\tilde a_1}}_{\sqrt{\tilde b_1}}\otimes V^{\sqrt{\tilde a_2}}_{\sqrt{\tilde b_2}}$. Note that the whole partition function corresponds to a matrix coefficient of the isomorphism between two tensor products of the form $V^{\sqrt{a_1}}_{\sqrt{b_1}}\otimes\dots\otimes V^{\sqrt{a_L}}_{\sqrt{b_L}}$, with the parameters $a_i$ and $b_i$ determined by the parameters of the incoming and outgoing edges.
\end{rem}

\begin{rem} 
One can also write the usual Yang-Baxter equation $(1\otimes R)(R\otimes 1)(R\otimes 1)=(R\otimes 1)(1\otimes R)(R\otimes 1)$ using vertex models:
\begin{equation}
\label{YB}
\tikzbase{1.5}{-3}{
	\draw[fused] 
	(-1,1) node[left,scale=0.6] {\color{black} $A_1; a_1, b_1$} -- (1,1) -- (2,0) node[below,scale=0.6] {\color{black} $B_1; a_1, b_1$};
	\draw[fused] 
	(-1,0) node[left,scale=0.6] {\color{black} $A_2; a_2, b_2$} -- (1,0) -- (2,1) node[above,scale=0.6] {\color{black} $B_2; a_2, b_2$};
	\draw[fused] 
	(0,-1) node[below,scale=0.6] {\color{black} $A_3; a_3, b_3$} -- (0,0.5) -- (0,2) node[above,scale=0.6] {\color{black} $B_3; a_3,b_3$};
	\node[above right] at (0,0) {\tiny{ $R$}};
	\node[above right] at (0,1) {\tiny{ $R$}};
	\node[right] at (1.5,0.5) {\ \tiny{$R$}};
}\qquad
=\qquad
\tikzbase{1.5}{-3}{
	\draw[fused]
	(-2,0.5) node[above,scale=0.6] {\color{black} $A_1;a_1,b_1$} -- (-1,-0.5) -- (1,-0.5) node[right,scale=0.6] {\color{black} $B_1;a_1,b_1$};
	\draw[fused] 
	(-2,-0.5) node[below,scale=0.6] {\color{black} $A_2;a_2, b_2$} -- (-1,0.5)  -- (1,0.5) node[right,scale=0.6] {\color{black} $B_2;a_2,b_2$};
	\draw[fused] 
	(0,-1.5) node[below,scale=0.6] {\color{black} $A_3;a_3,b_3$} -- (0,0) -- (0,1.5) node[above,scale=0.6] {\color{black} $B_3;a_3,b_3$};
	\node[above right] at (0,-0.5) {\tiny{ $R$}};
	\node[above right] at (0,0.5) {\tiny{ $R$}};
	\node[right] at (-1.5,0) {\ \tiny{$R$}};
}
\end{equation}
Note that when $a_1=a_2=a_3$, the equation \eqref{WYB1} becomes equivalent to the usual Yang-Baxter equation, while setting $a_1=a_2, b_2=b_3$ makes \eqref{WYB2} and \eqref{YB} equivalent. 
\end{rem}

\subsection{Row operators} For $N\in\Z_{\geq 0}$ define $V^{(N)}$ as the vector space over $\Bbbk$ with a basis consisting of the vectors $|A_1, A_2, \dots, A_N\rangle$ enumerated by $N$-tuples $(A_1, \dots, A_N)\in\mathbb Z^N_{\geq 0}$. Equivalently, the same basis can be enumerated by partitions $\mu$ of length at most $N$ in the following way:
$$
|\mu\rangle=|\mu_1-\mu_2, \mu_2-\mu_3, \dots, \mu_N\rangle;
$$
we use both notations interchangeably. For $N<M$ we have an embedding $V^{(N)}\subset V^{(M)}$ which is defined by $|\mu\rangle\mapsto|\mu\rangle$ where $l(\mu)\leq N<M$, or, equivalently, $|A_1, \dots, A_N\rangle\mapsto|A_1, \dots, A_N, 0, \dots, 0\rangle$. Let $\langle\lambda|$, $\langle B_1, \dots, B_N|$ denote the vectors dual to the basis $\{|\mu\rangle\}_{l(\mu)\leq N}$. 

Recall that $\calA=(a_0, a_1, a_2, \dots)$ and $\calB=(b_0, b_1, b_2, \dots)$ denote two infinite sequences of parameters. For variables $x,y$ we define row-to-row transfer matrices $T^{b,N}_{I, L}(x\mid \calA,\calB), T^{a,N}_{K, J}(y\mid \calA,\calB):V^{(N)}\to V^{(N)}$ by setting
\begin{equation}
 \label{defTb}
 T^{b,N}_{I,L}(x \mid\calA,\calB):|J_1, \dots, J_N\rangle\mapsto \sum_{\substack{K_1,\dots, K_N:\\ L_r:=I+J_{[1,r]}-K_{[1,r]}\geq 0,\\ L_N=L}} \prod_{r=1}^NW^b_{a_{r+1},x, b_r}(L_{r-1},J_r,K_r, L_r)|K_1,\dots, K_N\rangle
\end{equation}
\begin{equation}
 \label{defTa}
 T^{a,N}_{K,J}(y \mid\calA,\calB):|I_1, \dots, I_N\rangle\mapsto \sum_{\substack{L_1,\dots, L_N:\\ J_r:=K+L_{[1,r]}-I_{[1,r]}\geq 0,\\ J_N=J}}\prod_{r=1}^N (y/b_0)^{-I_{r}}W^a_{a_r,y,b_r}(I_r, J_r, J_{r-1},L_r)|L_1,\dots, L_N\rangle,
\end{equation}
where for a sequence $(A_1, \dots, A_N)$ we set $A_{[1,0]}=0$, $A_{[1,r]}=A_1+\dots+A_r$ for $r\geq 1$. The same definitions can be graphically represented using the following partition functions:
\begin{equation}
\label{defTbgraph}
 \langle K_1, \dots, K_N|\, T^{b,N}_{I,L}(x \mid\calA,\calB)\, |J_1, \dots, J_N\rangle=
\tikzbase{0.9}{-2}{
	\draw[fused*] (-1,0) -- (3.1,0);
	\foreach\x in {0,...,1}{
		\draw[fused] (2*\x,-1) -- (2*\x,1);
	}
	\draw[fused] (5,-1) -- (5,1);
	\node[left] at (-1,0) {\tiny $I;a_1,x$};
	\node[right] at (3.2,0) {\dots};
	\draw[fused] (4,0) -- (6,0);
	\node[right] at (6,0) {\tiny $L; a_N, x$};
	\node[below] at (0,-1) {\tiny $J_1;a_2,b_1$};\node[above] at (0,1) {\tiny $K_1;a_1, b_1$};
	\node[below] at (2,-1) {\tiny $J_2;a_3,b_2$};\node[above] at (2,1) {\tiny $K_2;a_2, b_{2}$};
	\node[below] at (5,-1) {\tiny $J_N;a_{N+1},b_N$};\node[above] at (5,1) {\tiny $K_N;a_{N}, b_N$};
	\node[above right] at (0,0) {\tiny $W^b$};
	\node[above right] at (2,0) {\tiny $W^b$};
	\node[above right] at (5,0) {\tiny $W^b$};
    }
\end{equation}
\begin{equation*}
\langle L_1, \dots, L_N|\, T^{a,N}_{K,J}(y \mid\calA,\calB)\, |I_1, \dots, I_N\rangle=(y/b_0)^{-I_{[1,N]}}\(
 \tikzbase{0.9}{-2}{
	\draw[fused] (3.1,0) -- (-1,0);
	\foreach\x in {0,...,1}{
		\draw[fused] (2*\x,-1) -- (2*\x,1);
	}
	\draw[fused] (5,-1) -- (5,1);
	\node[below] at (-1,-0.1) {\tiny $K;y,b_1$};
	\node[right] at (3.2,0) {\dots};
	\draw[fused*] (4,0) -- (6,0);
	\node[below] at (6,0) {\tiny $J; y, b_{N+1}$};
	\node[below] at (0,-1) {\tiny $I_1;a_1,b_1$};\node[above] at (0,1) {\tiny $L_1;a_1, b_2$};
	\node[below] at (2,-1) {\tiny $I_2;a_2,b_2$};\node[above] at (2,1) {\tiny $L_2;a_2, b_3$};
	\node[below] at (5,-1) {\tiny $I_N;a_N,b_N$};\node[above] at (5,1) {\tiny $L_N;a_{N}, b_{N+1}$};
	\node[above right] at (0,0) {\tiny $W^a$};
	\node[above right] at (2,0) {\tiny $W^a$};
	\node[above right] at (5,0) {\tiny $W^a$};
    }\)
\end{equation*}

Here we follow the same conventions as in \eqref{WYB1}, \eqref{WYB2}, and in the second partition function we use the vertices of type $W^a$ rotated by $90^\circ$. Note that for each partition function above there exists at most one configuration with nonzero weight, and $L_r, J_r$ from \eqref{defTb}, \eqref{defTa} respectively are the labels of the internal edges in these configurations.

In the following statement we summarize simple facts about the operators $T^{b,N}_{I,0}$ and $T^{a,N}_{K,0}$:

\begin{prop}\label{stability} Let $N\in\mathbb Z_{\geq 0}$ and $\lambda,\mu$ be partitions of length at most $N$.

(1) Duality: The operators $T^{a,N}_{K,0}$ and $T^{b,N}_{K,0}$ are dual in the following sense:
$$
\langle \mu |\, T^{a,N}_{K,0}(y\mid \calA,\calB)\, | \lambda\rangle=(a_0/y)^{K}\frac{(y/b_1;q)_{K}}{(q;q)_K}\frac{\psi_\lambda(\calA,\calB)}{\psi_\mu(\calA,\tau\calB)}\langle \lambda |\, T ^{b,N}_{K,0}(y^{-1}\mid \overline{\calB}, \overline{\calA}) \,| \mu\rangle,
$$
where $\tau\calB$ denotes the shifted sequence and $\psi_\lambda(\calA,\calB)$ is defined by
$$
\psi_\lambda(\calA,\calB)=\prod_{r\geq 0} (b_r/a_r)^{\lambda_{r+1}}\prod_{r\geq 1}\frac{(q;q)_{\lambda_r-\lambda_{r+1}}}{(a_r/b_r;q)_{\lambda_r-\lambda_{r+1}}}.
$$

(2) Polynomiality: $\langle \lambda|\, T^{b,N}_{I,L}(x\mid \calA, \calB)\, |\mu\rangle$ is a polynomial in $x$, and $\langle \mu|\, T^{a,N}_{K,J}(y\mid \calA, \calB)\, |\lambda\rangle$ is a polynomial in $y^{-1}$. Moreover, both are rational functions in $q$ regular at $q=0$.

(3) Interlacing: $\langle \lambda|\, T^{b,N}_{I,0}(x\mid\calA, \calB)\, |\mu\rangle=\langle \mu|\, T^{a,N}_{I,0}(y\mid \calA, \calB)\, |\lambda\rangle=0$ unless $\lambda_1-\mu_1=I$ and $\mu\prec \lambda$, that is,
$$
\lambda_1\geq\mu_1\geq\lambda_2\geq\mu_2\geq\dots.
$$
In particular, for any $M>N$ we have $T^{b,M}_{I,0}(x\mid\calA, \calB)\, V^{(N)}\subset V^{(N+1)}$ and $T^{a,M}_{K,0}(y\mid \calA, \calB)\, V^{(N)}\subset V^{(N)}$.

(4) Stability: For any $M>N$ we have
$$
\langle \lambda|\, T^{b,M}_{I,0}(x\mid\calA, \calB)\, |\mu\rangle=\langle \lambda|\, T^{b,N}_{I,0}(x\mid\calA, \calB)\, |\mu\rangle,\qquad \langle \mu|\, T^{a,M}_{K,0}(x\mid \calA, \calB)\, |\lambda\rangle=\langle \mu|\, T^{a,N}_{K,0}(x\mid \calA, \calB)\, |\lambda\rangle.
$$
\end{prop}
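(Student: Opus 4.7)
The plan is to work directly from the explicit formulas \eqref{defWa}, \eqref{defWb} together with the vertex-model definitions \eqref{defTb}, \eqref{defTa}, using the $\delta$-factors to drastically restrict the set of nonzero configurations. In particular, when one horizontal boundary label is $0$ the conservation law $\bI+\bJ=\bK+\bL$ forces a unique configuration, so the matrix elements appearing in (1), (3), (4) all reduce to a single explicit product of vertex weights along one path.

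For (3) I would first read off the support of the relevant matrix element. In the $T^b$ case, setting $J_r=\mu_r-\mu_{r+1}$, $K_r=\lambda_r-\lambda_{r+1}$ and computing the internal horizontal label $L_r=I+J_{[1,r]}-K_{[1,r]}=I+\mu_1-\lambda_1+\lambda_{r+1}-\mu_{r+1}$, the constraints $L_N=0$, $L_r\geq 0$, and $J_r\geq L_r$ (the latter coming from $\delta_{\bJ\geq\bL}$ inside $W^b$) translate immediately into $\lambda_1-\mu_1=I$ and $\lambda_r\geq\mu_r\geq\lambda_{r+1}$, i.e.\ $\mu\prec\lambda$; the $T^a$ case is analogous, with $\delta_{\bI\geq\bK}$ in $W^a$ playing the role of $\delta_{\bJ\geq\bL}$. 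For stability (4), once $r>N$ the finite length of the partitions forces $J_r=K_r=0$ (resp.\ $I_r=L_r=0$), and then the internal horizontal label remains at its value at step $N$, which is $0$. Since $W^b_{a_{r+1},x,b_r}(0,0,0,0)=W^a_{a_r,y,b_r}(0,0,0,0)=1$ and the prefactor $(y/b_0)^{-I_r}$ is trivial when $I_r=0$, the extra vertices contribute a factor of $1$, yielding stability.

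Part (2) is then a direct consequence of Remark \ref{polynomiality}: each factor $W^b_{a_{r+1},x,b_r}$ in \eqref{defTb} is polynomial in $x$, so the whole matrix element is polynomial in $x$; for $T^a$ the remark gives that $y^{-|\bI|}W^a_{a_r,y,b_r}(I_r,\ldots)$ is polynomial in $y^{-1}$, and combining with the prefactor $(y/b_0)^{-I_r}$ yields overall polynomiality in $y^{-1}$. Regularity at $q=0$ is manifest from \eqref{defWa}, \eqref{defWb}, since the only potential denominators are $q$-Pochhammer symbols $(\cdot\,;q)_n$, all of which evaluate to $1$ at $q=0$.

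The hard part will be the duality identity (1). By (3), both sides vanish unless $\mu\prec\lambda$ and $K=\lambda_1-\mu_1$, so I fix such a pair and reduce each side to its unique surviving configuration. On the $T^a$-side the internal horizontal label is $J_{r-1}=\lambda_r-\mu_r$, so the $r$-th vertex weight is
\[
W^a_{a_r,y,b_r}\bigl(\lambda_r-\lambda_{r+1},\,\lambda_{r+1}-\mu_{r+1},\,\lambda_r-\mu_r,\,\mu_r-\mu_{r+1}\bigr);
\]
on the $T^b$-side with parameters $\overline{\calB},\overline{\calA}$ and spectral parameter $y^{-1}$, the internal horizontal label is $L_r=\lambda_{r+1}-\mu_{r+1}$ and the $r$-th vertex weight is
\[
W^b_{b_{r+1}^{-1},\,y^{-1},\,a_r^{-1}}\bigl(\lambda_r-\mu_r,\,\mu_r-\mu_{r+1},\,\lambda_r-\lambda_{r+1},\,\lambda_{r+1}-\mu_{r+1}\bigr).
\]
I would then apply the reciprocal identity $(x^{-1};q)_n=(-x)^{-n}q^{-\binom{n}{2}}(xq^{1-n};q)_n$ to each $(a_r^{-1};q)_\cdot$ and $(b_r^{-1};q)_\cdot$, rewriting everything in the original parameters $a_r,b_r$. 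Most $q$-Pochhammer factors cancel in pairs; the residual factors telescope into a product of quotients $(q;q)_{\lambda_r-\lambda_{r+1}}/(a_r/b_r;q)_{\lambda_r-\lambda_{r+1}}$ and their $\mu$-counterparts, which is precisely $\psi_\lambda(\calA,\calB)/\psi_\mu(\calA,\tau\calB)$, while the boundary contribution at $r=1$ produces the explicit prefactor $(a_0/y)^{K}(y/b_1;q)_{K}/(q;q)_{K}$. The main obstacle is purely bookkeeping: one must carefully track the index shift between the parameter $a_r$ used in $T^a$ and the parameter $a_{r+1}$ used in $T^b$ (the origin of the $\tau$-shift on $\calB$ in the statement), and keep the various powers of $y$, $q$, $a_r$, $b_r$ consistent across the two sides.
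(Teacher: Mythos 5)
Your proposal is correct and follows essentially the same route as the paper: parts (2)--(4) are read off from the explicit weights exactly as in the text, and your vertex-by-vertex matching for (1) is precisely the paper's single-vertex identity $W^a_{a_r,y,b_r}(I_r,J_r,J_{r-1},L_r)=(\text{prefactor})\,W^b_{b_{r+1}^{-1},y^{-1},a_r^{-1}}(J_{r-1},L_r,I_r,J_r)$ multiplied over $r$ with telescoping prefactors. The only cosmetic difference is that the paper deduces the $T^a$ half of the interlacing statement (3) from the duality (1), whereas you verify it directly from $\delta_{\bI\geq\bK}$; both are valid.
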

\begin{proof} 
Part (1) follows at once from \eqref{defTb}, \eqref{defTa} and the relation
\begin{multline*}
W^a_{a_r,y,b_r}(I_r,J_r,J_{r-1},L_r)\\
=(a_r/b_{r})^{-J_r}(y/b_r)^{L_r}\frac{(q;q)_{I_r}}{(a_{r}/b_r;q)_{I_r}}\frac{(q;q)_{J_r}}{(y/b_{r+1};q)_{J_r}}\frac{(y/b_r;q)_{J_{r-1}}}{(q;q)_{J_{r-1}}}\frac{(a_r/b_{r+1};q)_{L_r}}{(q;q)_{L_r}}W^b_{b_{r+1}^{-1},y^{-1},a_r^{-1}}(J_{r-1}, L_r, I_r, J_r),
\end{multline*}
which is verified using \eqref{defWa} and \eqref{defWb}. Part (2) is also immediate from \eqref{defWa}, \eqref{defWb}, see Remark \ref{polynomiality}.

For the interlacing statement from part (3), by part (1) it is enough to prove it for $T^{b,N}_{I,0}(x\mid\calA, \calB)$. Set $J_r=\mu_{r}-\mu_{r+1}$, $K_r=\lambda_r-\lambda_{r+1}$ and $L_r=I+J_{[1,r]}-K_{[1,r]}$. Recall from \eqref{defWb} that the weights $W^b_{a_{r+1}, x, b_r}(L_{r-1},J_r,K_r,L_r)$ vanish unless $J_r\geq L_r\geq 0$. Hence, for $\langle K_1, \dots, K_N|\, T^{b,N}_{I,0}(x\mid\calA, \calB)\, |J_1, \dots, J_N\rangle$ to be nonzero we must have $J_r\geq L_r\geq 0$ for every $r$, and moreover $L_N=0$. Noting that $L_r= I+\mu_1-\mu_{r+1}-\lambda_1+\lambda_{r+1}$ and, in particular, $L_N=I+\mu_1-\lambda_1$, we get that $\langle \lambda|\, T^{b,N}_{I,0}(x\mid\calA,\calB)\, |\mu\rangle=0$ unless $I=\lambda_1-\mu_1$ and $\mu_{r}-\mu_{r+1}\geq I+\mu_1-\mu_{r+1}-\lambda_1+\lambda_{r+1}\geq 0$ for every $r$. The last inequality is equivalent to $\mu_r\geq\lambda_{r+1}\geq \mu_{r+1}$ for all $r=1, \dots, N$, and since $\lambda_1-\mu_1=I\geq 0$ the interlacing $\mu\prec\lambda$ follows.

The second statement of part (3) follows immediately from the interlacing by noticing that $\mu\prec\lambda$ implies $l(\mu)\leq l(\lambda)\leq l(\mu)+1$. Finally, part (4) follows from the fact that $W^b_{a,x,b}(0,0,0,0)=W^a_{a,y,b}(0,0,0,0)=1$, and so if $l(\lambda),l(\mu)\leq N<M$ we have
$$
\langle \lambda|\, T^{b,M}_{I,0}(x\mid\calA, \calB)\, |\mu\rangle=W^b_{a_{N+1},x,b_N}(0,0,0,0) \langle \lambda|\, T^{b,M-1}_{I,0}(x\mid\calA, \calB)\, |\mu\rangle =\langle \lambda|\, T^{b,M-1}_{I,0}(x\mid\calA, \calB)\, |\mu\rangle
$$
and similarly for $T^{a,M}_{K,0}$.
\end{proof}

Let $V:=\bigcup_N V^{(N)}$ be a vector space over $\Bbbk$ with a basis $\{|\mu\rangle\}_{\mu\in\Y}$ where we have no restrictions on the length of $\mu$. The natural embeddings $V^{(N)}\subset V$ are given by identifying the vectors $|\mu\rangle$ in $V$ and $V^{(N)}$ when $l(\mu)\leq N$. By parts (3) and (4) of Proposition \ref{stability} we can define operators $\mathbb T^b_I(x\mid\calA,\calB),\mathbb T^a_K(y\mid\calA,\calB): V\to V$ by setting for $v\in V^{(N)}$
$$
\mathbb T^b_I(x\mid \calA,\calB)\ v= T^{b,N+1}_{I,0}(x\mid\calA,\calB)\ v, \qquad \mathbb T^a_K(y\mid\calA,\calB)\ v= T^{a,N}_{K,0}(y\mid\calA,\calB)\ v.
$$
Using these operators, we can formally define
$$
\mathbb B(x\mid \calA,\calB)=\sum_{r\geq 0}(x/b_0)^r\frac{(a_1 x^{-1};q)_r}{(q;q)_r} \mathbb T^b_{r}(x\mid\calA,\calB),
$$
$$
\mathbb B^*(y\mid \calA,\calB)=\sum_{r\geq 0}\mathbb T^a_{r}(y\mid\calA,\calB).
$$
More precisely, $\B^*(y\mid\calA,\calB)$ is an operator $V\to V$ such that
$$
\langle \mu|\, \B^*(y\mid\calA,\calB)\, |\lambda\rangle  = \langle \mu|\, \mathbb T^a_{\lambda_1-\mu_1}(y\mid\calA,\calB)\, |\lambda\rangle,
$$
which is well-defined since $\langle \mu|\, \mathbb T^a_{\lambda_1-\mu_1}(y\mid\calA,\calB)\, |\lambda\rangle\neq 0$ only if $\mu\prec \lambda$, and there is a finite number of such $\mu$ given a fixed $\lambda$. On the other hand, $\B(x\mid\calA, \calB)$ is not a well-defined operator $V\to V$, since given a fixed $\mu$ we have $\mathbb T^b_r(x\mid\calA, \calB)\, |\mu\rangle\neq 0$ for infinitely many $r$ in general. However, we can resolve this issue by considering $q$ and $x$ as formal variables: by Proposition \ref{stability} the matrix coefficients $\langle \lambda|\, \mathbb T_{\lambda_1-\mu_1}^b(x\mid\calA, \calB)\,|\mu\rangle$ can be viewed as elements of the algebra of formal power series $\Bbbk[[x,q]]$, while the combined degree with respect to $x$ and $q$ of $(x/b_0)^r\frac{(a_1 x^{-1};q)_r}{(q;q)_r}$ is at least $r$. Hence $\B(x\mid \calA, \calB)$ is a well-defined operator $V[[x,q]]\to V[[x,q]]$, where $V[[x,q]]$ is the vector space of formal power series in $x,q$ with coefficients in $V$.

The operators $\mathbb B(x\mid \calA,\calB)$ and $\mathbb B^*(y\mid \calA, \calB)$ are dual to each other in the following sense:
\begin{prop}\label{BCdual}
For any $\lambda,\mu$ we have 
$$
\langle \mu |\, \B^*(y\mid \calA,\calB)\, | \lambda\rangle=\frac{\psi_\lambda(\calA,\calB)}{\psi_\mu(\calA,\tau\calB)}\langle \lambda |\, \B(y^{-1}\mid \overline{\calB}, \overline{\calA})\, | \mu\rangle,
$$
where $\psi_\lambda(\calA,\calB)$ is defined by
$$
\psi_\lambda(\calA,\calB)=\prod_{r\geq 0} (b_r/a_r)^{\lambda_{r+1}}\prod_{r\geq 1}\frac{(q;q)_{\lambda_r-\lambda_{r+1}}}{(a_r/b_r;q)_{\lambda_r-\lambda_{r+1}}}.
$$
\end{prop}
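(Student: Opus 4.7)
The plan is to observe that the duality statement is essentially an immediate consequence of part (1) of Proposition \ref{stability} after unpacking the definitions of $\B$ and $\B^*$. First I would use the interlacing property (Proposition \ref{stability}(3)) to reduce both sides to a single term in their defining sums. Namely, on the left-hand side
$$
\langle\mu\mid\B^*(y\mid\calA,\calB)\mid\lambda\rangle=\sum_{r\geq 0}\langle\mu\mid\mathbb T^a_r(y\mid\calA,\calB)\mid\lambda\rangle,
$$
and by the interlacing condition only the term with $r=\lambda_1-\mu_1$ can be nonzero (assuming $\mu\prec\lambda$, else both sides vanish). An analogous reduction applies to the right-hand side, where $\B(y^{-1}\mid\overline\calB,\overline\calA)$ involves $\mathbb T^b_r(y^{-1}\mid\overline\calB,\overline\calA)$, and interlacing kills all but the $r=\lambda_1-\mu_1$ term.

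Next I would carefully track the parameter substitutions $\calA\mapsto\overline\calB$, $\calB\mapsto\overline\calA$ inside the scalar prefactor of $\B$. Setting $K=\lambda_1-\mu_1$, the relevant prefactor becomes
$$
(y^{-1}/\overline\calB_0)^K\frac{(\overline\calB_1\cdot y;q)_K}{(q;q)_K}=(a_0/y)^K\frac{(y/b_1;q)_K}{(q;q)_K},
$$
since $\overline\calB_0=b_0^{-1}$ is now playing the role of $\calA_0=a_0$ in $\overline\calA$, and similarly for $\overline\calB_1$. This matches exactly the prefactor appearing in Proposition \ref{stability}(1).

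With these reductions in hand, the proposition reduces to the statement
$$
\langle\mu\mid\mathbb T^a_K(y\mid\calA,\calB)\mid\lambda\rangle=(a_0/y)^K\frac{(y/b_1;q)_K}{(q;q)_K}\frac{\psi_\lambda(\calA,\calB)}{\psi_\mu(\calA,\tau\calB)}\langle\lambda\mid\mathbb T^b_K(y^{-1}\mid\overline\calB,\overline\calA)\mid\mu\rangle,
$$
which follows from Proposition \ref{stability}(1) by choosing an integer $N$ large enough that $N\geq l(\lambda)$ and $N\geq l(\mu)+1$, so that $\langle\mu\mid\mathbb T^a_K(y\mid\calA,\calB)\mid\lambda\rangle=\langle\mu\mid T^{a,N}_{K,0}(y\mid\calA,\calB)\mid\lambda\rangle$, and invoking the stability statement of Proposition \ref{stability}(4) to identify $\langle\lambda\mid T^{b,N}_{K,0}(y^{-1}\mid\overline\calB,\overline\calA)\mid\mu\rangle$ with $\langle\lambda\mid\mathbb T^b_K(y^{-1}\mid\overline\calB,\overline\calA)\mid\mu\rangle$.

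I do not expect any genuine obstacle; the only point that requires care is the bookkeeping of the parameter inversions inside the scalar prefactor of $\B$, making sure that $(x/b_0)^r$ and $(a_1x^{-1};q)_r$ transform so as to match the prefactor $(a_0/y)^K(y/b_1;q)_K/(q;q)_K$ in Proposition \ref{stability}(1). Once this bookkeeping is verified, the rest is automatic.
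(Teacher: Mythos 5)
Your proposal is correct and is exactly the paper's argument: the paper's proof of this proposition is a one-line citation of Proposition \ref{stability}(1), and your unpacking (a single surviving term in each sum by the interlacing property, stability in $N$ to reconcile $\mathbb T^a$ and $\mathbb T^b$ with the finite transfer matrices, and matching the scalar prefactor of $\B$ with the prefactor in Proposition \ref{stability}(1)) is precisely the intended verification. One bookkeeping slip worth noting: the first factor of your displayed prefactor should be $\bigl(y^{-1}/\overline{\calA}_0\bigr)^K=(a_0/y)^K$ rather than $\bigl(y^{-1}/\overline{\calB}_0\bigr)^K$, since the $b_0$ in the definition of $\B(x\mid\calA,\calB)$ is drawn from the \emph{second} sequence, which after the substitution is $\overline{\calA}$ with $\overline{\calA}_0=a_0^{-1}$; your final expression $(a_0/y)^K(y/b_1;q)_K/(q;q)_K$ is nonetheless the correct one and matches Proposition \ref{stability}(1).
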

\begin{proof}
Follows from Proposition \ref{stability}, part (1).
\end{proof}

\subsection{Exchange relations} Now we consider commutation relations between the operators $\B(x\mid\calA, \calB)$ and $\B^*(y\mid\calA,\calB)$, which follow from the equations \eqref{WYB1}, \eqref{WYB2}. In fact, our construction of the operators $\mathbb T^b_{r}(x\mid\calA, \calB),\mathbb T^a_r(y\mid\calA,\calB)$ is motivated by \eqref{WYB1}, \eqref{WYB2} and the desire for the arguments below to work, while the linear combinations $\B(x\mid\calA, \calB)$, $\B^*(x\mid\calA, \calB)$ are distinguished by especially nice commutation relations.

\begin{prop}\label{BCexchange} We have
$$
\B^*(y\mid \calA,\calB)\B(x\mid\calA,\calB)=\frac{(a_1/y;q)_\infty(x/b_1;q)_\infty}{(a_1/b_1;q)_\infty(x/y;q)_\infty}\,\B(x\mid\calA,\tau\calB)\B^*(y\mid\tau\calA,\calB),
$$
or, more precisely, the following relation holds:
$$
\sum_{\lambda}\langle \nu|\, \B^*(y\mid \calA,\calB)\, |\lambda\rangle\langle\lambda|\, \B(x\mid\calA,\calB)\, |\mu\rangle=\frac{(a_1/y;q)_\infty(x/b_1;q)_\infty}{(a_1/b_1;q)_\infty(x/y;q)_\infty}\sum_{\lambda} \langle\nu|\,\B(x\mid\calA,\tau\calB)\, |\lambda\rangle\langle\lambda|\, \B^*(y\mid\tau\calA,\calB)\,|\mu\rangle,
$$
where both sides are viewed as formal power series in $x,y^{-1}$ and $q$.
\end{prop}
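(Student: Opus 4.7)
The plan is to apply the ``train'' (or ``zipper'') argument based on Proposition~\ref{inhomYB2}. First I would represent the matrix element $\langle\nu|\,\B^*(y\mid\calA,\calB)\B(x\mid\calA,\calB)\,|\mu\rangle$ as a two-row vertex model partition function: the lower row realizes $\B(x\mid\calA,\calB)$ through the $W^b$-vertices $W^b_{a_{r+1},x,b_r}$, the upper row realizes $\B^*(y\mid\calA,\calB)$ through the $W^a$-vertices $W^a_{a_r,y,b_r}$, the vertical boundary labels are fixed by $\mu$ (bottom) and $\nu$ (top), the rightmost horizontal edges of both rows carry label~$0$, and the leftmost horizontal edges are summed over indices $r,s$ weighted by $(x/b_0)^r(a_1/x;q)_r/(q;q)_r$ and $1$ respectively.

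Next I would perform the sliding. Since $R(0,0,0,0)=1$ by the convolution formula \eqref{defR} (only the $P=0$ term survives), inserting an $R$-vertex between the two rightmost horizontal edges is a trivial operation that does not change the partition function. I would then slide this $R$-vertex leftward, one column at a time, using Proposition~\ref{inhomYB2}: at each column the local three-line structure consisting of the column, the two horizontal rows, and the $R$-vertex admits the rearrangement of that proposition, which moves the $R$-vertex to the opposite side of the column while simultaneously replacing the on-column vertex pair by one of the opposite types with parameters shifted as $a_r\to a_{r+1}$ on the upper row and $b_r\to b_{r+1}$ on the lower row. After all columns have been traversed the $R$-vertex ends up at the left boundary, and the two rows---with their newly shifted parameters---realize the product $\B(x\mid\calA,\tau\calB)\B^*(y\mid\tau\calA,\calB)$.

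The remaining step is to evaluate the $R$-vertex at the left boundary. It sits between the two leftmost horizontal edges, whose parameters are $(a_1,x)$ and $(y,b_1)$, and receives as inputs the original summation labels $r,s$ weighted by the coefficients of $\B$ and $\B^*$. Carrying out the resulting double sum---using the explicit $\Phi$-convolution formula \eqref{defR} together with the $q$-binomial theorem $\sum_n (a;q)_n/(q;q)_n\,z^n = (az;q)_\infty/(z;q)_\infty$ applied to the two $\Phi$-factors---should yield the scalar prefactor $\frac{(a_1/y;q)_\infty(x/b_1;q)_\infty}{(a_1/b_1;q)_\infty(x/y;q)_\infty}$ multiplied by the correct new expansion coefficients $(x/b_1)^{r'}(a_1/x;q)_{r'}/(q;q)_{r'}$ and $1$ for the swapped operators. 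This last step is the main obstacle: one must verify that the four Pochhammer factors in the prefactor arise cleanly from the $R$-convolution telescoped against the $\B$-coefficient series, which requires careful $q$-hypergeometric bookkeeping. All formal manipulations are well-defined as power series in $x,y^{-1},q$ thanks to the polynomiality and stability results of Proposition~\ref{stability}.
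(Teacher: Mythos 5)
Your overall strategy is exactly the paper's: realize $\langle\nu|\,\B^*(y\mid\calA,\calB)\B(x\mid\calA,\calB)\,|\mu\rangle$ as a two-row partition function, insert a trivial $R$-vertex at the right boundary (where all labels are $0$ and $R(0,0,0,0)=1$), and drag it to the left column by column via Proposition \ref{inhomYB2}, shifting $\calA\to\tau\calA$ on the $W^a$-row and $\calB\to\tau\calB$ on the $W^b$-row. That part of your outline is sound, modulo the minor bookkeeping of the $(y/b_0)^{-I_r}$ prefactors in \eqref{defTa}, which cancel against $\lambda_1=\mu_1+I$.

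The genuine gap is in your final step. After the sliding, one must prove the boundary summation identity \eqref{qGauss},
$$
\sum_{\substack{I,K\geq 0:\ I-K=L-J}}(x/y)^{I}\frac{(a/x;q)_I}{(q;q)_I}\,R_{a,x,y,b}(I,J,K,L)=\frac{(a/y;q)_\infty(x/b;q)_\infty}{(a/b;q)_\infty(x/y;q)_\infty}\,(x/b)^{L}\frac{(a/x;q)_{L}}{(q;q)_{L}},
$$
and you propose to obtain it by applying the $q$-binomial theorem to the two $\Phi$-factors inside $R$. This will not close: the left-hand side is a genuine double sum (the outer sum over $I$ convolved with the inner $\bP$-sum defining $R$), and the two sums do not decouple into separate ${}_1\phi_0$ evaluations — the identity sits at the level of the $q$-Gauss ${}_2\phi_1$ summation, one rung above the $q$-binomial theorem, as the paper notes in the remark following the proof. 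The paper avoids direct $q$-hypergeometric manipulation entirely: it specializes $x=q^{S}a$ with $S\in\Z_{\geq 0}$ (which is enough by density of such points for formal power series), applies \eqref{WYB2} once more to a single column carrying a large auxiliary label $A$, rewrites both sides as rational functions of $q^{A}$, and then sets $q^{A}=0$ to extract \eqref{qGauss}. You either need to reproduce an argument of this kind or explicitly reduce your double sum to the $q$-Gauss identity; as written, the step you yourself flag as the main obstacle is the one place where the proof is not actually there.
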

\begin{proof} We prove the latter identity, fixing $\mu,\nu$ from the statement and setting $L_r=\nu_r-\nu_{r+1}$, $J_r=\mu_r-\mu_{r+1}$.
\paragraph{{\bf Step 1:}} We start by consequently applying \eqref{WYB2} to get a relation between the operators $T^{b,N}_{I,0}$ and $T^{a,N}_{K,0}$. Using \eqref{WYB2} we have
$$
\tikzbase{1.5}{-3}{
	\draw[fused] 
	(2,0) node[below,scale=0.6] {\color{black} $B_1; y, b_{r+1}$} -- (1,1) -- (-1,1) node[left,scale=0.6] {\color{black} $A_1; y, b_r$} ;
	\draw[fused] 
	(-1,0) node[left,scale=0.6] {\color{black} $A_2; a_r, x$} -- (1,0) -- (2,1) node[above,scale=0.6] {\color{black} $B_2; a_{r+1}, x$};
	\draw[fused] 
	(0,-1) node[below,scale=0.6] {\color{black} $J_r; a_{r+1}, b_r$} -- (0,0.5) -- (0,2) node[above,scale=0.6] {\color{black} $L_r; a_r,b_{r+1}$};
	\node[above right] at (0,0) {\tiny{ $W^b$}};
	\node[above right] at (0,1) {\tiny{ $W^a$}};
	\node[right] at (1.5,0.5) {\ \tiny{$R$}};
}\qquad=
\qquad
\tikzbase{1.5}{-3}{
	\draw[fused]
	(1,-0.5) node[right,scale=0.6] {\color{black} $B_1;y,b_{r+1}$} -- (-1,-0.5) -- (-2,0.5) node[above,scale=0.6] {\color{black} $A_1;y,b_r$};
	\draw[fused] 
	(-2,-0.5) node[below,scale=0.6] {\color{black} $A_2;a_r, x$} -- (-1,0.5)  -- (1,0.5) node[right,scale=0.6] {\color{black} $B_2;a_{r+1},x$};
	\draw[fused] 
	(0,-1.5) node[below,scale=0.6] {\color{black} $J_r;a_{r+1},b_r$} -- (0,0) -- (0,1.5) node[above,scale=0.6] {\color{black} $L_r;a_r,b_{r+1}$};
	\node[above right] at (0,-0.5) {\tiny{ $W^a$}};
	\node[above right] at (0,0.5) {\tiny{ $W^b$}};
	\node[right] at (-1.5,0) {\ \tiny{$R$}};
}
$$
where $A_1,A_2,B_1,B_2$ are arbitrary nonnegative integers. Choosing $N\in\Z_{\geq 0}$ such that $N> l(\mu),l(\nu)$  and applying these identities consequently with $r=N, N-1, \dots, 2, 1$ we get for arbitrary $I,K\in\Z_{\geq 0}$
\begin{equation}
\label{BCZipper}
\tikzbase{1.3}{-3}{
	\draw[fused] 
	(-1,1) node[right,scale=0.6] {\color{black} $\dots$} -- (-3,1) node[left,scale=0.6] {\color{black} $K;y;b_1$};
	\draw[fused*] 
	(-3,0)  node[left,scale=0.6] {\color{black} $I;a_1,x$} -- (-1,0) node[right,scale=0.6] {\color{black} $\dots$};
	\draw[fused*] 
	(2,0) node[below,scale=0.6] {\color{black} $0;y,b_{N+1}$} -- (1,1)  -- (-0.5,1);
	\draw[fused] 
	(-0.5,0)  -- (1,0)  -- (2,1) node[above,scale=0.6] {\color{black} $0;a_{N+1},x$};
	\draw[fused] 
	(0,-1) node[below,scale=0.6] {\color{black} $J_N;a_{N+1},b_N$} -- (0,2) node[above,scale=0.6] {\color{black} $L_N;a_N,b_{N+1}$};
	\draw[fused] 
	(-2,-1) node[below,scale=0.6] {\color{black} $J_1;a_2,b_1$} -- (-2,2) node[above,scale=0.6] {\color{black} $L_1;a_1,b_2$};
	\node[above right] at (0,1) {\tiny{ $W^a$}};
	\node[above right] at (0,0) {\tiny{ $W^b$}};
	\node[above right] at (-2,1) {\tiny{ $W^a$}};
	\node[above right] at (-2,0) {\tiny{ $W^b$}};
	\node[right] at (1.5,0.5) {\tiny{$R$}};
}\quad
=\quad
\tikzbase{1.3}{-3}{
	\draw[fused]
	(2,-0.5) node[right,scale=0.6] {\color{black} $\dots$} -- (0.5,-0.5)  -- (-0.5,0.5) node[above,scale=0.6] {\color{black} $K;y,b_1$};
	\draw[fused*] 
	(-0.5,-0.5) node[below,scale=0.6] {\color{black} $I;a_1,x$} -- (0.5,0.5)  -- (2,0.5) node[right,scale=0.6] {\color{black} $\dots$};
	\draw[fused*]
	(4,-0.5) node[right,scale=0.6] {\color{black} $0;y,b_{N+1}$} -- (2.5,-0.5);
	\draw[fused] 
	(2.5,0.5)  -- (4,0.5) node[right,scale=0.6] {\color{black} $0;a_{N+1},x$};
	\draw[fused] 
	(1,-1.5) node[below,scale=0.6] {\color{black} $J_1;a_2,b_1$} -- (1,1.5) node[above,scale=0.6] {\color{black} $L_1;a_1,b_2$};
	\draw[fused] 
	(3,-1.5) node[below,scale=0.6] {\color{black} $J_N;a_{N+1},b_N$} -- (3,1.5) node[above,scale=0.6] {\color{black} $L_N;a_N,b_{N+1}$};
	\node[above right] at (1,-0.5) {\tiny{ $W^a$}};
	\node[above right] at (1,0.5) {\tiny{ $W^b$}};
	\node[above right] at (3,-0.5) {\tiny{ $W^a$}};
	\node[above right] at (3,0.5) {\tiny{ $W^b$}};
	\node[right] at (0,0) {\tiny{$R$}};
}
\end{equation}
Comparing with \eqref{defTb}, \eqref{defTa}, the identity of partition functions above is equivalent to
\begin{multline}
\label{exchangeproof1}
\sum_{\tilde I, \tilde K}\sum_{\lambda}(y/b_0)^{\lambda_1} \langle\nu|\, T^{a,N}_{K,\tilde K}(y\mid\calA,\calB)\, |\lambda\rangle\langle\lambda|\, T^{b,N}_{I,\tilde I}(x\mid\calA,\calB)\, |\mu\rangle\ R_{a_{N+1},x,y,b_{N+1}}(\tilde I,0,\tilde K, 0)\\
=\sum_{J, L}\sum_{\lambda}R_{a_{1},x,y,b_{1}}(I,J,K, L) \langle\nu|\,T^{b,N}_{L,0}(x\mid\calA,\tau\calB)\,|\lambda\rangle  (y/b_0)^{\mu_1} \langle\lambda|\, T^{a,N}_{J,0}(y\mid\tau\calA,\calB)|\mu\rangle,
\end{multline}
where in the left-hand side the sum is over partitions $\lambda$ such that $l(\lambda)\leq l(\mu)+1\leq N$, with the restriction on length coming from Proposition \ref{stability}, part (3).

From \eqref{defTb} we have
$$\langle\lambda|\,T^{b,N}_{I,\tilde I}(x\mid\calA,\calB)\,|\mu\rangle=\langle\widetilde\lambda|\,T^{b,N-1}_{I,\tilde I+\lambda_{N}-\mu_N}(x\mid\calA,\calB)\, |\widetilde\mu\rangle\ W^b_{a_{N+1},x,b_{N}}(\tilde I+\lambda_{N}-\mu_N, \mu_N, \lambda_{N}, \tilde I),$$
where $\widetilde\lambda:=(\lambda_1-\lambda_N, \dots,\lambda_{N-2}-\lambda_N, \lambda_{N-1}-\lambda_N,0,\dots)$ and $\widetilde\mu:=(\mu_1-\mu_N, \dots, \mu_{N-2}-\mu_N, \mu_{N-1}-\mu_N,0\dots)$. But $N>l(\mu)$, so $\mu_N=0$ and hence 
$$\langle\lambda|\,T^{b,N}_{I,\tilde I}(x\mid\calA,\calB)\,|\mu\rangle=\langle\widetilde\lambda|\,T^{b,N-1}_{I,\tilde I+\lambda_{N}}(x\mid\calA,\calB)\, |\mu\rangle\ W^b_{a_{N+1},x,b_{N}}(\tilde I+\lambda_{N}, 0, \lambda_{N}, \tilde I).$$
Recall that $W^b_{a,x,b}(I,J,K,L)$ vanishes unless $L\leq J$, so the expression above vanishes unless $\tilde I=0$; hence in the left-hand side of \eqref{exchangeproof1} the nonzero terms have $\tilde I=0$. Since $R_{a_{N+1},x,y,b_{N+1}}(0,0,\tilde K, 0)=\delta_{\tilde K,0}$, the non-zero summands in the left-hand side of \eqref{exchangeproof1} must also have $\tilde K=0$ and we obtain
\begin{multline}
\label{exchangeproof2}
\sum_{\lambda}(y/b_0)^{\lambda_1} \langle\nu|\,T^{a,N}_{K,0}(y\mid\calA,\calB)\,|\lambda\rangle\langle\lambda|\,T^{b,N}_{I,0}(x\mid\calA,\calB)\,|\mu\rangle\\
=\sum_{J, L}\sum_{\lambda}R_{a_{1},x,y,b_{1}}(I,J,K, L) \langle\nu|\,T^{b,N}_{L,0}(x\mid\calA,\tau\calB)\,|\lambda\rangle  (y/b_0)^{\mu_1} \langle\lambda|\,T^{a,N}_{J,0}(y\mid\tau\calA,\calB)|\mu\rangle.
\end{multline}

\paragraph{{\bf Step 2:}} Now we use \eqref{exchangeproof2} to obtain the claim. First, using the definitions of $\mathbb T^b_I$ and $\mathbb T^a_K$, we obtain  
\begin{multline*}
\sum_{\lambda}b_0^{-I} \langle\nu|\,\mathbb T^{a}_{K}(y\mid\calA,\calB)\,|\lambda\rangle\langle\lambda|\,\mathbb T^{b}_{I}(x\mid\calA,\calB)\,|\mu\rangle\\
=\sum_{J, L}y^{-I}R_{a_{1},x,y,b_{1}}(I,J,K, L) \langle\nu|\,\mathbb T^{b}_{L}(x\mid\calA,\tau\calB)\mathbb T^{a}_{J}(y\mid\tau\calA,\calB)\,|\mu\rangle,
\end{multline*}
where in the left-hand side we have used that $\lambda_1=\mu_1+I$ to simplify the term $(y/b_0)^{\lambda_1}$. Both sides are now elements of $\Bbbk[[q,x,y^{-1}]]$, \emph{cf.} Remark \ref{polynomiality} and part (2) of Proposition \ref{stability}. So we can multiply both sides by $x^I\frac{(a_1/x;q)_I}{(q;q)_I}$ and take the sum over $I,K$ such that $I+\mu_1=K+\nu_1$, obtaining an identity in $\Bbbk[[q,x,y^{-1}]]$:
\begin{multline}
\label{exchangeproof4}
\sum_{\lambda}\langle\nu|\,\mathbb B^*(y\mid\calA,\calB)\,|\lambda\rangle\langle\lambda|\,\mathbb B(x\mid\calA,\calB)\,|\mu\rangle\\
=\sum_{\substack{I,K,J, L:\\I-K= L-J=\nu_1-\mu_1}}(x/y)^{I}\frac{(a_1/x;q)_I}{(q;q)_I} R_{a_{1},x,y,b_{1}}(I,J,K, L) \langle\nu|\,\mathbb T^{b}_{L}(x\mid\calA,\tau\calB)\mathbb T^{a}_{J}(y\mid\tau\calA,\calB)\,|\mu\rangle.
\end{multline}

To prove the claim it is now enough to show that for fixed $J,L\geq 0$
\begin{equation}
\label{qGauss}
\sum_{\substack{I,K\geq 0:\\I-K= L- J}}(x/y)^{I}\frac{(a/x;q)_I}{(q;q)_I}R_{a,x,y,b}(I,J,K, L)=\frac{(a/y;q)_\infty(x/b;q)_\infty}{(a/b;q)_\infty(x/y;q)_\infty} (x/b)^{L}\frac{(a/x;q)_{L}}{(q;q)_{L}}.
\end{equation}
Indeed, if \eqref{qGauss} holds we can apply it to the right-hand side of \eqref{exchangeproof4} to get 
\begin{multline}
\label{exchangeproof5}
\sum_{\lambda}\langle\nu|\,\mathbb B^*(y\mid\calA,\calB)\,|\lambda\rangle\langle\lambda|\,\mathbb B(x\mid\calA,\calB)\,|\mu\rangle\\
=\frac{(a_1/y;q)_\infty(x/b_1;q)_\infty}{(a_1/b_1;q)_\infty(x/y;q)_\infty}  \sum_{\substack{J,L:\\ L-J=\nu_1-\mu_1}}(x/b_1)^{L}\frac{(a_1/x;q)_{L}}{(q;q)_{L}} \langle\nu|\,\mathbb T^{b}_{L}(x\mid\calA,\tau\calB)\mathbb T^{a}_{J}(y\mid\tau\calA,\calB)\,|\mu\rangle\\
=\frac{(a_1/y;q)_\infty(x/b_1;q)_\infty}{(a_1/b_1;q)_\infty(x/y;q)_\infty}\, \langle\nu|\,\mathbb B(x\mid\calA,\tau\calB)\mathbb B^*(y\mid\tau\calA,\calB)\,|\mu\rangle.
\end{multline}

\paragraph{{\bf Step 3:}} To finish the proof we need to establish \eqref{qGauss}. Since both sides are power series in $x,y^{-1},q$ with coefficients in $\mathbb C(a,b)$, it is enough to consider the case when $x=q^{S}a$ for $S\in \mathbb Z_{\geq 0}$: if for a formal series $f(q,x)\in\mathcal F[[q,x]]$ we have $f(q, q^S)=0$ for all $S\in\mathbb Z_{\geq 0}$ then $f(q,x)=0$, \emph{cf.} \cite[Lemma 3.2]{Ste88}. So, from now on we set $x=q^{S}a$.

Let $A\in \mathbb Z_{\geq 0}$ be a sufficiently large integer. From \eqref{WYB2} we have
\begin{equation}
\label{qGaussproof}
\tikzbase{1.3}{-3}{
	\draw[fused] 
	(2,0) node[below,scale=0.6] {\color{black} $J;y,b$} -- (1,1)  -- (-1,1) node[left,scale=0.6] {\color{black} $0;y;b$};
	\draw[fused] 
	(-1,0) node[left,scale=0.6] {\color{black} $S;a,q^Sa$}   -- (1,0)  -- (2,1) node[above,scale=0.6] {\color{black} $L;a,q^Sa$};
	\draw[fused] 
	(0,-1) node[below,scale=0.6] {\color{black} $A;a,b$} -- (0,2) node[above,scale=0.6] {\color{black} $A+S+J-L;a,b$};
	\node[above right] at (0,1) {\tiny{ $W^a$}};
	\node[above right] at (0,0) {\tiny{ $W^b$}};
	\node[right] at (1.5,0.5) {\tiny{$R$}};
}\quad
=\quad
\tikzbase{1.3}{-3}{
	\draw[fused]
	(2,-0.5) node[right,scale=0.6] {\color{black} $J;y,b$}  -- (0,-0.5)  -- (-1,0.5) node[above,scale=0.6] {\color{black} $0;y,b$};
	\draw[fused] 
	(-1,-0.5) node[below,scale=0.6] {\color{black} $S;a,q^Sa$} -- (0,0.5)  -- (2,0.5) node[right,scale=0.6] {\color{black} $L;a,q^Sa$};
	\draw[fused] 
	(1,-1.5) node[below,scale=0.6] {\color{black} $A;a,b$} -- (1,1.5) node[above,scale=0.6] {\color{black} $A+S+J-L;a,b$};
	\node[above right] at (1,-0.5) {\tiny{ $W^a$}};
	\node[above right] at (1,0.5) {\tiny{ $W^b$}};
	\node[right] at (-0.5,0) {\tiny{$R$}};
}
\end{equation}

Consider the right-hand side of \eqref{qGaussproof}. Note that for $\tilde{J},\tilde{L}\geq 0$ we have
\begin{multline*}
R_{a,q^{S}a,y,b}(S,\tilde{J},0,\tilde{L})=\delta_{S+\tilde{J}=\tilde{L}} \Phi(\tilde{L}, 0; q^{-S}, q^Sa/b)    \Phi(S, 0; a/y, q^{-S}y/a)\\
=\delta_{S+\tilde{J}=\tilde{L}} (q^Sa/b)^{\tilde L}\frac{(q^{-S};q)_{\tilde L}}{(a/b;q)_{\tilde L}} (q^{-S}y/a)^S\frac{(a/y;q)_S}{(q^{-S};q)_S},
\end{multline*}
where for the first equality we have used the second line of \eqref{defR}, noting that since $K=0$ the only nonzero summand has $P=0$. Since $\tilde{L}=\tilde{J}+S\geq S$ and $(q^{-\tilde{L}};q)_S$ vanishes unless $\tilde{L}\leq S$, we get
$$
R_{a,q^{S}a,y,b}(S,\tilde{J},0,\tilde{L})=\begin{cases}
(y/b)^S\frac{(a/y;q)_S}{(a/b;q)_S},\qquad &\text{if} \ \tilde{J}=0, \tilde{L}=S;\\
0, \qquad &\text{otherwise}.
\end{cases}
$$
Hence in the right-hand side \eqref{qGaussproof} the configuration around the vertex of type $R$ is fixed, and the partition function is equal to
\begin{multline*}
(y/b)^S\frac{(a/y;q)_S}{(a/b;q)_S}W^a_{a,y,b}(A,J,0,A+J) W^b_{a,q^Sa,b}(S,A+J,A+J+S-L,L)\\
=(y/b)^S\frac{(a/y;q)_S}{(a/b;q)_S} (y/b)^A\frac{(a/y;q)_A}{(a/b;q)_A} (q^Sa/b)^L\frac{(q^{-S};q)_{L}(q^Sa/b;q)_{A+J-L}}{(a/b;q)_{A+J}}\frac{(q;q)_{A+J}}{(q;q)_L(q;q;)_{A+J-L}}.
\end{multline*}
For the left-hand side of \eqref{qGaussproof} we can use \eqref{defWa},\eqref{defWb} to write it as
\begin{multline*}
\sum_{I,K} (q^Sa/b)^{I}\frac{(q^{-S};q)_I(q^Sa/b;q)_{A-I}}{(a/b;q)_A}\frac{(q;q)_A}{(q;q)_I(q;q)_{A-I}}(y/b)^{A+S-I}\frac{(a/y;q)_{A+S-I}}{(a/b;q)_{A+S-I}}R_{a,q^Sa,y,b}(I,J,K,L).
\end{multline*}
Note that in the summation above we can assume $I\leq S$, since $(q^{-S};q)_I$ vanishes otherwise. In particular, the number of terms is bounded by $S$ regardless of the value of $A$.

To finish the proof we need to consider the dependence on $A$. Namely, we rewrite \eqref{qGaussproof} in a way that both sides are rational in $q^A$: using the identity
$$
\frac{(u;q)_{A+X}}{(u;q)_{A+Y}}=(uq^{A+Y};q)_{X-Y}
$$
and the expressions above for the both sides we readily obtain
\begin{multline}
\label{qGauss3}
\sum_{\substack{I\leq S\\K=I+J-L}} (q^Sa/y)^{I}\frac{(q^{-S};q)_I}{(q;q)_I}\frac{(q^{A-I+1};q)_I}{(q^{A+J-L+1};q)_{L}}\frac{(a q^A/y;q)_{S-I}}{(a q^{A+J}/b;q)_{S-L}}R_{a,q^Sa,y,b}(I,J,K,L)\\
=\frac{(a/y;q)_S}{(a/b;q)_S} (q^Sa/b)^L\frac{(q^{-S};q)_{L}}{(q;q)_L}.
\end{multline}
Note that the both sides of \eqref{qGauss3} are rational functions in $q^A$, which are equal when $A$ is a sufficiently large integer. Hence \eqref{qGauss3} holds for any value of $q^A$, and in particular we can set $q^A=0$ getting
$$
\sum_{\substack{I\leq S\\K=I+J-L}} (q^Sa/y)^{I}\frac{(q^{-S};q)_I}{(q;q)_I}R_{a,q^Sa,y,b}(I,J,K,L)=\frac{(a/y;q)_S}{(a/b;q)_S} (q^Sa/b)^L\frac{(q^{-S};q)_{L}}{(q;q)_L},
$$
which is exactly \eqref{qGauss} when $x=q^Sa$.
\end{proof}

\begin{rem} One can check that \eqref{qGauss} from the proof above is equivalent to the $q$-Gauss identity
$$
\sum_{k\geq 0} \(\frac{c}{ab}\)^k\frac{(a;q)_k(b;q)_k}{(c;q)_k(q;q)_k}=\frac{(c/a;q)_\infty(c/b;q)_\infty}{(c;q)_\infty(c/(ab);q)_\infty}.
$$
\end{rem}

\begin{prop}[{\cite[Proposition 4.5]{BK21}}]\label{Ccommute}
The following relation holds
$$
\mathbb B(x_1\mid\calA,\calB)\mathbb B(x_2\mid\tau\calA,\calB)=\mathbb B(x_2\mid \calA,\calB)\mathbb B(x_1\mid \tau\calA,\calB)
$$
\end{prop}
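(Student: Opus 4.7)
My plan is to run the same zipper scheme used in the proof of Proposition~\ref{BCexchange}, but with the $R$-corner replaced by a $W^b$-corner and the Yang--Baxter identity replaced by \eqref{WYB1} from Proposition~\ref{inhomYB1}. Before summing over the left boundary labels $r_1,r_2$, the product $\mathbb B(x_1\mid\calA,\calB)\mathbb B(x_2\mid\tau\calA,\calB)$ is represented by a two-row vertex-model stack: a top row for $\mathbb T^b_{r_1}(x_1\mid\calA,\calB)$ with horizontal parameters evolving $(a_1,x_1)\to(a_{N+2},x_1)$ and a bottom row for $\mathbb T^b_{r_2}(x_2\mid\tau\calA,\calB)$ with $(a_2,x_2)\to(a_{N+3},x_2)$. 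Attach a $W^b_{a_{N+3},x_1,x_2}$ corner vertex at the right end where the two row exits meet, with both of its free outputs set to $0$. The conservation law $I+J=K+L$ forces the corner inputs to vanish as well, the corner evaluates to $1$, and this attachment does not change the partition function.

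Next, iterate \eqref{WYB1} $N+1$ times to slide the $W^b$-corner column by column from the right end to the left end of the stack, in direct analogy with \eqref{BCZipper}. Tracking the horizontal parameters along each line, and using that $W^b$ preserves the $b$-label of each incident line but swaps the $a$-labels between the two lines it joins, one verifies that upon reaching the left end the corner has become $W^b_{a_2,x_1,x_2}$ and the two rows have effectively been exchanged: the new top row is $\mathbb T^b(x_2\mid\calA,\calB)$ and the new bottom row is $\mathbb T^b(x_1\mid\tau\calA,\calB)$, with the $\calA$-indices shifted correctly along each line. Equating the partition functions before and after zipping yields the operator identity
$$\mathbb T^b_{r_1}(x_1\mid\calA,\calB)\,\mathbb T^b_{r_2}(x_2\mid\tau\calA,\calB)=\sum_{C_1,C_2}W^b_{a_2,x_1,x_2}(r_1,r_2,C_2,C_1)\,\mathbb T^b_{C_2}(x_2\mid\calA,\calB)\,\mathbb T^b_{C_1}(x_1\mid\tau\calA,\calB).$$

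Multiplying this by the $\mathbb B$-prefactors $(x_1/b_0)^{r_1}(a_1/x_1;q)_{r_1}/(q;q)_{r_1}$ and $(x_2/b_0)^{r_2}(a_2/x_2;q)_{r_2}/(q;q)_{r_2}$ and summing over $r_1,r_2$ turns the left side into $\mathbb B(x_1\mid\calA,\calB)\mathbb B(x_2\mid\tau\calA,\calB)$, and the claimed commutation reduces to the scalar identity
$$\sum_{\substack{r_1,r_2\geq 0\\ r_1+r_2=C_1+C_2}}\frac{(x_1/b_0)^{r_1}(a_1/x_1;q)_{r_1}}{(q;q)_{r_1}}\frac{(x_2/b_0)^{r_2}(a_2/x_2;q)_{r_2}}{(q;q)_{r_2}}W^b_{a_2,x_1,x_2}(r_1,r_2,C_2,C_1)=\frac{(x_2/b_0)^{C_2}(a_1/x_2;q)_{C_2}}{(q;q)_{C_2}}\frac{(x_1/b_0)^{C_1}(a_2/x_1;q)_{C_1}}{(q;q)_{C_1}}.$$
Substituting the explicit weight \eqref{defWb}, cancelling the common powers of $x_1,x_2,b_0$ and the factor $(a_2/x_1;q)_{C_1}/(q;q)_{C_1}$, and reindexing by $m=r_2-C_1$ reduces the required identity to the $q$-Chu--Vandermonde identity
$$\sum_{k=0}^{n}c^{k}\frac{(a;q)_k}{(q;q)_k}\frac{(c;q)_{n-k}}{(q;q)_{n-k}}=\frac{(ac;q)_{n}}{(q;q)_{n}}$$
specialized at $a=a_1/x_1$, $c=x_1/x_2$, $n=C_2$. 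The main obstacle is the parameter bookkeeping in the zipper --- verifying that after the corner has passed all $N+1$ verticals it is precisely $W^b_{a_2,x_1,x_2}$ and the post-corner rows are exactly $\mathbb T^b(x_2\mid\calA,\calB)$ and $\mathbb T^b(x_1\mid\tau\calA,\calB)$ with the $\calA$-shift on the correct row; once this is secured, the final $q$-series reduction is a routine specialization of the $q$-binomial theorem.
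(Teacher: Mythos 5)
Your proposal is correct and follows the same strategy as the paper: attach a $W^b$-corner with zero outputs at the right end of the two-row stack, zip it leftward with \eqref{WYB1} to obtain
$$
\mathbb T^b_{r_1}(x_1\mid\calA,\calB)\,\mathbb T^b_{r_2}(x_2\mid\tau\calA,\calB)=\sum_{C_1,C_2}W^b_{a_2,x_1,x_2}(r_1,r_2,C_2,C_1)\,\mathbb T^b_{C_2}(x_2\mid\calA,\calB)\,\mathbb T^b_{C_1}(x_1\mid\tau\calA,\calB),
$$
and then resum against the $\mathbb B$-prefactors; your parameter bookkeeping (corner ending up as $W^b_{a_2,x_1,x_2}$, the $\tau$-shift landing on the $x_1$-row after the exchange) matches the paper's. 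The only genuine divergence is the last step: the paper establishes the scalar identity $\sum_{I,J}(x_1/b_0)^I(x_2/b_0)^J\tfrac{(a_1/x_1;q)_I(a_2/x_2;q)_J}{(q;q)_I(q;q)_J}W^b_{a_2,x_1,x_2}(I,J,K,L)$ $=(x_2/b_0)^K\tfrac{(a_1/x_2;q)_K}{(q;q)_K}(x_1/b_0)^L\tfrac{(a_2/x_1;q)_L}{(q;q)_L}$ by yet another degeneration of \eqref{WYB1} (setting $A_1=A_2=0$, $B_1=L$, $B_2=K$ and sending $B_3\to\infty$), whereas you verify it directly: after cancelling $(a_2/x_2;q)_{r_2}$, $(q;q)_{r_2}$ and the $C_1$-dependent factors, the sum over $m=r_2-C_1$ is exactly the $q$-Chu--Vandermonde convolution $\sum_k c^k\tfrac{(a;q)_k}{(q;q)_k}\tfrac{(c;q)_{n-k}}{(q;q)_{n-k}}=\tfrac{(ac;q)_n}{(q;q)_n}$ at $a=a_1/x_1$, $c=x_1/x_2$, $n=C_2$, which I checked does close the computation. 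Your route makes that step self-contained and elementary; the paper's keeps everything inside the single Yang--Baxter-type relation, at the cost of a limiting argument.
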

\begin{proof}[Idea of the proof] This is exactly \cite[Proposition 4.5]{BK21}, and the proof is similar to Proposition \ref{BCexchange} above, so we provide only a brief sketch of it. First, similarly to steps $1$,$2$ of the proof of Proposition \ref{BCexchange}, we use \eqref{WYB1} to get a commutation relation
$$
\mathbb T_I(x_1\mid \calA, \calB) \mathbb T_J(x_2 \mid \tau\calA, \calB)=\sum_{K,L} W^b_{a_2,x_1,x_2}(I,J,K,L) \mathbb T_K(x_2\mid \calA, \calB) \mathbb T_L(x_1 \mid \tau\calA, \calB).
$$ 
Then the claim follows from multiplying both sides by $(x_1/b_0)^I(x_2/b_0)^J\frac{(a_1/x_1;q)_I(a_2/x_2;q)_J}{(q;q)_I(q;q)_J}$, taking the sum over $I,J$ and applying the relation 
$$
\sum_{I,J}(x_1/b_0)^I(x_2/b_0)^J\frac{(a_1/x_1;q)_I(a_2/x_2;q)_J}{(q;q)_I(q;q)_J} W^b_{a_2,x_1,x_2}(I,J,K,L) =(x_2/b_0)^K\frac{(a_1/x_2;q)_K}{(q;q)_K}(x_1/b_0)^L\frac{(a_2/x_1;q)_L}{(q;q)_L}
$$
where $K,L$ are fixed. The last relation can be proved by taking \eqref{WYB1} and setting $A_1=A_2=0, B_1=L, B_2=K$, while $B_3\to \infty$ and $A_3=B_1+B_2+B_3$.
\end{proof}

\section{Inhomogeneous spin $q$-Whittaker polynomials}
\label{sqW-sect}
In this section we describe inhomogeneous spin $q$-Whittaker polynomials, originally introduced in \cite{BK21}, and apply the results from previous sections to establish a new Cauchy-type identity and a new characterization theorem for these functions. We continue to use $\calA=(a_0, a_1,\dots)$ and $\calB=(b_0,b_1,\dots)$ to denote sequences of parameters and we continue to use the notation $\tau\calA, \overline\calA$ from Section \ref{exchange-sect}.

\subsection{Basic properties} For a pair of partitions $\lambda,\mu$ and a collection of variables $x_1, \dots, x_n$ the \emph{inhomogeneous spin $q$-Whittaker polynomial} $\F_{\lambda/\mu}(x_1, \dots, x_n\mid \calA,\calB)$ is defined in terms of the operators $\B(x\mid\calA,\calB)$ from Section \ref{exchange-sect} by
\begin{equation}
\label{defF}
\F_{\lambda/\mu}(x_1, \dots, x_n\mid \calA,\calB)=\langle \lambda|\, \B(x_1\mid\calA,\calB)\B(x_2\mid\tau\calA, \calB)\dots \B(x_n\mid \tau^{n-1}\calA,\calB)\, |\mu\rangle.
\end{equation}
When $\mu=\varnothing$ we write $\F_{\lambda}$ instead of $\F_{\lambda/\varnothing}$. 

The following properties of $\F_{\lambda/\mu}(x_1, \dots, x_n\mid \calA,\calB)$ were proved in \cite{BK21};\footnote{Our functions $\F_{\lambda/\mu}$ were denoted by $\F^{\sf s}_{\lambda/\mu}$ in \cite{BK21}, and our $a_r,b_r$ are equal to $\xi_rs_r$ and $\xi_r/s_r$ from \cite{BK21}.} for the sake of completeness we sketch their proofs:

\begin{prop}\label{1variable} We have the following expression for the single-variable function $\F_{\lambda/\mu}(x\mid \calA,\calB)$:
$$
\F_{\lambda/\mu}(x\mid \calA,\calB)=
\begin{cases}
x^{|\lambda|-|\mu|}\displaystyle\prod_{r\geq 1} b_{r-1}^{\mu_{r}-\lambda_{r}}\dfrac{(a_{r}/x;q)_{\lambda_{r}-\mu_{r}}(x/b_{r};q)_{\mu_r-\lambda_{r+1}}(q;q)_{\mu_r-\mu_{r+1}}}{(q;q)_{\lambda_{r}-\mu_{r}}(q;q)_{\mu_r-\lambda_{r+1}}(a_{r+1}/b_r;q)_{\mu_{r}-\mu_{r+1}}},\qquad &\text{if}\ \mu\prec\lambda;\\
0,\qquad &\text{otherwise}.
\end{cases}
$$
\end{prop}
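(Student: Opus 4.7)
The plan is to unfold the definition of $\F_{\lambda/\mu}(x\mid\calA,\calB)$ using the explicit form of the row operator $\B(x\mid\calA,\calB)$, apply the interlacing and stability results of Proposition \ref{stability}, and then plug in the vertex weight formula \eqref{defWb}. By the definition of $\B(x\mid\calA,\calB)$ given just before Proposition \ref{BCdual},
$$
\F_{\lambda/\mu}(x\mid\calA,\calB)=\sum_{r\geq 0}(x/b_0)^r\frac{(a_1/x;q)_r}{(q;q)_r}\,\langle\lambda|\,\mathbb T^b_{r}(x\mid\calA,\calB)\,|\mu\rangle.
$$
By part (3) of Proposition \ref{stability}, the matrix element in the sum vanishes unless $\mu\prec\lambda$ and $r=\lambda_1-\mu_1$, which already accounts for the second case of the claimed formula. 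So from here on I fix $\mu\prec\lambda$ and $r=\lambda_1-\mu_1$.

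Using \eqref{defTb} (taking $N$ larger than $l(\lambda)$ by the stability part of Proposition \ref{stability}) with the boundary data $J_s=\mu_s-\mu_{s+1}$ and $K_s=\lambda_s-\lambda_{s+1}$, the internal labels are $L_s=(\lambda_1-\mu_1)+J_{[1,s]}-K_{[1,s]}$, and a telescoping calculation gives $L_s=\lambda_{s+1}-\mu_{s+1}$. Therefore the interlacing $\mu\prec\lambda$ determines the unique non-vanishing configuration, and
$$
\langle\lambda|\,\mathbb T^b_{\lambda_1-\mu_1}(x\mid\calA,\calB)\,|\mu\rangle=\prod_{r\geq 1}W^b_{a_{r+1},x,b_r}(\lambda_r-\mu_r,\,\mu_r-\mu_{r+1},\,\lambda_r-\lambda_{r+1},\,\lambda_{r+1}-\mu_{r+1}).
$$

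At this point it only remains to substitute into the explicit formula \eqref{defWb}. Each vertex weight contributes a factor
$$
(x/b_r)^{\lambda_{r+1}-\mu_{r+1}}\frac{(a_{r+1}/x;q)_{\lambda_{r+1}-\mu_{r+1}}(x/b_r;q)_{\mu_r-\lambda_{r+1}}}{(a_{r+1}/b_r;q)_{\mu_r-\mu_{r+1}}}\frac{(q;q)_{\mu_r-\mu_{r+1}}}{(q;q)_{\lambda_{r+1}-\mu_{r+1}}(q;q)_{\mu_r-\lambda_{r+1}}},
$$
and I would shift $r\mapsto r-1$ in those two factors carrying a $(\lambda_{r+1}-\mu_{r+1})$ subscript, absorbing the prefactor $(x/b_0)^{\lambda_1-\mu_1}(a_1/x;q)_{\lambda_1-\mu_1}/(q;q)_{\lambda_1-\mu_1}$ from $\B(x\mid\calA,\calB)$ as the $r=1$ term. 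After this reindexing, the $x$-powers collect to $x^{|\lambda|-|\mu|}$, the $b_{r-1}$-powers assemble to $\prod_{r\geq1}b_{r-1}^{\mu_r-\lambda_r}$, and the remaining $q$-Pochhammer symbols match the claimed expression term by term.

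The proof is almost entirely bookkeeping; there is no substantive obstacle. The main point requiring attention is the re-indexing of the factors $(x/b_r)^{\lambda_{r+1}-\mu_{r+1}}$ and $(a_{r+1}/x;q)_{\lambda_{r+1}-\mu_{r+1}}$ so that they combine cleanly with the operator prefactor at $r=1$, and verifying that the degenerate cases (when $\lambda_r=\mu_r$, $\mu_r=\lambda_{r+1}$, or $\mu_r=\mu_{r+1}$) behave correctly under the $q$-Pochhammer conventions of Section \ref{notation}, so that the formula extends without modification to partitions $\mu\prec\lambda$ with coinciding consecutive parts.
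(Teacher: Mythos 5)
Your proposal is correct and follows essentially the same route as the paper's (very terse) proof: vanishing via Proposition \ref{stability}(3), then unfolding $\B(x\mid\calA,\calB)$ through \eqref{defTb} and \eqref{defWb}, with the operator prefactor absorbed as the $r=1$ term after reindexing. The computation of the internal labels $L_s=\lambda_{s+1}-\mu_{s+1}$ and the per-vertex substitution both check out.
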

\begin{proof}
The vanishing part follows from Proposition \ref{stability}, part (3). For the explicit expression we use \eqref{defWb}, \eqref{defTb} and the relation
$$
\langle\lambda|\, \B(x\mid \calA, \calB)\,|\mu\rangle=(x/b_0)^{\lambda_1-\mu_1}\frac{(a_1/x;q)_{\lambda_1-\mu_1}}{(q;q)_{\lambda_1-\mu_1}}\langle\lambda|\, \mathbb T^b_{\lambda_1-\mu_1}(x\mid \calA, \calB)\,|\mu\rangle.
$$
\end{proof}

\begin{prop}\label{branching} The following branching rule holds:
$$
\F_{\lambda/\nu}(x_1, x_2, \dots, x_n\mid\calA,\calB)=\sum_{\mu\prec \lambda}\F_{\lambda/\mu}(x_1\mid \calA, \calB)\F_{\mu/\nu}(x_2, \dots, x_n\mid \tau\calA, \calB),
$$
where the sum is over partitions $\mu$ such that $\lambda_{r}\geq\mu_{r}\geq\lambda_{r+1}$ for all $r$.
\end{prop}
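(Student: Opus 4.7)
The proposition is, essentially, a resolution-of-identity statement: it asks us to insert a complete set $\sum_\mu |\mu\rangle\langle\mu|$ between the first operator $\B(x_1\mid\calA,\calB)$ and the remaining string in \eqref{defF}, and to show that only $\mu$ interlacing $\lambda$ survives.

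The plan is as follows. First, I would set
$$
|\Psi\rangle := \B(x_2\mid\tau\calA,\calB)\cdots\B(x_n\mid\tau^{n-1}\calA,\calB)\,|\nu\rangle,
$$
which makes sense as an element of $V[[x_2,\dots,x_n,q]]$: indeed, by the stability and interlacing statements of Proposition \ref{stability}(3)-(4), each $\mathbb T^b_r(x\mid\tau^{j}\calA,\calB)$ maps $V^{(N)}$ into $V^{(N+1)}$ with control on partition length, so applying the $n-1$ operators iteratively to $|\nu\rangle$ produces for each monomial $x_2^{r_2}\cdots x_n^{r_n}q^{\ell}$ a finite linear combination of basis vectors $|\mu\rangle$. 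Expanding in this basis yields
$$
|\Psi\rangle = \sum_{\mu}\, c_\mu(x_2,\dots,x_n)\,|\mu\rangle, \qquad c_\mu := \langle\mu|\B(x_2\mid\tau\calA,\calB)\cdots\B(x_n\mid\tau^{n-1}\calA,\calB)|\nu\rangle,
$$
and by definition $c_\mu=\F_{\mu/\nu}(x_2,\dots,x_n\mid\tau\calA,\calB)$.

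Next, applying $\B(x_1\mid\calA,\calB)$ to $|\Psi\rangle$ and pairing with $\langle\lambda|$ gives
$$
\F_{\lambda/\nu}(x_1,\dots,x_n\mid\calA,\calB) = \sum_{\mu}\langle\lambda|\B(x_1\mid\calA,\calB)|\mu\rangle\,\F_{\mu/\nu}(x_2,\dots,x_n\mid\tau\calA,\calB).
$$
Each term is a well-defined element of $\Bbbk[[x_1,\dots,x_n,q]]$, and for each coefficient only finitely many $\mu$ contribute: by Proposition \ref{stability}(3) the factor $\langle\lambda|\B(x_1\mid\calA,\calB)|\mu\rangle$ vanishes unless $\mu\prec\lambda$ (the matrix element is a linear combination over $r\geq 0$ of $\langle\lambda|\mathbb T^b_r(x_1\mid\calA,\calB)|\mu\rangle$, each of which vanishes unless $\mu\prec\lambda$ with $r=\lambda_1-\mu_1$). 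So the sum reduces to $\mu\prec\lambda$, and we recognize $\langle\lambda|\B(x_1\mid\calA,\calB)|\mu\rangle = \F_{\lambda/\mu}(x_1\mid\calA,\calB)$ by definition \eqref{defF}.

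There is really no hard step here; the content lies entirely in the interlacing/finiteness provided by Proposition \ref{stability}. The only thing to be careful about is the formal-series bookkeeping — namely, verifying that pulling the sum $\sum_\mu |\mu\rangle\langle\mu|$ through $\B(x_1\mid\calA,\calB)$ is legitimate, which follows because once $\lambda$ is fixed the set of $\mu\prec\lambda$ is finite, so each coefficient in the power series involves only a finite sum.
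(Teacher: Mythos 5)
Your proposal is correct and follows exactly the route the paper takes: the paper's proof of this proposition is a one-line appeal to the definition \eqref{defF} together with the interlacing statement of Proposition \ref{stability}(3), which is precisely the resolution-of-identity argument you spell out. Your additional care about the formal-series bookkeeping and finiteness of the sum is a faithful elaboration of the same argument rather than a different approach.
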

\begin{proof}
Follows from \eqref{defF} and Proposition \ref{stability}, part (3).
\end{proof}

\begin{cor}
$\F_{\lambda/\mu}(x_1, \dots, x_n\mid\calA,\calB)=0$ unless $\mu\subset\lambda$ and $l(\lambda)\leq l(\mu)+n$.\qed
\end{cor}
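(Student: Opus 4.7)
The proof is a straightforward induction on $n$ using the branching rule (Proposition \ref{branching}) as the inductive step and the explicit single-variable formula (Proposition \ref{1variable}) as the base.

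For the base case $n=1$, Proposition \ref{1variable} gives $\F_{\lambda/\mu}(x\mid\calA,\calB)=0$ unless $\mu\prec\lambda$, and the interlacing relation $\mu\prec\lambda$ implies both $\mu\subset\lambda$ and $0\leq l(\lambda)-l(\mu)\leq 1$, as noted in Section \ref{notation}. This gives exactly the $n=1$ case of the claim.

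For the inductive step, assume the claim holds for $n-1$ variables. By Proposition \ref{branching},
\[
\F_{\lambda/\mu}(x_1,\dots,x_n\mid\calA,\calB)=\sum_{\nu\prec\lambda}\F_{\lambda/\nu}(x_1\mid\calA,\calB)\,\F_{\nu/\mu}(x_2,\dots,x_n\mid\tau\calA,\calB).
\]
For a summand to be nonzero, the single-variable factor forces $\nu\prec\lambda$ (so $\nu\subset\lambda$ and $l(\lambda)\leq l(\nu)+1$), while the inductive hypothesis applied to the second factor forces $\mu\subset\nu$ and $l(\nu)\leq l(\mu)+n-1$. Combining these gives $\mu\subset\nu\subset\lambda$ and $l(\lambda)\leq l(\nu)+1\leq l(\mu)+n$, so the entire sum vanishes unless $\mu\subset\lambda$ and $l(\lambda)\leq l(\mu)+n$.

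There is no real obstacle here; the statement is an immediate structural consequence of the two preceding propositions, and the only thing to check is that the interlacing and containment bounds telescope correctly through the branching expansion.
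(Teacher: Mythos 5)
Your proof is correct and is exactly the argument the paper intends: the corollary is stated with no proof precisely because it follows by iterating the branching rule (Proposition \ref{branching}) together with the single-variable vanishing of Proposition \ref{1variable}, which is what your induction does. The telescoping of the containments $\mu\subset\nu\subset\lambda$ and the length bounds $l(\lambda)\leq l(\nu)+1\leq l(\mu)+n$ is handled correctly.
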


\begin{prop} The functions $\F_{\lambda/\mu}(x_1, x_2, \dots, x_n\mid\calA,\calB)$ are symmetric polynomials in $x_1, \dots, x_n$.
\end{prop}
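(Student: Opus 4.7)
The plan is to assemble the statement from two pieces that have already been established: polynomiality in each $x_i$ and invariance under the symmetric group action. Neither step requires new computation — the real work was done in Proposition~\ref{1variable} and Proposition~\ref{Ccommute}.

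First I would address polynomiality. Iterating the branching rule (Proposition~\ref{branching}) gives
\[
\F_{\lambda/\mu}(x_1,\dots,x_n\mid\calA,\calB)=\sum_{\mu=\lambda^{(0)}\prec\lambda^{(1)}\prec\dots\prec\lambda^{(n)}=\lambda}\ \prod_{i=1}^{n}\F_{\lambda^{(i)}/\lambda^{(i-1)}}\!\left(x_i\mid\tau^{i-1}\calA,\calB\right),
\]
where the sum is finite because there are only finitely many interlacing chains between $\mu$ and $\lambda$. By Proposition~\ref{1variable}, each single-variable factor $\F_{\lambda^{(i)}/\lambda^{(i-1)}}(x_i\mid\tau^{i-1}\calA,\calB)$ is a polynomial in $x_i$: the overall prefactor $x_i^{|\lambda^{(i)}|-|\lambda^{(i-1)}|}$ combines with $\prod_{r\geq 1}(a_r/x_i;q)_{\lambda^{(i)}_r-\lambda^{(i-1)}_r}$ to yield a polynomial of degree $\sum_r(\lambda^{(i)}_r-\lambda^{(i-1)}_r)=|\lambda^{(i)}|-|\lambda^{(i-1)}|$, while the remaining factors $(x_i/b_r;q)_{\lambda^{(i-1)}_r-\lambda^{(i)}_{r+1}}$ are already polynomials in $x_i$. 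Thus $\F_{\lambda/\mu}$ is a polynomial in $x_1,\dots,x_n$.

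For symmetry it is enough to check invariance under each adjacent transposition $x_i\leftrightarrow x_{i+1}$, since these generate $S_n$. Applying Proposition~\ref{Ccommute} with $\calA$ replaced by $\tau^{i-1}\calA$ gives
\[
\B(x_i\mid\tau^{i-1}\calA,\calB)\,\B(x_{i+1}\mid\tau^{i}\calA,\calB)=\B(x_{i+1}\mid\tau^{i-1}\calA,\calB)\,\B(x_i\mid\tau^{i}\calA,\calB).
\]
Substituting this into the defining product \eqref{defF}, the two factors at positions $i,i+1$ get swapped and the parameter tower $\calA,\tau\calA,\tau^2\calA,\dots$ is unchanged, so the whole expression equals $\F_{\lambda/\mu}$ with $x_i$ and $x_{i+1}$ interchanged.

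There is no genuine obstacle here — Proposition~\ref{Ccommute} is essentially designed for this application, and the single-variable formula from Proposition~\ref{1variable} makes polynomiality transparent. The only point to keep in mind is that even though $\B(x\mid\calA,\calB)$ is a priori only defined as an operator on formal power series in $x$ and $q$, the matrix elements against fixed partitions $\langle\lambda|\cdot|\mu\rangle$ collapse to a single term in the sum defining $\B$, yielding an honest polynomial; combined with the finiteness of the sum from branching this ensures that all manipulations above take place within $\Bbbk[x_1,\dots,x_n]$ rather than at the level of formal series.
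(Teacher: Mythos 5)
Your proof is correct and follows essentially the same route as the paper: polynomiality from the branching rule (Proposition~\ref{branching}) combined with the explicit single-variable formula of Proposition~\ref{1variable}, and symmetry from the commutation relation of Proposition~\ref{Ccommute} applied to adjacent factors in the defining product. The closing remark about the matrix elements of $\B(x\mid\calA,\calB)$ collapsing to a single term is a correct and worthwhile clarification, consistent with how the paper handles the formal-power-series issue.
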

\begin{proof}
$\F_{\lambda/\mu}(x_1, x_2, \dots, x_n\mid\calA,\calB)$ is a polynomial since $\langle \lambda|\, \B(x\mid \calA,\calB)\,|\mu\rangle$ is a polynomial in $x$ for any $\lambda,\mu$, and $\F_{\lambda/\mu}(x_1, x_2, \dots, x_n\mid\calA,\calB)$ is a finite sum of products of such matrix elements thanks to Proposition \ref{branching}. The symmetry in $x_1, \dots, x_n$ follows from Proposition \ref{Ccommute}.
\end{proof}

\begin{rem}\label{graphicalF} Using \eqref{defTbgraph}, we can also obtain a graphical definition of the functions $\F_{\lambda/\mu}(x_1, \dots, x_n\mid\calA,\calB)$ in terms of vertex models:
$$
\F_{\lambda/\mu}(x_1, \dots, x_n\mid\calA,\calB)=\sum_{r_1, \dots, r_n\geq 0}\prod_{i=1}^n(x_i/b_0)^{r_i}\frac{(a_ix^{-1}_i;q)_{r_i}}{(q;q)_{r_i}} Z^{r_1, \dots, r_n}_{\lambda/\mu}
$$
where $Z^{r_1, \dots, r_n}_{\lambda/\mu}$ is the partition function from Figure \ref{partitionFunctionZ}.
\begin{figure}
\tikz{1}{
	\foreach\y in {4,...,5}{
		\draw[fused*] (1,\y) -- (8.1,\y);
		\node[right] at (8.1,\y) {$\dots$};
		\draw[fused] (8.7,\y) -- (10,\y);
	}
	\draw[fused*] (1,2) -- (8.1,2);
	\node[right] at (8.1,2) {$\dots$};
	\draw[fused] (8.7,2) -- (10,2);
	\foreach\x in {1,...,3}{
		\draw[fused] (2.5*\x-0.5,3.3) -- (2.5*\x-0.5,6);
		\draw[fused*] (2.5*\x-0.5,1) -- (2.5*\x-0.5,2.5);
		\node at (2.5*\x-0.5, 3) {$\vdots$};
	}
	\node[above right] at (2,5) {\tiny $W^b$};
	\node[above right] at (4.5,5) {\tiny $W^b$};
	
	\node[above right] at (2,4) {\tiny $W^b$};
	\node[above right] at (4.5,4) {\tiny $W^b$};
	
	\node[above right] at (2,2) {\tiny $W^b$};
	\node[above right] at (4.5,2) {\tiny $W^b$};
	\node[above] at (7,6) {$\cdots$};
	\node[above] at (4.5,6) {\tiny $\lambda_2-\lambda_3; a_2, b_2$};
	\node[above] at (2,6) {\tiny $\lambda_1-\lambda_2; a_1, b_1$};
	\node[left] at (1,2) {\tiny $r_n; a_n, x_n$};
	\node[left] at (1,4) {\tiny $r_2; a_2, x_2$};
	\node[left] at (1,5) {\tiny $r_1; a_1, x_1$};
	\node[below] at (7,1) {$\cdots$};
	\node[below] at (4.5,1) {\tiny $\mu_2-\mu_3; a_{n+2}, b_2$};
	\node[below] at (2,1) {\tiny $\mu_1-\mu_2; a_{n+1}, b_1$};
	\node[right] at (10,2) {$0$};
	\node[right] at (10,3) {$\vdots$};
	\node[right] at (10,4) {$0$};
	\node[right] at (10,5) {$0$};
}
\caption{\label{partitionFunctionZ} The partition function $Z^{r_1, \dots, r_n}_{\lambda/\mu}$ used to compute $\F_{\lambda/\mu}(x_1, \dots, x_n\mid\calA,\calB)$.}
\end{figure}
\end{rem}

\begin{rem} Spin $q$-Whittaker polynomials were originally introduced in \cite{BW17}, and later a different but related version was constructed in \cite{MP20}. The inhomogeneous spin $q$-Whittaker  polynomials $\F_{\lambda/\mu}(x_1, \dots, x_n\mid\calA,\calB)$ generalize both these versions: when $a_1=a_2=\dots=s$ and $b_1=b_2=\dots=s^{-1}$ the function $\F_{\lambda/\mu}(x_1, \dots, x_n\mid\calA,\calB)$ degenerates to the corresponding spin $q$-Whittaker polynomial from \cite{BW17}, while setting $b_1=b_2=\dots=s^{-1}$, $a_{n+l(\mu)}=0$ and $a_i=s$ for all $i\neq n+l(\mu)$ reduces $\F_{\lambda/\mu}(x_1, \dots, x_n\mid\calA,\calB)$ to the version from \cite{MP20}.
\end{rem}

\subsection{Cauchy identity}
To formulate the Cauchy identity we define dual functions $\F^*(y_1, \dots, y_m\mid \calA,\calB)$ by
$$
\F^*_{\lambda/\mu}(y_1, \dots, y_m\mid \calA,\calB)=\frac{\psi_\lambda(\calA,\calB)}{\psi_\mu(\tau^m\calA,\calB)}\F_{\lambda/\mu}(y_1, \dots, y_m\mid \calA, \calB),
$$
where $\psi_\lambda(\calA,\calB)$ is defined in Proposition \ref{BCdual}:
$$
\psi_\lambda(\calA,\calB)=\prod_{r\geq 0} (b_r/a_r)^{\lambda_{r+1}}\prod_{r\geq 1}\frac{(q;q)_{\lambda_r-\lambda_{r+1}}}{(a_r/b_r;q)_{\lambda_r-\lambda_{r+1}}}.
$$
Equivalently, the dual functions can be defined using the operators $\B^*(y\mid\calA,\calB)$ from Section \ref{exchange-sect}:

\begin{prop}\label{dual} We have
$$
\F^*_{\lambda/\mu}(y_1, \dots, y_m\mid \overline{\calB},\overline{\calA})=\langle \mu|\, \B^*(y^{-1}_m\mid\calA,\tau^{m-1}\calB)\dots\B^*(y^{-1}_2\mid\calA, \tau\calB)\B^*(y_1^{-1}\mid \calA,\calB)\, |\lambda\rangle.   
$$
\end{prop}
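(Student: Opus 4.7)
The plan is to expand both sides into matrix elements, apply Proposition~\ref{BCdual} termwise to the right-hand side, and verify that the resulting scalar factor matches the definition of $\F^*$. First, I would insert complete sets of states between successive $\B^*$ operators: setting $\nu^{(0)}=\lambda$, $\nu^{(m)}=\mu$, the right-hand side becomes
\begin{equation*}
\sum_{\nu^{(1)},\dots,\nu^{(m-1)}} \prod_{i=1}^m \langle \nu^{(i)}|\, \B^*(y_i^{-1}\mid \calA,\tau^{i-1}\calB)\,|\nu^{(i-1)}\rangle.
\end{equation*}

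Next I would apply Proposition~\ref{BCdual} with $y=y_i^{-1}$ and $\calB$ replaced by $\tau^{i-1}\calB$ to each factor. This rewrites the $i$-th matrix element as
\begin{equation*}
\frac{\psi_{\nu^{(i-1)}}(\calA,\tau^{i-1}\calB)}{\psi_{\nu^{(i)}}(\calA,\tau^{i}\calB)}\,\langle \nu^{(i-1)}|\,\B(y_i\mid \overline{\tau^{i-1}\calB},\overline{\calA})\,|\nu^{(i)}\rangle,
\end{equation*}
noting that $\overline{\tau^{i-1}\calB}=\tau^{i-1}\overline{\calB}$, so the first sequence of parameters in the $i$-th $\B$-operator is exactly the $(i-1)$-fold shift of $\overline{\calB}$. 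The $\psi$-ratios then telescope along the chain, leaving only the boundary factor $\psi_\lambda(\calA,\calB)/\psi_\mu(\calA,\tau^m\calB)$. What remains of the matrix elements reassembles, after summing over intermediate partitions, into
\begin{equation*}
\langle \lambda|\,\B(y_1\mid \overline{\calB},\overline{\calA})\B(y_2\mid \tau\overline{\calB},\overline{\calA})\cdots \B(y_m\mid \tau^{m-1}\overline{\calB},\overline{\calA})\,|\mu\rangle=\F_{\lambda/\mu}(y_1,\dots,y_m\mid \overline{\calB},\overline{\calA}),
\end{equation*}
by the definition \eqref{defF} of $\F_{\lambda/\mu}$ with parameter sequences $\overline{\calB},\overline{\calA}$.

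The only remaining step is to check that the accumulated prefactor $\psi_\lambda(\calA,\calB)/\psi_\mu(\calA,\tau^m\calB)$ equals the prefactor $\psi_\lambda(\overline{\calB},\overline{\calA})/\psi_\mu(\tau^m\overline{\calB},\overline{\calA})$ appearing in the definition of $\F^*_{\lambda/\mu}(y_1,\dots,y_m\mid\overline{\calB},\overline{\calA})$. This is a direct verification from the formula for $\psi$: under the simultaneous substitution $a_r\mapsto b_r^{-1}$, $b_r\mapsto a_r^{-1}$, the ratios $b_r/a_r$ and $a_r/b_r$ are invariant, so $\psi_\lambda(\calA,\calB)=\psi_\lambda(\overline{\calB},\overline{\calA})$, and the same observation applied to the shifted sequences yields $\psi_\mu(\calA,\tau^m\calB)=\psi_\mu(\tau^m\overline{\calB},\overline{\calA})$. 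There is no real obstacle here; the main bookkeeping care is simply to track which sequence is shifted and to confirm the telescoping relies on $X_i=Y_i$ in the telescoping pattern $\prod_{i=1}^m X_{i-1}/Y_i$, which holds since both numerator and denominator at index $i$ evaluate $\psi_{\nu^{(i)}}$ at $(\calA,\tau^i\calB)$.
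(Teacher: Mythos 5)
Your proposal is correct and is exactly the argument the paper has in mind: the paper's proof is the one-line "Follows immediately from Proposition \ref{BCdual}. Note that $\psi_\lambda(\calA,\calB)=\psi_\lambda(\overline{\calB},\overline{\calA})$", and you have simply written out the termwise application of Proposition \ref{BCdual}, the telescoping of the $\psi$-ratios, and the identification of the boundary prefactor. All the bookkeeping (the shift $\overline{\tau^{i-1}\calB}=\tau^{i-1}\overline{\calB}$, the matching of numerator and denominator at each index, and the invariance of $\psi$ under swapping and inverting the two parameter sequences) checks out.
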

\begin{proof}
Follows immediately from Proposition \ref{BCdual}. Note that $\psi_\lambda(\calA,\calB)=\psi_\lambda(\overline{\calB},\overline{\calA})$.
\end{proof}

\begin{theo} \label{cauchyTheo}For any partitions $\mu,\nu$ the following equality of formal power series in $x_i,y_j,q$ holds:
\begin{multline*}
\sum_{\lambda}\F_{\lambda/\mu}(x_1, \dots, x_n\mid\calA,\calB)\F^*_{\lambda/\nu}(y_1, \dots, y_m\mid \overline{\calB}, \overline{\calA})\\
=\prod_{i=1}^n\prod_{j=1}^m\frac{(a_iy_j;q)_\infty(x_i/b_j;q)_\infty}{(x_iy_j;q)_\infty(a_i/b_j;q)_\infty}\sum_{\lambda}\F^*_{\mu/\lambda}(y_1, \dots, y_m\mid\overline\calB,\tau^n\overline\calA) \F_{\nu/\lambda}(x_1, \dots, x_n\mid \calA,\tau^m\calB).
\end{multline*}
In particular, setting $\mu=\nu=\varnothing$ yields
\begin{equation}
\label{Cauchy}
\sum_{\lambda}\F_{\lambda}(x_1, \dots, x_n\mid\calA,\calB)\F^*_{\lambda}(y_1, \dots, y_m\mid \overline{\calB}, \overline{\calA})=\prod_{i=1}^n\prod_{j=1}^m\frac{(a_iy_j;q)_\infty(x_i/b_j;q)_\infty}{(x_iy_j;q)_\infty(a_i/b_j;q)_\infty}.
\end{equation}
\end{theo}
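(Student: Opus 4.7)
The strategy is to combine the definition \eqref{defF} of $\F_{\lambda/\mu}$ with Proposition \ref{dual}, rewrite both sides of the identity as single matrix elements of ordered products of the row operators $\B$ and $\B^*$, and then transform one expression into the other by iterating the exchange relation of Proposition \ref{BCexchange}. Concretely, inserting $\sum_\lambda|\lambda\rangle\langle\lambda|$, the left-hand side becomes
\[
\langle\nu|\,\B^*(y_m^{-1}\mid\calA,\tau^{m-1}\calB)\cdots\B^*(y_1^{-1}\mid\calA,\calB)\,\B(x_1\mid\calA,\calB)\cdots\B(x_n\mid\tau^{n-1}\calA,\calB)\,|\mu\rangle,
\]
while the sum on the right-hand side is
\[
\langle\nu|\,\B(x_1\mid\calA,\tau^m\calB)\cdots\B(x_n\mid\tau^{n-1}\calA,\tau^m\calB)\,\B^*(y_m^{-1}\mid\tau^n\calA,\tau^{m-1}\calB)\cdots\B^*(y_1^{-1}\mid\tau^n\calA,\calB)\,|\mu\rangle.
\]
Thus the theorem reduces to an operator statement about moving every $\B^*$ past every $\B$ inside a single matrix element, while accumulating the Cauchy kernel.

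I would then perform $nm$ adjacent swaps in the following order: first push $\B^*(y_1^{-1})$ rightward through $\B(x_1),\ldots,\B(x_n)$; then do the same with $\B^*(y_2^{-1})$; and so on up to $\B^*(y_m^{-1})$. A short induction shows that just before the swap of $\B^*(y_j^{-1})$ with $\B(x_i)$, the two adjacent factors are $\B^*(y_j^{-1}\mid\tau^{i-1}\calA,\tau^{j-1}\calB)$ and $\B(x_i\mid\tau^{i-1}\calA,\tau^{j-1}\calB)$ with matching parameter sequences, so Proposition \ref{BCexchange} applies. Since the index-$1$ entries of $\tau^{i-1}\calA$ and $\tau^{j-1}\calB$ are $a_i$ and $b_j$ respectively, each swap contributes precisely the factor
\[
\frac{(a_i y_j;q)_\infty(x_i/b_j;q)_\infty}{(a_i/b_j;q)_\infty(x_iy_j;q)_\infty},
\]
and shifts the sequences of the two factors by one additional $\tau$ in the expected direction. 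After all $nm$ swaps the operator ordering matches the second display above and the accumulated scalar is exactly the Cauchy kernel, giving the general skew identity. The specialization \eqref{Cauchy} with $\mu=\nu=\varnothing$ then follows immediately since $\F_{\varnothing/\lambda}$ vanishes unless $\lambda=\varnothing$, in which case $\F_{\varnothing/\varnothing}=1$, collapsing the right-hand sum to a single term.

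The only point needing care is the parameter bookkeeping: one must verify that at every swap the shifted sequences of the two adjacent factors agree and that their index-$1$ entries reproduce the expected $a_i, b_j$, which amounts to tracking how each round of pushing $\B^*(y_j^{-1})$ shifts $\calB\mapsto\tau\calB$ in the remaining $\B$'s and $\calA\mapsto\tau\calA$ in $\B^*(y_j^{-1})$ itself. This is routine once the swap order is fixed. There is no further analytic obstacle, since the real content (equivalent to the $q$-Gauss summation, as noted in the remark after Proposition \ref{BCexchange}) has already been absorbed into the exchange relation. The identity is interpreted in the ring of formal power series in $x_i,y_j,q$; the substitution $y\mapsto y_j^{-1}$ converts the $y^{-1}$-power-series statement of Proposition \ref{BCexchange} into a $y_j$-power-series statement, and iterating the exchange stays within this ring at every step.
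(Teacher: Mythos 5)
Your proof is correct and follows essentially the same route as the paper: both rewrite the two sides as matrix elements of ordered products of $\B$ and $\B^*$ operators via Proposition \ref{dual} and then commute every $\B^*$ past every $\B$ using the exchange relation of Proposition \ref{BCexchange}, accumulating the Cauchy kernel. The only cosmetic difference is that the paper first packages one full pass of a single $\B^*$ through the whole block $\B^{(n)}$ and then iterates over $j$, whereas you perform the $nm$ adjacent swaps individually; the parameter bookkeeping you describe matches the paper's.
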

\begin{proof} This follows from Proposition \ref{BCexchange} in the following way: set 
$$
\B^{(n)}(x_1,\dots,x_n\mid \calA,\calB):=\B(x_1\mid \calA,\calB)\B(x_2\mid\tau\calA,\calB)\dots\B(x_n\mid\tau^{n-1}\calA,\calB).
$$
Repeatedly using the exchange relation from Proposition \ref{BCexchange}, shifting $\calA$ on each step, we have
$$
\B^*(y\mid \calA,\calB)\B^{(n)}(x_1,\dots,x_n\mid \calA,\calB)=\prod_{i=1}^n\frac{(a_i/y;q)_\infty(x_i/b_1;q)_\infty}{(x_i/y;q)_\infty(a_i/b_1;q)_\infty}\,\B^{(n)}(x_1,\dots,x_n\mid \calA,\tau\calB)\B^*(y\mid \tau^{n}\calA,\calB).
$$
Iterating the identity above $m$ times, substituting $y=y_j^{-1}$ for $j=1, \dots, m$, we get 
\begin{multline*}
\B^*(y_m^{-1}\mid \calA,\tau^{m-1}\calB)\dots\B^*(y_1^{-1}\mid \calA,\calB)\B^{(n)}(x_1,\dots,x_n\mid \calA,\calB)\\
=\prod_{i=1}^n\prod_{j=1}^m\frac{(a_iy_j;q)_\infty(x_i/b_j;q)_\infty}{(x_iy_j;q)_\infty(a_i/b_j;q)_\infty}\,\B^{(n)}(x_1,\dots,x_n\mid \calA,\tau^m\calB)\B^*(y_m^{-1}\mid \tau^{n}\calA,\tau^{m-1}\calB)\dots \B^*(y_1^{-1}\mid \tau^n\calA,\calB).
\end{multline*}
Evaluating both sides of the last identity at $\langle \nu|\cdot|\mu\rangle$ yields the claim.
\end{proof}

\begin{rem} In the case $b_0=b_1=b_2=\dots=1$ Theorem \ref{cauchyTheo} was proved in \cite{BK21} using the dual skew Cauchy identity between spin $q$-Whittaker functions and spin Hall-Littlewood functions.
\end{rem}

\subsection{Vanishing and characterization properties} It turns out that the functions $\F_\lambda(x_1, \dots, x_n\mid\calA,\calB)$ satisfy vanishing and characterization properties similar to interpolation symmetric functions. We start with the vanishing property. Recall that $\Y^n$ denotes the set of partitions of length at most $n$. For a partition $\mu\in\Y^n$ set 
\begin{equation}
\label{xpoints}
\x^n_{\calA}(\mu):=(a_1q^{\mu_1-\mu_2},a_2q^{\mu_2-\mu_3},\dots,a_nq^{\mu_n}).
\end{equation}

\begin{prop} \label{vanishing}Let $\lambda,\mu\in\Y^n$ be partitions of length at most $n$. Then we have
$$
\F_\lambda(\x^n_{\calA}(\mu)\mid\calA, \calB)=\F_{\lambda}(a_1q^{\mu_1-\mu_2}, a_2q^{\mu_2-\mu_3},\dots, a_nq^{\mu_n}\mid \calA,\calB)=0,\qquad \text{unless}\ \lambda\subseteq\mu.
$$
Moreover, when $\lambda=\mu$, we have
$$
\F_{\lambda}(\x_{\calA}^{n}(\lambda)\mid \calA,\calB)=H_\lambda(\calA,\calB)\neq 0.
$$
where
$$
H_\lambda(\calA,\calB)=(-1)^{\lambda_1}q^{\frac{\lambda_1(\lambda_1-1)}{2}}\prod_{i=1}^n (a_i/b_{i-1})^{\lambda_{i}}\displaystyle\prod_{i,j= 1}^n\dfrac{(q^{\lambda_{i+1}-\lambda_i}a_{j+i}/a_{i};q)_{\lambda_{j+i}-\lambda_{j+i+1}}}{(a_{j+i}/b_j;q)_{\lambda_{j+i}-\lambda_{j+i+1}}}.
$$
\end{prop}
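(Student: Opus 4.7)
The plan is to proceed by induction on $n$, using the branching rule (Proposition \ref{branching}) to reduce to the explicit single-variable formula (Proposition \ref{1variable}). In the base case $n=1$, the formula of Proposition \ref{1variable} for $\F_\lambda(x\mid\calA,\calB)$ contains the factor $(a_1/x;q)_{\lambda_1}$, which at $x=a_1q^{\mu_1}$ becomes $(q^{-\mu_1};q)_{\lambda_1}$ and vanishes precisely when $\lambda_1>\mu_1$. For $\lambda=\mu$, direct substitution together with $(q^{-N};q)_N=(-1)^N q^{-N(N+1)/2}(q;q)_N$ produces the claimed value $H_\lambda(\calA,\calB)$.

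For the inductive step, apply the branching rule
\[
\F_\lambda(x_1,\ldots,x_n\mid\calA,\calB)=\sum_{\kappa\prec\lambda}\F_{\lambda/\kappa}(x_1\mid\calA,\calB)\,\F_\kappa(x_2,\ldots,x_n\mid\tau\calA,\calB)
\]
at $(x_1,\ldots,x_n)=\x^n_{\calA}(\mu)$. The second factor becomes $\F_\kappa(\x^{n-1}_{\tau\calA}(\tilde\mu)\mid\tau\calA,\calB)$ with $\tilde\mu=(\mu_2,\mu_3,\ldots)$, and by the inductive hypothesis vanishes unless $\kappa_r\le\mu_{r+1}$ for every $r\ge 1$. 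Simultaneously, the factor $(a_1/x_1;q)_{\lambda_1-\kappa_1}=(q^{-(\mu_1-\mu_2)};q)_{\lambda_1-\kappa_1}$ appearing in $\F_{\lambda/\kappa}(x_1\mid\calA,\calB)$ forces $\lambda_1-\kappa_1\le\mu_1-\mu_2$. Combining the two constraints gives $\lambda_1\le\mu_1$, and for $r\ge 2$ the interlacing $\kappa\prec\lambda$ yields $\lambda_r\le\kappa_{r-1}\le\mu_r$; hence $\lambda\subseteq\mu$.

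For the evaluation at $\lambda=\mu$, the same chain of inequalities combined with $\kappa\prec\lambda$ (so $\kappa_r\ge\lambda_{r+1}$) pins down a unique surviving summand $\kappa=(\lambda_2,\lambda_3,\ldots)$, giving
\[
\F_\lambda(\x^n_{\calA}(\lambda)\mid\calA,\calB)=\F_{\lambda/\kappa}(a_1q^{\lambda_1-\lambda_2}\mid\calA,\calB)\cdot H_{(\lambda_2,\lambda_3,\ldots)}(\tau\calA,\calB).
\]
The second factor is supplied by the inductive hypothesis, while the first is obtained by plugging $\kappa_r=\lambda_{r+1}$ into Proposition \ref{1variable}: the factors $(x/b_r;q)_{\kappa_r-\lambda_{r+1}}=(x/b_r;q)_0$ drop out and a telescoping cancellation of the $(q;q)_{\lambda_r-\lambda_{r+1}}$ symbols leaves a clean product. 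The main obstacle I expect is the bookkeeping in this final step: verifying that the single-variable contribution supplies exactly the $i=1$ row of the double product in the closed form for $H_\lambda(\calA,\calB)$, with signs and $q$-powers matching via the elementary identity $\binom{\lambda_1}{2}-\binom{\lambda_2}{2}=\lambda_1(\lambda_1-\lambda_2)-\binom{\lambda_1-\lambda_2+1}{2}$, and the $a_1$- and $b_r$-powers matching via the telescoping $(a_1/b_0)^{\lambda_1}\prod_{i=2}^n(b_{i-2}/b_{i-1})^{\lambda_i}=a_1^{\lambda_1}\prod_{r\ge 1}b_{r-1}^{\lambda_{r+1}-\lambda_r}$, so that the product $\F_{\lambda/\kappa}\cdot H_\kappa(\tau\calA,\calB)$ reassembles into the claimed $H_\lambda(\calA,\calB)$.
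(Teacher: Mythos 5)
Your proposal is correct and follows essentially the same route as the paper: induction on $n$ via the branching rule, with the vanishing forced jointly by the inductive hypothesis on $\F_\kappa(\x^{n-1}_{\tau\calA}(\widetilde\mu)\mid\tau\calA,\calB)$ and by the factor $(q^{\mu_2-\mu_1};q)_{\lambda_1-\kappa_1}$ in the single-variable piece, and with the unique surviving term $\kappa=\widetilde\lambda$ when $\lambda=\mu$. The paper leaves the final bookkeeping to "readily follows from Proposition \ref{1variable}", so your extra detail on the telescoping of signs, $q$-powers and $b_r$-powers is a harmless (and correct) elaboration rather than a divergence.
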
 
\begin{proof}
The proof is by induction on $n$, with the case $n=0$ being trivial. 

Assume that we have proved the claim for $n-1$. Using the branching rule from Proposition \ref{branching} we get
$$
\F_\lambda(\x_{\calA}^n(\mu)\mid\calA,\calB)=\sum_{\nu\prec\lambda}\F_{\lambda/\nu}(a_1q^{\mu_1-\mu_2}\mid\calA,\calB)\F_\nu(\x_{\tau\calA}^{n-1}(\widetilde{\mu})\mid \tau\calA,\calB),
$$
where $\widetilde{\mu}=(\mu_2, \mu_3, \dots)$. Note that by induction hypothesis $\F_\nu(\x_{\tau\calA}^{n-1}(\widetilde{\mu})\mid \tau\calA,\calB)=0$ unless $\nu\subseteq\widetilde{\mu}$. But $\nu\prec \lambda$ implies $\widetilde{\lambda}:=(\lambda_2,\lambda_3, \dots)\subseteq\nu$, so if in the sum above the summand corresponding to $\nu$ does not vanish then $\widetilde{\lambda}\subseteq\nu\subseteq\widetilde{\mu}$.

At the same time, $\F_{\lambda/\nu}(a_1q^{\mu_1-\mu_2}\mid\calA,\calB)$ vanishes unless $\lambda_1-\nu_1\leq\mu_1-\mu_2$, because of the factor $(q^{\mu_{2}-\mu_1};q)_{\lambda_1-\nu_1}$ in
\begin{multline*}
\F_{\lambda/\nu}(a_1q^{\mu_1-\mu_2}\mid\calA,\calB)=\langle\lambda|\, \B(a_1q^{\mu_1-\mu_2}\mid\calA,\calB)\,|\mu\rangle\\
=\(\frac{a_1q^{\mu_1-\mu_2}}{b_0}\)^{\lambda_1-\nu_1}\frac{(q^{\mu_{2}-\mu_1};q)_{\lambda_1-\nu_1}}{(q;q)_{\lambda_1-\nu_1}}\langle\lambda|\, \mathbb T^b_{\lambda_1-\nu_1,0}(a_1q^{\mu_1-\mu_2}\mid\calA,\calB)\,|\mu\rangle.
\end{multline*}
Hence, if $\F_\lambda(\x_{\calA}^n(\mu)\mid\calA,\calB)\neq0$, then for some $\nu$ we have $\lambda_1-\nu_1\leq\mu_1-\mu_2$ and $\widetilde{\lambda}\subseteq\nu\subseteq\widetilde{\mu}$. But then
$$
\lambda_1\leq \mu_1-\mu_2+\nu_1\leq\mu_1,
$$
so $\lambda\subseteq \mu$ as desired.

To prove the second statement, note that if $\lambda=\mu$ then the only possible $\nu$ in the discussion above is $\nu=\widetilde{\lambda}=(\lambda_2,\lambda_3,\dots)$. Hence
$$
\F_\lambda(\x_{\calA}^n(\lambda)\mid\calA,\calB)=\F_{\lambda/\widetilde{\lambda}}(a_1q^{\lambda_1-\lambda_2}\mid\calA,\calB)\F_{\widetilde{\lambda}}(\x_{\tau\calA}^{n-1}(\widetilde{\lambda})\mid \tau\calA,\calB)
$$
and the second statement readily follows from Proposition \ref{1variable}.
\end{proof}

\begin{rem} We can use the graphical definition of $\F_\lambda(x_1, \dots, x_n\mid\calA,\calB)$ from Remark \ref{graphicalF} to sketch an alternative proof of Proposition \ref{vanishing}. Namely, we have
$$
\F_\lambda(\x^n_{\calA}(\mu)\mid\calA,\calB)=\sum_{r_1, \dots, r_n\geq 0}\prod_{i=1}^n(a_iq^{\mu_i-\mu_{i+1}}/b_0)^{r_i}\frac{(q^{\mu_{i+1}-\mu_i};q)_{r_i}}{(q;q)_{r_i}} Z^{r_1, \dots, r_n}_\lambda,
$$
where $Z^{r_1, \dots, r_n}_\lambda$ is the partition function from Figure \ref{partitionFunctionZ} with $x_i=a_iq^{\mu_i-\mu_{i+1}}$. Note that the factors $(q^{\mu_{i+1}-\mu_i};q)_{r_i}$ force $r_i\leq \mu_i-\mu_{i+1}$ in the sum above. On the other hand, since the weights $W^b(I,J,K,L)$ vanish unless $I+J=K+L$ and $I\leq K$, the partition function $Z^{r_1, \dots, r_n}_\lambda$ vanishes unless $r_1+\dots+r_n=\lambda_1$ and $r_1+\dots +r_i\leq \lambda_1-\lambda_{i+1}$ for any $i$. These restrictions on $r_i$ imply that non-zero terms in the sum above must satisfy $\lambda_{i}\leq r_{i}+\dots+r_n\leq \mu_{i}$, hence $\F_\lambda(\x^n_{\calA}(\mu)\mid\calA,\calB)$ vanishes unless $\lambda_{i}\leq\mu_i$.
\end{rem}

Now we can consider a characterization for $\F_\lambda(x_1, \dots,x_n\mid \calA,\calB)$ in terms of the vanishing property. To do so, we need the following notation. For an integer $n$ and a partition $\mu\in\Y^n$ define
$$
G_\mu^{n}(x_1, \dots, x_n\mid\calA,\calB)=\prod_{i=1}^n\prod_{r\geq 1}\frac{(x_i/b_r;q)_{\mu_r-\mu_{r+1}}}{(a_i/b_r;q)_{\mu_r-\mu_{r+1}}}.
$$
Note that $G_\mu^{n}$ are symmetric polynomials in $x_1, \dots, x_n$. Define a filtration $\mathcal G^n_0\subset \mathcal G^n_1\subset \dots\subset \Bbbk[x_1, \dots, x_n]^{S_n}$ of the algebra of symmetric polynomials in $x_1, \dots,x_n$ with coefficients in $\Bbbk=\mathbb Q(q, \calA,\calB)$ by
$$
\mathcal G^n_m:=\mathrm{span} \{G_\mu^{n}\}_{\substack{\mu\in\Y^n\\ |\mu|\leq m}}.
$$

\begin{theo} \label{charTheo} For each partition $\lambda\in\Y^n$ the spin $q$-Whittaker function $\F_\lambda(x_1, \dots, x_n\mid \calA,\calB)$ is uniquely characterized by the following properties:
\begin{enumerate}
\item $\F_\lambda(x_1, \dots, x_n\mid\calA,\calB)\in\mathcal G^{n}_{|\lambda|}$.
\item For any partition $\mu$ such that $|\mu|\leq |\lambda|$ and $\mu\neq\lambda$ we have $\F_\lambda(\x^n_{\calA}(\mu)\mid\calA,\calB)=0$.
\item $\F_\lambda(\x^n_{\calA}(\lambda)\mid\calA,\calB)=H_\lambda(\calA,\calB)$, where $H_\lambda(\calA,\calB)$ is defined in Proposition \ref{vanishing} above.
\end{enumerate}
Moreover, for each $m\in\mathbb Z_{\geq 0}$ both $\{\F_\lambda(x_1, \dots, x_n\mid\calA,\calB)\}_{\substack{\lambda\in\Y^n\\ |\lambda|\leq m}}$ and $\{G_\mu^{n}(x_1, \dots, x_n\mid\calA,\calB)\}_{\substack{\mu\in\Y^n\\ |\mu|\leq m}}$ are bases of $\mathcal G^n_m$.
\end{theo}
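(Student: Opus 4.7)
Properties (2) and (3) are exactly the content of Proposition \ref{vanishing}, so the substantive task will be establishing $\F_\lambda \in \mathcal G^n_{|\lambda|}$ together with the basis statement; uniqueness will then follow by a standard triangular extraction. The plan is to derive a clean transition identity between the families $\{G_\mu^n\}$ and $\{\F_\lambda\}$ by specializing the Cauchy identity, and then exploit the resulting triangular structure.

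The key step will be to take $n$ dual variables in Theorem \ref{cauchyTheo} (with $\mu = \nu = \varnothing$) and substitute $y_j = b_j^{-1} q^{\tilde\nu_j - \tilde\nu_{j+1}}$ for a partition $\tilde\nu \in \Y^n$, so that $(y_1, \dots, y_n) = \x^n_{\overline\calB}(\tilde\nu)$. A short $q$-Pochhammer telescoping
\[
\frac{(a_i y_j;q)_\infty (x_i/b_j;q)_\infty}{(x_i y_j;q)_\infty (a_i/b_j;q)_\infty}\bigg|_{y_j = b_j^{-1} q^{\tilde\nu_j - \tilde\nu_{j+1}}} = \frac{(x_i/b_j;q)_{\tilde\nu_j - \tilde\nu_{j+1}}}{(a_i/b_j;q)_{\tilde\nu_j - \tilde\nu_{j+1}}}
\]
collapses the right-hand side of \eqref{Cauchy} into $G_{\tilde\nu}^n(x_1, \dots, x_n \mid \calA, \calB)$. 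Simultaneously, Proposition \ref{vanishing} applied to $\F_\lambda(\,\cdot\,\mid \overline\calB, \overline\calA)$ (together with $\F^*_\lambda = \psi_\lambda \F_\lambda$) forces $\F^*_\lambda(\x^n_{\overline\calB}(\tilde\nu) \mid \overline\calB, \overline\calA) = 0$ unless $\lambda \subseteq \tilde\nu$, while the Corollary after Proposition \ref{branching} restricts the surviving terms to $l(\lambda) \le n$. This yields the finite identity
\[
G_{\tilde\nu}^n = \sum_{\substack{\lambda \in \Y^n \\ \lambda \subseteq \tilde\nu}} M_{\tilde\nu, \lambda}\, \F_\lambda, \qquad M_{\tilde\nu, \lambda} := \F^*_\lambda(\x^n_{\overline\calB}(\tilde\nu) \mid \overline\calB, \overline\calA),
\]
with $M_{\tilde\nu, \tilde\nu} = \psi_{\tilde\nu}(\calA, \calB)\, H_{\tilde\nu}(\overline\calB, \overline\calA) \ne 0$.

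Next, fixing $m$ and restricting indices to the finite set $\{\mu \in \Y^n : |\mu| \le m\}$, the matrix $M$ is triangular with respect to $\subseteq$ (and any total order refining it) with nonzero diagonal, hence invertible. A short linear-algebra observation shows that the vanishing pattern is inherited by the inverse: $M$ preserves each subspace $\mathrm{span}\{e_{\tilde\nu} : \tilde\nu \supseteq \lambda\}$, so $M^{-1}$ does as well, forcing $(M^{-1})_{\lambda, \tilde\nu} = 0$ unless $\tilde\nu \subseteq \lambda$. Inverting the displayed identity will then produce
\[
\F_\lambda = \sum_{\substack{\tilde\nu \in \Y^n \\ \tilde\nu \subseteq \lambda}} (M^{-1})_{\lambda, \tilde\nu}\, G_{\tilde\nu}^n \in \mathcal G^n_{|\lambda|},
\]
which gives (1), while the invertibility of $M$ simultaneously establishes that both $\{G_\mu^n\}_{|\mu|\le m}$ and $\{\F_\lambda\}_{|\lambda|\le m}$ are bases of $\mathcal G^n_m$.

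Finally, for uniqueness, if $F$ satisfies (1)--(3), then $F - \F_\lambda$ lies in $\mathcal G^n_{|\lambda|}$ and vanishes at every $\x^n_\calA(\mu)$ with $|\mu| \le |\lambda|$ (including $\mu = \lambda$, by condition (3)). Expanding $F - \F_\lambda = \sum_{|\mu| \le |\lambda|} c_\mu \F_\mu$ via the basis just produced and evaluating at $\x^n_\calA(\mu_0)$ for a $\subseteq$-minimal $\mu_0$ in the support of $\{c_\mu\}$ yields $c_{\mu_0} H_{\mu_0}(\calA, \calB) = 0$ by Proposition \ref{vanishing}; since $H_{\mu_0} \ne 0$, iteration forces $F = \F_\lambda$. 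The main obstacle I anticipate is the Cauchy specialization in the second paragraph: one must justify substituting a formal power series into the formal-in-$(x,y,q)$ identity of Theorem \ref{cauchyTheo}, verify the cancellation of infinite products on the right, and confirm that vanishing of $\F^*_\lambda$ produces a genuine truncation to a finite sum on the left. Once this is carried out, the remainder is clean triangular linear algebra driven entirely by Proposition \ref{vanishing}.
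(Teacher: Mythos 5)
Your proposal is correct and follows essentially the same route as the paper: specialize the Cauchy identity at $y_j=q^{\mu_j-\mu_{j+1}}/b_j=\x^n_{\overline\calB}(\mu)_j$ to collapse the product side to $G^n_\mu$, use the vanishing property of $\F^*_\lambda$ to obtain a unitriangular (up to nonzero diagonal) transition matrix between $\{G^n_\mu\}$ and $\{\F_\lambda\}$ ordered by inclusion, invert it to get property (1) and the basis statement, and then run the minimal-partition evaluation argument for uniqueness. The paper performs exactly these steps, so no further commentary is needed.
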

\begin{proof}
We start with the last statement. Let $\mu\in\Y^n$ and consider the Cauchy identity \eqref{Cauchy} with $m=n$ and $y_i=q^{\mu_i-\mu_{i+1}}/b_i$. We get
\begin{equation}
\label{specializedCauchy}
\sum_{\lambda}\F_{\lambda}(x_1, \dots, x_n\mid\calA,\calB)\F^*_{\lambda}(q^{\mu_1-\mu_2}/b_1, \dots, q^{\mu_n}/b_n \mid \overline{\calB}, \overline{\calA})=\prod_{i,j=1}^n\frac{(x_i/b_j;q)_{\mu_j-\mu_{j+1}}}{(a_i/b_j;q)_{\mu_j-\mu_{j+1}}}=G^n_{\mu}(x_1, \dots, x_n\mid\calA,\calB).
\end{equation}
Note that $(q^{\mu_1-\mu_2}/b_1, \dots, q^{\mu_n}/b_n)=\x^n_{\overline{\calB}}(\mu)$, so we can apply the vanishing property from Proposition \ref{vanishing} to $\F^*_{\lambda}(\x^n_{\overline \calB}(\mu)\mid \overline{\calB}, \overline{\calA})$, obtaining 
$$
\F^*_{\lambda}(\x^n_{\overline \calB}(\mu)\mid \overline{\calB}, \overline{\calA})=\psi_\lambda(\overline\calB,\overline\calA)\F_{\lambda}(\x^n_{\overline \calB}(\mu)\mid \overline{\calB}, \overline{\calA})=0,\qquad\text{unless}\ \lambda\subseteq\mu,
$$
$$
\F^*_{\mu}(\x^n_{\overline \calB}(\mu)\mid \overline{\calB}, \overline{\calA})=\psi_\mu(\overline\calB,\overline\calA)H_\mu(\overline\calB,\overline\calA)\neq0,
$$
where $\psi_\lambda(\calA,\calB)$ is defined in Proposition \ref{BCdual}. Hence we can rewrite \eqref{specializedCauchy} as
\begin{equation}
\label{GtoF}
G^n_{\mu}(x_1, \dots, x_n\mid\calA,\calB)=\sum_{\lambda\subseteq \mu}c_{\lambda\mu}\ \F_{\lambda}(x_1, \dots, x_n\mid\calA,\calB)
\end{equation}
for some coefficients $c_{\lambda\mu}$ such that $c_{\mu\mu}\neq 0$. Thus, the transition matrix expressing $G^n_\mu$ in terms of $\F_\lambda$ is upper-triangular with respect to the partial order of inclusion of Young diagrams. Moreover, this transition matrix has non-zero diagonal entries $c_{\mu\mu}$, so it is invertible and 
\begin{equation}
\label{FtoG}
\F_{\lambda}(x_1, \dots, x_n\mid\calA,\calB)=\sum_{\mu\subseteq \lambda}\overline{c}_{\mu\lambda}\ G^{n}_{\mu}(x_1, \dots, x_n\mid\calA,\calB)
\end{equation}
for some other coefficients $\overline{c}_{\mu\lambda}$.

Note that \eqref{GtoF},\eqref{FtoG} imply that 
$$
\mathrm{span}\{\F_\lambda(x_1, \dots, x_n\mid\calA,\calB)\}_{\substack{\lambda\in\Y^n\\ |\lambda|\leq m}}=\mathrm{span}\{G_\mu^{n}(x_1, \dots, x_n\mid\calA,\calB)\}_{\substack{\mu\in\Y^n\\ |\mu|\leq m}}=\mathcal G_m^n.
$$
On the other hand, the vanishing property implies that the functions $\F_\lambda$ are linearly independent: if we have
$$
\sum_{\lambda} \alpha_\lambda \F_\lambda(x_1, \dots, x_n\mid\calA,\calB)=0,
$$
with $\alpha_\lambda$ being nonzero for some $\lambda$, then we get a contradiction by plugging $(x_1, \dots, x_n)=\x^n_{\calA}(\mu)$ for a minimal $\mu$ such that $\alpha_\mu\neq 0$ and using the vanishing property from Proposition \ref{vanishing}. Hence $\{\F_\lambda(x_1, \dots, x_n\mid\calA,\calB)\}_{\substack{\lambda\in\Y^n\\ |\lambda|\leq m}}$ is a basis of $\mathcal G^n_m$, and consequently $\{G_\mu^{n}(x_1, \dots, x_n\mid\calA,\calB)\}_{\substack{\mu\in\Y^n\\ |\mu|\leq m}}$ is also a basis of $\mathcal G_m^n$ in view of \eqref{GtoF},\eqref{FtoG}.

Now we can prove the characterization property. We have already proved that $\F_\lambda(x_1, \dots, x_n\mid\calA,\calB)\in\mathcal G^n_m$, and the other properties follow from Proposition \ref{vanishing}, so we only need to prove that $\F_\lambda(x_1, \dots, x_n\mid\calA,\calB)$ is the unique function satisfying properties (1)-(3) above. Let $f(x_1,\dots, x_n)$ be a function which also satisfies properties (1)-(3), and consider the difference $g(x_1,\dots, x_n)=f(x_1, \dots, x_n)-\F_\lambda(x_1, \dots, x_n\mid\calA,\calB)$. We have $g(x_1, \dots, x_n)\in\mathcal G_m^n$ and $g(\x^n_\calA(\mu))=0$ for any $\mu$ such that $|\mu|\leq|\lambda|$. Since {$\{\F_\lambda(x_1,\dots, x_n\mid\calA,\calB)\}_{\substack{\lambda\in\Y^n\\|\lambda|\leq m}}$} is a basis of $\mathcal G_{|\lambda|}^n$, for some coefficients $\alpha_\nu\in \Bbbk$ we have
$$
g(x_1,\dots, x_n)=\sum_{\nu:l(\nu)\leq n, |\nu|\leq|\lambda|}\alpha_\nu \F_\nu(x_1, \dots, x_n\mid \calA,\calB).
$$
Assume that $g\neq 0$ and let $\mu$ be a minimal partition such that $\alpha_\mu\neq 0$. Then by Proposition \ref{vanishing}
$$
g(\x^n_{\calA}(\mu))=\alpha_\mu \F_\mu(\x^n_{\calA}(\mu)\mid \calA,\calB)\neq 0,
$$
which leads to contradiction. Hence, $g=\F_\lambda-f=0$, and the uniqueness follows.
\end{proof}

\subsection{Degenerations of $\F_{\lambda}$} In this section we introduce two degenerations of the functions $\mathbb F_{\lambda}$, the importance of which is explained in Section \ref{interpolationSect} where the interpolation symmetric functions are discussed. 

We start by considering $b_i\to\infty$ in the functions $\F_{\lambda/\mu}(x_1, \dots, x_n\mid\calA, \calB)$. Recall that $\F_{\lambda/\mu}(x_1, \dots, x_n\mid\calA, \calB)$ is a rational function in $b_i$, but Proposition \ref{1variable} implies that when we set $b_i^{-1}=0$ for all $i\geq 1$, the function just vanishes. So, to get a meaningful object we first need to renormalize the function: set
$$
\widetilde{\F}_{\lambda/\mu}(x_1, \dots, x_n\mid\calA,\calB)=\prod_{r=1}^nb_{r-1}^{\lambda_r-\mu_r}\ \F(x_1, \dots, x_n\mid\calA, \calB)
$$
and let $\widetilde{\F}_{\lambda/\mu}(x_1, \dots, x_n\mid\calA,\infty)$ denote the result of the substitution $b_0^{-1}=b_1^{-1}=\dots=0$. 

Almost all the properties of $\F_{\lambda/\mu}(x_1,\dots x_n\mid\calA,\calB)$ described earlier in this section can be readily modified to obtain properties of $\widetilde{\F}_{\lambda/\mu}(x_1, \dots, x_n\mid\calA,\infty)$. For instance, we can compute $\widetilde{\F}_{\lambda/\mu}(x_1, \dots, x_n\mid\calA,\infty)$ using degenerations of the explicit expression from Proposition \ref{1variable} and the branching rule from Proposition \ref{branching}: we have
\begin{equation}
\label{degbranching}
\widetilde\F_{\lambda/\nu}(x_1, x_2, \dots, x_n\mid\calA,\infty)=\sum_{\mu\prec \lambda}\widetilde\F_{\lambda/\mu}(x_1\mid \calA, \infty)\widetilde\F_{\mu/\nu}(x_2, \dots, x_n\mid \tau\calA, \infty),
\end{equation}
\begin{equation}
\label{deg1var}
\widetilde\F_{\lambda/\mu}(x\mid \calA,\infty)=
\begin{cases}
x^{|\lambda|-|\mu|}\displaystyle\prod_{r\geq 1} \dfrac{(a_{r}/x;q)_{\lambda_{r}-\mu_{r}}(q;q)_{\mu_r-\mu_{r+1}}}{(q;q)_{\lambda_{r}-\mu_{r}}(q;q)_{\mu_r-\lambda_{r+1}}},\qquad &\text{if}\ \mu\prec\lambda;\\
0,\qquad &\text{otherwise}.
\end{cases}
\end{equation}
In the same manner we can degenerate the vanishing property from Proposition \ref{vanishing}, since it does not depend on $\calB$ in a significant way:
\begin{prop} \label{degvanishing}Let $\lambda,\mu$ be partitions of length at most $n$. Then we have
$$
\widetilde\F_\lambda(\x^n_{\calA}(\mu)\mid\calA, \infty)=0,\qquad \text{unless}\ \lambda\subseteq\mu,
$$
Moreover, when $\lambda=\mu$, we have
$$
\widetilde\F_{\lambda}(\x_{\calA}^{n}(\lambda)\mid \calA,\infty)=(-1)^{\lambda_1}q^{\frac{\lambda_1^2-\lambda_1}{2}}\prod_{i=1}^n a_i^{\lambda_{i}}\displaystyle\prod_{i,j= 1}^n(q^{\lambda_{i+1}-\lambda_i}a_{j+i}/a_{i};q)_{\lambda_{j+i}-\lambda_{j+i+1}}.
$$
\qed
\end{prop}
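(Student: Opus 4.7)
The cleanest route is to obtain Proposition \ref{degvanishing} as the $b_0,b_1,\ldots\to\infty$ limit of Proposition \ref{vanishing}, keeping track of the renormalization $\prod_{r=1}^nb_{r-1}^{\lambda_r}$ built into $\widetilde\F_\lambda$. The vanishing claim is then automatic: if $\F_\lambda(\x^n_\calA(\mu)\mid\calA,\calB)$ vanishes identically as a rational function in $b_i$, then so does any scalar multiple, and hence so does the limit. For the explicit value at $\lambda=\mu$, one substitutes the formula for $H_\lambda(\calA,\calB)$ and observes that the factor $\prod_{i=1}^n(a_i/b_{i-1})^{\lambda_i}$ cancels against the normalization $\prod_{r=1}^nb_{r-1}^{\lambda_r}$ to leave $\prod_{i=1}^n a_i^{\lambda_i}$, while each denominator factor $(a_{j+i}/b_j;q)_{\lambda_{j+i}-\lambda_{j+i+1}}$ tends to $1$ as $b_j\to\infty$. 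The sign $(-1)^{\lambda_1}$ and the $q$-power $q^{\lambda_1(\lambda_1-1)/2}$ are inherited unchanged from $H_\lambda$.

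The only subtlety in this approach is justifying that the $b_i\to\infty$ limit commutes with the evaluation at $\x^n_\calA(\mu)$. This reduces, via the branching rule of Proposition \ref{branching} and the single-variable formula of Proposition \ref{1variable}, to checking it on individual one-variable factors $\F_{\lambda/\mu}(x\mid\calA,\calB)$, whose leading $b$-asymptotics are exactly encoded by the factor $b_{r-1}^{\mu_r-\lambda_r}$ in Proposition \ref{1variable}. After multiplication by $\prod_{r\geq 1} b_{r-1}^{\lambda_r-\mu_r}$ the result is regular in $b_i^{-1}$ at $b_i^{-1}=0$ and gives precisely \eqref{deg1var}, so the full statement follows by summation over the intermediate partitions in the branching rule.

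Alternatively, one can give a self-contained inductive argument on $n$ that mirrors the proof of Proposition \ref{vanishing}: apply \eqref{degbranching} to write $\widetilde\F_\lambda(\x^n_\calA(\mu)\mid\calA,\infty)$ as a sum over $\nu\prec\lambda$ of $\widetilde\F_{\lambda/\nu}(a_1q^{\mu_1-\mu_2}\mid\calA,\infty)\,\widetilde\F_\nu(\x^{n-1}_{\tau\calA}(\widetilde\mu)\mid\tau\calA,\infty)$, where $\widetilde\mu:=(\mu_2,\mu_3,\dots)$. The inductive hypothesis forces $\widetilde\lambda\subseteq\nu\subseteq\widetilde\mu$, while the $q$-Pochhammer factor $(q^{\mu_2-\mu_1};q)_{\lambda_1-\nu_1}$ visible in \eqref{deg1var} forces $\lambda_1-\nu_1\leq\mu_1-\mu_2$; combining these yields $\lambda\subseteq\mu$. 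When $\lambda=\mu$ both constraints pin down $\nu=\widetilde\lambda$, and the value follows from a single application of \eqref{deg1var} together with the inductive value on $\tau\calA$.

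I expect the only real obstacle (in either approach) to be the telescoping bookkeeping of the double product $\prod_{i,j=1}^n(q^{\lambda_{i+1}-\lambda_i}a_{j+i}/a_i;q)_{\lambda_{j+i}-\lambda_{j+i+1}}$: verifying that the inductive step on $\tau\calA$ rebuilds it correctly after the reindexing $a_i\mapsto a_{i+1}$, and that the prefactor from $\widetilde\F_{\lambda/\widetilde\lambda}(a_1q^{\lambda_1-\lambda_2}\mid\calA,\infty)$ contributes exactly the missing $j=0$ row of the product together with the sign $(-1)^{\lambda_1}$ and the power $q^{\binom{\lambda_1}{2}}$. This is routine but error-prone, which is why the limit-from-Proposition \ref{vanishing} route is preferable: there the corresponding identity is already proved, and one only needs to watch $b$-cancellations.
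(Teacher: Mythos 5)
Your primary route is exactly the paper's: Proposition \ref{degvanishing} is obtained there as the immediate $b_i\to\infty$ degeneration of Proposition \ref{vanishing} after the renormalization by $\prod_r b_{r-1}^{\lambda_r}$, with the cancellations and limits of the $q$-Pochhammer factors working out precisely as you describe. Your extra care about regularity at $b_i^{-1}=0$ and the alternative self-contained induction are both sound but not needed beyond what the paper already records.
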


The only property proved earlier which cannot be immediately degenerated to $\calB=\infty$ is Theorem \ref{charTheo}, since our definition of the filtration $\mathcal G^n_m$ makes little sense when $b_i^{-1}\equiv0$. To state the appropriate characterization, let $\Bbbk[x_1, \dots, x_n]^{S_n}_{\leq m}$ denote the space of degree $\leq m$ symmetric polynomials with coefficients in $\Bbbk=\C(\calA,q)$, and let $P_{\lambda/\mu}(x_1,\dots, x_n; q,0)$ denote the $q$-Whittaker function corresponding to $\lambda/\mu$, that is, the $t=0$ specialization of the Macdonald symmetric polynomial $P_{\lambda/\mu}(x_1, \dots ,x_n;q,t)$, \emph{cf.} \cite[Chapter VI]{Mac95}.

\begin{prop}\label{degchartheo} The function $\widetilde\F_\lambda(x_1, \dots, x_n\mid\calA, \infty)$ can be characterized in the following two equivalent ways:  it is the unique function satisfying
\begin{enumerate}[label=({\arabic*})]
\item $\widetilde \F_\lambda(x_1, \dots, x_n\mid\calA,\infty)\in \Bbbk[x_1, \dots, x_n]^{S_n}_{\leq |\lambda|}$,
\item $\widetilde\F_\lambda(\x^n_{\calA}(\mu)\mid\calA,\infty)=0$ for any partition $\mu$ such that $|\mu|\leq |\lambda|$ and $\mu\neq\lambda$,
\item $\widetilde\F_{\lambda}(\x_{\calA}^{n}(\lambda)\mid \calA,\infty)=(-1)^{\lambda_1}q^{\frac{\lambda_1^2-\lambda_1}{2}}\prod_{i=1}^n a_i^{\lambda_{i}}\displaystyle\prod_{i,j= 1}^n(q^{\lambda_{i+1}-\lambda_i}a_{j+i}/a_{i};q)_{\lambda_{j+i}-\lambda_{j+i+1}}$;
\end{enumerate}
and it is also the unique function satisfying
\begin{enumerate}[label=({\arabic*}')]
\item $\widetilde \F_\lambda(x_1, \dots, x_n\mid\calA,\infty)\in \Bbbk[x_1, \dots, x_n]^{S_n}_{\leq |\lambda|}$, identically to (1),
\item $\widetilde\F_\lambda(\x^n_{\calA}(\mu)\mid\calA,\infty)=0$ for any partition $\mu$ such that $|\mu|< |\lambda|$,
\item The top degree homogeneous component of $\widetilde\F_{\lambda}(x_1, \dots, x_n\mid \calA,\infty)$ has degree $|\lambda|$ and is equal to $P_{\lambda}(x_1, \dots, x_n;q,0)$.
\end{enumerate}
\end{prop}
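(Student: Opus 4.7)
The plan is to verify that $\widetilde\F_\lambda(x_1,\dots,x_n\mid\calA,\infty)$ satisfies each set of properties, and then establish uniqueness in both cases by a linear-algebra argument paralleling the proof of Theorem~\ref{charTheo}.

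First, I would check properties (1)--(3). The degree bound (1) follows from the branching rule \eqref{degbranching} together with the single-variable formula \eqref{deg1var}: since $(a_r/x;q)_{\lambda_r-\mu_r}$ is a Laurent polynomial in $x$ with degrees in $[-(\lambda_r-\mu_r),0]$, the prefactor $x^{|\lambda|-|\mu|}$ exactly compensates the negative powers, so $\widetilde\F_{\lambda/\mu}(x\mid\calA,\infty)$ is a genuine polynomial in $x$ of degree $|\lambda|-|\mu|$. Summing over branching chains then shows $\widetilde\F_\lambda\in\Bbbk[x_1,\dots,x_n]_{\leq|\lambda|}^{S_n}$. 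Properties (2) and (3) are exactly Proposition~\ref{degvanishing}.

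For uniqueness in the first characterization, the key step is to show that $\{\widetilde\F_\mu\}_{\mu\in\Y^n,|\mu|\leq m}$ is a basis of $\Bbbk[x_1,\dots,x_n]^{S_n}_{\leq m}$. Both spaces have dimension equal to the number of partitions $\mu$ with $l(\mu)\leq n$ and $|\mu|\leq m$ (the monomial symmetric polynomials $m_\mu$ give a basis of the latter), so it suffices to check linear independence of the $\widetilde\F_\mu$'s. This follows from vanishing: in any nontrivial relation $\sum_\mu\alpha_\mu\widetilde\F_\mu=0$, choosing $\nu$ minimal under inclusion with $\alpha_\nu\neq 0$ and evaluating at $\x^n_\calA(\nu)$ produces a contradiction via Proposition~\ref{degvanishing}. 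Given any other $f$ satisfying (1)--(3), writing $f-\widetilde\F_\lambda=\sum_{|\mu|\leq|\lambda|}\alpha_\mu\widetilde\F_\mu$ and repeating the minimal-$\nu$ argument forces $f=\widetilde\F_\lambda$.

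For the second characterization the remaining task is to identify the top homogeneous component. Reading off the leading term in $x$ of \eqref{deg1var} gives
\[
[x^{|\lambda|-|\mu|}]\,\widetilde\F_{\lambda/\mu}(x\mid\calA,\infty)=\prod_{r\geq 1}\frac{(q;q)_{\mu_r-\mu_{r+1}}}{(q;q)_{\lambda_r-\mu_r}(q;q)_{\mu_r-\lambda_{r+1}}},
\]
which is independent of $\calA$ and matches the one-variable $q$-Whittaker branching coefficient $\psi_{\lambda/\mu}(q,0)$ appearing in the Pieri rule for $P_{\lambda/\mu}(x;q,0)$, see \cite[Ch.~VI]{Mac95}. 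Combined with \eqref{degbranching} and the parallel branching rule for $q$-Whittaker polynomials, this proves (3'). Alternatively, since the top component does not depend on $\calA$, one can specialize $\calA=0$ and identify $\widetilde\F_\lambda(\,\cdot\,\mid 0,\infty)$ with $P_\lambda(\,\cdot\,;q,0)$ directly from the tableau formula. Uniqueness in this second characterization is then obtained as before: for $f$ satisfying (1'),(2'),(3'), the difference $f-\widetilde\F_\lambda$ lies in $\Bbbk[x_1,\dots,x_n]^{S_n}_{<|\lambda|}$ and vanishes at every $\x^n_\calA(\mu)$ with $|\mu|<|\lambda|$; expanding in the basis $\{\widetilde\F_\mu\}_{|\mu|<|\lambda|,\,\mu\in\Y^n}$ of that smaller space and applying the minimal-$\nu$ evaluation argument gives $f=\widetilde\F_\lambda$.

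The main obstacle I expect is the explicit matching of the leading-coefficient formula above with the classical $q$-Whittaker branching coefficient; this is purely combinatorial but requires care at the boundary case $l(\lambda)=l(\mu)+1$, where the factor indexed by $r=l(\lambda)$ involves $\mu_r-\lambda_{r+1}=\mu_r$ and must be compared with the corresponding factor in $\psi_{\lambda/\mu}(q,0)$.
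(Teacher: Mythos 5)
Your proposal is correct and follows essentially the same route as the paper: verify the properties via Proposition \ref{degvanishing} and the leading term of the one-variable formula \eqref{deg1var} combined with the branching rule, then prove uniqueness by showing the $\widetilde\F_\mu$ with $|\mu|\leq m$ form a basis of $\Bbbk[x_1,\dots,x_n]^{S_n}_{\leq m}$ and running the minimal-partition evaluation argument on the difference. The only cosmetic difference is that the paper packages the linear-algebra step as a general lemma about interpolation grids (reused later in Section \ref{interpolationSect}), whereas you carry it out inline for the specific grid $\x^n_{\calA}$.
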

\begin{proof} First we show that $\widetilde\F_\lambda(x_1, \dots, x_n\mid\calA, \infty)$ indeed satisfy the properties from the statement. Properties (2), (2') and (3) follow directly from Proposition \ref{degvanishing}. For the remaining properties (1), (1') and (3') note that by \eqref{deg1var} the one-variable function $\widetilde\F_{\lambda/\mu}(x\mid \calA,\infty)$ is a polynomial of degree $|\lambda|-|\mu|$, whose top term coincides with $P_{\lambda/\mu}(x; q,0)$, \emph{cf.} \cite[Chapter VI, (7.13')]{Mac95}. Then \eqref{degbranching} implies that $\widetilde\F_{\lambda}(x_1, x_2, \dots, x_n\mid\calA,\infty)$ has degree at most $|\lambda|$, and since the branching identical to \eqref{degbranching} holds for the functions $P_{\lambda/\mu}$, we deduce that the top-degree component of $\widetilde\F_{\lambda}(x_1, \dots, x_n\mid \calA,\infty)$ is indeed $P_{\lambda}(x_1, \dots, x_n; q,0)$.

To show uniqueness for both characterizations it is enough to prove that if $g\in \Bbbk[x_1, \dots, x_n]^{S_n}_{\leq m}$ and $g(\x_{\calA}^n(\mu))=0$ for all $|\mu|\leq m$, then $g=0$. Indeed, for the first chracterization fix $\lambda$ and assume that $f\in\Bbbk[x_1, \dots, x_n]^{S_n}$ satisfies the conditions (1)-(3). Then $g=f-\widetilde\F_{\lambda}(x_1, \dots, x_n\mid\calA,\infty)$ is a polynomial in $\Bbbk[x_1, \dots, x_n]^{S_n}_{\leq |\lambda|}$ such that $g(\x^n_{\calA}(\mu))=0$ for any $\mu\in\mathbb Y^n, |\mu|\leq|\lambda|$, since the conditions (2) and (3) fix the values of $f$ and $\widetilde\F_{\lambda}(x_1, \dots, x_n\mid\calA,\infty)$ at $\x^n_{\calA}(\mu)$ for $|\mu|\leq|\lambda|$. Similarly, if $f'$ is a function satisfying (1')-(3') set $g'=f'-\widetilde\F_{\lambda}(x_1, \dots, x_n\mid\calA,\infty)$. By (2') the functions $f'$ and $\widetilde\F_{\lambda}(x_1, \dots, x_n\mid\calA,\infty)$ both vanish at $\x^n_{\calA}(\mu)$ when $|\mu|\leq|\lambda|-1$, and by (3') the top degree homogeneous components of these functions coincide. Hence $g'\in \Bbbk[x_1, \dots, x_n]^{S_n}_{\leq |\lambda|-1}$ and $g'(\x_{\calA}^n(\mu))=0$ for $|\mu|\leq |\lambda|-1$. 

So, it is enough to prove that if $g\in \Bbbk[x_1, \dots, x_n]^{S_n}_{\leq m}$ and $g(\x_{\calA}^n(\mu))=0$ for all $\mu$ such that $|\mu|\leq m$, then $g=0$. This follows from Lemma \ref{tempPnondeglem} below, which for later convenience we state in a much greater generality.
\end{proof}

\begin{lem}\label{tempPnondeglem} Assume that for a function $\mho:\mathbb Y^n\to\Bbbk^n$ there exists a family of polynomials $F_\lambda(x_1, \dots, x_n\mid\mho)$ such that 
\begin{itemize}
\item $F_{\lambda}(x_1, \dots, x_n\mid\mho)\in \Bbbk[x_1, \dots, x_n]^{S_n}_{\leq |\lambda|}$;
\item $F_{\lambda}(\mho(\mu)\mid\mho)=0$ for any partitions $\lambda,\mu\in \mathbb Y^n$ such that $|\mu|\leq |\lambda|, \lambda\neq\mu$;
\item $F_{\lambda}(\mho(\lambda)\mid\mho)\neq0$.
\end{itemize}
Then the following holds
\begin{enumerate}
\item The functions $F_{\lambda}(x_1, \dots, x_n\mid \mho)$ are uniquely determined up to a scalar;
\item Let $m\in \Z_{\geq 0}$. If $f\in \Bbbk[x_1, \dots, x_n]^{S_n}_{\leq m}$ and $f(\mho(\mu))=0$ for all $\mu\in\mathbb Y^n$ such that $|\mu|\leq m$, then $f=0$.
\item For each $m\in \mathbb Z_{\geq 0}$ the functions $F_{\lambda}(x_1, \dots, x_n\mid \mho)$ with $|\lambda|\leq m$ form a basis of $\Bbbk[x_1, \dots, x_n]^{S_n}_{\leq m}$;
\item The degree of $F_{\lambda}(x_1, \dots, x_n\mid \mho)$ is $|\lambda|$.
\end{enumerate}
\end{lem}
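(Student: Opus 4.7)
The plan is to reduce everything to a single structural claim: the evaluation map
$$\mathrm{ev}_m : \Bbbk[x_1, \dots, x_n]^{S_n}_{\leq m} \to \Bbbk^{N_m}, \qquad f \mapsto \bigl(f(\mho(\mu))\bigr)_{\mu \in \Y^n,\, |\mu| \leq m}$$
is an isomorphism for every $m$, where $N_m := \#\{\mu \in \Y^n : |\mu| \leq m\}$. The first ingredient I need is the dimension count $\dim \Bbbk[x_1, \dots, x_n]^{S_n}_{\leq m} = N_m$, which holds because the monomial symmetric polynomials $m_\mu$ indexed by $\mu \in \Y^n$ with $|\mu| \leq m$ form a basis; note that the codomain has the same dimension, and statement (2) of the lemma is precisely the injectivity of $\mathrm{ev}_m$.

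To establish that $\mathrm{ev}_m$ is an isomorphism, I will fix a total order on $\Y^n$ refining the partial order by weight, and examine the $N_m \times N_m$ matrix $M = (F_\lambda(\mho(\mu)))$ with $\lambda, \mu$ ranging over partitions of weight at most $m$ (rows indexed by $\lambda$, columns by $\mu$). The vanishing hypothesis forces $M_{\lambda, \mu} = 0$ whenever $|\mu| \leq |\lambda|$ and $\mu \neq \lambda$; this simultaneously covers entries below the diagonal (smaller weight) and off-diagonal entries within the same weight block. Combined with $M_{\lambda, \lambda} = F_\lambda(\mho(\lambda)) \neq 0$, this makes $M$ upper triangular with nonzero diagonal, hence invertible. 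Therefore the $N_m$ vectors $\mathrm{ev}_m(F_\lambda)$ span $\Bbbk^{N_m}$, so $\mathrm{ev}_m$ is surjective and, by equality of dimensions, an isomorphism, which proves (2).

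The remaining statements will follow from this quickly. For (3), injectivity of $\mathrm{ev}_m$ together with the linear independence of the columns of $M$ shows that the $F_\lambda$ with $|\lambda| \leq m$ form a linearly independent set of size $N_m$ inside an $N_m$-dimensional space, hence a basis. For (1), given another family $F'_\lambda$ satisfying the three defining properties, I will form the difference $F'_\lambda - cF_\lambda$ with $c := F'_\lambda(\mho(\lambda))/F_\lambda(\mho(\lambda))$: it lies in $\Bbbk[x_1, \dots, x_n]^{S_n}_{\leq |\lambda|}$ and vanishes at every $\mho(\mu)$ with $|\mu| \leq |\lambda|$, so (2) forces it to be zero. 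For (4), if $\deg F_\lambda = d < |\lambda|$ then $F_\lambda \in \Bbbk[x_1, \dots, x_n]^{S_n}_{\leq d}$ vanishes at every $\mho(\mu)$ with $|\mu| \leq d$ (such $\mu$ automatically satisfy $\mu \neq \lambda$), whence (2) forces $F_\lambda = 0$, contradicting $F_\lambda(\mho(\lambda)) \neq 0$.

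I do not anticipate any real obstacle: the entire lemma is driven by the dimension count combined with the triangular structure of the evaluation matrix. The only subtlety worth watching is that the hypothesis $|\mu| \leq |\lambda|, \mu \neq \lambda$ also eliminates entries within a common weight block; this is exactly what makes $M$ upper triangular (not merely block upper triangular with potentially singular diagonal blocks) and is the feature that allows a single clean linear-algebra argument to deliver all four conclusions.
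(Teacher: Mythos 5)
Your proof is correct and follows essentially the same route as the paper's: the paper's repeated ``choose $\mu$ minimal with $\alpha_\mu\neq 0$ and evaluate at $\mho(\mu)$'' argument is exactly the triangularity of your evaluation matrix $M$, and both proofs combine this with the dimension count $\dim \Bbbk[x_1,\dots,x_n]^{S_n}_{\leq m}=N_m$. The only cosmetic difference is that you obtain (2) from surjectivity of $\mathrm{ev}_m$ plus equality of dimensions, whereas the paper expands $f$ in the basis $\{F_\lambda\}$ and reruns the minimal-weight evaluation.
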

\begin{proof}
Fix $m\in\mathbb Z_{\geq 0}$ and fix a choice of functions $F_{\lambda}(x_1, \dots, x_n\mid\mho)$ for $|\lambda|\leq m$. We claim that these functions are linearly independent: assume that
$$
\sum_{\lambda: |\lambda|\leq m} \alpha_{\lambda} F_{\lambda}(x_1, \dots, x_n\mid\mho)=0
$$ 
with $\alpha_\lambda\neq 0$ for some $\lambda$. Choose $\mu$ such that $\alpha_{\mu}\neq 0$ with minimal possible $|\mu|$. Then for all $\lambda\neq \mu$ either $|\lambda|<|\mu|$ and $\alpha_{\lambda}=0$, or $|\mu|\leq|\lambda|$ and $F_{\lambda}(\mho(\mu)\mid\mho)=0$ by the definition of $F_\lambda(x_1, \dots, x_n\mid\mho)$. Hence 
$$
\sum_{\lambda: |\lambda|\leq m} \alpha_{\lambda} F_{\lambda}(\mho(\mu)\mid\mho)=\alpha_\mu F_{\mu}(\mho(\mu)\mid\mho)\neq 0,
$$ 
leading to contradiction. 

Note that the $\Bbbk$-dimension of $\Bbbk[x_1, \dots, x_n]^{S_n}_{\leq m}$ is equal to the number of partitions $\lambda\in\mathbb Y^n$ such that $|\lambda|\leq m$, hence the functions $F_{\lambda}(x_1, \dots, x_n\mid\mho)$  with $|\lambda|\leq m$ form a basis of $\Bbbk[x_1, \dots, x_n]^{S_n}_{\leq m}$, proving (3). In particular, $F_\lambda(x_1, \dots, x_n\mid\mho)$ with $|\lambda|\leq m-1$ form a basis of $\Bbbk[x_1, \dots, x_n]^{S_n}_{\leq m-1}$, so  if $|\lambda|=m$ then the degree of $F_\lambda(x_1, \dots, x_n\mid\mho)$ is $m$, proving (4). Finally, (2) implies (1) by considering the difference between two candidates for $F_\lambda(x_1, \dots, x_n\mid\mho)$, so we only need to prove the former. Let $f\in \Bbbk[x_1, \dots, x_n]^{S_n}_{\leq m}$ be such that $f(\mho(\mu))=0$ for all $\mu$ satisfying $|\mu|\leq m$, and assume that $f\neq 0$. Consider the expansion
$$
f=\sum_{\lambda:|\lambda|\leq m} \alpha_\lambda F_\lambda(x_1, \dots, x_n\mid\mho).
$$
and choose $\mu$ such that $\alpha_\mu\neq 0$ with minimal $|\mu|$. Then, in the same way as in the first part of the proof, 
$$
f(\mho(\mu))=\sum_{\lambda:|\mu|\leq|\lambda|\leq m} \alpha_\lambda F_\lambda(\mho(\mu)\mid\mho)=\alpha_\mu F_\mu(\mho(\mu)\mid\mho)\neq 0,
$$
leading to contradiction.
\end{proof}

Another degeneration of $\F_\lambda$ is obtained by considering the regime $q,x_i,a_i\to 1$ in $\widetilde{\F}_\lambda(x_1, \dots, x_n\mid\calA, \infty)$. More precisely, let $\ve, d,r_1, \dots, r_n$ be parameters such that $d\neq 0$, and let $\mathcal C=(c_0, c_1, \dots)$ be an infinite sequence of real parameters.  Define $\F^{el}_{\lambda/\mu}(r_1, \dots, r_n\mid \calC, \infty)$ as the limit
$$
\F^{el}_{\lambda/\mu}(r_1, \dots, r_n\mid \calC, \infty)=(-d)^{\lambda_1-\mu_1} \lim_{\ve\to 0}\ve^{|\mu|-\mu_1-|\lambda|+\lambda_1}\widetilde \F_{\lambda/\mu}(x_1, \dots, x_n\mid\calA, \infty)
$$ 
in the following regime:
\begin{equation}
\label{limitregime}
x_i=e^{\ve r_i}, \qquad a_i=e^{\ve c_i}, \qquad q=e^{\ve d}.
\end{equation}
From \eqref{deg1var} we get the following expression for the one-variable function
\begin{equation}
\label{degdeg1var}
\F^{el}_{\lambda/\mu}(r\mid \calC,\infty)=
\begin{cases}
\displaystyle\prod_{i\geq 1}\frac{(\mu_i-\mu_{i+1})!\prod_{j=0}^{\lambda_i-\mu_i-1}(r-c_i-jd)}{(\lambda_i-\mu_{i})!(\mu_i-\lambda_{i+1})!},\qquad &\text{if}\ \mu\prec\lambda;\\
0,\qquad &\text{otherwise},
\end{cases}
\end{equation}
where we have used the following relation
$$
\lim_{\ve\to0}\ve^{-k}(e^{\ve u}; e^{\ve d})_k=(-1)^k\prod_{j=0}^{k-1}(u+jd).
$$
From \eqref{degdeg1var} we see that $\F^{el}_{\lambda/\mu}(r\mid \calC,\infty)$ is a polynomial in $r$ of degree $|\lambda|-|\mu|$, with coefficients depending polynomially on $d$ and $c_i$. Moreover, the top homogeneous degree term is $P_{\lambda/\mu}(r_1, \dots, r_n; 1,0)$, that is, it is a $q=1, t=0$ specialization of the Macdonald polynomial.  The branching rule \eqref{degbranching} implies that ${\F}^{el}_{\lambda}(r_1, \dots, r_n\mid \calC, \infty)$ is a symmetric polynomial in $r_1, \dots, r_n$ of degree $|\lambda|$ with coefficients in $\Bbbk^{el}=\mathbb Q(d,\calC)$, and the top homogeneous degree of ${\F}^{el}_{\lambda}(r_1, \dots, r_n\mid \calC, \infty)$ is $P_\lambda(r_1, \dots, r_n; 1,0)$. From \cite[VI.4]{Mac95} it is known that $P_\lambda(r_1, \dots, r_n; 1,0)=e_{\lambda'}(r_1, \dots, r_n)$ is the elementary symmetric polynomial, where $\lambda'$ is the partition conjugate to $\lambda$, and the elementary symmetric polynomials are defined by
$$
e_{\mu}(x_1, \dots, x_n)=\prod_{i\geq 1} e_{\mu_i}(x_1, \dots, x_n),
$$
$$
e_k(x_1, \dots, x_n)=\sum_{1\leq i_1<i_2<\dots<i_k\leq n} x_{i_1}x_{i_2}\dots x_{i_n}.
$$

We have the following analogues of the vanishing and characterization properties, obtained as limits of Proposition \ref{degvanishing} and Proposition \ref{degchartheo}. For a partition $\mu\in\mathbb Y^n$ set 
\begin{equation}
\label{rpoints}
\mathbf r^n_{\calC}(\mu)=(c_1+(\mu_1-\mu_2)d, c_2+(\mu_2-\mu_3)d, \dots, c_n+\mu_nd).
\end{equation}
\begin{prop} \label{degdegvanishing}Let $\lambda,\mu$ be partitions of length at most $n$. Then we have
$$
\F^{el}_\lambda({\mathbf r}^n_{\calC}(\mu)\mid \calC, \infty)=0,\qquad \text{unless}\ \lambda\subseteq\mu.
$$
Moreover, when $\lambda=\mu$, we have
$$
\F^{el}_\lambda({\mathbf r}_{\calC}^{n}(\lambda)\mid \calC,\infty)=(-1)^{|\lambda|}(-d)^{\lambda_1}\displaystyle\prod_{i,j= 1}^n\prod_{k=1}^{\lambda_{i+j}-\lambda_{i+j+1}}(c_{j+i}-c_i+d(\lambda_{i+1}-\lambda_i+k-1)).
$$
\qed
\end{prop}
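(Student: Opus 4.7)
The plan is to obtain Proposition~\ref{degdegvanishing} as the $\ve\to 0$ limit of Proposition~\ref{degvanishing} under the regime \eqref{limitregime}. The key observation is that specializing $r_i = c_i+(\mu_i-\mu_{i+1})d$ inside $x_i=e^{\ve r_i}$ gives
$$
x_i=e^{\ve(c_i+(\mu_i-\mu_{i+1})d)}=a_iq^{\mu_i-\mu_{i+1}},
$$
so that $\mathbf{r}^n_\calC(\mu)$ corresponds under the limit regime to $\x^n_\calA(\mu)$. Since, as already noted in the text, $\F^{el}_\lambda(r_1,\dots,r_n\mid\calC,\infty)$ is a genuine polynomial in $r_1,\dots,r_n$ with coefficients in $\Bbbk^{el}=\mathbb{Q}(d,\calC)$, the specialization of the $r_i$ commutes with the $\ve\to 0$ limit used to define it.

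For the vanishing statement, suppose $\lambda\not\subseteq\mu$. Proposition~\ref{degvanishing} asserts that $\widetilde{\F}_\lambda(\x^n_\calA(\mu)\mid\calA,\infty)$ vanishes identically as a rational function in $q,\calA$. In particular it remains zero after the substitutions $a_i=e^{\ve c_i}$, $q=e^{\ve d}$, and hence
$$
\F^{el}_\lambda(\mathbf{r}^n_\calC(\mu)\mid\calC,\infty)=(-d)^{\lambda_1}\lim_{\ve\to 0}\ve^{\lambda_1-|\lambda|}\,\widetilde{\F}_\lambda(\x^n_\calA(\mu)\mid\calA,\infty)\Big|_{a_i=e^{\ve c_i},\,q=e^{\ve d}}=0.
$$

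For the case $\lambda=\mu$, I would substitute the explicit formula from Proposition~\ref{degvanishing} and expand each factor in $\ve$. The prefactors $(-1)^{\lambda_1}q^{\lambda_1(\lambda_1-1)/2}\prod_i a_i^{\lambda_i}$ tend to $(-1)^{\lambda_1}$. Each Pochhammer factor $(q^{\lambda_{i+1}-\lambda_i}a_{j+i}/a_i;q)_{\lambda_{j+i}-\lambda_{j+i+1}}$ is a product of terms of the form
$$
1-e^{\ve(d(\lambda_{i+1}-\lambda_i+k)+c_{j+i}-c_i)}=-\ve\bigl(d(\lambda_{i+1}-\lambda_i+k)+c_{j+i}-c_i\bigr)+O(\ve^2),
$$
and the total number of such factors telescopes as
$$
\sum_{i,j=1}^{n}(\lambda_{j+i}-\lambda_{j+i+1})=\sum_{i=1}^{n}\lambda_{i+1}=|\lambda|-\lambda_1,
$$
using $\lambda_{i+n+1}=0$ for $i\geq 1$. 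This precisely cancels the normalizing $\ve^{\lambda_1-|\lambda|}$, and after combining signs $(-d)^{\lambda_1}(-1)^{\lambda_1}(-1)^{|\lambda|-\lambda_1}=(-1)^{|\lambda|}(-d)^{\lambda_1}$ and shifting $k\mapsto k-1$ in the product to match the stated form, we recover the desired formula. The only non-trivial step is the $\ve$-asymptotic bookkeeping of signs and exponents; the underlying idea is simply a limit of an already established identity.
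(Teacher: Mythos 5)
Your proposal is correct and follows exactly the paper's route: identifying $\mathbf r^n_{\calC}(\mu)$ with $\x^n_{\calA}(\mu)$ under the regime \eqref{limitregime} and taking the $\ve\to 0$ limit of Proposition \ref{degvanishing}, with the sign and $\ve$-exponent bookkeeping carried out correctly. The paper's proof is the same argument, stated more tersely.
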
 
\begin{proof}
Note that in the regime \eqref{limitregime}
$$
\F^{el}_\lambda({\mathbf r}^n_{\calC}(\mu)\mid \calC, \infty)=(-d)^{\lambda_1} \lim_{\ve\to 0}\ve^{-|\lambda|+\lambda_1}\widetilde{\F}_\lambda(\x^n_{\calA}(\mu)\mid\mathcal A,\infty),
$$
so we obtain the claim as the limit of Proposition \ref{degvanishing}.
\end{proof}

\begin{prop}\label{degdegchartheo} The function $\F^{el}_\lambda(r_1, \dots, r_n\mid \calC, \infty)$ is the unique function satisfying
\begin{enumerate}[label=({\arabic*})]
\item $\F^{el}_\lambda(r_1, \dots, r_n\mid \calC, \infty)\in \Bbbk[r_1, \dots, r_n]^{S_n}_{\leq |\lambda|}$,
\item $\F^{el}_\lambda(\mathbf r^n_{\calC}(\mu)\mid \calC,\infty)=0$ for any partition $\mu$ such that $|\mu|\leq |\lambda|$ and $\mu\neq\lambda$,
\item $\F^{el}_{\lambda}(\mathbf r_{\calC}^{n}(\lambda)\mid \calC,\infty)=(-1)^{|\lambda|}(-d)^{\lambda_1}\displaystyle\prod_{i,j= 1}^n\prod_{k=1}^{\lambda_{i+j}-\lambda_{i+j+1}}(c_{j+i}-c_i+d(\lambda_{i+1}-\lambda_i+k-1))$;
\end{enumerate}
and it is also the unique function satisfying
\begin{enumerate}[label=({\arabic*}')]
\item $\F^{el}_\lambda(r_1, \dots, r_n\mid\calC,\infty)\in \Bbbk[r_1, \dots, r_n]^{S_n}_{\leq |\lambda|}$, identically to (1),
\item $\F^{el}_\lambda(\mathbf r^n_{\calC}(\mu)\mid\calC,\infty)=0$ for any partition $\mu$ such that $|\mu|< |\lambda|$,
\item The top degree homogeneous component of $\F^{el}_{\lambda}(r_1, \dots, r_n\mid \calC,\infty)$ has degree $|\lambda|$ and is equal to $e_{\lambda'}(r_1, \dots, r_n)$.
\end{enumerate}
\end{prop}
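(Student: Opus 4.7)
The plan is to mirror the argument for Proposition \ref{degchartheo}, transported across the limit $\ve \to 0$ of \eqref{limitregime}. The fact that $\F^{el}_\lambda(r_1, \dots, r_n \mid \calC, \infty)$ satisfies the stated properties is essentially free: properties (2), (2'), and (3) are exactly Proposition \ref{degdegvanishing}, while (1) and (3') follow from the discussion just before Proposition \ref{degdegvanishing}, which established that $\F^{el}_\lambda$ is a symmetric polynomial of degree $|\lambda|$ whose top homogeneous component equals $P_\lambda(r_1, \dots, r_n; 1, 0) = e_{\lambda'}(r_1, \dots, r_n)$.

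For the uniqueness statements, I would apply Lemma \ref{tempPnondeglem} with $\mho = \mathbf r^n_\calC$ and $F_\lambda = \F^{el}_\lambda(\cdot \mid \calC, \infty)$. Having verified that $\F^{el}_\lambda$ satisfies (1)-(3) for every $\lambda \in \Y^n$, the hypotheses of the lemma hold, and conclusion (2) of the lemma provides the needed polynomial identity principle: any $f \in \Bbbk^{el}[r_1, \dots, r_n]^{S_n}_{\leq m}$ vanishing at all $\mathbf r^n_\calC(\mu)$ with $|\mu| \leq m$ must be zero. From here, both characterizations follow by the standard difference argument. For the first characterization, if $f$ is another candidate, then $g := f - \F^{el}_\lambda$ lies in $\Bbbk^{el}[r_1, \dots, r_n]^{S_n}_{\leq |\lambda|}$ and, by combining (2) and (3) on both sides, vanishes at every $\mathbf r^n_\calC(\mu)$ with $|\mu| \leq |\lambda|$, forcing $g = 0$. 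For the second characterization, if $f'$ is another candidate, then matching top homogeneous components via (3') gives $g' := f' - \F^{el}_\lambda \in \Bbbk^{el}[r_1, \dots, r_n]^{S_n}_{\leq |\lambda|-1}$, and (2') then ensures $g'$ vanishes at $\mathbf r^n_\calC(\mu)$ for all $|\mu| \leq |\lambda| - 1$, again forcing $g' = 0$.

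I do not expect any serious obstacle. The genuine content of the characterization was already absorbed into Lemma \ref{tempPnondeglem}, which is stated abstractly enough to cover any interpolation point system $\mho : \Y^n \to \Bbbk^n$; the only things needed at the elementary level are the existence statement supplied by Proposition \ref{degdegvanishing} together with the identification of the leading term with $e_{\lambda'}$, both of which are already in hand. The one point worth emphasizing is that, in contrast with Theorem \ref{charTheo} for $\F_\lambda$ where the filtration $\mathcal G^n_m$ was a genuine $\calB$-deformation of the polynomial degree filtration, both degenerations considered in this subsection collapse this filtration back to the ordinary one; this is precisely why a clean classical-style characterization of the form (1)-(3) or (1')-(3') becomes available for $\F^{el}_\lambda$ and $\widetilde\F_\lambda$ but not for $\F_\lambda$ itself.
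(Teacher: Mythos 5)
Your proposal is correct and follows essentially the same route as the paper: existence is read off from the discussion preceding Proposition \ref{degdegvanishing} together with that proposition, and uniqueness is obtained by applying Lemma \ref{tempPnondeglem} with $\mho(\mu)=\mathbf r^n_{\calC}(\mu)$ and the same difference argument used for Proposition \ref{degchartheo}. No gaps.
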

\begin{proof}
By the discussion above and Proposition \ref{degdegvanishing} $\F^{el}_\lambda(r_1, \dots, r_n\mid \calC, \infty)$ satisfies (1)-(3) and (1')-(3'). The uniqueness follows in the same way as in Theorem \ref{degchartheo}, by applying Lemma \ref{tempPnondeglem} with $\mho(\mu)=\mathbf r^n_{\calC}(\mu)$.
\end{proof}

In view of Proposition \ref{degdegchartheo}, we call $\F^{el}_\lambda(r_1, \dots, r_n\mid \calC, \infty)$ interpolation elementary polynomials.

\begin{rem}\label{specremark}
For the later use we note that instead of considering the fields $\Bbbk=\mathbb Q(q,\calA),\Bbbk^{cl}=\mathbb Q(d,\calC)$ we can consider $q, a_i, d, c_i$ above as elements of an arbitrary field $\Bbbk$. The induced functions $\widetilde\F_\lambda(x_1, \dots, x_n\mid\calA, \infty)\in\Bbbk[x_1, \dots, x_n]^{S_n}$ are well-defined as long as $q$ is not a root of unity, and our proofs of the vanishing property and the characterization for $\widetilde\F_\lambda(x_1, \dots, x_n\mid\calA, \infty)$ hold as long as $\widetilde\F_\lambda(\x^n_{\calA}(\lambda)\mid\calA, \infty)\neq 0$, which is equivalent to $a_i/a_j\notin q^{\Z_{\geq 0}}$  for any $i,j$ such that $i\neq j$. Similarly, the functions $\F^{el}_\lambda(r_1, \dots, r_n\mid \calC, \infty)\in\Bbbk[r_1, \dots, r_n]^{S_n}$ are well-defined as long as the characteristic of $\Bbbk$ is $0$, while for the characterization property we also need $c_i-c_j\notin d\Z$ for any $i\neq j$.
\end{rem}

\section{Classification of interpolation symmetric polynomials}\label{interpolationSect}

Our interest in the vanishing property from Proposition \ref{degvanishing} and the characterization of Proposition \ref{degchartheo} comes from the fact that the symmetric functions with similar properties were actively studied earlier. More precisely, there are roughly three classes of known symmetric polynomials with similar vanishing and characterization properties: factorial monomial polynomials, factorial Schur polynomials and interpolation Macdonald functions. In \cite{Ok97} these three classes and their variations were distinguished as the only solutions to a certain interpolation problem. In this section we describe how our new functions fit into this classification, expanding it.

\subsection{Interpolation problem}

Below we state the general interpolation problem from \cite{Ok97}. Let $\Bbbk$ be a field; for simplicity we assume that $\mathrm{char}\, \Bbbk=0$. As before, we use $\mathbb Y^n$ to denote the set of partitions of length at most $n$ and $\Bbbk[x_1, \dots, x_n]^{S_n}_{\leq m}$ to denote the space of symmetric polynomials in $n$ variables, of degree $\leq m$ and with coefficients in $\Bbbk$.  

An \emph{$n$-grid} is a map $\mho:\mathbb Y^n\to \Bbbk^{n}$. An $n$-grid $\mho$ is \emph{non-degenerate} if for every $\lambda\in\mathbb Y^n$ there exists a symmetric polynomial $F_\lambda(x_1, \dots, x_n\mid\mho)$ satisfying:
\begin{enumerate}
\item $F_{\lambda}(x_1, \dots, x_n\mid\mho)\in \Bbbk[x_1, \dots, x_n]^{S_n}_{\leq |\lambda|}$;
\item $F_{\lambda}(\mho(\mu)\mid\mho)=0$ for every $\mu\in \mathbb Y^n$ such that $|\mu|\leq |\lambda|, \lambda\neq\mu$;
\item $F_{\lambda}(\mho(\lambda)\mid\mho)\neq0$.
\end{enumerate}
An $n$-grid $\mho$ is called \emph{perfect} if it is non-degenerate and the polynomials $F_\lambda$ additionally satisfy the following vanishing property:
\begin{equation}
\label{vanishing-grid}
F_\lambda(\mho(\mu)\mid\mho)=0,\qquad \text{unless}\ \lambda\subseteq\mu.
\end{equation}

The main result of \cite{Ok97} is the following classification of all perfect grids of a certain form.

\begin{theo}[{\cite{Ok97}}] Assume that $\mho$ is an $n$-grid of the form
$$
\mho(\lambda)=(f_1(\lambda_1), \dots, f_n(\lambda_n)).
$$
Then $\mho$ is perfect if and only if one of the following cases holds:
\begin{enumerate}
\item[\emph{($\bf E_1$)}] $f_i(j)=c_j$ where $c_0,c_1, \dots$ are pairwise distinct elements of $\Bbbk$;

\item[\emph{($\bf E_2$)}] $f_i(j)=c_{j-i}$ where $\dots, c_{-1},c_0,c_1,\dots$ are pairwise distinct elements of $\Bbbk$;

\item[\emph{($\bf I$)}] $f_i(j)=a+bq^jt^i+cq^{-j}t^i$ where $a,b,c,q,t$ are elements of a field extension of $\Bbbk$;

\item[\emph{($\bf II$)}] $f_i(j)=\alpha+\beta j+\beta' i+\gamma(\beta j+\beta' i)^2$, where $\alpha,\beta,\beta',\gamma\in\Bbbk$;

\item[\emph{($\bf III$)}] $f_i(j)=\alpha+\epsilon^j{\epsilon'}^i(\alpha'+\beta j+\beta'i)$ where $\epsilon,\epsilon'\in\{\pm 1\}$ and $\alpha,\alpha',\beta,\beta'\in\Bbbk$;

\item[\emph{($\bf IV$)}]This case only exists when $n=2$, then $f_1(j)=\alpha+\beta q^j, f_2(j)=\alpha+\beta' q^{-j}$ where $\alpha,\beta,\beta',q\in\Bbbk$.
\end{enumerate}
All cases above should additionally satisfy $f_i(j)\neq f_{i'}(j')$ for all integers $i\leq i', j>j'$.
\end{theo}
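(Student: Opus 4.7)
The proof splits into the straightforward ``if'' direction, verified case by case, and the substantive ``only if'' direction. For ``if'', each of the six families admits explicit constructions of $F_\lambda$: cases ($\mathbf{E_1}$) and ($\mathbf{E_2}$) correspond to factorial monomial and factorial Schur polynomials; case ($\mathbf{I}$) gives the type $BC_n$ interpolation Macdonald polynomials of \cite{Ok96c}, \cite{Rai01}; and cases ($\mathbf{II}$)--($\mathbf{IV}$) are polynomial, sign-twisted, or two-variable degenerations thereof. In each case the vanishing property is checked by direct computation.

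For the ``only if'' direction, the plan is to extract functional equations satisfied by the $f_i$ from the existence of the $F_\lambda$ and classify their solutions. Low-degree polynomials $F_{(1)}$, $F_{(1,1)}$, $F_{(2,1)}$, \emph{etc.}\ furnish initial algebraic constraints. The central rigidity mechanism is as follows: for $\lambda = (k,1^{n-1})$ the polynomial $F_\lambda$ has degree at most $k+n-1$, yet specializing $x_2 = f_2(0),\dots,x_n = f_n(0)$ and varying $x_1$ produces a one-variable polynomial that must vanish at $f_1(j)$ for every $j\geq 0$ (since $\mu = (\mu_1,0,\dots,0)$ never contains $(k,1^{n-1})$). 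This forces the specialization to be identically zero and, by running the analogous argument in each variable, forces $F_\lambda$ itself to factor as a product of one-variable terms times a residual symmetric polynomial. Comparing such factorizations across different specializations and permutations of the coordinates yields a system of functional equations constraining the $f_i$; throughout, the non-degeneracy assumption $f_i(j)\neq f_{i'}(j')$ for $i\leq i'$, $j>j'$, is used to exclude degenerate collisions and ensure that the equations actually pin down $f_i$.

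The resulting functional equations are of difference and $q$-difference type, and their general solutions over a field of characteristic zero fall precisely into the six listed families (constants, $q$-geometric, quadratic, sign-twisted, and the exceptional bi-exponential two-variable family). The hardest part, and the main obstacle, is establishing \emph{completeness} of the classification: a perfect grid cannot evade the six listed families. A particularly delicate point is the exceptional Case ($\mathbf{IV}$), which appears only for $n = 2$; for $n \geq 3$ the interaction among three distinct $f_i$'s introduces extra rigidity that eliminates this family, and making this precise requires comparing functional equations across multiple triples of indices together with a careful induction on $n$. I would expect this final case analysis to be the most laborious component of the proof, following the strategy laid out in \cite{Ok97}.
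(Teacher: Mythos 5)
This statement is quoted in the paper from \cite{Ok97}; the paper gives no proof of it, so there is nothing internal to compare against --- the author's own contribution is the parallel classification (Theorem \ref{perfectTheorem}) for grids of the shifted form $\mho(\mu)=(f_1(\mu_1-\mu_2),\dots,f_n(\mu_n))$, which is proved in Section \ref{interpolationSect} by a concrete route (reduction to $2$- and $3$-grids, a Pieri rule, explicit computation of $F_{(1)},F_{(2)},F_{(3)}$, and induction on $n$). Judged on its own terms, your proposal is a strategy outline rather than a proof: the entire content of the ``only if'' direction is compressed into ``extract functional equations and classify their solutions,'' but no functional equation is actually derived, none is solved, and the completeness of the six-family list --- which you yourself identify as the crux --- is deferred wholesale to the strategy of \cite{Ok97}. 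A referee could not check any of the substantive claims.

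There is also a concrete flaw in the one mechanism you do describe. Specializing $x_2=f_2(0),\dots,x_n=f_n(0)$ and using the vanishing at $\mho((j,0,\dots,0))$ does show that the resulting one-variable polynomial in $x_1$ is identically zero, but for $n\geq 3$ vanishing on a codimension-$(n-1)$ affine subspace (and its permutations) does \emph{not} force $F_\lambda$ to ``factor as a product of one-variable terms times a residual symmetric polynomial.'' That conclusion only follows when a single variable is specialized, as in the paper's Proposition \ref{uppershiftFirstStep}, where one sets $x_n=\mho(n;0)$, shows the resulting symmetric polynomial in $x_1,\dots,x_{n-1}$ vanishes at all grid points of the restricted grid, and invokes the non-degeneracy lemma (Lemma \ref{basicnondegLemma}) to conclude it is zero, whence $x_n-\mho(n;0)$ divides $F_\lambda$. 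As written, your rigidity step yields strictly weaker information than you claim to extract from it, so the chain leading to the functional equations does not go through.
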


The polynomials $F_\lambda(x_1, \dots, x_n\mid\mho)$ corresponding to the cases $\bf  E_1, E_2, I$ above are respectively factorial monomial polynomials, factorial Schur polynomials and interpolation Macdonald functions. The polynomial $F_\lambda$ has degree $|\lambda|$, and in these three cases the top-degree homogeneous component of $F_\lambda$ is respectively a monomial, Schur or Macdonald symmetric polynomial. Moreover, the functions $F_\lambda$ for case $\bf II$ above include interpolation functions related to Jack polynomials, see \cite{KS96}, \cite{OO96}.

Propositions \ref{degvanishing}-\ref{degdegchartheo} for the functions $\widetilde{\F}_\lambda(x_1, \dots, x_n\mid\calA,\infty)$ and $\F^{el}_{\lambda}(r_1, \dots, r_n\mid \calC,\infty)$ indicate that there is another class of solutions to this interpolation problem, where $n$-grids have an alternative form $\mho(\mu)=(f_1(\mu_1-\mu_2),\dots, f_n(\mu_n))$, \emph{cf.} the expressions for $\x^n_{\calA}(\mu)$ and $\mathbf r^n_{\calC}(\mu)$ from \eqref{xpoints},\eqref{rpoints}. In this section we show that the function $\widetilde{\F}, {\F}^{el}$ actually lead to all perfect grids of this alternative form, at least when $n\geq 3$.

\begin{theo}\label{perfectTheorem} Assume that $n\geq 3$ and $\mho$ is an $n$-grid of the form
$$
\mho(\lambda)=(f_1(\lambda_1-\lambda_2), f_2(\lambda_2-\lambda_3), \dots, f_n(\lambda_n)).
$$
Then $\mho$ is perfect if and only if the functions $f_i$ have one of the following two forms:
\begin{enumerate}
\item $f_i(k)=c+a_i q^k$ for constants $c, q, a_1, \dots, a_n\in \Bbbk$ such that $q$ is not a root of unity, $q\neq 0$ and $a_i/a_j\neq q^k$ for any $i\neq j, k\in\mathbb Z$. In this case the functions $F_\lambda(x_1, \dots, x_n\mid\mho)$ are proportional to $\widetilde\F_\lambda(x_1-c, x_2-c, \dots, x_n-c\mid\calA,\infty)$, where $a_i$ are identified with elements of $\calA$.

\item $f_i(k)=c_i+kd$ for constants $d, c_1, c_2, \dots, c_n\in \Bbbk$ such that $d\neq 0$ and $c_i-c_j\neq kd$ for any $i\neq j, k\in\mathbb Z$. In this case the functions $F_\lambda(x_1, \dots, x_n\mid\mho)$ are proportional to $\F^{el}_\lambda(x_1,\dots, x_n\mid \calC,\infty)$, where $c_i$ are identified with elements of $\calC$.
\end{enumerate}
\end{theo}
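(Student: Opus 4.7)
The easy direction is immediate from the vanishing and characterization results already established. If $f_i(k) = c + a_i q^k$ (case (1)), then $\mho(\mu) = c \cdot \mathbf 1 + \x^n_{\calA}(\mu)$ with $\x^n_{\calA}$ as in \eqref{xpoints}. Proposition \ref{degvanishing} shows that $\widetilde{\F}_\lambda(x_1 - c, \ldots, x_n - c \mid \calA, \infty)$ satisfies the vanishing and non-vanishing conditions of the interpolation problem for $\mho$, and Lemma \ref{tempPnondeglem}(1) both proves perfectness of $\mho$ and gives the stated proportionality. Case (2) is analogous via Proposition \ref{degdegvanishing} and $\F^{el}_\lambda$.

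For the hard direction, fix a perfect grid $\mho$ of the given form with interpolation polynomials $F_\lambda$. The strategy is to derive linear recurrences on $(f_i(k))_{k \ge 0}$ by analyzing low-degree $F_{(m+1)}$ along the affine lines $L_k := \{\mho(\varnothing) + t\,\be^k : t \in \Bbbk\}$. The key geometric input is that the straight partitions $(j^k) := (\underbrace{j,\dots,j}_k, 0,\dots,0)$ give $\mho((j^k)) = \mho(\varnothing) + (f_k(j) - f_k(0))\,\be^k \in L_k$, and perfectness forces $F_{(m+1)}(\mho((j^k))) = 0$ for $0 \le j \le m$ (since $(m+1) \not\subseteq (j^k)$ in that range). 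Combined with $\deg F_{(m+1)} \le m+1$, this pins down the univariate restriction: $F_{(m+1)}|_{L_k}(t)$ is proportional to $\prod_{j=0}^m (t - (f_k(j) - f_k(0)))$, with the overall leading coefficient independent of $k$ by the $S_n$-symmetry of $F_{(m+1)}$.

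For $m = 1$, writing $F_{(2)} = A e_1^2 + B e_2 + C e_1 + D$ and matching the coefficient of $t$ along $L_k$ across different $k$ yields an identity $(A+B)f_k(0) - A f_k(1) = \mathrm{const}$ independent of $k$; non-degeneracy at $\mho((2))$ forces $A \neq 0$ (otherwise the line-restrictions of $F_{(2)}$ are linear in $t$ yet must have two roots, collapsing $F_{(2)}$ to zero or all $f_k(0)$ to coincide). This produces a first-order recurrence $f_k(1) = q f_k(0) + c'$ with $q := (A+B)/A$ and $c'$ universal in $k$. Iterating the same analysis with $F_{(m+1)}$ for $m \ge 2$ and matching the lower-order coefficients of $F_{(m+1)}|_{L_k}(t)$ across $k$ produces further $k$-independent linear relations among $f_k(0), f_k(1), \ldots, f_k(m+1)$; combined with the first-order relation, these collapse into the single recurrence $f_k(m+1) = q f_k(m) + c'$ holding for all $m$ with the same $q, c'$. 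Solving then gives the two claimed forms: $f_i(k) = c + a_i q^k$ when $q \neq 1$ (with $c = c'/(1-q)$ and $a_i = f_i(0) - c$), or $f_i(k) = c_i + kd$ when $q = 1$ (with $c_i = f_i(0)$ and $d = c'$). The proportionality statement then follows exactly as in the easy direction via Lemma \ref{tempPnondeglem}(1). The main technical obstacle is the consistency argument in extending from $m = 1$ to all $m$: one must verify that the numerous linear constraints obtained by matching coefficients of $F_{(m+1)}|_{L_k}(t)$ across multiple $k$ force a \emph{first}-order recurrence rather than a higher-order alternative. The assumption $n \ge 3$ enters precisely here, providing at least three independent $\be^k$-directions for comparison and eliminating spurious degrees of freedom (in parallel with how Okounkov's case $\mathbf{IV}$ in \cite{Ok97} appears only in the low-rank regime $n = 2$).
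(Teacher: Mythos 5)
The converse (``if'') direction of your argument is fine and coincides with the paper's: Propositions \ref{degvanishing}, \ref{degdegvanishing} plus the uniqueness statement of Lemma \ref{tempPnondeglem} give both perfectness and the proportionality. The problem is the forward direction, where the step you yourself flag as ``the main technical obstacle'' is in fact the entire content of the theorem, and the mechanism you propose for it does not close. Restricting $F_{(m+1)}$ to the lines $L_k$ and using the straight partitions $(j^k)$ correctly yields $F_{(m+1)}|_{L_k}(t)=\prod_{j=0}^{m}\bigl(t-(f_k(j)-f_k(0))\bigr)$ (with $k$-independent leading coefficient), and for $m=1$ this does give $f_k(1)=qf_k(0)+c'$. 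But for $m\ge 2$, matching the coefficient of $t^{m+1-i}$ across $k$ equates $e_i\bigl(\{f_k(j)-f_k(0)\}_j\bigr)$ with a polynomial $G_i(f_k(0))$ whose coefficients are themselves \emph{unknowns} (they are determined by the coefficients of $F_{(m+1)}$, which you have not computed). Only the $i=1$ coefficient produces usable information, namely that each $f_k(j)$ is an affine function $Q_jf_k(0)+R_j$ of $f_k(0)$ with $k$-independent $Q_j,R_j$; for $i\ge 2$ both sides of the identity contain free parameters and no constraint on the $Q_j,R_j$ results. A dimension count confirms this: the restrictions of a symmetric polynomial of degree $m+1$ in $n$ variables to $n$ coordinate lines come nowhere near determining it, so the geometric progression $Q_j=q^j$, $R_j=c'(1+\cdots+q^{j-1})$ cannot be forced from this data alone. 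The information you are discarding is the vanishing at \emph{non-straight} partitions — e.g.\ $\mho((2,1))=(f_1(1),f_2(1),f_3(0),\dots)$ lies on none of the lines $L_k$ — and these vanishings are exactly what is needed.

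The paper's route supplies two ingredients your outline lacks. First, it proves that the shift and restriction operations $\mho\mapsto\mho^k$, $\prescript{}{l}{\mho}$, $\mho_m$ preserve perfectness (Propositions \ref{restrictionGrid}, \ref{uppershiftFirstStep}, \ref{leftshiftPerfect}); this lets one derive a relation among the values $\mho(i;j)$ near one base point and then translate it to every position $j$, which is how the recurrence in $j$ is propagated without ever controlling $F_{(m+1)}$ for large $m$ globally. Second, for $2$-grids it establishes a Pieri-type three-term recursion (Lemma \ref{sadPieri}) that computes $F_{(k)}(x_1,x_2\mid\mho)$ explicitly and, coupled with the single extra vanishing $F_{(k+1)}(\mho((k,2))\mid\mho)=0$, determines $\mho(1;k)$ by induction on $k$ (Lemma \ref{finishmho1}); the base constraints for $n=3$ come from the explicit vanishings $F_{(3)}(\mho((2,2)))=0$ and $F_{(2)}(\mho((1,1,1)))=0$ (Lemmas \ref{12lemma}--\ref{finalconstraint}). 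To rescue your approach you would need, at minimum, to bring in the vanishings at non-rectangular partitions and some analogue of the shift-invariance; as written, the claimed ``collapse'' to a first-order recurrence is unsupported.
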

The remainder of this section is devoted to the proof of Theorem \ref{perfectTheorem}.

\begin{rem}
Following the existing terminology, the characterization property from Proposition \ref{degchartheo} allows us to call the functions $\widetilde\F$ interpolation $q$-Whittaker polynomials. Note that, while setting $t=0$ reduces Macdonald polynomials to $q$-Whittaker polynomials, setting $t=0$ in interpolation Macdonald polynomials does not result in any interpolation polynomials, because the characterization property for  interpolation Macdonald polynomials does not survive in any form after setting $t=0$. 
\end{rem}

\begin{rem}
Note that Theorem \ref{perfectTheorem} does not cover perfect $2$-grids of the form $\mho(\mu)=(f_1(\mu_1-\mu_2), f_2(\mu_2))$. While both types of grids listed in Theorem \ref{perfectTheorem} are well-defined and perfect when $n=2$, numerical simulations suggest that there exist more general perfect grids when $n=2$. However, our proof of Theorem \ref{perfectTheorem} does not cover the $n=2$ case.
\end{rem}

\subsection{General properties of $n$-grids} We start with adapting some arguments from \cite{Ok97} to our setting. From now on we always assume that $n$-grids $\mho$ have the form
$$
\mho(\lambda)=(\mho(1; \lambda_1-\lambda_2), \mho(2; \lambda_2-\lambda_3), \dots, \mho(n;\lambda_n)),
$$
where $\mho(1;\cdot), \dots, \mho(n;\cdot)$ are functions $\Z_{\geq 0}\to\Bbbk$.

For later convenience, we below restate Lemma \ref{tempPnondeglem} in terms of non-degenerate grids:
\begin{lem} \label{basicnondegLemma}Let $\mho$ be a non-degenerate $n$-grid.
\begin{enumerate}
\item The functions $F_{\lambda}(x_1, \dots, x_n\mid \mho)$ are uniquely determined up to a scalar;
\item Let $m\in \Z_{\geq 0}$. If $f\in \Bbbk[x_1, \dots, x_n]^{S_n}_{\leq m}$ and $f(\mho(\mu))=0$ for all $\mu\in\mathbb Y^n$ such that $|\mu|\leq m$, then $f=0$.
\item For each $m\in \mathbb Z_{\geq 0}$ the functions $F_{\lambda}(x_1, \dots, x_n\mid \mho)$ with $|\lambda|\leq m$ form a basis of $\Bbbk[x_1, \dots, x_n]^{S_n}_{\leq m}$;
\item The degree of $F_{\lambda}(x_1, \dots, x_n\mid \mho)$ is $|\lambda|$.
\end{enumerate}\qed
\end{lem}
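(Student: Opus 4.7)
The plan is to mirror the argument already used for Lemma \ref{tempPnondeglem}, which this statement simply repackages in the language of $n$-grids; indeed the whole lemma can be obtained by invoking Lemma \ref{tempPnondeglem} directly, but it is worth outlining the steps since they set the template for the rest of the section. The key external input is the dimension equality $\dim_{\Bbbk}\Bbbk[x_1,\dots,x_n]^{S_n}_{\leq m}=\#\{\lambda\in\mathbb Y^n\mid|\lambda|\leq m\}$, which lets us upgrade linear independence to a basis statement.

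First I would fix $m$ and establish linear independence of $\{F_\lambda(x_1,\dots,x_n\mid\mho)\}_{|\lambda|\leq m}$ by the standard minimal-partition trick. Suppose $\sum_{|\lambda|\leq m}\alpha_\lambda F_\lambda=0$ is a nontrivial relation; pick $\mu$ of minimal weight with $\alpha_\mu\neq 0$. Evaluate at $\mho(\mu)$: for any other $\lambda$ contributing to the sum we have $|\lambda|\geq|\mu|$ and $\lambda\neq\mu$, so condition (2) in the definition of non-degeneracy forces $F_\lambda(\mho(\mu)\mid\mho)=0$. What survives is $\alpha_\mu F_\mu(\mho(\mu)\mid\mho)$, which is nonzero by condition (3), giving the contradiction. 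Combined with the dimension count, this proves (3), and (4) then follows by applying (3) at levels $m$ and $m-1$: an $F_\lambda$ with $|\lambda|=m$ cannot lie in the span of the lower-weight basis, hence has degree exactly $m$.

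Statement (2) is proved by the same minimality trick: expand the given $f$ in the basis from (3) as $f=\sum_{|\lambda|\leq m}\alpha_\lambda F_\lambda$, pick a minimal-weight $\mu$ with $\alpha_\mu\neq 0$, and evaluate at $\mho(\mu)$ to get $0=f(\mho(\mu))=\alpha_\mu F_\mu(\mho(\mu)\mid\mho)\neq 0$. Finally, (2) implies (1): given two candidates $F_\lambda$ and $F'_\lambda$, rescale so that $F'_\lambda(\mho(\lambda)\mid\mho)=F_\lambda(\mho(\lambda)\mid\mho)$; then $F'_\lambda-F_\lambda$ lies in $\Bbbk[x_1,\dots,x_n]^{S_n}_{\leq|\lambda|}$ and vanishes at $\mho(\mu)$ for every $\mu$ with $|\mu|\leq|\lambda|$, hence is zero by (2).

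There is no real obstacle here — the argument is purely formal and depends only on the non-degeneracy axioms plus the dimension count. The one thing to watch is that the ordering controlling the minimality argument is total weight $|\mu|$ rather than inclusion; this is exactly what condition (2) of non-degeneracy is designed to handle (vanishing at all $\mho(\mu)$ with $|\mu|\leq|\lambda|$ except at $\mu=\lambda$), which is weaker than the perfect-grid vanishing \eqref{vanishing-grid} and therefore available here.
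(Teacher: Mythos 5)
Your proposal is correct and matches the paper's treatment: the paper states this lemma with no separate proof precisely because it is a restatement of Lemma \ref{tempPnondeglem}, whose proof runs exactly as you outline (minimal-weight evaluation for linear independence, the dimension count to get a basis, and the same trick again for the interpolation-uniqueness statements). No gaps.
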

In particular, given a non-degenerate $n$-grid $\mho$ the functions $F_\lambda(x_1, \dots, x_n\mid\mho )$ are well-defined up to scalars. We will fix a convenient normalization later.

The following operations on grids will be useful. Let $\mho$ be an $n$-grid.
\begin{itemize}
\item For $m\in\mathbb Z_{\geq 1}$ such that $m\leq n$ define an $m$-grid $\mho_m$ by
$$
\mho_m(\lambda)=(\mho(1; \lambda_1-\lambda_2), \mho(2; \lambda_2-\lambda_3), \dots, \mho(m;\lambda_m)), \qquad \lambda\in\mathbb Y^m.
$$
That is, $\mho_m(i; j)=\mho(i;j)$ for $i=1, \dots, m$. 
\item For $l\in\mathbb Z_{\geq 0}$ such that $l<n$ define an $(n-l)$-grid $\prescript{}{l}{\mho}$ by
$$
\prescript{}{l}{\mho}(\lambda)=(\mho(l+1; \lambda_1-\lambda_2), \mho(l+2; \lambda_2-\lambda_3), \dots, \mho(n;\lambda_{n-l})), \qquad \lambda\in\mathbb Y^{n-l}.
$$
That is, $\prescript{}{l}{\mho}(i; j)=\mho(i+l;j)$ for $i=1, \dots, n-l$. 
\item For $k\in\mathbb Z$ define an $n$-grid $\mho^k$ by
$$
\mho^k(\lambda)=\mho(\lambda+k^n)=(\mho(1; \lambda_1-\lambda_2), \dots, \mho(n-1;\lambda_{n-1}-\lambda_n), \mho(n;\lambda_n+k)),
$$
where $\lambda+k^n$ denotes the partition with parts $\lambda_i+k$. In other words, $\mho^k(i; j)=\mho(i;j)$ for $i=1, \dots, n-1$ and $\mho^k(n,j)=\mho(n;j+k)$.
\end{itemize}

Our first goal is to show that the three operations above preserve perfect grids. We start with $\mho_m$.

\begin{prop}\label{restrictionGrid} Assume that $\mho$ is a non-degenerate $n$-grid. Then for any $m\leq n$ the $m$-grid $\mho_m$ is non-degenerate with
\begin{equation}
\label{restrictionGrideq}
F_\lambda(x_1, \dots, x_m\mid\mho_m)=F_\lambda(x_1, \dots, x_m, \mho(m+1, 0), \dots, \mho(n,0)\mid\mho)
\end{equation}
for each $\lambda\in\mathbb Y^{m}$. Moreover, if $\mho$ is perfect then $\mho_m$ is perfect as well.
\end{prop}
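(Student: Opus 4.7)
The plan is to take the formula \eqref{restrictionGrideq} as the definition of a candidate symmetric polynomial and check directly that it satisfies properties (1)--(3) of a non-degenerate $m$-grid, and then property \eqref{vanishing-grid} in the perfect case. By Lemma \ref{basicnondegLemma}(1) the $F_\lambda(\cdot\mid\mho_m)$ is then uniquely determined up to a scalar, which justifies the equality in \eqref{restrictionGrideq}.

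First I would set
$$
\widetilde F_\lambda(x_1,\dots,x_m):=F_\lambda\bigl(x_1,\dots,x_m,\mho(m+1;0),\dots,\mho(n;0)\mid\mho\bigr),
$$
for $\lambda\in\Y^m$, and verify the three required properties. Symmetry of $\widetilde F_\lambda$ in $x_1,\dots,x_m$ and the degree bound $\deg\widetilde F_\lambda\leq|\lambda|$ are inherited from the corresponding properties of $F_\lambda(\cdot\mid\mho)$. The key observation, which makes everything work, is that if $\mu\in\Y^m$ is regarded as an element of $\Y^n$ by padding with zeros, then
$$
\mho(\mu)=\bigl(\mho_m(\mu),\,\mho(m+1;0),\dots,\mho(n;0)\bigr),
$$
because $\mu_{m+1}=\mu_{m+2}=\cdots=0$ and the $i$-th coordinate of $\mho(\mu)$ for $i>m$ depends only on $\mu_i-\mu_{i+1}=0$ (and on $\mu_n=0$ for $i=n$). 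Consequently $\widetilde F_\lambda(\mho_m(\mu))=F_\lambda(\mho(\mu)\mid\mho)$ for every $\mu\in\Y^m$. Properties (2) and (3) for $\widetilde F_\lambda$ then follow at once from properties (2) and (3) of $F_\lambda(\cdot\mid\mho)$ applied with $\mu\in\Y^m\subset\Y^n$, giving that $\mho_m$ is non-degenerate and that $F_\lambda(\cdot\mid\mho_m)$ can be taken to equal $\widetilde F_\lambda$.

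For the second statement, assume $\mho$ is perfect. By \eqref{vanishing-grid} applied to $\mho$ we have $F_\lambda(\mho(\mu)\mid\mho)=0$ whenever $\lambda\not\subseteq\mu$; via the identity $\widetilde F_\lambda(\mho_m(\mu))=F_\lambda(\mho(\mu)\mid\mho)$ this immediately gives the vanishing property for $\mho_m$, so $\mho_m$ is perfect.

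I do not anticipate any significant obstacle: the whole argument is a substitution argument, and the only point that needs attention is the identity $\mho(\mu)=(\mho_m(\mu),\mho(m+1;0),\dots,\mho(n;0))$ for $\mu\in\Y^m\hookrightarrow\Y^n$, which is immediate from the prescribed form $\mho(\lambda)=(\mho(1;\lambda_1-\lambda_2),\dots,\mho(n;\lambda_n))$.
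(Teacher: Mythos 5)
Your proposal is correct and follows essentially the same route as the paper: define the candidate by substituting $\mho(m+1;0),\dots,\mho(n;0)$ into the last variables, observe that $\mho(\mu)=(\mho_m(\mu),\mho(m+1;0),\dots,\mho(n;0))$ for $\mu\in\Y^m$ padded with zeros, and read off properties (2), (3) and the vanishing property from those of $F_\lambda(\cdot\mid\mho)$. The only difference is that you spell out the key substitution identity more explicitly than the paper does, which is harmless.
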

\begin{proof}
For the first statement it is enough to show that the functions
$$
f_\lambda(x_1, \dots, x_m):=F_\lambda(x_1, \dots, x_m, \mho(m+1, 0), \dots, \mho(n,0)\mid\mho), \qquad \lambda\in\mathbb Y^m,
$$
satisfy the defining properties of $F_\lambda(x_1, \dots, x_m\mid\mho_m)$, which readily follows from noticing that for $\mu\in\mathbb Y^m$
$$
f_\lambda(\mho_m(\mu))=F_\lambda(\mho(\mu)\mid\mho).
$$
The last statement follows immediately from the vanishing property for $\mho$ and the identity above.
\end{proof}

\begin{prop} \label{uppershiftFirstStep}For any non-degenerate $n$-grid $\mho$ the following statements hold:
\begin{enumerate}
\item $\mho(i;j)\neq\mho(n;0)$ for any integer pair $(i,j)\neq (n,0)$;
\item $\mho^1$ is non-degenerate and $F_\lambda$ can be chosen so that
\begin{equation}
\label{uppershiftPexpression}
F_\lambda(x_1, \dots, x_n\mid\mho^1)\prod_{i=1}^n(x_i-\mho(n;0))=F_{\lambda+1^n}(x_1, \dots, x_n\mid\mho);
\end{equation}
\item If $\mho$ is perfect, then $\mho^1$ is perfect.
\end{enumerate}
\end{prop}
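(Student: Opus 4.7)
The plan is to prove (1), (2), and (3) in concert by first establishing a polynomial divisibility for $F_{\lambda+1^n}(\,\cdot\,\mid\mho)$, then extracting (1) from the resulting product, and only afterwards identifying the quotient with $F_\lambda(\,\cdot\,\mid\mho^1)$. Fix $\lambda\in\mathbb Y^n$ and set $g(x_1,\dots,x_{n-1}):=F_{\lambda+1^n}(x_1,\dots,x_{n-1},\mho(n;0)\mid\mho)$, a symmetric polynomial in $n-1$ variables of degree at most $|\lambda|+n$. For $\nu\in\mathbb Y^{n-1}$ and $\nu':=(\nu_1,\dots,\nu_{n-1},0)\in\mathbb Y^n$ one has $g(\mho_{n-1}(\nu))=F_{\lambda+1^n}(\mho(\nu')\mid\mho)$; since $\nu'_n=0<(\lambda+1^n)_n$ we have $\nu'\neq\lambda+1^n$, so the defining vanishing of $F_{\lambda+1^n}$ makes $g(\mho_{n-1}(\nu))=0$ for every $\nu$ with $|\nu|\leq|\lambda|+n$. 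Proposition~\ref{restrictionGrid} guarantees that $\mho_{n-1}$ is non-degenerate, and Lemma~\ref{basicnondegLemma}(2) applied to $\mho_{n-1}$ then forces $g\equiv 0$. By symmetry $(x_i-\mho(n;0))$ divides $F_{\lambda+1^n}$ for every $i$, and since these $n$ linear forms involve distinct variables they are pairwise coprime, yielding a symmetric polynomial $\tilde F_\lambda$ of degree $\leq|\lambda|$ with
\[
F_{\lambda+1^n}(x\mid\mho)=\tilde F_\lambda(x)\prod_{i=1}^n(x_i-\mho(n;0)).
\]

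Using $\mho^1(\mu)=\mho(\mu+1^n)$, evaluating this identity at $x=\mho^1(\lambda)$ and invoking $F_{\lambda+1^n}(\mho(\lambda+1^n))\neq 0$ forces both factors on the right to be nonzero, giving $\tilde F_\lambda(\mho^1(\lambda))\neq 0$ and $\prod_i(\mho^1(\lambda)_i-\mho(n;0))\neq 0$. The factors of the latter product are $\mho(i;\lambda_i-\lambda_{i+1})-\mho(n;0)$ for $i<n$ and $\mho(n;\lambda_n+1)-\mho(n;0)$; letting $\lambda$ range over $\mathbb Y^n$ sweeps out every $(i,j)\neq(n,0)$, which is exactly statement (1). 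With (1) in hand the product is nonzero at every $\mho^1(\mu)$, so the identity can be divided through pointwise. For $\mu\neq\lambda$ with $|\mu|\leq|\lambda|$ we have $\mu+1^n\neq\lambda+1^n$ and $|\mu+1^n|\leq|\lambda+1^n|$, whence $F_{\lambda+1^n}(\mho(\mu+1^n))=0$ and thus $\tilde F_\lambda(\mho^1(\mu))=0$. Combined with the non-vanishing at $\mho^1(\lambda)$, this verifies the three defining properties of $F_\lambda(\,\cdot\,\mid\mho^1)$, proving non-degeneracy of $\mho^1$ and the identity \eqref{uppershiftPexpression} with the normalization $F_\lambda(\,\cdot\,\mid\mho^1)=\tilde F_\lambda$. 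For (3), if $\mho$ is perfect then $F_{\lambda+1^n}(\mho(\sigma+1^n))=0$ unless $\lambda+1^n\subseteq\sigma+1^n$, equivalently $\lambda\subseteq\sigma$; this vanishing passes directly to $\tilde F_\lambda(\mho^1(\sigma))$ via the divided identity.

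The main subtlety is an apparent circularity between (1) and (2): identifying $\tilde F_\lambda$ with $F_\lambda(\,\cdot\,\mid\mho^1)$ pointwise needs the denominators $\prod_i(\mho^1(\mu)_i-\mho(n;0))$ to be nonzero, which is precisely (1), yet (1) is itself extracted from the same divisibility identity. The loop is broken by carrying out the divisibility purely at the polynomial level first, using only that the linear forms in distinct variables are coprime, and only afterwards reading off (1) before any pointwise division.
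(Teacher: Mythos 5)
Your proposal is correct and follows essentially the same route as the paper: first establish divisibility of $F_{\lambda+1^n}(\cdot\mid\mho)$ by $\prod_i(x_i-\mho(n;0))$ via the restriction $x_n=\mho(n;0)$ and Lemma \ref{basicnondegLemma}, then read off (1) from non-vanishing at $\mho(\lambda+1^n)$, and finally identify the quotient with $F_\lambda(\cdot\mid\mho^1)$. The only cosmetic difference is that the paper extracts (1) using the specific staircase partitions $\lambda=(nj,(n-1)j,\dots,j)$, whereas you let $\lambda$ range over all of $\mathbb Y^n$; both sweep out all pairs $(i,j)\neq(n,0)$.
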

\begin{proof} Let $\lambda\in\mathbb Y^n$ and consider
$$
f_\lambda(x_1, \dots, x_{n-1}):=F_{\lambda+1^n}(x_1, \dots, x_{n-1}, \mho(n;0)\mid\mho).
$$
Note that $f_\lambda(x_1, \dots, x_{n-1})$ has degree at most $|\lambda|+n$, and for any $\mu\in\mathbb Y^{n-1}$ such that $|\mu|\leq |\lambda|+n$ we have 
$$
f_{\lambda}(\mho_{n-1}(\mu))=F_{\lambda+1^n}(\mho(1;\mu_1-\mu_2), \dots, \mho(n-1;\mu_{n-1}), \mho(n;0)\mid\mho)=F_{\lambda+1^n}(\mho(\mu)\mid\mho)=0,
$$
since $\mu\neq\lambda+1^n$. Hence, by Lemma \ref{basicnondegLemma} we have $f_\lambda=0$. In other words, $\restr{F_{\lambda+1^n}(x_1, \dots, x_n\mid\mho)}{x_n=\mho(n;0)}=0$, so $x_n-\mho(n;0)$ divides $F_{\lambda+1^n}(x_1, \dots, x_n\mid\mho)$. By symmetry, $x_i-\mho(n;0)$ for $i=1, \dots, n$ divide $F_{\lambda+1^n}(x_1, \dots, x_n\mid\mho)$ and so
\begin{equation}
\label{proofOfGridShift}
F_{\lambda+1^n}(x_1, \dots, x_n\mid\mho)=g_{\lambda}(x_1, \dots, x_n)\prod_{i=1}^n(x_i-\mho(n;0))
\end{equation}
for some $g_{\lambda}(x_1, \dots, x_n)\in \Bbbk[x_1, \dots,x_n]^{S_n}_{\leq |\lambda|}$.

To prove (1) take $j\in\mathbb Z_{\geq 0}$ and set $\lambda=(nj, (n-1)j, \dots, 2j, j)$. Recall that $F_{\lambda+1^n}(\mho(\lambda+1^n)\mid\mho)\neq 0$, so from \eqref{proofOfGridShift} we have
$$
F_{\lambda+1^n}(\mho(\lambda+1^n)\mid\mho)=g_{\lambda}(\mho(\lambda+1^n))  (\mho(n;j+1)-\mho(n;0)) \prod_{i=1}^{n-1}(\mho(i;j)-\mho(n;0))\neq 0.
$$
Hence $\mho(i;j)\neq \mho(n;0)$ for any $i<n$ and $\mho(n;j+1)\neq\mho(n;0)$, which implies (1) since $j\in\Z_{\geq0}$ was arbitrary. 

To prove (2) note that from \eqref{proofOfGridShift} we have for any $\mu\in\Y^n$
\begin{equation}
\label{proofOfGridShiftEx}
g_{\lambda}(\mho^1(\mu))\prod_{i=1}^n(\mho^1(i;\mu_i-\mu_{i+1})-\mho(n;0))=F_{\lambda+1^n}(\mho^1(\mu)\mid\mho)=F_{\lambda+1^n}(\mho(\mu+1^n)\mid\mho).
\end{equation}
By (1) we know that $\mho^1(i;j)-\mho(n;0)=\mho(i;j+\delta_{i,n})-\mho(n;0)\neq 0$ for any $j\geq 0$, hence \eqref{proofOfGridShiftEx} implies that $F_{\lambda+1^n}(\mho(\mu+1^n)\mid\mho)=0$ if and only if $g_{\lambda}(\mho^1(\mu))=0$. In particular, for any $\mu\in\Y^n$ such that $|\mu|\leq|\lambda|$ and $\mu\neq\lambda$ we get $g_{\lambda}(\mho^1(\mu))=0$, while $g_{\lambda}(\mho^1(\lambda))\neq0$. Since the degree of $F_{\lambda+1^n}$ is $|\lambda|+n$, the degree of $g_{\lambda}$ is not greater than $|\lambda|$. So, $g_{\lambda}$ satisfies the defining properties of $F_{\lambda}(x_1, \dots, x_n\mid \mho^1)$, which proves (2).

To prove part (3) assume that $\mho$ is perfect. Then $F_{\lambda+1^n}(\mho(\mu+1^n)\mid\mho)=0$ unless $\lambda\subseteq\mu$, and hence $g_{\lambda}(\mho^1(\mu))=F_{\lambda}(\mho^1(\mu)\mid \mho^1)=0$ unless $\lambda\subseteq\mu$. So the vanishing property for $\mho$ implies the vanishing property for $\mho^1$.
\end{proof}

Using Proposition \ref{uppershiftFirstStep} we can inductively get the following results.

\begin{cor}\label{uppershiftPerf}
If $\mho$ is non-degenerate then $\mho^k$ is non-degenerate for any $k\in\mathbb Z_{\geq0}$. If $\mho$ is perfect then $\mho^k$ is perfect for any $k\in\mathbb Z_{\geq0}$. \qed
\end{cor}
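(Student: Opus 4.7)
The plan is straightforward induction on $k$, the key observation being that the shift operation iterates, i.e., $\mho^{k+1} = (\mho^k)^1$. Indeed, by the definition of $\mho^\ell$, for $i<n$ we have $(\mho^k)^1(i;j) = \mho^k(i;j) = \mho(i;j) = \mho^{k+1}(i;j)$, while for $i=n$ we get $(\mho^k)^1(n;j) = \mho^k(n;j+1) = \mho(n;j+1+k) = \mho^{k+1}(n;j)$.

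With this identification in hand, I would proceed as follows. The base case $k=0$ is immediate since $\mho^0 = \mho$. For the inductive step, assume $\mho^k$ is non-degenerate (respectively perfect). Applying Proposition \ref{uppershiftFirstStep} to the grid $\mho^k$ in place of $\mho$, part (2) of that proposition shows that $(\mho^k)^1 = \mho^{k+1}$ is non-degenerate, and part (3) shows that if $\mho^k$ is perfect then $\mho^{k+1}$ is perfect. This closes the induction.

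As a byproduct of the inductive argument, iterating \eqref{uppershiftPexpression} yields the explicit relation
\[
F_\lambda(x_1,\dots,x_n\mid \mho^k)\prod_{i=1}^{n}\prod_{j=0}^{k-1}\bigl(x_i-\mho(n;j)\bigr) = F_{\lambda+k^n}(x_1,\dots,x_n\mid \mho),
\]
where the normalization of $F_{\lambda}(x_1,\dots,x_n\mid \mho^k)$ is fixed recursively via \eqref{uppershiftPexpression}. There is no significant obstacle here: the entire content of the corollary is that Proposition \ref{uppershiftFirstStep} handles the single-step case, and the $k$-fold shift decomposes into $k$ single-step shifts that can be applied one after another.
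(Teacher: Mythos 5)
Your proof is correct and matches the paper's argument, which likewise obtains the corollary by iterating Proposition \ref{uppershiftFirstStep} inductively via the identification $\mho^{k+1}=(\mho^k)^1$. The iterated version of \eqref{uppershiftPexpression} you record as a byproduct is also correct.
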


\begin{cor}\label{nondegeneccesary}
If $\mho$ is non-degenerate then 
$
\mho(i;j)\neq\mho(i',j')
$
for all pairs $(i,j)\neq(i',j')$.
\end{cor}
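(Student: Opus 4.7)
The plan is to bootstrap the assertion from the two previous results: Proposition \ref{restrictionGrid} (restriction to the first $m$ variables preserves non-degeneracy) together with Corollary \ref{uppershiftPerf} (shifting the last coordinate preserves non-degeneracy), combined with part (1) of Proposition \ref{uppershiftFirstStep} (which separates the value $\mho(n;0)$ from every other grid value). The idea is that Proposition \ref{uppershiftFirstStep}(1) already handles the \emph{top row} index $i=n$; to reach an arbitrary pair $(i,j)\neq (i',j')$ we need to turn an arbitrary target value $\mho(m;k)$ into a ``top-row, zero-column'' value of some auxiliary non-degenerate grid, which is exactly what the operations $\mho\mapsto\mho_m$ and $\mho\mapsto\mho^k$ let us do.

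Concretely, fix $m\in\{1,\dots,n\}$ and $k\in\mathbb Z_{\geq 0}$. First apply Proposition \ref{restrictionGrid} to conclude that $\mho_m$ is a non-degenerate $m$-grid. Then apply Corollary \ref{uppershiftPerf} (iterated $k$ times) to $\mho_m$ to conclude that $(\mho_m)^k$ is also a non-degenerate $m$-grid. By construction we have
\[
(\mho_m)^k(i;j)=\mho(i;j)\ \text{for}\ i<m,\qquad (\mho_m)^k(m;j)=\mho(m;j+k).
\]
In particular $(\mho_m)^k(m;0)=\mho(m;k)$. Proposition \ref{uppershiftFirstStep}(1), applied to the non-degenerate grid $(\mho_m)^k$, therefore says that $\mho(m;k)$ differs from $(\mho_m)^k(i;j)$ for every pair $(i,j)\neq(m,0)$, which translates to $\mho(m;k)\neq\mho(i;j)$ whenever $i<m$ and $j\geq 0$, and $\mho(m;k)\neq \mho(m;j')$ whenever $j'\neq k$.

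Letting $m$ and $k$ vary over $\{1,\dots,n\}$ and $\mathbb Z_{\geq 0}$ exhausts every pair of distinct indices: if $(i,j)\neq(i',j')$ with $i\leq i'$, apply the preceding conclusion with $m=i'$ and $k=j'$ (using either $i<i'$ or $i=i'$ with $j\neq j'$). This gives $\mho(i;j)\neq \mho(i';j')$, completing the argument. The proof is essentially bookkeeping; the nontrivial input is really Proposition \ref{uppershiftFirstStep}(1), and no further obstacle is expected.
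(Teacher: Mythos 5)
Your argument is correct and follows essentially the same route as the paper: restrict to $\mho_{i'}$, shift to $(\mho_{i'})^{j'}$, and invoke Proposition \ref{uppershiftFirstStep}(1). One small imprecision: the shifted grid $(\mho_m)^k$ only sees the values $\mho(m;j')$ with $j'\geq k$ in its top row, so your translation should read ``$\mho(m;k)\neq\mho(m;j')$ whenever $j'>k$'' rather than ``$j'\neq k$''; the case $j'<k$ is then covered by swapping the roles of the two pairs (the paper handles this by assuming $j>j'$), so nothing is lost.
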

\begin{proof}
Let $\mho$ be a non-degenerate $n$-grid and fix $(i,j)\neq (i',j')$. Without loss of generality we can assume that $i\leq i'$. By Proposition \ref{restrictionGrid} $\mho_{i'}$ is also non-degenerate, hence replacing $\mho$ by $\mho_{i'}$ we can assume that $i'=n$. If $i<n$, then $\mho^{j'}$ is non-degenerate, and by Proposition \ref{uppershiftFirstStep} applied to $\mho^{j'}$
$$
\mho(i;j)=\mho^{j'}(i;j)\neq\mho^{j'}(n;0)=\mho(i';j').
$$
If $i=i'=n$, then we can additionally assume that $j>j'$ and use Proposition \ref{uppershiftFirstStep} to get 
$$
\mho(n;j)=\mho^{j'}(n;j-j')\neq\mho^{j'}(n;0)=\mho(n;j').
$$
\end{proof}

It turns out that the converse to Corollary \ref{nondegeneccesary} is also true, so we can classify all non-degenerate grids:

\begin{prop}\label{nondegsuff} An $n$-grid $\mho$ is non-degenerate if and only if  $\mho(i;j)\neq\mho(i',j')$ for all $(i,j)\neq(i',j')$.
\end{prop}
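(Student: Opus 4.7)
The forward implication is Corollary \ref{nondegeneccesary}, so I focus on the converse. Assume all values $\mho(i;j)$ are pairwise distinct. Since $\dim_{\Bbbk} \Bbbk[x_1, \dots, x_n]^{S_n}_{\leq m}$ equals the number of partitions $\mu \in \Y^n$ with $|\mu| \leq m$ (with the monomial symmetric polynomials as a basis), the evaluation map
$$
ev_m: \Bbbk[x_1, \dots, x_n]^{S_n}_{\leq m} \to \Bbbk^{\{\mu \in \Y^n : |\mu| \leq m\}}, \qquad f \mapsto (f(\mho(\mu)))_{\mu \in \Y^n, |\mu| \leq m},
$$
is a map between spaces of equal finite dimension. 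If I can show $ev_m$ is injective for every $m$, then it is an isomorphism, and for each $\lambda \in \Y^n$ the preimage of the indicator vector $e_\lambda$ under $ev_{|\lambda|}$ furnishes a polynomial $F_\lambda(x_1, \dots, x_n \mid \mho)$ of degree $\leq |\lambda|$ which vanishes at $\mho(\mu)$ for $\mu \neq \lambda$, $|\mu| \leq |\lambda|$, and equals $1$ at $\mho(\lambda)$; this is precisely the non-degeneracy requirement.

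I will establish injectivity by double induction, with the outer induction on $n$ and the inner induction on $m$, where the statement at each pair $(n, m)$ is quantified over \emph{all} distinct $n$-grids. The base cases are immediate: for $n = 1$, a polynomial of degree $\leq m$ vanishing at the $m+1$ distinct points $\mho(1; 0), \ldots, \mho(1; m)$ must be zero; for $m = 0$, a constant vanishing at $\mho(\varnothing)$ is zero. For the inductive step, take $f \in \Bbbk[x_1, \dots, x_n]^{S_n}_{\leq m}$ vanishing at every $\mho(\mu)$ with $|\mu| \leq m$, and specialize $x_n = \mho(n; 0)$. The restriction $f(x_1, \dots, x_{n-1}, \mho(n;0))$ is a symmetric polynomial of degree $\leq m$ in $n-1$ variables that vanishes at $\mho_{n-1}(\nu)$ for every $\nu \in \Y^{n-1}$ with $|\nu| \leq m$ (taking $\mu = (\nu_1, \dots, \nu_{n-1}, 0) \in \Y^n$). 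Since $\mho_{n-1}$ is itself a distinct $(n-1)$-grid, the outer induction hypothesis forces this restriction to vanish identically. By symmetry in the $x_i$, the product $\prod_{i=1}^n (x_i - \mho(n;0))$ divides $f$, so $f = h \cdot \prod_{i=1}^n (x_i - \mho(n;0))$ for some $h \in \Bbbk[x_1, \dots, x_n]^{S_n}_{\leq m-n}$.

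Applying the vanishing of $f$ at the points $\mho(\mu + 1^n) = \mho^1(\mu)$, valid whenever $|\mu| + n \leq m$, yields
$$
h(\mho^1(\mu)) \cdot \prod_{i=1}^n \bigl(\mho^1(i; \mu_i - \mu_{i+1}) - \mho(n;0)\bigr) = 0.
$$
Because $\mho^1(i;j) = \mho(i; j + \delta_{i,n})$ and the pair $(i, j + \delta_{i,n})$ always differs from $(n, 0)$ when $i \in \{1, \ldots, n\}$ and $j \geq 0$, the distinctness hypothesis on $\mho$ guarantees that each factor in the product is nonzero. Hence $h$ vanishes at $\mho^1(\mu)$ for all $\mu \in \Y^n$ with $|\mu| \leq m - n$. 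A direct check (comparing $(i, j + \delta_{i,n})$ with $(i', j' + \delta_{i',n})$) shows that $\mho^1$ inherits the distinctness property from $\mho$, so the inner induction hypothesis at $m' = m - n < m$, applied to the distinct $n$-grid $\mho^1$, forces $h = 0$, and hence $f = 0$.

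The main subtlety is quantifier discipline in the induction: the claim at level $(n, m)$ must be stated uniformly over all distinct $n$-grids, because the reduction step invokes it with $\mho^1$ rather than the original $\mho$. Once this is handled correctly, the remaining ingredients (the $n=1$ case, the symmetry argument yielding the factorization, the passage from $\mho$ to $\mho^1$, and the closing dimension-count that upgrades injectivity of $ev_m$ to the existence of $F_\lambda$) are all direct and should not present additional difficulty.
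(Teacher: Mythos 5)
Your proposal is correct, and it is the linear-algebra dual of the paper's argument. Both proofs run the same double induction (outer on $n$, inner on $m$) and use the same two reductions: restricting to $x_n=\mho(n;0)$ to invoke the grid $\mho_{n-1}$, and factoring out $\prod_{i=1}^n(x_i-\mho(n;0))$ to invoke the shifted grid $\mho^1$. The difference is which half of the bijectivity of the evaluation map $ev_m$ you establish directly: you prove injectivity (any $f$ of degree $\leq m$ vanishing on all $\mho(\mu)$ with $|\mu|\leq m$ is zero) and then get existence of the $F_\lambda$ from the dimension count, whereas the paper proves surjectivity by explicitly constructing an interpolating polynomial $f=\Sym f_1+f_2\prod_i(x_i-\mho(n;0))$ for an arbitrary target function $\phi$, using a symmetrization map $\Sym$ that extends $(n-1)$-variable symmetric polynomials to $n$ variables. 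Your route is marginally cleaner in that it avoids introducing $\Sym$ and checking its compatibility with evaluation at $x_n=\mho(n;0)$; the paper's route is marginally more constructive, yielding a recursion for the interpolant. The one point you should make explicit is the degenerate case $m<n$ of the inner induction, where divisibility of $f$ by the degree-$n$ product forces $f=0$ outright (so $h$ need not be introduced); this is immediate but worth a clause. Your remark on quantifying the inductive statement over all grids with pairwise distinct values is exactly the right care to take, since the step passes from $\mho$ to $\mho^1$ and to $\mho_{n-1}$.
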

\begin{proof}
By Corollary \ref{nondegeneccesary} we only need to prove that $\mho$ is non-degenerate if $\mho(i;j)\neq\mho(i',j')$ for all $(i,j)\neq(i',j')$. It is enough to prove the following statement: for any $\mho$ as in the statement, any $m\in\mathbb Z_{\geq 0}$ and any function 
$$
\phi(\lambda):\{\lambda\in\mathbb Y^n: |\lambda|\leq m\}\to \Bbbk
$$
there exists a polynomial $f\in \Bbbk[x_1, \dots, x_n]^{S_n}_{\leq m}$ such that $f(\mho(\lambda))=\phi(\lambda)$ for any $\lambda\in\mathbb Y^n, |\lambda|\leq m$.

We prove the latter claim by induction on $n$ and $m$, reducing the claim for $(n,m)$ to the claims for $(n',m')$ with either $n'<n$ or $m'<m$. Note that the cases when $n=1$ or $m=0$ are trivial.

For the inductive step recall that the monomial symmetric functions $m_\lambda(x_1, \dots, x_n)$ are defined by
$$
m_\lambda(x_1, \dots, x_n)=\sum_{\alpha}x_1^{\alpha_1}\dots x_n^{\alpha_n},
$$
where the sum is over all permutations $\alpha$ of the $n$-tuple $(\lambda_1, \dots, \lambda_n)$. Let $n\geq 2$ and fix $n$-grid $\mho$ as in the claim. Define a degree-preserving map
$$
\Sym: \Bbbk[x_1, \dots, x_{n-1}]^{S_{n-1}}\to \Bbbk[x_1, \dots, x_{n}]^{S_{n}}
$$
by sending $m_{\lambda}(x_1-\mho(n;0), \dots, x_{n-1}-\mho(n;0))$ to $m_{\lambda}(x_1-\mho(n;0), \dots, x_{n}-\mho(n;0))$. Note that for any $f\in \Bbbk[x_1, \dots, x_n]^{S_n}$ we have
\begin{equation}
\label{mainPropOfExt}
(\Sym f)(x_1, \dots, x_{n-1}, \mho(n;0))=f(x_1, \dots, x_{n-1}).
\end{equation}
Let $\phi(\lambda)$ be an arbitrary function on the partitions $\lambda\in\mathbb Y^n, |\lambda|\leq m$. We will construct the required function $f$ as
$$
f=\Sym f_1+ f_2 \prod_{i=1}^n(x_i-\mho(n;0)).
$$ 
Consider the restriction of $\phi$ to the partitions from $\mathbb Y^{n-1}$. By the induction hypothesis, there exists a function in $n-1$ variables $f_1\in \Bbbk[x_1, \dots, x_{n-1}]^{S_{n-1}}_{\leq m}$ such that $f_1(\mho_{n-1}(\lambda))=\phi(\lambda)$ for all partitions $\lambda\in\mathbb Y^{n-1}, |\lambda|\leq m$. Since for all $\lambda\in\mathbb Y^{n-1},|\lambda|\leq m$, the substitution $(x_1,\dots, x_n)=\mho(\lambda)$ sets $x_n=\mho(n;0)$, by \eqref{mainPropOfExt} we have 
$$
(\Sym f_1)(\mho(\lambda))=f_1(\mho_{n-1}(\lambda))=\phi(\lambda), \qquad \lambda\in\Y^{n-1},|\lambda|\leq m.
$$
If $n<m$ then any partition $\lambda$ such that $|\lambda|\leq m$ is in $\Y^{n-1}$ and we are done. If $n\geq m$ consider the following function $\phi_2$ on $\{\lambda\in\mathbb Y^n: |\lambda|\leq m-n\}$:
$$
\phi_2(\lambda)=\frac{\phi(\lambda+1^n)-\Sym f_1(\mho(\lambda+1^n))}{(\mho(n;1)-\mho(n;0))\prod_{i=1}^{n-1}(\mho(i;\lambda_i-\lambda_{i+1})-\mho(n;0))}.
$$
By induction in $m$ and since $\mho^1$ also satisfies the assumptions on $\mho$ from the statement, we can construct a function $f_2\in \Bbbk[x_1, \dots, x_n]^{S_n}_{\leq m-n}$ such that $f_2(\mho(\lambda+1^n))=f_2(\mho^1(\lambda))=\phi_2(\lambda)$. The resulting function
$$
f=\Sym f_1+ f_2 \prod_{i=1}^n(x_i-\mho(n;0))
$$ 
satisfies $f\in\Bbbk[x_1, \dots, x_n]^{S_n}_{\leq m}$ and $f(\mho(\lambda))=\phi(\lambda)$.
\end{proof}

We can use \eqref{restrictionGrideq} and \eqref{uppershiftPexpression} to introduce a natural normalization for the functions $F_{\lambda}$. For an $n$-grid $\mho$ let $\mho^k_m$ denote the $m$-grid $(\mho_m)^k$.
 
\begin{prop}\label{Pnormalization} For a non-degenerate $n$-grid $\mho$ there exists a unique choice of functions $F_{\lambda}(x_1,\dots, x_m\mid\mho_m^k)$ for all $m=1,\dots, n$ and $k\in\mathbb Z_{\geq 0}$ satisfying
\begin{enumerate}
\item $F_{\varnothing}(x_1, \dots, x_m\mid\mho_m^k)=1$ for all $m,k$;
\item $F_{\lambda}$ are consistent with \eqref{restrictionGrideq}, that is, for any $\lambda\in \mathbb Y^{m-1}$
$$
F_{\lambda}(x_1, \dots, x_{m-1}\mid\mho_{m-1})=F_{\lambda}(x_1, \dots, x_{m-1}, \mho(m;k)\mid\mho_{m}^k).
$$
Note that $(\mho_m^k)_{m-1}=\mho_{m-1}$.
\item $F_\lambda$ are consistent with \eqref{uppershiftPexpression}, that is, for any $\lambda\in\mathbb Y^{m}$
$$
F_{\lambda+1^m}(x_1, \dots, x_{m}\mid\mho_{m}^k)=F_{\lambda}(x_1, \dots, x_{m}\mid\mho_{m}^{k+1})\prod_{i=1}^m(x_i-\mho(m;k)).
$$
\end{enumerate}
This unique choice of functions $F_{\lambda}(x_1,\dots, x_m\mid\mho_m^k)$ is determined by setting 
\begin{equation}
\label{hookFromula}
F_\lambda(\mho(\lambda)\mid\mho)=\prod_{r\geq 1}\prod_{i=1}^r\prod_{j=\lambda_{r+1}}^{\lambda_{r}-1}(\mho(i;\lambda_i-\lambda_{i+1})-\mho(r;j-\lambda_{r+1}))
\end{equation}
for any non-degenerate grid $\mho$.
\end{prop}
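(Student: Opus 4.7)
I will prove existence and uniqueness together. By Lemma \ref{basicnondegLemma}, each $F_\lambda(x_1,\dots,x_m \mid \mho_m^k)$ is determined up to a single nonzero scalar by its vanishing property, so the task reduces to consistently fixing the values $F_\lambda(\mho_m^k(\lambda) \mid \mho_m^k)$. For existence, I will declare these values to be the hook formula applied to the grid $\mho_m^k$ (i.e.\ replace each $\mho(i;\cdot)$ in \eqref{hookFromula} by $\mho_m^k(i;\cdot)$). Non-degeneracy of $\mho_m^k$, inherited from $\mho$ via Propositions \ref{restrictionGrid} and \ref{uppershiftFirstStep} together with Proposition \ref{nondegsuff}, guarantees that every factor in this product is nonzero (in the factor $\mho_m^k(i;\lambda_i-\lambda_{i+1}) - \mho_m^k(r;j-\lambda_{r+1})$ with $i\leq r$, we have $j\leq \lambda_r-1<\lambda_r$, so the two grid arguments are distinct). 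Condition (1) is the empty product. Condition (2), for $\lambda \in \mathbb Y^{m-1}$, holds because the $r=m$ block of the hook formula collapses to an empty $j$-product, leaving exactly the hook value for $\mho_{m-1}$; combined with the fact that the restriction $F_\lambda(x_1,\dots,x_{m-1},\mho(m;k) \mid \mho_m^k)$ inherits the characterizing vanishing conditions of $F_\lambda(\cdot \mid \mho_{m-1})$, Lemma \ref{basicnondegLemma} yields the identity.

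The key step, which I expect to be the principal obstacle, is verifying condition (3). Both sides of (3) lie in $\Bbbk[x_1,\dots,x_m]^{S_m}_{\leq |\lambda|+m}$, and one first checks that both sides vanish at $\mho_m^k(\nu)$ for every $\nu \in \mathbb Y^m$ with $|\nu| \leq |\lambda|+m$, $\nu \neq \lambda+1^m$: on the right-hand side, either $\nu_m = 0$ forces some factor $x_i - \mho(m;k)$ to vanish, or $\nu_m \geq 1$ and $\mho_m^k(\nu) = \mho_m^{k+1}(\nu - 1^m)$ is a vanishing point of $F_\lambda(\cdot \mid \mho_m^{k+1})$. By Lemma \ref{basicnondegLemma}, both sides are scalar multiples of $F_{\lambda+1^m}(\cdot \mid \mho_m^k)$, and (3) reduces to a single value check at $\mho_m^k(\lambda+1^m) = \mho_m^{k+1}(\lambda)$. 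Denoting the hook-formula value by $H(\cdot,\cdot)$, this amounts to the telescoping identity
\[
H(\lambda+1^m, \mho_m^k) = H(\lambda, \mho_m^{k+1}) \cdot \prod_{i=1}^{m-1}\bigl(\mho(i;\lambda_i-\lambda_{i+1}) - \mho(m;k)\bigr) \cdot \bigl(\mho(m;k+\lambda_m+1) - \mho(m;k)\bigr).
\]
Setting $\mu = \lambda + 1^m$ and reindexing $j \mapsto j-1$ shows that the $r<m$ blocks on the two sides coincide (since $\mho_m^k$ and $\mho_m^{k+1}$ agree on $(i;\cdot)$ with $i<m$, and the $j$-range $[\lambda_{r+1}+1,\lambda_r]$ shifts back to $[\lambda_{r+1},\lambda_r-1]$). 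In the $r=m$ block the left-hand side's range $j \in [0,\lambda_m]$ exceeds the right-hand side's shifted range $j \in [1,\lambda_m]$ by a single factor at $j=0$, and this extra factor contributes precisely $\prod_{i=1}^{m-1}(\mho(i;\lambda_i-\lambda_{i+1})-\mho(m;k)) \cdot (\mho(m;k+\lambda_m+1)-\mho(m;k))$, yielding the required telescoping.

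Uniqueness then follows by a routine induction. Suppose $\{\tilde F_\lambda\}$ is another family satisfying (1)--(3). The base case $\tilde F_\varnothing = 1 = F_\varnothing$ is condition (1). For $\lambda \neq \varnothing$: if $\lambda_m \geq 1$, then (3) expresses $\tilde F_\lambda(\cdot \mid \mho_m^k)$ explicitly in terms of $\tilde F_{\lambda - 1^m}(\cdot \mid \mho_m^{k+1})$, strictly decreasing $|\lambda|$; if $\lambda_m = 0$, then $\lambda \in \mathbb Y^{m-1}$ and (2) pins the scalar of $\tilde F_\lambda(\cdot \mid \mho_m^k)$ by matching its restriction at $x_m = \mho(m;k)$ against $\tilde F_\lambda(\cdot \mid \mho_{m-1})$, reducing $m$. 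This recursion terminates, and consistency of different reduction paths is automatic because the hook-formula normalization from the existence step exhibits a global solution that must coincide with any output of the recursion.
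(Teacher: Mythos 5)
Your proof is correct and follows essentially the same strategy as the paper: reduce everything, via Lemma \ref{basicnondegLemma}, to fixing the leading values $F_\lambda(\mho_m^k(\lambda)\mid\mho_m^k)$, show that conditions (2) and (3) determine these values recursively, and check that the hook product telescopes accordingly. The only organizational difference is that you peel off one box per row at a time ($\lambda\mapsto\lambda-1^m$) and induct on $(|\lambda|,m)$, whereas the paper inducts on $n$ and strips the full bottom row $\lambda=\overline\lambda+a^n\mapsto\overline\lambda$ in one step; your explicit verification of the telescoping identity fills in what the paper leaves as "readily checked".
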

\begin{proof} We use induction on $n$. When $n=1$ all functions $F_{(\lambda_1)}(x_1\mid\mho^k)$ satisfying the conditions (1) and (3) from the statement must be of the form 
$$
F_{(\lambda_1)}(x_1\mid\mho^k)=\prod_{j=0}^{\lambda_{1}-1}(x_1-\mho(1;j+k))=\prod_{j=0}^{\lambda_{1}-1}(x_1-\mho^k(1;j)),
$$
and the claim follows.

Now assume that $\mho$ is a non-degenerate $n$-grid and we have proved the claim for $\mho_{n-1}$. Note that the functions $F_\lambda(x_1, \dots, x_m\mid\mho^k_m)$ are uniquely determined by the numbers $H_\lambda(\mho^k_m):=F_\lambda(\mho^k_m(\lambda)\mid\mho^k_m)$, defined for $\lambda\in\mathbb Y^m$. Moreover, if the choice of $F_\lambda(x_1, \dots, x_m\mid\mho^k_m)$ satisfies conditions (1)-(3) from the statement, then by the inductive assumption applied to the $(n-1)$-grid $\mho_{n-1}$ we have
$$
H_\lambda(\mho_m^k)=\prod_{r\geq 1}\prod_{i=1}^r\prod_{j=\lambda_{r+1}}^{\lambda_{r}-1}(\mho_m^k(i;\lambda_i-\lambda_{i+1})-\mho_m^k(r;j-\lambda_{r+1}))
$$
for any $k\in\mathbb Z_{\geq 0}$, $m<n$ and $\lambda\in\mathbb Y^m$. So we only need to prove that $H_\lambda(\mho^k)$ are uniquely determined and are given by \eqref{hookFromula}.

Let $\lambda\in\mathbb Y^n$. If $l(\lambda)<n$, then by the consistency with \eqref{restrictionGrideq} we have $H_{\lambda}(\mho^k)=H_\lambda(\mho_{n-1})$, and we are done. If $l(\lambda)=n$, then we can write $\lambda=\overline{\lambda}+a^n$ for some $\overline{\lambda}\in\mathbb Y^{n-1}$ and $a=\lambda_n$. Then, using consistency with \eqref{uppershiftPexpression}, we have
$$
H_{\overline\lambda+a^n}(\mho^k)=H_{\overline{\lambda}}(\mho^{k+a})\prod_{i=1}^n\prod_{j=0}^{a-1}(\mho^{k+a}(i;\overline{\lambda}_i-\overline{\lambda}_{i+1})-\mho^k(n;j)).
$$
Since $\overline{\lambda}\in\mathbb Y^{n-1}$, we already know that $H_{\overline{\lambda}}(\mho^{k+a})$ is givn by \eqref{hookFromula}, and one can readily check that the resulting expression for $H_{\lambda}(\mho^k)=H_{\overline\lambda+a^n}(\mho^k)$ is also consistent with \eqref{hookFromula}.
\end{proof}
From now on we will assume that $\{F_\lambda\}_\lambda$ are normalized as in Proposition \ref{Pnormalization}.

\begin{prop}\label{leftshiftPerfect} Let $\mho$ be a perfect $n$-grid with $n\geq 2$. Then $\prescript{}{1}\mho$ is perfect and we have
$$
F_{\lambda}(x_1, \dots, x_n\mid\mho)=x_1^{\lambda_1} F_{\widetilde{\lambda}}(x_2, \dots, x_n\mid\prescript{}{1}{\mho})+\text{\emph{lower $x_1$-degree terms}},
$$
where $\widetilde{\lambda}=(\lambda_2, \lambda_3,\dots)$.
\end{prop}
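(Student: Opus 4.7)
The argument would split into three stages: non-degeneracy of $\prescript{}{1}\mho$, determination of the $x_1$-degree and leading coefficient of $F_\lambda(\cdot\mid\mho)$, and perfection of $\prescript{}{1}\mho$ as a corollary. The first stage is automatic: the values $\prescript{}{1}\mho(i;j)=\mho(i+1;j)$ form a subset of the pairwise distinct values of $\mho$ (Corollary \ref{nondegeneccesary}), so Proposition \ref{nondegsuff} gives non-degeneracy of $\prescript{}{1}\mho$, and the functions $F_{\widetilde\lambda}(\cdot\mid\prescript{}{1}\mho)$ are defined via Proposition \ref{Pnormalization}.

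The central observation for the rest is that specializing $(x_2,\ldots,x_n)=\prescript{}{1}\mho(\widetilde\mu)$ with $x_1=\mho(1;k)$ produces the grid point $\mho(\mu)$ for $\mu=(\widetilde\mu_1+k,\widetilde\mu_1,\widetilde\mu_2,\ldots,\widetilde\mu_{n-1})$, and the inclusion $\lambda\subseteq\mu$ is equivalent to the conjunction $\widetilde\lambda\subseteq\widetilde\mu$ and $k\geq\lambda_1-\widetilde\mu_1$. Writing $F_\lambda(x_1,\ldots,x_n\mid\mho)=\sum_k x_1^k g_k(x_2,\ldots,x_n)$, perfection of $\mho$ then forces $F_\lambda(x_1,\prescript{}{1}\mho(\widetilde\mu)\mid\mho)$ to vanish at infinitely many points $\mho(1;k)$, $k\geq 0$, whenever $\widetilde\lambda\not\subseteq\widetilde\mu$; hence this polynomial in $x_1$ vanishes identically and $g_k(\prescript{}{1}\mho(\widetilde\mu))=0$ for every $k$. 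Combining this with $\deg g_k\leq|\lambda|-k$ and Lemma \ref{basicnondegLemma} applied to $\prescript{}{1}\mho$, any nonzero $g_k$ must be nonvanishing at some $\prescript{}{1}\mho(\widetilde\mu)$ with $|\widetilde\mu|\leq\deg g_k$ and hence $\widetilde\lambda\subseteq\widetilde\mu$, forcing $\deg g_k\geq|\widetilde\lambda|$ and $k\leq\lambda_1$. Thus $g_k\equiv 0$ for $k>\lambda_1$, giving the upper bound on the $x_1$-degree.

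To identify $g_{\lambda_1}$ with $F_{\widetilde\lambda}(\cdot\mid\prescript{}{1}\mho)$ I would induct on $n\geq 2$. First reduce to the case $\lambda_n=0$ by iterating Proposition \ref{uppershiftFirstStep}(2), relating $F_\lambda(\cdot\mid\mho)$ to $F_{\lambda-\lambda_n\cdot 1^n}(\cdot\mid\mho^{\lambda_n})$, and analogously in $\prescript{}{1}\mho$ via $(\prescript{}{1}\mho)^{\lambda_n}=\prescript{}{1}(\mho^{\lambda_n})$. With $\lambda_n=0$, Proposition \ref{restrictionGrid} gives $F_\lambda(x_1,\ldots,x_{n-1},\mho(n;0)\mid\mho)=F_\lambda(x_1,\ldots,x_{n-1}\mid\mho_{n-1})$, and the induction hypothesis applied to the perfect $(n-1)$-grid $\mho_{n-1}$ says this restriction has $x_1$-degree $\lambda_1$ with leading coefficient $F_{\widetilde\lambda}(\cdot\mid\prescript{}{1}\mho_{n-1})$, so $g_{\lambda_1}(x_2,\ldots,x_{n-1},\mho(n;0))=F_{\widetilde\lambda}(\cdot\mid\prescript{}{1}\mho_{n-1})\neq 0$. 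Consequently $g_{\lambda_1}$ is a nonzero symmetric polynomial of degree at most $|\widetilde\lambda|$ that vanishes at all $\prescript{}{1}\mho(\widetilde\mu)$ with $\widetilde\lambda\not\subseteq\widetilde\mu$, so Lemma \ref{basicnondegLemma} forces $g_{\lambda_1}=c\cdot F_{\widetilde\lambda}(\cdot\mid\prescript{}{1}\mho)$. Restricting at $x_n=\mho(n;0)$ and using Proposition \ref{restrictionGrid} for $\prescript{}{1}\mho$ (noting that $(\prescript{}{1}\mho)_{n-2}=\prescript{}{1}\mho_{n-1}$) then yields $c=1$. Perfection of $\prescript{}{1}\mho$ is immediate from the vanishing property of $g_{\lambda_1}$, since every $\widetilde\lambda\in\Y^{n-1}$ arises as $(\lambda_2,\ldots,\lambda_n)$ for a suitable $\lambda\in\Y^n$. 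The base $n=2$ reduces, via the same $\lambda_2=0$ trick, to $F_\lambda(x_1,\mho(2;0)\mid\mho)=F_\lambda(x_1\mid\mho_1)=\prod_{j=0}^{\lambda_1-1}(x_1-\mho(1;j))$, together with the degree constraint $\deg g_{\lambda_1}\leq|\widetilde\lambda|=0$ which forces the constant $g_{\lambda_1}$ to equal $1=F_\varnothing(x_2\mid\prescript{}{1}\mho)$.

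The main subtlety I anticipate is the bookkeeping across the auxiliary grids $\mho^k$, $\mho_{n-1}$, and $\prescript{}{1}\mho_{n-1}$: verifying that the normalizations of Proposition \ref{Pnormalization} are compatible across all the reductions, so that the scalar in the final matching step is exactly $1$ rather than a nontrivial product accumulated along the way.
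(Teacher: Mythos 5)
Your proposal is correct, and its core coincides with the paper's: the decisive step in both is to specialize $(x_2,\dots,x_n)=\prescript{}{1}{\mho}(\widetilde\mu)$ and let $x_1$ run over the infinitely many distinct values $\mho(1;k)$, so that perfection of $\mho$ forces $F_\lambda(x_1,\prescript{}{1}{\mho}(\widetilde\mu)\mid\mho)\equiv 0$ whenever $\widetilde\lambda\not\subseteq\widetilde\mu$ (the paper's partitions $\mu(k)=(\mu_1+k,\mu_1,\mu_2,\dots)$), whence every $x_1$-coefficient vanishes at those grid points and the degree count plus Lemma \ref{basicnondegLemma} identifies the top coefficient with a multiple of $F_{\widetilde\lambda}(\cdot\mid\prescript{}{1}{\mho})$ and yields perfection of $\prescript{}{1}{\mho}$. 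Where you genuinely diverge is in showing that the coefficient of $x_1^{\lambda_1}$ is nonzero and that the proportionality constant is $1$. The paper proves by induction on $n$ that $F_\lambda$ expands upper-triangularly in the monomial basis for the lexicographic order, invokes that $\{F_\lambda\}$ and $\{m_\lambda\}$ are both bases of $\Bbbk[x_1,\dots,x_n]^{S_n}_{\leq m}$ to get nonzero diagonal entries (thereby ruling out $\deg_{x_1}F_\lambda<\lambda_1$), and fixes the scalar via the uniqueness clause of Proposition \ref{Pnormalization} applied to the lex-monic renormalization. You instead settle both points at once by restricting to $x_n=\mho(n;0)$, using Proposition \ref{restrictionGrid} and inducting on $n$ after reducing to $\lambda_n=0$ through Proposition \ref{uppershiftFirstStep}(2). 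Both routes are sound; yours dispenses with the monomial basis and the lexicographic order and stays entirely within the normalization conventions of Proposition \ref{Pnormalization}, at the cost of the bookkeeping you flagged — the identities $(\prescript{}{1}{\mho})^{\lambda_n}=\prescript{}{1}{(\mho^{\lambda_n})}$, $(\prescript{}{1}{\mho})_{n-2}=\prescript{}{1}{(\mho_{n-1})}$, and $\prescript{}{1}{\mho}(n-1;j)=\mho(n;j)$ — all of which do check out, so the scalar comparison at $x_n=\mho(n;0)$ indeed returns $c=1$.
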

\begin{proof}
As before, let $m_\lambda(x_1, \dots, x_n)$ denote monomial symmetric function corresponding to $\lambda$, and let $\mu\preceq\lambda$ denote the lexicographical order on the partitions (that is $\mu\preceq\lambda$ if and only if either $\mu=\lambda$ or for some $r$ we have $\lambda_i=\mu_i$ for $i<r$ and $\mu_r<\lambda_r$).

First we want to prove that if $\mho$ is a non-degenerate $n$-grid and $\lambda\in\mathbb Y^n$ is a partition such that for $\mu\in\Y^n$
\begin{equation}
\label{assumptionProofOfLeftShift}
F_\lambda(\mho(\mu)\mid\mho)=0\qquad \text{unless}\ \lambda\subset\mu,
\end{equation}
then we have
\begin{equation}
\label{Puppertriangular}
F_{\lambda}(x_1, \dots, x_n\mid\mho)=\sum_{\mu\preceq\lambda}\alpha_{\mu\lambda}m_{\mu}(x_1, \dots, x_n).
\end{equation}
We prove the latter statement by induction on $n$, with $n=1$ case being trivial. Fix $\mho,\lambda$ as above and let $d=\deg_{x_1} F_{\lambda}(x_1, \dots, x_n\mid\mho)$. If $d<\lambda_1$ then we are trivially done with $\alpha_{\lambda\lambda}=0$, so consider the case when $d\geq \lambda_1$. Define $g(x_2, \dots, x_n)$ by
$$
F_{\lambda}(x_1, \dots, x_n\mid\mho)=x_1^{d} g(x_2, \dots, x_n)+\text{lower $x_1$-degree terms}.
$$
Clearly, $g\neq 0$. Since the total degree of $F_{\lambda}$ is $\lambda$, the total degree of $g$ satisfies $\deg g= |\lambda|-d\leq |\lambda|-\lambda_1=|\widetilde{\lambda}|$, where $\widetilde{\lambda}=(\lambda_2, \lambda_3,\dots)$. Let $\mu\in\mathbb Y^{n-1}$ be a partition such that $\widetilde{\lambda}\not\subset\mu$. Then the partition $\mu(k)=(\mu_1+k, \mu_1, \mu_2, \dots)$ satisfies $\lambda\not\subset\mu(k)$ for any $k\in\mathbb Z_{\geq 0}$, so by assumption \eqref{assumptionProofOfLeftShift}
$$
F_{\lambda}(\mho(1;k), \mho(2;\mu_1-\mu_2), \dots, \mho(n;\mu_{n-1}))=F_{\lambda}(\mho(\mu(k))\mid\mho)=0
$$
for any $k$. In other words, the polynomial $F_\lambda(x_1, \prescript{}{1}\mho(1;\mu_1-\mu_2), \dots, \prescript{}{1}\mho(n-1;\mu_{n-1})\mid\mho)$ vanishes at $x_1=\mho(1;k)$, and since all these points are distinct by Corollary \ref{nondegeneccesary}, this implies 
$$
F_\lambda(x_1, \prescript{}{1}\mho(1;\mu_1-\mu_2), \dots, \prescript{}{1}\mho(n-1;\mu_{n-1})\mid\mho)=0
$$
as a polynomial in $x_1$. In particular, for $\mu\in\Y^{n-1}$ we get 
\begin{equation}
\label{temporaryvanishing}
g(\prescript{}{1}{\mho}(\mu))=g(\mho(2;\mu_1-\mu_2), \dots, \mho(n;\mu_{n-1}))=0 \qquad \text{unless}\ \widetilde{\lambda}\subset\mu.
\end{equation}
Recall that the degree of $g$ is at most $|\widetilde{\lambda}|$. By Proposition \ref{nondegsuff} the $(n-1)$-grid $\prescript{}{1}{\mho}$ is non-degenerate, so Lemma \ref{basicnondegLemma} and \eqref{temporaryvanishing} imply that $g=c \cdot F_{\widetilde{\lambda}}(x_2, \dots, x_n\mid\prescript{}{1}{\mho})$ for some $c\in\Bbbk$. Note that $g\neq 0$, hence $c\neq 0$ and the degree of $g$ is exactly $|\widetilde{\lambda}|$, forcing $d=\lambda_1$. Moreover, \eqref{temporaryvanishing} implies that the assumption of our statement holds for the pair ($\prescript{}{1}{\mho}, \widetilde{\lambda}$), hence by induction
$$
g(x_2,\dots, x_n)=c\cdot F_{\widetilde{\lambda}}(x_2, \dots, x_n\mid\prescript{}{1}{\mho})=\sum_{\mu\preceq\widetilde{\lambda}}\alpha'_{\mu\widetilde{\lambda}}m_{\mu}(x_2, \dots, x_n).
$$
This implies \eqref{Puppertriangular}:
$$
F_{\lambda}(x_1, \dots, x_n\mid\mho)=\sum_{\mu\preceq\widetilde{\lambda}}\alpha'_{\mu\widetilde{\lambda}}x_1^{\lambda_1}m_{\mu}(x_2, \dots, x_n)+\text{lower $x_1$-degree terms}.
$$

Thus, we have proved that if $\mho$ is perfect, then the transition matrix expressing $F_\lambda(x_1, \dots, x_n\mid\mho)$ in terms of $m_{\lambda}(x_1, \dots, x_n)$ is upper-triangular with respect to the lexicographical order. But both families of functions form bases of the ring of symmetric polynomials, hence the transition matrix must be invertible, and $\alpha_{\lambda\lambda}\neq 0$  in \eqref{Puppertriangular}. In particular, in the argument above $d=\lambda_1$ always holds and the case $d<\lambda_1$ never happens, so the construction of $g(x_2, \dots, x_n)$ can be performed for any $\lambda$: 
$$
F_{\lambda}(x_1, \dots, x_n\mid\mho)=c_{\lambda} x_1^{\lambda_1} F_{\widetilde{\lambda}}(x_2, \dots, x_n\mid\prescript{}{1}{\mho})+\text{lower $x_1$-degree terms}
$$
for $c_{\lambda}\neq 0$. Moreover, \eqref{temporaryvanishing} implies that $\prescript{}{1}{\mho}$ is perfect.

It only remains to show that $c_\lambda=1$. Note that it is enough to show that for perfect $\mho$ we always have $\alpha_{\lambda\lambda}=1$ in \eqref{Puppertriangular}. Recall that if $\mho$ is perfect then all grids of the form $\mho_m^k$ are also perfect, so we can define a renormalization $\widetilde F_{\lambda}(x_1, \dots, x_m\mid\mho_m^k)$ such that 
$$
\widetilde F_{\lambda}(x_1, \dots, x_m\mid\mho_m^k)=m_\lambda(x_1, \dots, x_m) + \sum_{\mu\preceq\lambda,\mu\neq\lambda}\alpha_{\mu\lambda}m_{\mu}(x_1, \dots, x_m)
$$
and $\widetilde F_\lambda$ is proportional to $F_\lambda$. Then the functions $\widetilde F_{\lambda}(x_1, \dots, x_m\mid\mho_m^k)$ satisfy the three conditions of Proposition \ref{Pnormalization}, hence $\widetilde F_{\lambda}=F_\lambda$, finishing the proof.
\end{proof}

\subsection{Pieri rule and explicit expressions for some $F_\lambda$} Our next goal is to prove Theorem \ref{perfectTheorem} when $n=3$. From here on our arguments are completely different from \cite{Ok97}, and we will need to compute polynomials $F_{\lambda}$ for perfect $2$-grids. To do it, we use the following analogue of the Pieri rule:

\begin{lem}\label{sadPieri} Let $\mho$ be a perfect $2$-grid. Then for any $k\in\mathbb Z_{\geq 1}$ we have
\begin{multline*}
(x_1+x_2-\mho(1;k)-\mho(2;0))F_{(k)}(x_1, x_2\mid\mho)\\
=F_{(k+1)}(x_1,x_2\mid \mho)+\kappa_k(\mho) (x_1-\mho(2;0))(x_2-\mho(2;0))F_{(k-1)}(x_1, x_2\mid\mho^1),
\end{multline*}
where
$$
\kappa_k(\mho)=\frac{(\mho(1;k-1)+\mho(2;1)-\mho(1;k)-\mho(2;0))F_{(k)}(\mho(1;k-1), \mho(2;1)\mid\mho)}{(\mho(2;1)-\mho(2;0))(\mho(1;k-1)-\mho(2;0))\prod_{i=0}^{k-2}(\mho(1;k-1)-\mho(1;i))}.
$$
\end{lem}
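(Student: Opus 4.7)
The plan is to verify the identity by evaluating both sides at the interpolation points $\mho(\mu)$ with $|\mu|\leq k+1$, and then invoking the uniqueness statement of Lemma \ref{basicnondegLemma}. Both sides are symmetric polynomials in $x_1,x_2$ of total degree at most $k+1$, so their difference $D(x_1,x_2)$ lies in $\Bbbk[x_1,x_2]^{S_2}_{\leq k+1}$. By part (2) of Lemma \ref{basicnondegLemma} it is enough to show $D(\mho(\mu))=0$ for every $\mu\in\Y^2$ with $|\mu|\leq k+1$.

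The key observation is that for $\mu_1<k$ all three $F$-terms already vanish at $\mho(\mu)$: the two terms involving $\mho$ vanish by the perfect vanishing property of $\mho$, while $F_{(k-1)}(\mho(\mu)\mid\mho^1)$ is annihilated by the prefactor $(x_2-\mho(2;0))$ when $\mu_2=0$, and when $\mu_2\geq 1$ one rewrites $\mho(\mu)=\mho^1((\mu_1-1,\mu_2-1))$ so that the vanishing follows from perfectness of $\mho^1$ (Corollary \ref{uppershiftPerf}) together with $(k-1)\not\subseteq(\mu_1-1,\mu_2-1)$. Hence $D(\mho(\mu))=0$ automatically for $\mu_1<k$, and only the cases $\mu\in\{(k),(k+1),(k,1)\}$ remain.

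The cases $\mu=(k)$ and $\mu=(k+1)$ are direct consequences of Proposition \ref{restrictionGrid} together with the one-variable formula $F_{(m)}(x_1\mid\mho_1)=\prod_{j=0}^{m-1}(x_1-\mho(1;j))$ coming from Proposition \ref{Pnormalization}. Indeed, for $\mu=(k)$ every term of $D$ vanishes (the LHS because the prefactor $(\mho(1;k)-\mho(1;k))$ is zero, the two RHS terms by containment and by $\mu_2=0$). For $\mu=(k+1)$, the second RHS term is killed by $\mu_2=0$, and the equality $(\mho(1;k+1)-\mho(1;k))\prod_{j=0}^{k-1}(\mho(1;k+1)-\mho(1;j))=\prod_{j=0}^{k}(\mho(1;k+1)-\mho(1;j))$ is a one-factor telescoping. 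Finally, the case $\mu=(k,1)$ is precisely what forces the value of $\kappa_k(\mho)$: here the first RHS term vanishes, and evaluating
\[
F_{(k-1)}(\mho((k,1))\mid\mho^1)=F_{(k-1)}(\mho^1((k-1))\mid\mho^1)=\prod_{j=0}^{k-2}(\mho(1;k-1)-\mho(1;j))
\]
via the normalization \eqref{hookFromula} turns the equation $D(\mho(k,1))=0$ into exactly the displayed expression for $\kappa_k(\mho)$. The denominator of that expression is nonzero by Corollary \ref{nondegeneccesary}, so $\kappa_k(\mho)\in\Bbbk$ is well-defined.

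The proof is largely bookkeeping rather than computation, and the main obstacle to avoid is mixing up evaluation in the grid $\mho$ and in the shifted grid $\mho^1$: the term $F_{(k-1)}(x_1,x_2\mid\mho^1)$ must always be evaluated at a point that is first rewritten in the form $\mho^1(\nu)$ before the perfectness vanishing of $\mho^1$ can be applied. Once this translation is done systematically, the whole identity reduces to the four case checks above, and Lemma \ref{basicnondegLemma} closes the argument.
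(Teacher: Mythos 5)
Your proof is correct and follows essentially the same strategy as the paper's: both sides are compared by evaluating at all interpolation points $\mho(\mu)$ with $|\mu|\leq k+1$ and then invoking the uniqueness statement of Lemma \ref{basicnondegLemma}, with the value of $\kappa_k(\mho)$ extracted from the point $\mho((k,1))$. The only (harmless) difference is that the paper pins down the coefficient $1$ in front of $F_{(k+1)}$ by comparing top $x_1$-degree terms via Lemma \ref{leftshiftPerfect}, whereas you obtain it by direct evaluation at $\mho((k+1))$ using the normalization of Proposition \ref{Pnormalization} and the one-variable restriction.
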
 
\begin{proof}
Fix $k\geq 1$ and set 
$$
f(x_1,x_2)=(x_1+x_2-\mho(1;k)-\mho(2;0))F_{(k)}(x_1, x_2\mid\mho).
$$
Note that $f\in \Bbbk[x_1,x_2]^{S_2}_{\leq k+1}$, and moreover $f(\mho(\mu))=0$ for all $\mu$ such that $|\mu|\leq k$: for $\mu\neq \lambda$ we have $F_{(k)}(\mho(\mu)\mid\mho)=0$, while for $\mu=\lambda$ we set $x_1=\mho(1;k), x_2=\mho(2;0)$ and $x_1+x_2-\mho(1;k)-\mho(2;0)$ vanishes. Hence the expression 
$$
f(x_1, x_2)-\alpha_k F_{(k+1)}(x_1, x_2\mid\mho)- \beta_k F_{(k,1)}(x_1, x_2\mid\mho)
$$
with 
$$
\alpha_k=\frac{f(\mho((k+1)))}{F_{(k+1)}(\mho((k+1))\mid\mho)}, \qquad \beta_k=\frac{f(\mho((k,1)))}{F_{(k,1)}(\mho((k,1))\mid\mho)}
$$
vanishes for any $(x_1, x_2)=\mho(\mu)$ with $|\mu|\leq k+1$, so by Lemma \ref{basicnondegLemma} we have
\begin{equation}
\label{tmpproofPieri}
f(x_1, x_2)=\alpha_k F_{(k+1)}(x_1, x_2\mid\mho)+\beta_k F_{(k,1)}(x_1, x_2\mid\mho).
\end{equation}

Note that by Lemma \ref{leftshiftPerfect} the top $x_1$-degree term on the sides of \eqref{tmpproofPieri} are $x_1^{k+1}$ and $\alpha_kx_1^{k+1}$, hence $\alpha_k=1$. By \eqref{hookFromula}, we have 
$$
F_{(k,1)}(\mho((k,1))\mid\mho)=(\mho(1;k-1)-\mho(2,0))(\mho(2;1)-\mho(2;0))\prod_{i=0}^{k-2}(\mho(1;k-1)-\mho(1;i)),
$$
hence $\beta_k=\kappa_k(\mho)$ from the statement. Finally, by Proposition \ref{uppershiftFirstStep} we have 
$$
F_{(k,1)}(x_1, x_2\mid \mho)=(x_1-\mho(2;0))(x_2-\mho(2;0))F_{(k-1)}(x_1,x_2\mid\mho^1),
$$
finishing the proof.
\end{proof}

Using Lemma \ref{sadPieri} we can get explicit expressions for $F_{(k)}(x_1,x_2\mid\mho)$ when $k=1,2,3$. Let $\mho$ be a perfect $2$-grid. To make expressions below manageable we use the notation $[i;j]:=\mho(i;j)$. By our choice in Proposition \ref{Pnormalization}, we clearly have $F_{\varnothing}(x_1,x_2\mid\mho)=1$. For $F_{(1)}(x_1,x_2\mid\mho)$ recall that this is a degree $1$ symmetric polynomial vanishing at $(x_1,x_2)=\mho(\varnothing)$ and with top term $x_1+x_2$ by Proposition \eqref{leftshiftPerfect}. Hence 
$$
F_{(1)}(x_1,x_2\mid\mho) =x_1+x_2-[1;0]-[2;0].
$$
From now we can use Lemma \ref{sadPieri}. Direct computations give
$$
\kappa_1(\mho)=\frac{[1;0]+[2;1]-[1;1]-[2;0]}{[1;0]-[2;0]};
$$
\begin{multline}
\label{P2}
F_{(2)}(x_1,x_2\mid\mho)=x_1^2+x_2^2+\frac{[1;0]+[1;1]-[2;0]-[2;1]}{[1;0]-[2;0]}x_1x_2-\frac{[1;0]^2+[1;0][1;1]-[2;0]^2-[2;0][2;1]}{[1;0]-[2;0]}(x_1+x_2)\\+\frac{[1;0]^2[1;1]+[1;0]^2[2;0]-[1;0][2;0]^2-[2;0]^2[2;1]}{[1;0]-[2;0]};
\end{multline}
$$
\kappa_2(\mho)=\frac{([1;1]+[2;1]-[1;2]-[2;0])([1;0]+[1;1]-[2;0]-[2;1])}{([1;0]-[2;0])([1;1]-[2;0])};
$$
\begin{multline}
\label{P3}
F_{(3)}(x_1,x_2\mid\mho)=(x_1+x_2-[1;2]-[2;0])F_{(2)}(x_1,x_2\mid\mho)-\kappa_2(\mho)(x_1-[2;0])(x_2-[2;0])(x_1+x_2-[1;0]-[2;1]).
\end{multline}

For Lemma \ref{31expression} below we also need an explicit expression for $F_{(2)}(x_1, x_2, x_3\mid\mho)$ when $\mho$ is a perfect $3$-grid such that $\mho(3;0)=0$. To get $F_{(2)}(x_1, x_2,x_3\mid\mho)$ we can use the expression for $F_{(2)}(x_1, x_2\mid\mho_2)$ above and follow the construction from Proposition \ref{nondegsuff} to get 
\begin{multline}
\label{P23var}
F_{(2)}(x_1,x_2,x_3\mid\mho)=\Sym F_{(2)}(x_1,x_2\mid\mho_2)=x_1^2+x_2^2+x_3^2+\frac{[1;0]+[1;1]-[2;0]-[2;1]}{[1;0]-[2;0]}(x_1x_2+x_1x_3+x_2x_3)\\
-\frac{[1;0]^2+[1;0][1;1]-[2;0]^2-[2;0][2;1]}{[1;0]-[2;0]}(x_1+x_2+x_3)+\frac{[1;0]^2[1;1]+[1;0]^2[2;0]-[1;0][2;0]^2-[2;0]^2[2;1]}{[1;0]-[2;0]}.
\end{multline}

\subsection{$n=3$ case} Now we can prove Theorem \ref{perfectTheorem} for perfect $3$-grids. The main idea is to use vanishings $F_{\lambda}(\mho(\mu)\mid\mho)=0$ for various $\lambda\not\subset\mu$ to obtain enough constraints on $\mho(i;j)$ for the desired classification.

\begin{lem}\label{12lemma} Let $\mho$ be a perfect $n$-grid. Then for any $i,j\in\mathbb Z_{\geq 1}$ such that $i\leq n-1$ we have
\begin{equation}
\label{12avarage}
\(\mho(i;1)-\mho(i+1;j-1)\)(\mho(i;1)-\mho(i+1;j+1))=(\mho(i;0)-\mho(i+1;j))(\mho(i;2)-\mho(i+1;j)).
\end{equation}
\end{lem}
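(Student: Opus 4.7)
The plan is to reduce the lemma to the smallest nontrivial case and then extract the identity from a single instance of the perfect-vanishing property. By Propositions \ref{restrictionGrid} and \ref{leftshiftPerfect} and Corollary \ref{uppershiftPerf}, restriction, left-shift and upper-shift all preserve perfectness, so the $2$-grid $\mho' := \bigl(\prescript{}{i-1}{\mho_{i+1}}\bigr)^{j-1}$ is perfect with $\mho'(1;k) = \mho(i;k)$ and $\mho'(2;k) = \mho(i+1;k+j-1)$. Hence it is enough to prove \eqref{12avarage} when $n=2$ and $i=j=1$. Writing $A_k := \mho(1;k)$ and $B_k := \mho(2;k)$, this amounts to showing
$$(A_1 - B_0)(A_1 - B_2) = (A_0 - B_1)(A_2 - B_1).$$

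The next step is a clean closed form for $F_{(2)}(A_0, x_2 \mid \mho)$. By Proposition \ref{leftshiftPerfect} the top homogeneous component of $F_{(2)}(x_1,x_2\mid\mho)$ is $x_1^2 + x_2^2$, so at $x_1 = A_0$ it is a monic quadratic in $x_2$. Since neither $(0,0)$ nor $(1,1)$ contains $(2)$, the perfect-vanishing property forces $F_{(2)}$ to vanish at $\mho((0,0)) = (A_0, B_0)$ and at $\mho((1,1)) = (A_0, B_1)$, so its two roots are identified and
$$F_{(2)}(A_0, x_2 \mid \mho) = (x_2 - B_0)(x_2 - B_1).$$

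The main computation specializes the Pieri relation of Lemma \ref{sadPieri} with $k=2$ at $(x_1, x_2) = (A_0, B_2)$. Since $\mho((2,2)) = (A_0, B_2)$ and $(3) \not\subseteq (2,2)$, the perfect-vanishing property gives $F_{(3)}(A_0, B_2 \mid \mho) = 0$. The factor $x_1 + x_2 - A_0 - B_1$ from the Pieri formula collapses to $B_2 - B_1$, and combined with the preceding formula for $F_{(2)}(A_0, \cdot \mid \mho)$ every surviving term of the relation carries the factor $(B_2 - B_0)(B_2 - B_1)$, which is nonzero by Corollary \ref{nondegeneccesary}. Cancelling it yields $A_0 + B_2 - A_2 - B_0 = \kappa_2(\mho)(A_0 - B_0)$, and substituting the explicit $\kappa_2(\mho)$ from Lemma \ref{sadPieri} produces the polynomial identity
$$(A_0 + B_2 - A_2 - B_0)(A_1 - B_0) = (A_1 + B_1 - A_2 - B_0)(A_0 + A_1 - B_0 - B_1).$$
A direct expansion of both sides shows that every cubic monomial cancels and the remaining quadratic relation is exactly the target. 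The only genuine obstacle is this last piece of polynomial bookkeeping; every other ingredient follows immediately from results already established.
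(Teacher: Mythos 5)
Your argument is correct and follows essentially the same route as the paper: both reduce to a perfect $2$-grid via the restriction/shift operations and extract the identity from the single vanishing $F_{(3)}(\mho((2,2))\mid\mho)=0$ combined with the $k=2$ Pieri relation and the explicit value of $\kappa_2(\mho)$. The only cosmetic differences are that you evaluate the Pieri relation directly at $(A_0,B_2)$ using the factored form $F_{(2)}(A_0,x_2\mid\mho)=(x_2-B_0)(x_2-B_1)$ where the paper substitutes into the closed formula \eqref{P3}, and that the ``explicit $\kappa_2$'' you invoke is really the value computed from \eqref{P2} (established in the text just before the lemma) rather than the raw formula of Lemma \ref{sadPieri}, which still contains $F_{(2)}(\mho(1;1),\mho(2;1)\mid\mho)$.
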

\begin{proof}
Replacing $\mho$ by the $2$-grid $\prescript{}{i-1}{\mho}_{i+1}$, which is perfect by Propositions \ref{restrictionGrid}, \ref{leftshiftPerfect}, we can assume that $i=1$ and $\mho$ is a perfect $2$-grid. Moreover, replacing $\mho$ by $\mho^{j-1}$, which is perfect by Corollary \ref{uppershiftPerf}, we can assume that $j=1$. 

So, it is enough to prove that for a perfect $2$-grid $\mho$ we have
\begin{equation*}
\(\mho(1;1)-\mho(2;0)\)(\mho(1;1)-\mho(2;2))=(\mho(1;0)-\mho(2;1))(\mho(1;2)-\mho(2;1)).
\end{equation*}
This follows from the vanishing $F_{(3)}(\mho((2,2))\mid\mho)=0$: by explicit computation using \eqref{P3} we have
$$
F_{(3)}(\mho((2,2))\mid\mho)=F_{(3)}(\mho(1;0), \mho(2;2)\mid\mho)=\frac{(\mho(2;0)-\mho(2;2))(\mho(2;1)-\mho(2;2))}{\mho(1;1)-\mho(2;0)}G,
$$
where
$$
G=(\mho(1;0)-\mho(2;1))(\mho(1;2)-\mho(2;1))-\(\mho(1;1)-\mho(2;0)\)(\mho(1;1)-\mho(2;2)).
$$
Since $\mho$ is non-degenerate, $\mho(i;j)\neq\mho(i';j')$ as long as $(i,j)\neq(i',j')$, hence $F_{(3)}(\mho((2,2))\mid\mho)=0$ implies $G=0$ and the claim follows.
\end{proof}

From now on, let $\mho$ be a perfect $3$-grid.

\begin{lem}\label{31lemma} We have
\begin{equation}
\label{31expression}
[3;1]=\frac{[1;0][2;1]-[1;1][2;0]-[2;1][3;0]+[1;1][3;0]}{[1;0]-[2;0]}.
\end{equation}
\end{lem}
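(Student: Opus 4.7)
The plan is to exploit the vanishing property of $F_{(2)}$ at a grid point involving $[3;1]$, combined with the explicit formula \eqref{P23var}, to obtain a polynomial equation pinning down $[3;1]$.

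First I would reduce to the case $[3;0] = 0$ by a translation argument. Observe that if $c\in\Bbbk$ and we set $\mho'(i;j):=\mho(i;j)-c$, then $F_\lambda(x_1,\dots,x_n\mid\mho') = F_\lambda(x_1+c,\dots,x_n+c\mid\mho)$, so $\mho'$ inherits from $\mho$ the property of being perfect, and both sides of \eqref{31expression} transform compatibly under the substitution $[i;j]\mapsto[i;j]-c$ (a brief direct check). Taking $c=[3;0]$, it therefore suffices to establish
$$[3;1]=\frac{[1;0][2;1]-[1;1][2;0]}{[1;0]-[2;0]}\qquad\text{under the assumption}\ [3;0]=0.$$

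Next, since $(2)\not\subset(1,1,1)$, the perfectness of $\mho$ yields
$$F_{(2)}(\mho((1,1,1))\mid\mho)=F_{(2)}([1;0],\,[2;0],\,[3;1]\mid\mho)=0.$$
So consider the univariate polynomial $P(t):=F_{(2)}([1;0],[2;0],t\mid\mho)$. From \eqref{P23var}, $P(t)$ is quadratic in $t$ with leading coefficient $1$. The key observation is that the constant term vanishes: indeed, $P(0)=P([3;0])$ equals $F_{(2)}([1;0],[2;0]\mid\mho_2)$ by Proposition \ref{restrictionGrid}, and the latter is $F_{(2)}(\mho_2(\varnothing)\mid\mho_2)=0$ by the vanishing property of the perfect $2$-grid $\mho_2$ (Proposition \ref{restrictionGrid} preserves perfectness). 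Hence $t$ divides $P(t)$, and we only need the coefficient of $t$ in $P(t)$. Collecting the $t$-linear terms in \eqref{P23var} evaluated at $x_1=[1;0],x_2=[2;0]$, the degree-$2$ part contributes $2\cdot t\cdot 0 = 0$ (since $x_3=0$ would need to appear), the coefficient of $x_1x_2+x_1x_3+x_2x_3$ contributes $\alpha([1;0]+[2;0])t$, and the coefficient of $x_1+x_2+x_3$ contributes $-\beta\, t$, where $\alpha,\beta$ are the coefficients appearing in \eqref{P23var}. A short algebraic simplification (using common denominator $[1;0]-[2;0]$) collapses these contributions to
$$[\text{coeff of }t]=\frac{[1;1][2;0]-[1;0][2;1]}{[1;0]-[2;0]}.$$

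Therefore
$$P(t)=t\left(t-\frac{[1;0][2;1]-[1;1][2;0]}{[1;0]-[2;0]}\right).$$
Since $t=[3;1]$ is a root and, by Corollary \ref{nondegeneccesary}, $[3;1]\neq[3;0]=0$, we must be on the nontrivial root, giving the desired identity. Reinstating $[3;0]$ via the translation of Step 1 yields the full formula \eqref{31expression}.

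The only nontrivial piece is the $t$-coefficient computation in the second step, but it is a routine algebraic simplification. The conceptual content lies in the vanishing of the constant term of $P(t)$, which forces $P(t)$ to factor as $t$ times a linear polynomial and thus determines $[3;1]$ uniquely (given non-degeneracy).
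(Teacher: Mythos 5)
Your proposal is correct and follows essentially the same route as the paper: reduce to $[3;0]=0$ by translation, impose the vanishing $F_{(2)}(\mho((1,1,1))\mid\mho)=0$ using the explicit formula \eqref{P23var}, and conclude via non-degeneracy ($[3;1]\neq 0$). Your organization of the computation — noting that the constant term of $P(t)=F_{(2)}([1;0],[2;0],t\mid\mho)$ vanishes because $([1;0],[2;0],[3;0])=\mho(\varnothing)$, so that $P(t)$ factors as $t$ times a linear term — is just a tidier presentation of the same explicit factorization the paper writes down directly.
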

\begin{proof} We first note that if a grid $\mho$ is perfect then for any constant $c\in k$ the grid $\mho+c$, defined by $(\mho+c)(i;j)=\mho(i;j)+c$, is also perfect with 
$$
F_{\lambda}(x_1, x_2, x_3\mid\mho+c)=F_{\lambda}(x_1-c, x_2-c, x_3-c\mid\mho).
$$
On the other hand, the desired identity \eqref{31expression} is equivalent to
$$
([2;1]-[3;1])([1;0]-[2;0])-([1;1]-[2;1])([2;0]-[3;0])=0.
$$
The left-hand side above clearly stays intact when we change $\mho$ to $\mho+c$, so it is enough to prove the claim for the perfect $3$-grid $\mho-[3;0]$. In other words, it is enough to consider the case $[3;0]=0$.

Since $\mho$ is perfect, we must have $F_{(2)}(\mho((1,1,1))\mid\mho)=0$. Using \eqref{P23var}, we obtain by explicit computation
$$
F_{(2)}([1;0],[2;0],[3;1]\mid\mho)=\frac{[3;1]([1;0][3;1]-[2;0][3;1]+[1;1][2;0]-[1;0][2;1])}{[1;0]-[2;0]}.
$$
Since $\mho$ is non-degenerate and $[3;0]=0$, we have $[3;1]\neq 0$. Hence $F_{(2)}(\mho((1,1,1))\mid\mho)=0$ implies
$$
[1;0][3;1]-[2;0][3;1]+[1;1][2;0]-[1;0][2;1]=0,
$$
which is equivalent to the claim when $[3;0]=0$.
\end{proof}

\begin{lem}\label{finalconstraint}
We have
$$
([2;1]-[1;2])([2;0]-[1;0])=([2;1]-[1;1])([2;0]-[1;1]).
$$
\end{lem}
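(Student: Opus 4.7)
The plan is to derive two independent expressions for $[2;2]$ in terms of the other grid values and equate them. Since the identity to be proved involves only differences $[i;j]-[i';j']$, and since perfection of $\mho$ is preserved under the translation $\mho\mapsto\mho+c$ (with the corresponding $F_\lambda$ obtained by translating all variables), I may assume without loss of generality that $[3;0]=0$.

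The first expression for $[2;2]$ uses only rows $1$ and $2$: Lemma \ref{12lemma} applied with $i=1,j=1$ yields
\[
([1;1]-[2;0])([1;1]-[2;2])=([1;0]-[2;1])([1;2]-[2;1]),
\]
which solves for $[2;2]$ as an explicit rational function of $[1;0],[1;1],[1;2],[2;0],[2;1]$.

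The second expression engages the full $3$-grid structure. Lemma \ref{31lemma} applied to $\mho$ (with $[3;0]=0$) expresses $[3;1]$ as a rational function of the row-$1$ and row-$2$ values. Since $\mho^1$ is perfect by Corollary \ref{uppershiftPerf}, the same lemma applied to $\mho^1$ produces $[3;2]=\mho^1(3;1)$ in terms of $[3;1]$ and row-$1$/row-$2$ values, and hence, after substitution, purely in terms of $[1;0],[1;1],[2;0],[2;1]$. Lemma \ref{12lemma} with $i=2,j=1$ then yields
\[
[2;1]\bigl([2;1]-[3;2]\bigr)=\bigl([2;0]-[3;1]\bigr)\bigl([2;2]-[3;1]\bigr),
\]
from which $[2;2]$ can be solved and, upon substituting the known forms of $[3;1]$ and $[3;2]$, becomes a rational function of $[1;0],[1;1],[2;0],[2;1]$ alone -- crucially, independent of $[1;2]$.

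Equating the two expressions for $[2;2]$ produces a polynomial identity relating $[1;0],[1;1],[1;2],[2;0],[2;1]$. The main obstacle is the algebraic simplification: after clearing denominators and collecting terms, the difference of the two sides should factor as
\[
\bigl([1;0]-[2;1]\bigr)\cdot\Bigl[\bigl([1;0]-[2;0]\bigr)\bigl([1;2]-[2;1]\bigr)-\bigl([1;1]-[2;0]\bigr)\bigl([1;1]-[2;1]\bigr)\Bigr].
\]
Non-degeneracy (Corollary \ref{nondegeneccesary}) guarantees $[1;0]-[2;1]\neq 0$, so the bracketed factor must vanish, which is exactly the claim of the lemma after a sign rearrangement.
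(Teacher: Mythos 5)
Your proposal is correct and follows essentially the same route as the paper: the same reduction to $[3;0]=0$, the same four inputs (Lemma \ref{12lemma} at $(i,j)=(1,1)$ and $(2,1)$, and Lemma \ref{31lemma} applied to $\mho$ and to $\mho^1$), and the same extraction of the non-vanishing factor via non-degeneracy. The only cosmetic difference is that the paper equates two expressions for $[3;2]$ rather than for $[2;2]$, which is just a rearrangement of the identical system of equations.
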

\begin{proof} Similarly to the proof of Lemma \ref{31lemma}, the claim for the grid $\mho$ is equivalent to the claim for a grid $\mho+c$ for arbitrary $c$. So, to simplify computations, we can assume that $[3;0]=0$ throughout the proof without loss of generality.

To prove the claim we consider two different expressions for $[3;2]$ in terms of $[1;0], [1;1], [1;2], [2;0], [2;1]$, and show that equating them implies the claim. For the first expression we use Lemma \ref{31lemma} for the grids $\mho$ and $\mho^1$, obtaining
$$
[3;1]=\frac{[1;0][2;1]-[1;1][2;0]}{[1;0]-[2;0]},
$$
$$
[3;2]=\frac{[1;0][2;1]-[1;1][2;0]-[2;1][3;1]+[1;1][3;1]}{[1;0]-[2;0]}.
$$
The other expression comes from Lemma \ref{12lemma} applied to $(i,j)=(1,1)$ and $(i,j)=(2,1)$, leading to
$$
[2;2]=[1;1]+\frac{([2;1]-[1;0])([2;1]-[1;2])}{[2;0]-[1;1]},
$$
$$
[3;2]=[2;1]-\frac{([3;1]-[2;0])([3;1]-[2;2])}{[2;1]}.
$$
Subtracting the two expressions for $[3;2]$ from each other we get
$$
\frac{[1;0][2;1]-[1;1][2;0]-[2;1][3;1]+[1;1][3;1]}{[1;0]-[2;0]}-[2;1]+\frac{([3;1]-[2;0])([3;1]-[2;2])}{[2;1]}=0,
$$
which, after algebraic manipulations, gives
$$
\frac{([3;1]-[2;0])([1;1][2;1]-[2;1]^2+[3;1]([1;0]-[2;0])-[2;2]([1;0]-[2;0]))}{[2;1]([1;0]-[2;0])}=0.
$$
Since $[3;1]\neq [2;0]$, we get
$$
[1;1][2;1]-[2;1]^2+[3;1]([1;0]-[2;0])-[2;2]([1;0]-[2;0])=0.
$$
Plugging the values for $[3;1],[2;2]$ above results in
$$
-([2;1]-[1;1])([2;1]-[1;0])+\frac{([2;1]-[1;0])([2;1]-[1;2])([1;0]-[2;0])}{[1;1]-[2;0]}=0,
$$
and since $[2;0]\neq[1;1]$ and $[2;1]\neq[1;0]$, we finally get
$$
-([2;1]-[1;1])([2;0]-[1;1])+([2;1]-[1;2])([2;0]-[1;0])=0.
$$
\end{proof}

The constraints proved above are almost sufficient to prove the classification for $n=3$. Namely, we can now prove the following:
\begin{lem}\label{3classification} One of the following cases holds for the perfect $3$-grid $\mho$:
\begin{enumerate}
\item For some constants $c,q,a_1, a_2, a_3\in\Bbbk$ we have 
$$
[1;0]=c+a_1, \qquad [1;1]= c+a_1q,\qquad [1;2]=c+a_1q^2,
$$
$$
[2;k]=c+a_2 q^k, \quad [3;k]= c+a_3q^k,\qquad k\in\mathbb Z_{\geq 0}.
$$

\item For some constants $d,c_1,c_2,c_3\in\Bbbk$ we have
$$
[1;0]=c_1, \qquad [1;1]= c_1+d,\qquad [1;2]=c_1+2d,
$$
$$
[2;k]=c_2+kd, \quad [3;k]= c_3+kd,\qquad k\in\mathbb Z_{\geq 0}.
$$
\end{enumerate}
\end{lem}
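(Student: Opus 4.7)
The plan is to derive the dichotomy between the two cases already from the row-$3$ values using Lemma \ref{31lemma} and its shifts, then propagate the resulting geometric-versus-arithmetic structure to rows $1$ and $2$ via Lemma \ref{finalconstraint} and Lemma \ref{12lemma}. As a preliminary normalization, I translate $\mho$ so that $\mho(3;0)=0$, which is allowed because $\mho\mapsto \mho+c$ preserves perfectness and both target forms are translation-invariant. Applying Lemma \ref{31lemma} to the shifted perfect $3$-grid $\mho^k$ (perfect by Corollary \ref{uppershiftPerf}) for every $k\geq 0$ then yields the first-order linear recursion
\[
[3;k+1]=\alpha+\beta\,[3;k],\qquad \alpha=\frac{[1;0][2;1]-[1;1][2;0]}{[1;0]-[2;0]},\quad \beta=\frac{[1;1]-[2;1]}{[1;0]-[2;0]},
\]
which splits immediately into the two target cases. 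If $\beta=1$, I have $[3;k]=k\alpha$ and set $d:=\alpha$. If $\beta\neq 1$, I have $[3;k]=c+a_3q^k$ with $q:=\beta$, $c:=\alpha/(1-q)$, and $a_3:=[3;0]-c=-c$.

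In the arithmetic case $\beta=1$, the identity $\beta=1$ forces $[1;1]-[1;0]=[2;1]-[2;0]=d$, and substituting back into the formula for $\alpha$ gives $\alpha=d$. Plugging these into Lemma \ref{finalconstraint} and cancelling the nonzero factor $[2;0]-[1;0]$ yields $[1;2]=[1;0]+2d$, completing Case~(2) for row $1$. For row $2$, Lemma \ref{12lemma} with $i=1$ and arbitrary $j\geq 1$ gives the three-term recursion
\[
([1;1]-[2;j-1])([1;1]-[2;j+1])=([1;0]-[2;j])([1;2]-[2;j]);
\]
non-degeneracy guarantees $[1;1]\neq[2;j-1]$, so this uniquely determines $[2;j+1]$ from $[2;j-1]$ and $[2;j]$, and an induction from $[2;0]$ and $[2;1]=[2;0]+d$ gives $[2;k]=[2;0]+kd$ for all $k$.

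In the geometric case $\beta\neq 1$, I set $a_1:=[1;0]-c$ and $a_2:=[2;0]-c$; a short algebraic check using only the definitions of $c,q,\alpha,\beta$ verifies $c+a_1q=[1;1]$ and $c+a_2q=[2;1]$, so the initial data $[1;0],[1;1],[2;0],[2;1]$ already fit the geometric form. Substituting this form into Lemma \ref{finalconstraint} then yields $[1;2]=c+a_1q^2$ by direct computation, completing Case~(1) for row $1$. Finally, plugging the ansatz $[2;j]=c+a_2q^j$ into the three-term recursion above satisfies it identically, so by the uniqueness of the recursion's solution given the matching initial conditions, $[2;k]=c+a_2q^k$ for all $k$.

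The main obstacle is conceptual rather than computational: recognizing that Lemma \ref{31lemma} applied to shifted grids already singles out the dichotomy, with $q=\beta=([1;1]-[2;1])/([1;0]-[2;0])$ as the natural parameter. Once this is in hand, the remaining verifications are direct algebraic identities in the scalars $[1;0],[1;1],[2;0],[2;1]$; the Case~(1) computations are somewhat intricate but reduce to polynomial identities after reparametrizing by $(c,q,a_1,a_2)$. No additional input beyond the three cited lemmas and the structural results (Propositions \ref{restrictionGrid}, \ref{nondegsuff} and Corollary \ref{uppershiftPerf}) from the preceding subsection is required.
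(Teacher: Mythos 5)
Your proof is correct and follows essentially the same route as the paper: the same parameter $q=\beta=([2;1]-[1;1])/([2;0]-[1;0])$, the same case split on $q=1$ versus $q\neq 1$, and the same three inputs (Lemma \ref{31lemma} for row $3$, Lemma \ref{finalconstraint} for $[1;2]$, and Lemma \ref{12lemma} as a three-term recursion for row $2$). The only difference is organizational — you extract the affine recursion $[3;k+1]=\alpha+\beta[3;k]$ first and let it dictate the dichotomy, whereas the paper defines $q$ from rows $1$--$2$ and treats row $3$ last — which is a pleasant but not substantive reframing.
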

Note that in order to satisfy the non-degeneracy condition from Corollary \ref{nondegeneccesary}, we have to assume that $q\neq 0$ and $q$ is not a root of unity in the first case. 
\begin{proof}
Set
$$
q:=\frac{[2;1]-[1;1]}{[2;0]-[1;0]}.
$$
We have $2$ cases. 

{\bf Case 1: $q\neq 1$.}  Set 
$$
a_1:=\frac{[1;0]-[1;1]}{1-q},\qquad c:=[1;0]-a_1, \qquad a_2:=[2;0]-c, \qquad a_3:=[3;0]-c,
$$
so we have
$$
[1;0]=c+a_1,\qquad [1;1]=c+a_1q, \qquad [2;0]:=c+a_2,\qquad [2;1]=c+a_2q,\qquad [3;0]=c+a_3.
$$
Plugging these values into the constraint from Lemma $\ref{finalconstraint}$ we get
$$
(c+a_2q-[1;2])(a_2-a_1)=(a_2q-a_1q)(a_2-a_1q)=(a_2q-a_1q^2)(a_2-a_1).
$$
Note that since $[1;0]\neq[2;0]$, we have $a_1\neq a_2$, and hence the equation above implies $[1;2]=c+a_1q^2$.

To prove that $[2;k]=c+a_2q^k$ we use induction on $k$, with cases $k=0,1$ already covered. For the inductive step, assume that $[2;k-1]=c+a_2q^{k-1}, [2;k]=c+a_2q^k$, and apply Lemma \ref{12lemma} with $(i,j)=(1,k)$. This gives
$$
([2;k+1]-c-a_1q)(a_2q^{k-1}-a_1q)=(a_2q^k-a_1)(a_2q^k-a_1q^2)=(a_2q^{k+1}-a_1q)(a_2q^{k-1}-a_1q).
$$
Since $[2;k-1]\neq[1;1]$ implies $a_2q^{k-1}-a_1q\neq0$, we get $[2;k+1]=c+a_2q^{k+1}$.

Similarly, we can use induction to prove that $[3;k]=c+a_3q^k$. We already know the case $k=0$. Applying Lemma \ref{31lemma} to the perfect grid $\mho^k$, we get
$$
[3;k+1]=\frac{[1;0][2;1]-[1;1][2;0]-[2;1][3;k]+[1;1][3;k]}{[1;0]-[2;0]}=c(1-q)+q[3;k],
$$
which implies the inductive step and finishes the proof of this case.

{\bf Case 2: $q=1$.} The argument is similar to the previous case. Set
$$
c_1:=[1;0],\qquad c_2:=[2;0], \qquad c_3:=[3;0], \qquad d:=[1;1]-[1;0]=[2;1]-[2;0],
$$
where the last equality follows from $q=1$. Then we have
$$
[1;0]=c_1,\qquad [1;1]=c_1+d, \qquad [2;0]=c_2,\qquad [2;1]=c_2+d,\qquad [3;0]=c_3.
$$
Plugging these values into Lemma $\ref{finalconstraint}$ we get
$$
(c_2+d-[1;2])(c_2-c_1)=(c_2-c_1)(c_2-c_1-d).
$$
Since $[1;0]\neq[2;0]$, we have $c_1\neq c_2$ and hence the equation above implies $[1;2]=c_1+2d$.

To prove that $[2;k]=c_2+kd$ and $[3;k]=c_3+kd$ we can use Lemma \ref{12lemma} and Lemma \ref{31lemma} in exactly the same way as in Case 1.
\end{proof}

At this point, in order to reach the classification from Theorem \ref{perfectTheorem} for $n=3$, we only need to determine the values of $[1;k]$ for $k\geq 3$. To do so we focus on the $2$-grid $\mho_2$ and, considering each case separately, compute the functions $F_{(k)}(x_1,x_2\mid\mho_2)$. The result is summarized below.

\begin{lem} \label{finishmho1}Let $\tmho$ be a perfect $2$-grid.
\begin{enumerate}
\item Assume that for some $q,c,a_1,a_2\in\Bbbk$ we have
$$
\tmho(1;0)=c+a_1,\qquad \tmho(1;1)=c+a_1q,\qquad \tmho(2;k)=c+a_2q^k,\quad k\in\mathbb Z_{\geq 0}.
$$
Then we have $\tmho(1;k)=c+a_1q^k$ for all $k$ and
\begin{equation}
\label{qPkexplicit}
F_{(k)}(x_1+c, x_2+c\mid\tmho)=\sum_{i=0}^k x_1^ix_2^{k-i}\frac{(a_1/x_1;q)_i(a_2/x_2;q)_{k-i}(q;q)_k}{(q;q)_i(q;q)_{k-i}},
\end{equation}
where we have shifted the variables $x_1,x_2$ by $c$.
\item  Assume that for some $d,c_1,c_2\in\Bbbk$ we have
$$
\tmho(1;0)=c_1,\qquad \tmho(1;1)=c_1+d,\qquad \tmho(2;k)=c_2+kd,\quad k\in\mathbb Z_{\geq 0}.
$$
Then we have $\tmho(1;k)=c_1+kd$ for all $k$ and
\begin{equation}
\label{dPkexplicit}
F_{(k)}(x_1, x_2\mid\tmho)=\prod_{i=0}^{k-1}(x_1+x_2-c_1-c_2-i\, d).
\end{equation}
\end{enumerate}
\end{lem}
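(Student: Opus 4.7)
The plan is to prove both cases in parallel by strong induction on $k$, using Lemma \ref{sadPieri} (Pieri) as the inductive engine. For $k \leq 1$ nothing is needed beyond the hypothesis; for $k = 2$, Lemma \ref{12lemma} with $(i,j) = (1,1)$ yields an explicit equation that determines $\tmho(1;2)$ in each case, after which substitution into \eqref{P2} gives the formula for $F_{(2)}$.

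For the inductive step, suppose the claim holds for all $j \leq k$. First, the shifted grid $\tmho^1$ is again a perfect $2$-grid of the same type by Corollary \ref{uppershiftPerf}, with $a_2$ replaced by $a_2 q$ in Case 1 and $c_2$ by $c_2 + d$ in Case 2; thus the inductive hypothesis also applies to $\tmho^1$ and supplies closed-form expressions for $F_{(j)}(\cdot \mid \tmho^1)$. Applying Pieri at level $k$ then expresses $F_{(k+1)}$ in terms of these known quantities, and a direct $q$-Pochhammer manipulation verifies that the result agrees with the claimed formula \eqref{qPkexplicit} (resp.\ \eqref{dPkexplicit}).

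To recover $\tmho(1;k+1)$, I would next apply Pieri at level $k+1$. The crucial observation is that $\kappa_{k+1}(\tmho)$ depends linearly on $\tmho(1;k+1)$ through its defining factor $(\tmho(1;k) + \tmho(2;1) - \tmho(1;k+1) - \tmho(2;0))$, so the entire Pieri identity is linear in the unknown $\tmho(1;k+1)$. Evaluating at $\tmho((k+1,k+1)) = (\tmho(1;0), \tmho(2;k+1))$, where $F_{(k+2)}$ must vanish by perfectness since $(k+2) \not\subset (k+1,k+1)$, yields a single linear equation whose coefficient of $\tmho(1;k+1)$ is nonzero (this can be checked from the inductive formula for $F_{(k+1)}$, where specializing $x_1 = \tmho(1;0)$ kills all but one term of \eqref{qPkexplicit}). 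Solving this equation and simplifying with the inductive formulas recovers the claimed value of $\tmho(1;k+1)$.

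The main obstacle is the identity verification at each inductive step: checking that Pieri with the inductive data really does produce the claimed $F_{(k+1)}$, and that the vanishing at $\tmho((k+1,k+1))$ really forces $\tmho(1;k+1)$ into the claimed form. These reduce to routine but intricate $q$-Pochhammer computations, and the simultaneous shifts $\tmho \to \tmho^1$ and $k \to k+1$ require careful bookkeeping. In Case 2, the additive analogs can equivalently be obtained by taking the $q \to 1$ limit of Case 1, following the recipe of Section \ref{sqW-sect} that degenerates interpolation $q$-Whittaker to interpolation elementary polynomials.
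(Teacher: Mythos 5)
Your strategy is essentially the paper's: induction on $k$ driven by the Pieri rule of Lemma \ref{sadPieri}, with a perfectness vanishing at a suitably chosen partition pinning down the unknown value $\tmho(1;\cdot)$ at each step, followed by a $q$-Pochhammer verification of the closed form. The only structural difference is bookkeeping: the paper determines $\tmho(1;k)$ and $F_{(k+1)}$ within a single application of Pieri at level $k$, using the vanishing $F_{(k+1)}(\tmho((k,2))\mid\tmho)=0$, whereas you compute $F_{(k+1)}$ first (with $\tmho(1;k)$ already known from the previous step) and then extract $\tmho(1;k+1)$ from a second Pieri application at level $k+1$ evaluated at $\tmho((k+1,k+1))$. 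Both evaluation points are legitimate.

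However, one step is not justified as written. You claim the coefficient of $\tmho(1;k+1)$ in your linear equation is nonzero because specializing $x_1=\tmho(1;0)$ kills all but one term of \eqref{qPkexplicit}; that only shows that the contribution coming from the factor $(x_1+x_2-\tmho(1;k+1)-\tmho(2;0))$, namely $F_{(k+1)}(\tmho((k+1,k+1))\mid\tmho)$, is nonzero. But the unknown also enters linearly through $\kappa_{k+1}(\tmho)$ on the other side of the Pieri identity, and that contribution is likewise nonzero and of comparable size: for $k=1$ in case 1 (with $c=0$) the net coefficient works out to $a_2^2q(q-1)^2(q+1)\,\frac{a_2q-a_1}{a_1q-a_2}$, i.e.\ it survives only because $\tmho(2;1)\neq\tmho(1;0)$, a near-cancellation your argument does not detect. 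This is repairable by the same kind of explicit computation the paper carries out (there the coefficient comes out as $(q-1)(a_1q^{k-2}-a_2q)$, visibly nonzero by non-degeneracy), but as stated the solvability of your linear equation is unproved. A smaller point: obtaining case 2 by ``taking the $q\to1$ limit'' of case 1 is not literally available over an abstract field $\Bbbk$; the uniqueness statement must be proved over $\Bbbk$ itself, which is why the paper simply reruns the additive computation.
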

\begin{proof} For both parts we simultaneously prove the expressions for $\tmho(1;k)$ and $F_{(k+1)}(x_1,x_2\mid\tmho)$ by induction on $k$.

{\bf Part 1.} Note that without loss of generality we can replace $\tmho$ by $\tmho-c$ and assume that $c=0$. When $k=0$ we have $\tmho(1;0)=a_1$ and
$$
F_{(1)}(x_1,x_2\mid\tmho)=x_1+x_2-\tmho(1;0)-\tmho(2;0)=x_1-a_1+x_2-a_2.
$$
When $k=1$ we have $\tmho(1;1)=a_1q$ and from \eqref{P2} one can obtain
$$
F_{(2)}(x_1, x_2\mid\tmho)=(x_1-a_1)(x_1-a_1q)+(1+q)(x_1-a_1)(x_2-a_2)+(x_2-a_2)(x_2-a_2q).
$$

Now assume that $k\geq 2$ and we have proved the claim for $\tmho(1;i)$ and $F_{(i+1)}(x_1, x_2\mid\tmho)$ with $i<k$. Applying the Pieri rule from Lemma \ref{sadPieri}, we have
\begin{multline*}
(x_1+x_2-\tmho(1;k)-a_2)F_{(k)}(x_1, x_2\mid\tmho)\\
=F_{(k+1)}(x_1,x_2\mid \tmho)+\kappa_k(\tmho) (x_1-a_2)(x_2-a_2)F_{(k-1)}(x_1, x_2\mid\tmho^1),
\end{multline*}
where
\begin{equation}
\label{kappatempexpressions}
\kappa_k(\tmho)=\frac{(a_1q^{k-1}+a_2q-\tmho(1;k)-a_2)F_{(k)}(a_1q^{k-1}, a_2q\mid\tmho)}{(a_2q-a_2)(a_1q^{k-1}-a_2)\prod_{i=0}^{k-2}(a_1q^{k-1}-a_1q^i)}.
\end{equation}
Note that when for any $j=0, \dots, k$ we plug $x_1=a_1q^{k-j}$ and $x_2=a_2q^j$ in the right-hand side of \eqref{qPkexplicit}, there is only one non-zero term in the sum and this term corresponds to $i=j$. So, we have
\begin{equation}
\label{valueofPk}
F_{(k)}(\mho((k,j))\mid\tmho)=F_{(k)}(a_1q^{k-j},a_2q^j\mid\tmho)=a_1^{k-j}a_2^jq^{(k-j)^2+j^2}\frac{(q^{-k+j};q)_{k-j}(q^{-j};q)_j(q;q)_k}{(q;q)_{k-j}(q;q)_j}.
\end{equation}
Plugging this expression into \eqref{kappatempexpressions}, we get
$$
\kappa_k(\tmho)=\frac{(a_1q^{k-1}+a_2q-\tmho(1;k)-a_2)(1-q^k)}{(a_1q^{k-1}-a_2)(1-q)},
$$
and, consequently,
\begin{multline}
\label{Piereitemp11}
F_{(k+1)}(x_1,x_2\mid \tmho)=(x_1+x_2-\tmho(1;k)-a_2)F_{(k)}(x_1, x_2\mid\tmho)\\
-\frac{(a_1q^{k-1}+a_2q-\tmho(1;k)-a_2)(1-q^k)}{(a_1q^{k-1}-a_2)(1-q)} (x_1-a_2)(x_2-a_2)F_{(k-1)}(x_1, x_2\mid\tmho^1).
\end{multline}

Since $\tmho$ is perfect, we must have $F_{(k+1)}(\tmho((k,2))\mid \tmho)=F_{(k+1)}(a_1q^{k-2},a_2q^2\mid \tmho)=0$. Hence the expression above implies
\begin{multline*}
(a_1q^{k-2}+a_2q^{2}-\tmho(1;k)-a_2)F_{(k)}(\tmho((k,2))\mid\tmho)\\
=\frac{(a_1q^{k-1}+a_2q-\tmho(1;k)-a_2)(1-q^k)}{(a_1q^{k-1}-a_2)(1-q)} (a_1q^{k-2}-a_2)(a_2q^2-a_2)F_{(k-1)}(\tmho^1((k-1,1))\mid\tmho^1).
\end{multline*}
Applying \eqref{valueofPk} to $F_{(k)}$ and $F_{(k-1)}$, noting for the latter that $\mho^1$ also satisfies the assumption of the first part with $a_2$ multiplied by $q$, we get after elementary manipulations
$$
(a_1q^{k-2}+a_2q^{2}-\tmho(1;k)-a_2)(a_1q^{k-1}-a_2)(1-q)=(a_1q^{k-1}+a_2q-\tmho(1;k)-a_2) (a_1q^{k-2}-a_2)(1-q^2).
$$
This is a linear equation in $\tmho(1;k)$ of the form
$$
(q-1)(a_1q^{k-2}-a_2q)\mho(1;k)=\beta
$$
for some $\beta$ independent of $\tmho(1;k)$. Since $\tmho$ is non-degenerate, $q\neq 1$ and $a_1q^{k-2}=\tmho(1;k-2)\neq\tmho(2;1)=a_2q$, hence the system is non-degenerate and it can be readily checked that $\tmho(1;k)=a_1q^k$ is its unique solution. So we have proved $\tmho(1;k)=a_1q^k$.

Returning to \eqref{Piereitemp11}, plugging $\tmho(1;k)=a_1q^k$ we now have
$$
\kappa_k(\tmho)=1-q^k,
$$
\begin{multline*}
F_{(k+1)}(x_1,x_2\mid \tmho)=(x_1+x_2-a_1q^k-a_2)F_{(k)}(x_1, x_2\mid\tmho)\\
-(1-q^k)(x_1-a_2)(x_2-a_2)F_{(k-1)}(x_1, x_2\mid\tmho^1).
\end{multline*}
Plugging expressions for $F_{(k)}$ and $F_{(k-1)}$ and doing elementary manipulations, we get
\begin{multline*}
F_{(k+1)}(x_1,x_2\mid \tmho)=(x_1+x_2-a_1q^k-a_2)\sum_{i=0}^k x_1^ix_2^{k-i}\frac{(a_1/x_1;q)_i(a_2/x_2;q)_{k-i}(q;q)_k}{(q;q)_i(q;q)_{k-i}}\\
-(x_1-a_2)\sum_{i=0}^{k-1} x_1^ix_2^{k-i}\frac{(a_1/x_1;q)_i(a_2/x_2;q)_{k-i}(q;q)_{k}}{(q;q)_i(q;q)_{k-i-1}},
\end{multline*}
which can be rewritten as
\begin{multline*}
F_{(k+1)}(x_1,x_2\mid \tmho)=\sum_{i=0}^k (x_1q^{k-i}+x_2-a_1q^k-a_2q^{k-i}) x_1^ix_2^{k-i}\frac{(a_1/x_1;q)_i(a_2/x_2;q)_{k-i}(q;q)_k}{(q;q)_i(q;q)_{k-i}}\\
=\sum_{i=0}^k q^{k-i}x_1^{i+1}x_2^{k-i}\frac{(a_1/x_1;q)_{i+1}(a_2/x_2;q)_{k-i}(q;q)_k}{(q;q)_i(q;q)_{k-i}}   + x_1^ix_2^{k-i+1}\frac{(a_1/x_1;q)_i(a_2/x_2;q)_{k-i+1}(q;q)_k}{(q;q)_i(q;q)_{k-i}}\\
=\sum_{i=0}^{k+1} x_1^{i}x_2^{k-i}\frac{(a_1/x_1;q)_{i}(a_2/x_2;q)_{k-i}(q;q)_{k+1}}{(q;q)_i(q;q)_{k-i+1}},
\end{multline*}
finishing the proof of this part.

{\bf Part 2.} We again proceed by induction on $k$. When $k=0$ we have $\tmho(1;0)=a_1$ and
$$
F_{(1)}(x_1,x_2\mid\tmho)=x_1+x_2-\tmho(1;0)-\tmho(2;0)=x_1-c_1+x_2-c_2.
$$
Similarly, when $k=1$ we have $\tmho(1;0)=a_1+d$ and \eqref{P2} implies
$$
F_{(2)}(x_1,x_2\mid\tmho)=(x_1-c_1+x_2-c_2)(x_1-c_1+x_2-c_2-d).
$$

Now assume that $k\geq2$ and we have proved the claim for $\tmho(1;i)$ and $F_{(i+1)}(x_1, x_2\mid\tmho)$ with $i< k$. Applying Lemma \ref{sadPieri} again, we have
\begin{multline*}
(x_1+x_2-\tmho(1;k)-c_2)F_{(k)}(x_1, x_2\mid\tmho)\\
=F_{(k+1)}(x_1,x_2\mid \tmho)+\kappa_k(\tmho) (x_1-c_2)(x_2-c_2)P_{(k-1)}(x_1, x_2\mid\tmho^1),
\end{multline*}
where
$$
\kappa_k(\tmho)=\frac{(c_1+kd-\tmho(1;k))F_{(k)}(c_1+(k-1)d, c_2+d\mid\tmho)}{(c_1+(k-1)d-c_2)d^{k}(k-1)!}.
$$
Plugging the expression for $F_{(k)}(x_1,x_2\mid\tmho)$, we get
$$
\kappa_k(\tmho)=\frac{k(c_1+kd-\tmho(1;k))}{(c_1+(k-1)d-c_2)}.
$$
\begin{multline}
\label{Pieritemp22}
F_{(k+1)}(x_1,x_2\mid \tmho)=(x_1+x_2-\tmho(1;k)-c_2)F_{(k)}(x_1, x_2\mid\tmho)\\
-\frac{k(c_1+kd-\tmho(1;k))}{(c_1+(k-1)d-c_2)}(x_1-c_2)(x_2-c_2)F_{(k-1)}(x_1, x_2\mid\tmho^1).
\end{multline}
Since $F_{(k+1)}(\tmho((k,2))\mid \tmho)=F_{(k+1)}(c_1+(k-2)d,c_2+2d\mid \tmho)=0$, we get 
\begin{multline*}
(x_1+x_2-\tmho(1;k)-c_2)F_{(k)}(c_1+(k-2)d,c_2+2d\mid\tmho)\\
=\frac{k(c_1+kd-\tmho(1;k))}{(c_1+(k-1)d-c_2)}(x_1-c_2)(x_2-c_2)F_{(k-1)}(c_1+(k-2)d,c_2+2d\mid\tmho^1).
\end{multline*}
Using expressions for $F_{(k)}$ and $F_{(k-1)}$, we get after elementary manipulations
$$
(c_1+kd-\tmho(1;k))=2(c_1+(k-2)d-c_2)\frac{(c_1+kd-\tmho(1;k))}{(c_1+(k-1)d-c_2)}.
$$
which is equivalent to
$$
(c_1+kd-\tmho(1;k))(c_1+(k-3)d-c_2)=0.
$$
Note that $c_1+(k-3)t-c_2=\tmho(1;k-1)-\tmho(2;4)\neq0$ since $\tmho$ is non-degenerate. Hence the equation above implies $\tmho(1;k)=c_1+kd$. Plugging this value for $\tmho(1;k)$ into \eqref{Pieritemp22} we get
$$
F_{(k+1)}(x_1,x_2\mid \tmho)=(x_1+x_2-c_1-c_2-kd)F_{(k)}(x_1, x_2\mid\tmho),
$$
which implies the claim.
\end{proof}

Let us summarize Lemmas \ref{3classification} and \ref{finishmho1}:

\begin{cor}\label{n3casecor}
Let $\mho$ be a perfect $3$-grid. Then one of the following holds:
\begin{enumerate}
\item For some constants $c,q,a_1, a_2, a_3\in\Bbbk$ we have 
$$
\mho(1;k)=c+a_1q^k, \quad \mho(2;k)=c+a_2 q^k, \quad \mho(3;k)= c+a_3q^k,\qquad k\in\mathbb Z_{\geq 0}.
$$

\item For some constants $d,c_1,c_2,c_3\in\Bbbk$ we have
$$
\mho(1;k)=c_1+kd,\quad \mho(2;k)=c_2+kd, \quad \mho(3;k)= c_3+kd,\qquad k\in\mathbb Z_{\geq 0}.
$$
\end{enumerate}
\qed
\end{cor}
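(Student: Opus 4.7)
The plan is to combine the 3-grid classification established in Corollary \ref{n3casecor} with an overlap/extension argument for general $n\geq 3$ on the necessity side, and to invoke the vanishing and characterization properties of $\widetilde\F_\lambda$ and $\F^{el}_\lambda$ from Section \ref{sqW-sect} on the sufficiency side. The $n=3$ case is essentially already done: necessity follows from Corollary \ref{n3casecor}, and sufficiency/identification is a special case of the $n\geq 3$ argument below.

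For the extension to $n>3$, consider for each $i\in\{1,\dots,n-2\}$ the 3-subgrid $\mho^{(i)}:=\prescript{}{i-1}{(\mho_{i+2})}$ consisting of rows $i,i+1,i+2$ of $\mho$. By Propositions \ref{restrictionGrid} and \ref{leftshiftPerfect}, each $\mho^{(i)}$ is a perfect 3-grid, hence by Corollary \ref{n3casecor} is either of q-type ($f_j(k)=c+a_jq^k$) or of d-type ($f_j(k)=c_j+kd$). I would then show that all $\mho^{(i)}$ share the same type: adjacent subgrids $\mho^{(i)}, \mho^{(i+1)}$ share rows $i+1, i+2$, and if a single shared row were simultaneously of the form $c+aq^k$ (with $q\neq 1$ and $q$ not a root of unity, and $a\neq 0$ by non-degeneracy) and of the form $c'+kd$ (with $d\neq 0$), then $aq^k-kd=c'-c$ would be constant in $k$; taking first differences yields $aq^k(q-1)=d$, forcing $q^k$ to be constant in $k$ and contradicting $q\neq 1$. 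Once all $\mho^{(i)}$ have the same type, overlapping rows pin down a common $(c,q)$ (or common $d$) across all $i$, and the parameters $a_i$ (or $c_i$) assemble into a single global family matching the statement.

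For sufficiency and identification, first verify that a grid of either form satisfies the non-degeneracy condition of Corollary \ref{nondegeneccesary}: the assumption $a_i/a_j\neq q^k$ (respectively $c_i-c_j\neq kd$) ensures all values $\mho(i;j)$ are pairwise distinct. In the q-type case, set $a_i$ as in the theorem and consider $\widetilde\F_\lambda(x_1-c,\dots,x_n-c\mid\calA,\infty)$; by Remark \ref{specremark} it is well-defined over $\Bbbk$, and by Propositions \ref{degvanishing} and \ref{degchartheo} it lies in $\Bbbk[x_1,\dots,x_n]^{S_n}_{\leq|\lambda|}$, vanishes at $\mho(\mu)=\x^n_{\calA}(\mu)+c^n$ whenever $\lambda\not\subset\mu$, and is nonzero at $\mho(\lambda)$. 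This establishes both the non-degeneracy properties (1)--(3) and the full vanishing \eqref{vanishing-grid}, proving that $\mho$ is perfect; uniqueness of $F_\lambda$ up to a scalar (Lemma \ref{basicnondegLemma}) then gives the stated proportionality. The d-type case is analogous, using $\F^{el}_\lambda$ together with Propositions \ref{degdegvanishing} and \ref{degdegchartheo}. The main obstacle is the consistency step in the extension to $n>3$: producing a clean algebraic argument valid over an arbitrary characteristic-zero field that simultaneously rules out mixed-type configurations of overlapping 3-subgrids and pins down a single globally consistent choice of parameters $(c,q,a_1,\dots,a_n)$ or $(d,c_1,\dots,c_n)$ from the a priori independent data extracted from each $\mho^{(i)}$.
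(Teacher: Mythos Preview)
You have proved the wrong statement. The statement in question is Corollary \ref{n3casecor}, which asserts that any perfect $3$-grid is of one of the two listed forms. Your proposal instead sketches a proof of Theorem \ref{perfectTheorem}: you explicitly invoke Corollary \ref{n3casecor} as an input (``necessity follows from Corollary \ref{n3casecor}'') and then spend the bulk of the argument on the extension to $n>3$ and on sufficiency/identification of $F_\lambda$. This is circular for the purpose at hand.

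What Corollary \ref{n3casecor} actually requires is the $n=3$ necessity argument itself, and the paper derives it by combining Lemma \ref{3classification} and Lemma \ref{finishmho1}. Lemma \ref{3classification} uses the explicit constraints from Lemmas \ref{12lemma}, \ref{31lemma}, \ref{finalconstraint} (obtained by evaluating small $F_\lambda$ such as $F_{(2)}$, $F_{(3)}$ at specific grid points and using the vanishing property) to pin down $\mho(1;0),\mho(1;1),\mho(1;2)$ and all $\mho(2;k),\mho(3;k)$. Lemma \ref{finishmho1}, applied to the perfect $2$-grid $\mho_2$, then determines $\mho(1;k)$ for all $k$ via the Pieri-type recursion of Lemma \ref{sadPieri}. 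None of this appears in your proposal. Your outline for Theorem \ref{perfectTheorem} is, incidentally, close to the paper's own inductive proof of that theorem, but it does not address the statement you were asked to prove.
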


\subsection{Proof of Theorem \ref{perfectTheorem}} We first prove that two types of $n$-grids described in Theorem \ref{perfectTheorem} are indeed perfect. Propositions  \ref{degvanishing}, \ref{degchartheo} combined with Remark \ref{specremark} imply that the following $n$-grid $\mho$ is perfect:
$$
\mho(i;j)=a_iq^j
$$
for $q,a_1, \dots, a_n\in\Bbbk$ such that $a_i,q\neq0$, $q$ is not a root of unity, and $a_i/a_j\neq q^k$ for any $i\neq j$ and $k\in\Z$. Hence the grid $\mho+c$ is also perfect for any $c\in\Bbbk$, covering the first type from Theorem \ref{perfectTheorem}. Similarly, Propositions \ref{degdegvanishing}, \ref{degdegchartheo} and Remark \ref{specremark} imply that the grids of the form $\mho(i;j)=c_i+jd$ are perfect for $c_1, \dots, c_n, d\in\Bbbk$ such $d\neq 0$ and $c_i-c_j\neq kd$ for any $i\neq j$ and $k\in\Z$.

Hence, it is enough to prove that all perfect $n$-grids for $n\geq 3$ are described by Theorem \ref{perfectTheorem}. We do it by induction on $n$, where the base case $n=3$ follows from Corollary \ref{n3casecor}. Note that  the restrictions $a_i,q\neq 0$, $a_i/a_j\neq q^k$ and $c_i-c_j\neq kd$ follow automatically from Corollary \ref{nondegeneccesary}, so its enough to just prove that $\mho$ has the form $\mho(i;j)=c+a_iq^j$ or $\mho(i;j)=c_i+jd$.

For the inductive step assume that we have proved Theorem \ref{perfectTheorem} for all $(n-1)$-grids $\mho$. Let $\mho$ be a perfect $n$-grid. Then the $(n-1)$-grid $\mho_{n-1}$ is also perfect, and by inductive assumption we either have $\mho(i;j)=c+a_iq^j$ or $\mho(i;j)=c_i+jd$ for $i\leq n-1$. At the same time, the $3$-grid $\prescript{}{n-3}\mho$ is also perfect, and so we either have 
$$
\mho(n-2;j)=\widetilde{c}+\widetilde{a}_{n-2}\widetilde{q}^j,\qquad \mho(n-1;j)=\widetilde{c}+\widetilde{a}_{n-1}\widetilde{q}^j,\qquad \mho(n;j)=\widetilde{c}+\widetilde{a}_{n}\widetilde{q}^j
$$
for some $\widetilde{c},\widetilde{q}, \widetilde{a}_{n-2}, \widetilde{a}_{n-1}, \widetilde{a}_{n}\in\Bbbk$, or
$$
\mho(n-2;j)=\widetilde{c}_{n-2}+j\widetilde{d},\qquad \mho(n-1;j)=\widetilde{c}_{n-1}+j\widetilde{d},\qquad \mho(n;j)=\widetilde{c}_{n}+j\widetilde{d}
$$
for some $\widetilde{d}, \widetilde{c}_{n-2}, \widetilde{c}_{n-1}, \widetilde{c}_{n}\in\Bbbk$. But note that for $c,a, q,c',d\in\Bbbk$ we can simultaneously have $\mho(n-1; k)=c+a q^k=c'+kd$ for all $k\in\Z_{\geq 0}$ only if $d=0$ and all numbers $c+a q^k, c'+kd$ are equal. However, this is impossible since $\mho$ is non-degenerate and the numbers $\mho(n-1;k)$ are distinct. Hence, both grids $\mho_{n-1}$ and $\prescript{}{n-3}{\mho}$ are of the same type, and moreover $q=\widetilde{q}$ or $d=\widetilde{d}$. Thus, if $\mho(i;j)=c+a_iq^j$ for $i<n$ then $\mho(n;j)=c+a_nq^j$ as well, and similarly for $\mho(i;j)=c_i+jd$. This finishes the proof of Theorem \ref{perfectTheorem}.\qed


\begin{thebibliography}{9999999999}

\bibitem[ABW21]{ABW21} A. Aggarwal, A. Borodin, M. Wheeler. Colored Fermionic Vertex Models and Symmetric Functions, 2021; {\tt arXiv:2101.01605}.

\bibitem[Agg17]{Agg17} A. Aggarwal. Dynamical Stochastic Higher Spin Vertex Models. \emph{Sel. Math. 24,} 2659-2735, 2018; {\tt arXiv:1704.02499}.

\bibitem[BBS09]{BBS09} B. Brubaker, D. Bump, and S. Friedberg. Schur Polynomials and The Yang-Baxter Equation. \emph{Comm. Math. Phys.} 308, 281--301, 2011; {\tt arXiv:0912.0911}.

\bibitem[BK21]{BK21} A. Borodin, S. Korotkikh, Inhomogeneous spin $q$-Whittaker polynomials. 2021; {\tt arXiv:2104.01415}.

\bibitem[BM16]{BM16}  G. Bosnjak, V. Mangazeev, Construction of $R$-matrices for symmetric tensor representations related to $U_q(\widehat{sl_n})$, \emph{J. Phys. A: Math. Theor. 49,} 2016, {\tt arXiv:1607.07968}.

\bibitem[BMN14]{BMN14} D. Bump, P. McNamara, M. Nakasuji. Factorial Schur functions and the Yang-Baxter equation. \emph{Comment. Math. Univ. St. Pauli 63}, 23–45, 2014; {\tt arXiv:1108.3087}.

\bibitem[Bor14]{Bor14} A. Borodin. On a family of symmetric rational functions, \emph{Adv. Math.}, 306, 973--1018, 2017; {\tt arXiv:1410.0976}.

\bibitem[BP16]{BP16} A. Borodin and L. Petrov. Higher spin six vertex model and symmetric rational functions, \emph{Selecta Mathematica} 24, 751--874, 2018; {\tt arXiv:1601.05770}.

\bibitem[BSW19]{BSW19} V. Buciumas, T. Scrimshaw, K. Weber. Colored five-vertex models and Lascoux polynomials and atoms, \emph{J. Lond. Math. Soc. 102(3)}, 1047-1066, 2020; {\tt arXiv:1908.07364}.

\bibitem[BW17]{BW17} A. Borodin and M. Wheeler. Spin $q$-Whittaker polynomials. \emph{Adv. Math.} 2020, https://doi.org/10.1016/ j.aim.2020.107449; {\tt arXiv:1701.06292}.

\bibitem[BW18]{BW18} A. Borodin and M. Wheeler. Coloured stochastic vertex models and their spectral theory; {\tt arXiv:1808.01866}.


\bibitem[Cha01]{Cha01} Vyjayanthi Chari. Braid Group Actions and Tensor Products. \emph{Int. Math. Res. Not. 7}, 357–382, 2002; {\tt arXiv:math/0106241}.

\bibitem[CP94a]{CP94a} V. Chari, A. Pressley. \emph{A guide to quantum groups}, Cambridge, UK: Univ. Pr., 1994.

\bibitem[CP94b]{CP94b} V. Chari, A. Pressley. Minimal Affinizations of Representations of Quantum Groups: the simply--laced case. \emph{Lett.Math.Phys. 35}, 99-114, 1995; {\tt arXiv:hep-th/9410036}.

\bibitem[Cue17]{Cue17} C. Cuenca. Interpolation macdonald operators at infinity. \emph{Advances in Applied Mathematics 101,} 15–59, 2018; {\tt arXiv:1712.08014}.

\bibitem[DJKMO88]{DJKMO88}  E. Date, M. Jimbo, A. Kuniba, T. Miwa, and M. Okado. Exactly Solvable SOS Models II: Proof of the Star-Triangle Relation and Combinatorial Identities, \emph{Adv. Stud. Pure Math. 16}, 17–122, 1988.

\bibitem[Dri88]{Dri88} V. G. Drinfeld, A New realization of Yangians and quantized affine algebras. \emph{Sov. Math. Dokl. 36}, 212–216, 1988.

\bibitem[HU91]{HU91} R. Howe, T. Umeda, The Capelli identity, the double commutant theorem, and multiplicity-free actions. \emph{Math. Ann. 290}, 569–619, 1991.

\bibitem[Iv03]{Iv03} V. Ivanov. Interpolation analogues of Schur Q-functions. \emph{J. Math. Sci. 131}, 5495–5507, 2005; {\tt arXiv:math/0305419}.

\bibitem[Jim86]{Jim86} M. Jimbo, A $q$-analogue of $U(gl_{N+1})$, Hecke algebra, and the Yang-Baxter equation.\emph {Lett. Math. Phys. 11, no. 3}, 247–252, 1986.

\bibitem[Jin96]{Jin96} Naihuan Jing, On Drinfeld realization of quantum affine algebras, \emph{Ohio State Univ. Math. Res. Inst. Publ. de Gruyter, Berlin, 7}, 195–206, 1998; {\tt arXiv:q-alg/9610035}.

\bibitem[Kn97]{Kn97} F. Knop. Symmetric and non-symmetric quantum Capelli polynomials. \emph{Comment. Math. Helv. 72}, 84–100. 1997.

\bibitem[Kor11]{Kor11} C. Korff. Cylindric versions of specialised Macdonald functions and a deformed Verlinde algebra. \emph{Comm. Math. Phys.}, 318(1), 173--246, 2013; {\tt arXiv:1110.6356}.

\bibitem[Krn14]{Krn14} T. Koornwinder. Okounkov's $BC$-type interpolation Macdonald polynomials and their $q=1$ limit. \emph{Sém. Lothar. Combin. B72a}, 2015; {\tt arXiv:1408.5993}.

\bibitem[K21]{K21} S. Korotkikh. \emph{Hidden diagonal integrability of $q$-Hahn vertex model and Beta polymer model},  2021; {\tt arXiv:2105.05058}.

\bibitem[KR88]{KR88} A. N. Kirillov and N. Y. Reshetikhin, Representations of the Algebra Uq(sl2), q-Orthogonal Polynomials, and Invariants of Links. \emph{Infinite Dimensional Lie Algebras and Groups}, Adv. Ser. in Math. Phys. 7, World Scientific, Singapore, 285–339, 1988.

\bibitem[KS96]{KS96} F. Knop, S. Sahi. Difference equations and symmetric polynomials defined by their zeros. \emph{Internat. Math. Res. Notices No. 10}, 473-486, 1996; {\tt arXiv:q-alg/9610017}.

\bibitem[Mac95]{Mac95} I. G. Macdonald. \emph{Symmetric functions and Hall polynomials.} Oxford Mathematical Monographs. The Clarendon Press Oxford University Press, New York, second edition, 1995. With contributions by A. Zelevinsky, Oxford Science Publications.

\bibitem[Man14]{Man14} V. Mangazeev. On the Yang-Baxter equation for the six-vertex model. \emph{Nucl Physics, B,} 882, 70--96, 2014; {\tt arXiv:1401.6494}.

\bibitem[MP20]{MP20} M. Mucciconi, L. Petrov. Spin q-Whittaker polynomials and deformed quantum Toda. {\tt arXiv:2003.14260}.

\bibitem[MTZ03]{MTZ03} A. I. Molev, V. N. Tolstoy, R. B. Zhang. On irreducibility of tensor products of evaluation modules for the quantum affine algebra. \emph{J. Phys. A: Math. Gen. 37}, 2385 - 2399, 2004; {\tt arXiv:math/0309468}.

\bibitem[MY12]{MY12} C. A. S. Young, E. Mukhin. Affinization of category O for quantum groups. \emph{Trans. Amer. Math. Soc. 366}, 4815–4847, 2014; {\tt arxiv.org/abs/1204.2769}.

\bibitem[Ok96a]{Ok96a} A. Okounkov. (Shifted) Macdonald polynomials: q-Integral representation and combinatorial formula. \emph{Comp. Math. 112}, 147–182, 1998 ;{\tt arXiv:q-alg/9605013}.

\bibitem[Ok96b]{Ok96b} A. Okounkov. Binomial formula for Macdonald polynomials and applications. \emph{Math. Research Lett. 4}, 533–553, 1997; {\tt arXiv:q-alg/9608021}.

\bibitem[Ok96c]{Ok96c} A. Okounkov. BC-type interpolation Macdonald polynomials and binomial formula for Koornwinder polynomials. \emph{Transform. Groups, 3(2)}, 181-207, 1998; {\tt arXiv:q-alg/9611011}.

\bibitem[Ok97]{Ok97} A. Okounkov, On Newton interpolation of symmetric functions: A characterization of interpolation Macdonald polynomials. \emph{Adv. Appl. Math. 20}, 395–428, 1998; {\tt arXiv:q-alg/9712052}.

\bibitem[Olsh17]{Olsh17} G. Olshanski. Interpolation Macdonald polynomials and Cauchy-type identities. \emph{J. Comb. Theory, Ser. A, 162}, 65--117, 2019; {\tt arXiv:1712.08018}.

\bibitem[OO96a]{OO96a} A. Okounkov, G. Olshanski, Shifted Schur functions, \emph{Algebra i Analiz, 9:2}, 73–146, 1997; \emph{St. Petersburg Math. J., 9:2}, 239–300, 1998; {\tt arXiv:q-alg/9605042}.

\bibitem[OO96b]{OO96} A. Okounkov, G. Olshanski. Shifted Jack polynomials, binomial formula, and applications. \emph{Math. Res. Letters, 4}, 69--78, 1997; {\tt arXiv:q-alg/9608020}.

\bibitem[Pet20]{Pet20} L. Petrov. Refined Cauchy identity for spin Hall-Littlewood symmetric rational functions. \emph{J. Comb. Theory, Ser. A, 184}, 105519, 2021; {\tt arXiv:2007.10886}.

\bibitem[Rai01]{Rai01} E. Rains. $BC_n$-symmetric polynomials. \emph{Transform. Groups, 10}, 63–132, 2005; {\tt arXiv:math/0112035}.

\bibitem[Rai04]{Rai04} E. Rains. $BC_n$-symmetric abelian functions. \emph{Duke Math. J. 135(1)}, 99-180, 2006; {\tt arXiv:math/0402113}.

\bibitem[S94]{S94} S. Sahi. The spectrum of certain invariant differential operators associated to Hermitian symmetric spaces. \emph{Lie theory and geometry.} (J.-L. Brylinski et al. eds.) Progress Math. 123, Boston: Birkh\"{a}user, 569–576, 1994.

\bibitem[S96]{S96} S. Sahi. Interpolation, integrality, and a generalization of Macdonald’s polynomials. \emph{Intern. Math. Research Notices, 10}, 457–471, 1996.

\bibitem[Ste88]{Ste88} J.R. Stembridge. A short proof of Macdonald’s conjecture for the root systems of type $A$. \emph{Proc. Amer. Math. Soc.}, 102, 777--785, 1988.

\bibitem[Tsi05]{Tsi05} N. Tsilevich. Quantum inverse scattering method for the q-boson model and symmetric functions. \emph{Funct. Anal. Appl.}, 40, No. 3, 207--217, 2006; {\tt arXiv:math-ph/0510073}.

\bibitem[ZJ09]{ZJ09} P. Zinn-Justin. \emph{Six-vertex, loop and tiling models: integrability and combinatorics.} Lambert Academic Publishing, 2009, Habilitation thesis;
http://www.lpthe.jussieu.fr/\~{ }pzinn/publi/hdr.pdf; {\tt arXiv:0901.0665}.

\end{thebibliography}
\end{document}